\date{\vspace{-.7cm}}
\title{\vspace{-1.2cm} The closures of  test configurations and algebraic singularity types }
\author{Tam\'as Darvas and Mingchen Xia}
\begin{document}
\maketitle
\begin{abstract}
Given a Kähler manifold $X$ with an ample line bundle $L$, we consider the metric space of finite energy geodesic rays  associated to the Chern class $c_1(L)$. We characterize rays that can be approximated by ample test configurations. At the same time, we also characterize the closure of algebraic singularity types among all singularity types of quasi-plurisubharmonic  functions, pointing out the very close relationship between these two seemingly unrelated problems.

By Bonavero's holomorphic Morse inequalities, the arithmetic and non-pluripolar volumes of algebraic singularity types coincide. We show that in general the arithmetic volume dominates the non-pluripolar one, and equality holds exactly on the closure of algebraic singularity types. Analogously, we give an estimate for the Monge--Ampère energy of a general finite energy ray in terms of the arithmetic volumes along its Legendre transform. Equality holds exactly for rays approximable by test configurations.

Various other cohomological and potential theoretic characterizations are given in both settings. As applications, we give a concrete formula for the non-Archimedean Monge--Ampère energy in terms of asymptotic expansion, and show the continuity of the projection map from $L^1$ rays to non-Archimedean rays. 
\end{abstract}

\begin{spacing}{0.95}
\tableofcontents
\end{spacing}

\section{Introduction}

We fix notation and terminology for the entire paper. We consider $X$ a compact Kähler manifold of dimension $n$ with an ample line bundle $L$. We pick a positive smooth Hermitian metric $h$ on $L$ and let $\omega := \frac{\mathrm{i}}{2\pi} \Theta(h)>0$  be the background Kähler form of $X$, where $\Theta(h)$ denotes the curvature form of the Hermitian metric $h$. 

By an $\omega$-plurisubharmonic ($\omega$-psh) function $u \in \PSH(X,\omega)$, we understand a quasi-plurisubharmonic function on $X$, such that $\omega + \ddc u : = \omega + \frac{\mathrm{i}}{2\pi}\partial \bar \partial u \geq 0$, where $\mathrm{d} = \partial + \bar \partial$ and $\mathrm{d}^{\mathrm{c}} = \frac{\mathrm{i}}{4\pi}(-\partial + \bar \partial)$. We will often use $\omega_u$ shorthand for $\omega + \ddc u$.

Two potentials $u,v \in \PSH(X,\omega)$ have the same singularity type if $u - C \leq v \leq u+C$ for some $C >0$, inducing an equivalence relation $u \simeq v$, with equivalence classes $[u]=[v] \in \mathcal S$, called \emph{singularity types}. It turns out that this latter space has a natural pseudometric structure $(\mathcal S, d_\mathcal S)$, introduced in \cite{DDNL5} (recalled in Section~\ref{subsec:mecL1geod}).

For $u \in \PSH(X,\omega)$, by $H^0(X,L^k \otimes \mathcal{I}(ku)) \subseteq H^0(X, L^k)$ we denote the space of sections $s$ satisfying the $L^2$ integrability condition   $\int_X h^k(s,s)e^{-ku}\,\omega^n  <\infty$. We also denote 
\[
h^0(X,L^k \otimes \mathcal{I}(ku)):= \dim_{\mathbb{C}} H^0(X,L^k \otimes \mathcal{I}(ku))\,.
\]

A major theme in K\"ahler geometry is to relate algebraic objects to analytic ones. In this work we address two such problems. First, we give  cohomological and potential theoretic characterizations for $L^1$ geodesic rays in the space of Kähler metrics that lie in the closure of test configurations. Second, we  characterize the closure of algebraic singularity types in $\mathcal S$, with respect to the complete pseudometric  $d_\mathcal S$. Potentials with algebraic singularity types are among the nicest ones one could hope for in practice (see Definition~\ref{def: anal_alg_sing}).

According to our work, it is most natural to treat both of these seemingly different problems at the same time, with our final answers also paralleling each other on many different levels.  We now present one facet of this, with slight abuse of precision. 

Given $\varphi \in \PSH(X,\omega)$ with algebraic singularity type $[\varphi] \in \mathcal S$,  the arithmetic and non-pluripolar volumes coincide, according to the singular holomorphic Morse inequalities of Bonavero \cite{Bon98} (see Theorem~\ref{thm: analytic_sing_type_formula}):
\begin{equation}\label{eq: BonaId}
\lim_{k\to\infty} \frac{1}{k^n}h^0(X,L^k \otimes \mathcal{I}(k\varphi)) =\frac{1}{n!} \int_X \omega_{\varphi}^n.
\end{equation}
All volumes in this work, in particular the one on the right hand side above,  are interpreted in the non-pluripolar sense of Guedj--Zeriahi \cite{GZ07}, \cite{BEGZ10} (see \eqref{eq: non_pluripolar_def} below). Both the left and right hand sides only depend on the singularity type $[\varphi]$ (\cite[Theorem~1.1]{WN19}). 

In Theorem~\ref{main_thm: arith_plurip_vol_deficit} we show that in \eqref{eq: BonaId} the limit on the left exists for \emph{all} $\varphi \in \PSH(X, \omega)$ and dominates the right hand side in general. Moreover, $[\varphi] \in \mathcal S$ lies in the $d_\mathcal S$-closure of algebraic singularity types if and only if Bonavero's identity \eqref{eq: BonaId} holds for $\varphi$.

Paralleling the above, in \cite{DL20} the authors introduced a metric $d_1^c$ on the space of $L^1$ geodesic rays $\mathcal R^1$ (recalled in Section~\ref{subsec:mecL1geod}), making $(\mathcal R^1,d_1^c)$ a complete geodesic metric space. As is well known, to any ample test configuration one can associate a geodesic ray, a construction going back to Phong--Sturm \cite{PS07}. We show that a geodesic ray $\{r_t\}_t \in \mathcal R^1$ is in the $d_1^c$-completion of the space of rays induced by test configurations if and only if $\hat r_\tau \in \PSH(X,\omega)$ satisfies Bonavero's identity \eqref{eq: BonaId} for all $\tau \in \mathbb R$, where $\hat r_\tau := \inf_{t>0} (r_t - t \tau)$ is the Legendre transform of the ray $\{r_t\}_t$. In particular, the ray $\{r_t\}_t \in \mathcal R^1$ is approximable by test configurations if and only if the singularity types $[\hat r_\tau] \in \mathcal S$ are approximable by algebraic singularity types! This parallels previous characterizations of approximable rays in the non-Archimedean context from \cite{BBJ21}.

We refer to Theorem~\ref{mainthm: on_rays} and Theorem~\ref{main_thm: arith_plurip_vol_deficit} for additional cohomological and potential theoretic characterizations complementing each other in both settings.

In addition, in Theorem~\ref{main_thm: Non-Archimedean} we express the non-Archimedean Monge--Ampère energy of a ray as the first order term in the expansion of the non-Archimedean Donaldson's $\mathcal L$-functional, a result paralleling \cite[Theorem~A]{BB10}.

\paragraph{The closure of the rays induced by test configurations.} 

Let $d_1$ be the metric on the space of smooth Kähler potentials $\mathcal{H}_\omega:= \{u \in C^\infty(X) : \omega +\ddc u >0\}$ associated with the $L^1$ Finsler metric \cite{Da15}:
\[
\|\psi\|_1 := \frac{1}{V}\int_X |\psi|\,\omega_u^n\,, \quad u \in \mathcal{H}_\omega \textnormal{ and } \psi \in T_u \mathcal{H}_\omega\,,
\]
where $V = \int_X \omega^n$ is the total volume. 
By $\mathcal{E}^1$ we denote the $d_1$-completion of $\mathcal{H}_\omega$, that can be identified with a finite energy space studied by Guedj--Zeriahi \cite{GZ07}. Then $(\mathcal{E}^1,d_1)$ is a complete geodesic metric space. Its geodesics are limits of solutions to a degenerate complex Monge--Ampère equation \cite[Theorem~2]{Da15}.

By $\mathcal{R}^1$ we denote the space of $d_1$-geodesic rays $\{r_t\}_{t \geq 0}$ in $\mathcal{E}^1$ emanating from $r_0 = 0 \in \mathcal{H}_\omega$. As shown in \cite[Theorem~1.3, Theorem~1.4]{DL20}, $\mathcal{R}^1$ has a a natural chordal metric $d_1^c$ (see \eqref{eq: d_1^c_def}), compatible with $d_1$, making $(\mathcal{R}^1, d_1^c)$ a complete geodesic metric space.

Of special importance is the subspace $\mathcal{T} \subseteq \mathcal{R}^1$, composed of the rays induced by ample test configurations \cite{PS07}, \cite{PS10}, \cite{CT08}. Similarly, one can consider the bigger subspace $\mathcal{F} \subseteq \mathcal{R}^1$, the space of rays induced by filtrations \cite{RWN14}. In this work it is advantageous to think of ample test configurations as special kind of filtrations on the ring of sections of $(X,L)$, and we refer to Section~\ref{subsec:nonAchi} for details on this. Understanding the closures $\overline{\mathcal T}$ and $\overline{\mathcal F}$ is one of the main problems we take up in this work.

The well-known Monge--Ampère energy $\mathrm{I}(\cdot) : \mathcal{E}^1 \to \mathbb{R}$, whose Euler--Lagrange equation is simply the Monge--Ampère equation (see \eqref{eq: MA_en_def}), is linear along $d_1$-geodesics. One can simply consider its radial version $\Irad[\cdot]:\mathcal{R}^1 \to \mathbb{R}$ defined by the following slope
\begin{equation}\label{eq:defIrad}
\Irad[r_t] =\mathrm{I}(r_1) = \lim_{t \to \infty} \frac{\mathrm{I}(r_t)}{t}\,.
\end{equation}
Another quantity that is linear along a ray $\{r_t\}_t$ is the supremum of potentials (Lemma \ref{lem: suplinear}). For simplicity, we will often assume that $\sup_X r_t =0$ for $t \geq 0$, and such rays will be called \emph{sup-normalized}. Note that all our results hold in an appropriate form without normalization, even when these are not specifically mentioned.

First, in Theorem~\ref{thm: max_test_curve_ray_duality}, we develop ideas from \cite{RWN14} further, and show a precise formula for the radial Monge--Amp\`ere energy of  sup-normalized rays $\{r_t\}_t \in \mathcal{R}^1$:
\begin{equation}\label{eq: L^1 slope_dual_masses_intr}
\Irad[r_t] =  \int_{-\infty}^{0} \bigg(\frac{\int_X \omega_{{\hat r}_\tau}^n}{\int_X \omega^n}-1\bigg) \,\mathrm{d}\tau\,, \quad t > 0\,,
\end{equation}
where $\hat r_\tau \in \PSH(X,\omega)$ for $\tau < 0$ is the Legendre transform of the ray:
\[
\hat r_\tau := \inf_{t\geq 0} (r_t - t\tau)\,.
\] 

We attempt to approximate the non-pluripolar volumes in the integrand of \eqref{eq: L^1 slope_dual_masses_intr} using arithmetic volumes (in the spirit of Bonavero's identity \eqref{eq: BonaId}).  In our first main result we show that this fails in general, and it works exactly for rays in the $d_1^c$-closure of $\mathcal T$:

\begin{theorem}[Theorem~\ref{thm: closure_of_test_conf}, Corollary~\ref{cor: Thm1.1cor}]\label{mainthm: on_rays}
For $\{r_t\}_t \in \mathcal{R}^1$ with $\sup_X r_t = 0$ we have
\begin{equation}\label{eq: mainthm_on_rays}
\lim_{k\to\infty} \int_{-\infty}^0 \bigg( \frac{h^0(X,L^k \otimes \mathcal{I}(k {\hat r}_\tau))}{h^0(X,L^k)}  - 1\bigg)\,\mathrm{d}\tau \geq \Irad[r_t] \,.
\end{equation}
Moreover, equality holds in \eqref{eq: mainthm_on_rays} if and only if the following equivalent conditions hold:
\vspace{0.15cm}\\
\vspace{0.15cm}\Rom{1} $\displaystyle\lim_{k\to\infty} \frac{1}{k^n}h^0(X,L^k \otimes \mathcal{I}(k\hat r_\tau)) =\frac{1}{n!} \int_X \omega_{\hat r_\tau}^n$, $\tau <0 $.\\
\vspace{0.15cm}\Rom{2} $\PrIv[\hat r_\tau]=\hat r_\tau$, $\tau \leq 0$. \\
\vspace{0.15cm}\Rom{3} $\{r_t\}_t \in \overline{\mathcal{T}}$.\\
\vspace{0.15cm}\Rom{4} $\{r_t\}_t \in \overline{\mathcal{F}}$.
\end{theorem}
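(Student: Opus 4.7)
My approach is to reduce the theorem to the parallel singularity-type statement of Theorem~\ref{main_thm: arith_plurip_vol_deficit} applied level-by-level in the Legendre variable $\tau$, and then to close the loop with the dynamical conditions (III) and (IV) via a density argument inside $\mathcal F$ and an explicit correspondence between test-configuration rays and the algebraic singularity types of their Legendre transforms.

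I first establish the inequality \eqref{eq: mainthm_on_rays}. By Theorem~\ref{main_thm: arith_plurip_vol_deficit}, for each $\tau \leq 0$ the limit $\lim_k k^{-n} h^0(X, L^k \otimes \mathcal I(k\hat r_\tau))$ exists and dominates $\frac{1}{n!}\int_X \omega_{\hat r_\tau}^n$. Combined with $h^0(X, L^k) = V k^n/n! + O(k^{n-1})$ and the slope formula \eqref{eq: L^1 slope_dual_masses_intr}, matters reduce to interchanging the limit in $k$ with the integral over $\tau$ of $g_k(\tau) := h^0(X, L^k \otimes \mathcal I(k\hat r_\tau))/h^0(X, L^k) - 1 \in [-1, 0]$. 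Since $\sup_X r_t = 0$ and $\{r_t\}_t \in \mathcal R^1$ together imply $\hat r_\tau \equiv 0$ for $\tau$ sufficiently negative and provide a $k$-uniform effective support for $|g_k|$, bounded convergence applies and yields \eqref{eq: mainthm_on_rays}. Equality in \eqref{eq: mainthm_on_rays} then forces equality $\tau$-a.e.\ in the pointwise bound, which by monotonicity of $\tau \mapsto \hat r_\tau$ upgrades to every $\tau \leq 0$; this is precisely (I). The equivalence (I)$\Leftrightarrow$(II) is exactly the pointwise content of Theorem~\ref{main_thm: arith_plurip_vol_deficit}, which identifies Bonavero's identity with the fixed-point property $\PrIv[\cdot] = \cdot$.

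For the purely dynamical equivalence (III)$\Leftrightarrow$(IV), the inclusion $\mathcal T \subseteq \mathcal F$ trivially yields $\overline{\mathcal T} \subseteq \overline{\mathcal F}$; the converse follows because every finitely generated filtration produces an ample test configuration (as in \cite{RWN14}), and a general filtration can be $d_1^c$-approximated by its finitely generated truncations.

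The heart of the argument is the bridge (II)$\Leftrightarrow$(III). For $(\Leftarrow)$, rays in $\mathcal T$ have genuinely algebraic Legendre transforms, hence satisfy (II) automatically; since the fixed-point property $\PrIv[\cdot]=\cdot$ is preserved under $d_\mathcal S$-limits, rays in $\overline{\mathcal T}$ inherit (II). For $(\Rightarrow)$, given (II) I would construct ample test configurations from the multiplier ideals $\mathcal I(k\hat r_\tau)$ as $\tau$ ranges over $\frac{1}{k}\mathbb Z$: these define a finitely generated $\mathbb Z$-filtration of $R(X, L)$ whose Rees construction produces an ample test configuration $\mathcal X_k$, with associated ray $\{r_t^{(k)}\}_t \in \mathcal T$ whose Legendre transforms are precisely the multiplier-ideal approximations of $\hat r_\tau$. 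The equality case of \eqref{eq: mainthm_on_rays}, now in hand, together with the slope identity \eqref{eq: L^1 slope_dual_masses_intr}, gives $\Irad[r_t^{(k)}] \to \Irad[r_t]$, from which $d_1^c$-convergence follows by comparison with the $d_\mathcal S$-convergence of $[\hat r_\tau^{(k)}]$ to $[\hat r_\tau]$. The main obstacle I anticipate is this final convergence step: the chordal metric $d_1^c$ couples $d_1$-distance at each time with the $\Irad$-slope, and promoting pointwise-in-$\tau$ convergence of arithmetic volumes to $d_1^c$-convergence of rays requires careful joint control, most naturally achieved by combining the slope identity with the fixed-point characterization of $\PrIv$ and the completeness of $(\mathcal R^1, d_1^c)$.
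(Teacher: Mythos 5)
There is a genuine gap in the first and most delicate step, the inequality \eqref{eq: mainthm_on_rays}. You propose to obtain it by applying Theorem~\ref{main_thm: arith_plurip_vol_deficit} pointwise in $\tau$ and then invoking bounded convergence, on the grounds that $\sup_X r_t=0$ forces $\hat r_\tau\equiv 0$ for $\tau$ sufficiently negative. That last claim is false for a general $\{r_t\}_t\in\mathcal R^1$: it characterizes the \emph{bounded} rays $\mathcal R^\infty$ (equivalently, bounded test curves), whereas a finite energy ray only satisfies $\hat r_\tau\nearrow 0$ a.e.\ together with \eqref{eq: fetestcurve_def}. Indeed, if $r_t(x)/t\to-\infty$ at a single point $x$, then $\hat r_\tau(x)=-\infty$ for every $\tau$, and the sets $\{\tau:\mathcal I(k\hat r_\tau)\neq\mathcal O_X\}$ grow with $k$ with no exhibited integrable minorant for your integrands $g_k$. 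Reverse Fatou (the integrands are $\leq 0$) gives only the $\leq$ direction of the interchange, which is the easy half; the needed bound $\varliminf_k\int(\cdots)\,\mathrm{d}\tau\geq\Irad[r_t]$ is precisely where the work lies. The paper's route is entirely different: Berndtsson convexity (Theorem~\ref{thm:Bernconvex}) makes $t\mapsto\Lk(r_t)$ convex along sufficiently positive subgeodesics, Proposition~\ref{prop:Lkarctononarc} identifies $\Lk^{\NA}$ with the slope at infinity of $\Lk$ via the exponent/filtration formalism of Section~\ref{subsec:expfilt}, the quantization Theorem~\ref{thm:QuantI} gives $\frac{n!}{k^n}\Lk(r_t)\to\mathrm I(r_t)$ for each $t$, and a slope comparison for convex functions plus approximation of the ray from below by Kähler current subgeodesics (Theorem~\ref{thm: approx_ray_below}) yields Theorem~\ref{thm:lowerboundmisbyI}. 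None of this is replaceable by the dominated convergence argument you describe.

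A second, structural problem: you take Theorem~\ref{main_thm: arith_plurip_vol_deficit} as an input, but in the paper that theorem is a \emph{consequence} of the present one --- the existence of $\lim_k k^{-n}h^0(X,L^k\otimes\mathcal I(ku))$ and its identification with $\frac{1}{n!}\int_X\omega^n_{\PrIv[u]}$ are proved (Propositions~\ref{prop: volume_id_along_bounded_ray} and~\ref{prop: analvol_arithvol}) by embedding $u$ into a bounded $\veq$-maximal test curve and applying Theorem~\ref{thm: closure_of_test_conf}(iv). Your reduction is therefore circular unless you supply an independent proof of the singularity-type theorem. The remaining equivalences in your sketch are broadly in the right spirit --- $(\mathrm{III})\Leftrightarrow(\mathrm{IV})$ via $\veq$-maximality of filtration rays, $(\mathrm{II})\Rightarrow(\mathrm{III})$ via the flag ideals $\mathcal I(2^m\Phi)$ --- but the convergence step you yourself flag as the ``main obstacle'' is resolved in the paper not by a metric comparison between $d_1^c$ and $d_\mathcal S$, but by monotonicity: the projected ray $\Pi(r)_t$ dominates $r_t$, and equality of the Monge--Amp\`ere slopes together with $r_t\leq\Pi(r)_t$ forces $r_t=\Pi(r)_t$.
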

A ray satisfying condition~\Rom{3} is called a \emph{maximal} geodesic ray in the work \cite{BBJ21}, giving non-Archimedean characterizations of $\overline{\mathcal T}$ recalled below. Here we do not use this terminology, to avoid potential confusion with other notions of maximality.

In condition~\Rom{2} the operator $\PrIv[u]$ is the $\veq$-envelope of the singularity type $[u] \in \mathcal S$, used explicitly and implicitly in \cite{KS20} and  \cite{Cao14} respectively:
\begin{equation}\label{eq: PrI_def}
\PrIv[u] = \sup \left\{\, \chi \in \PSH(X,\omega) : \chi \leq 0 \textnormal{ and } \mathcal{I}(c \chi) = \mathcal{I}(c u) \textnormal{ for all } c > 0   \,\right\}\,,
\end{equation}
where $\mathcal{I}(u)$ is simply the multiplier ideal sheaf of a quasi-psh function $u$ on $X$.

As part of showing \eqref{eq: mainthm_on_rays}, we will argue that the limit on the left hand side exists. We think of  \Rom{1} as the \emph{cohomological characterization} of the closure of $\mathcal T$. On the other hand, we think of \Rom{2} as the \emph{potential theoretic} characterization. 

The equivalence between  \Rom{3}  and  \Rom{4}  indicates that uniform notions of K-stability with respect to test configurations and filtrations are very likely the same (c.f. \cite[Question~1.12]{CC3}). To fully confirm this, one needs to show that elements of $\mathcal F$ can be approximated by $\mathcal T$ while also preserving the slope of the K-energy functional, as predicted  by \cite[Conjecture~1.5]{Li20}. Of course, due to the examples of \cite{ACGTF08}, one might still expect that relative K-stability with respect to $\mathcal F$ and $\mathcal T$ are different notions. 

It is not hard to see that for many of the rays constructed in \cite{Da17} there is strict inequality in \eqref{eq: mainthm_on_rays}, implying $\overline{\mathcal T} \subsetneq \mathcal R^1$. This strict containment was noticed in \cite[Remark~5.9]{BBJ21} using non-Archimedean methods. However, as a result of condition \Rom{1} above and our Theorem~\ref{thm: max_test_curve_ray_duality} (iv) (that allows for flexible construction of $L^1$ rays using test curves)  the containment $\overline{\mathcal T} \subsetneq \mathcal R^1$ is seen to be nowhere $d_1^c$-dense (c.f. \cite[Question~1.10]{CC3}).

Finally, let us put our Theorem~\ref{mainthm: on_rays} in historical context, and discuss the possible connection with the general version of the Yau--Tian--Donaldson conjecture, seeking to characterize existence of constant scalar curvature Kähler (cscK) metrics cohomolgous to $\omega$ in terms of algebro-geometric properties of the bundle $(X,L)$. Despite the difficulties arising due to infinite dimensionality, and the underlying fourth order PDE, by now we have a comprehensive understanding on the analytic side (see \cite{BDL17}, \cite{CC1}, \cite{CC2}, \cite{CC3}, \cite{DL20}), allowing to characterize existence of cscK metrics in terms of uniform geodesic stability along $C^{1,\bar 1}$-rays of the space of Kähler metrics, yielding the essentially optimal version of what was conjectured by Donaldson \cite{Do99}.

Similarly, with the development of the non-Archimedean toolbox, we have a very good understanding of the algebraic side as well (see \cite{BBJ21}, \cite{BHJ17}, \cite{BJ18}, \cite{BHJ19}, \cite{De16}), allowing not only to embed test configurations into $\mathcal R^1$ (along with their invariants), but to also keeping track of algebraic invariants using non-Archimedean metrics, an intermediate notion lying between the algebraic and analytic data. 

The remaining step in the variational program for the Yau--Tian--Donaldson conjecture is to understand what $L^1$ rays are approximable by ample test configurations, while also preserving the slope of the radial K-energy in the limit. This is the connection point with our characterization theorem above, though here we completely ignored the behavior of the K-energy in the approximation process. 

During the final stages of writing up our work we learned of the preprint of C. Li \cite{Li20}, who proved that $L^1$ rays with bounded radial K-energy are in $\overline{\mathcal T}$. Though not a characterization of $\overline{\mathcal T}$, this result is more closely lined up with the variational program, and it is an intriguing prospect to examine the relationship between our results  and the ones in  \cite{Li20}.

\paragraph{Non-Archimedean interpretation.} The non-Archimedean approach to singularities in pluripotential theory developed in \cite{BFJ08}, \cite{BBJ21} will play a crucial role in our discussion (especially in the form of valuative criteria), and we mention here how Theorem~\ref{mainthm: on_rays} can be interpreted in this context.

In this approach $\mathcal{T}$ can be identified with $\mathcal{H}^\NA$, the space of Fubini--Study metrics on the Berkovich analytification  $(X^{\NA},L^{\NA})$  with respect to the trivial valuation on $\mathbb{C}$. 
On the other hand, the closure $\overline{\mathcal{T}}$ can naturally be identified with the space of finite energy metrics on $(X^{\NA},L^{\NA})$, leading to a characterization of $\overline{\mathcal T}  =\mathcal{E}^{1,\NA}$ in the non-Archimedean context \cite{BBJ21}, in addition to the ones given in  Theorem~\ref{mainthm: on_rays}.

Given an arbitrary ray $\{r_t\}_t \in \mathcal{R}^1$, in \cite{BBJ21} the authors introduce a natural projection 
\[\Pi: \mathcal R^1 \to  \overline{\mathcal{T}}=\mathcal{E}^{1,\NA} \subset \mathcal R^1\, ,\]
satisfying $r_t \leq \Pi(r)_t$ and one can think of $\{\Pi(r)_t\}_t$ as the closest ray to $\{r_t\}_t$ that is approximable by test configurations (see Section~\ref{subsec:rayfiltandapp} for more details). Using $\Pi$, one can conveniently introduce the non-Archimedean Monge--Ampère energy as follows:
\[
\mathrm{I}^{\NA}\{r_t\} := \Irad[\Pi(r)_t]\,.
\]
The original definition is given by means of the non-Archimedean Monge--Ampère measures introduced in \cite{CL06}, \cite{CLD12} that only depend on the non-Archimedean data $r^\NA$ (see \cite{BJ18} and references therein). Here we show that $\Pi$ is $d_1^c$-continuous, and give the following expansion interpretation for $\textrm{I}^{\NA}$: 
\begin{theorem}[Theorem~\ref{thm: Pi_cont}, Corollary~\ref{thm: Non-Archimedean}]\label{main_thm: Non-Archimedean} The map $\Pi: \mathcal R^1 \to  \overline{\mathcal{T}}$ is $d_1^c$-continuous. Moreover, for any sup-normalized $\{r_t\}_t \in \mathcal{R}^1$ we have
\begin{equation}\label{eq: INA_expand}
\mathrm{I}^{\NA}\{r_t\}=\mathrm{I}\{\Pi(r)_t\} = \lim_{k\to\infty} \frac{n!}{Vk^n}\int_{-\infty}^0 \big( h^0(X,L^k \otimes \mathcal{I}(k\hat{r}_\tau))-h^0(X,L^k)\big)\,\mathrm{d}\tau\,.
\end{equation}

\end{theorem}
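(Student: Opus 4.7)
I attack the two assertions of Theorem~\ref{main_thm: Non-Archimedean} separately. The expansion formula \eqref{eq: INA_expand} will drop out of Theorem~\ref{mainthm: on_rays} once the projection $\Pi$ is identified, on Legendre transforms, with the $\veq$-envelope operator $\PrIv$. The continuity of $\Pi$ is more delicate and will absorb the bulk of the work.

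\paragraph{The formula \eqref{eq: INA_expand}.} The first equality $\mathrm{I}^{\NA}\{r_t\}=\mathrm{I}\{\Pi(r)_t\}$ is the definition recalled immediately above the theorem. For the second equality, the input I plan to use is the identification $\widehat{\Pi(r)}_\tau=\PrIv[\hat r_\tau]$ for every $\tau \leq 0$, which should be built into the construction of $\Pi$ recalled in Section~\ref{subsec:rayfiltandapp}. Granting it, the defining property \eqref{eq: PrI_def} of $\PrIv$ yields $\mathcal I(k\widehat{\Pi(r)}_\tau)=\mathcal I(k\hat r_\tau)$ for every $k\geq 1$. Since $\Pi(r)\in\overline{\mathcal T}$, Theorem~\ref{mainthm: on_rays} applies to the ray $\Pi(r)$ with equality in \eqref{eq: mainthm_on_rays}; substituting the ideal identity lets me swap $\widehat{\Pi(r)}_\tau$ for $\hat r_\tau$ inside the integrand. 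Converting the ratio form $h^0(\cdots)/h^0(X,L^k)$ to the absolute form \eqref{eq: INA_expand} is then immediate from the Riemann--Roch asymptotic $h^0(X,L^k)=Vk^n/n!+O(k^{n-1})$, with the limit--integral interchange already set up in proving Theorem~\ref{mainthm: on_rays}.

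\paragraph{Continuity of $\Pi$ and the main obstacle.} Let $r^{(j)}\to r$ in $d_1^c$; I want $\Pi(r^{(j)})\to \Pi(r)$. Since $d_1^c$ is essentially an integrated $d_\mathcal S$-distance on the Legendre-transform curves $\tau\mapsto\hat r_\tau$ (Section~\ref{subsec:mecL1geod}), the convergence $r^{(j)}\to r$ translates into $d_\mathcal S(\hat r^{(j)}_\tau,\hat r_\tau)\to 0$ for almost every $\tau<0$, with uniform integrable control in $\tau$. Combined with the identification $\widehat{\Pi(r)}_\tau=\PrIv[\hat r_\tau]$, the continuity of $\Pi$ reduces to the $d_\mathcal S$-continuity of the envelope operator $[u]\mapsto[\PrIv[u]]$ on singularity types. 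This is where I expect the main difficulty: $\PrIv[u]$ is defined by an abstract supremum over singularity classes with matching multiplier ideals at every scale, and its stability under $d_\mathcal S$-perturbations of $[u]$ is not automatic. My plan for this envelope-continuity is to combine three ingredients: monotonicity of $\PrIv$ in $[u]$ (direct from \eqref{eq: PrI_def}); density of algebraic singularity types on which $\PrIv$ acts as the identity (by the equivalence \Rom{2}$\Leftrightarrow$\Rom{3} of Theorem~\ref{mainthm: on_rays}); and mass preservation $\int\omega^n_{\PrIv[u]}=\int\omega^n_u$ from Theorem~\ref{main_thm: arith_plurip_vol_deficit}, which combined with the standard lower semicontinuity of non-pluripolar mass under $d_\mathcal S$-convergence upgrades pointwise convergence to $d_\mathcal S$-convergence. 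Finally, Fubini in $\tau$ assembles $d_1^c(\Pi(r^{(j)}),\Pi(r))\to 0$.
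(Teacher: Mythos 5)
Your treatment of the expansion formula \eqref{eq: INA_expand} is correct and is essentially the paper's argument: the identification $\widehat{\Pi(r)}_\tau=\PrIv[\hat r_\tau]$ from Theorem~\ref{thm:BBJ21projnaequal} gives $\mathcal I(k\,\widehat{\Pi(r)}_\tau)=\mathcal I(k\hat r_\tau)$, and applying the equality case of Theorem~\ref{mainthm: on_rays} to the approximable ray $\{\Pi(r)_t\}_t$ (whose radial energy is $\mathrm{I}^{\NA}\{r_t\}$ by definition) yields the claimed limit after the Riemann--Roch normalization.

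The continuity argument, however, has genuine gaps. First, your key ingredient ``mass preservation $\int_X\omega^n_{\PrIv[u]}=\int_X\omega^n_u$'' is false in general: Theorem~\ref{main_thm: arith_plurip_vol_deficit} asserts only the inequality $\int_X\omega^n_{\PrIv[u]}\geq\int_X\omega^n_u$, with equality holding precisely when $[u]\in\overline{\mathcal Z}$ --- and the whole point of $\Pi$ is that for a general ray the slices $\hat r_\tau$ need not lie in $\overline{\mathcal Z}$, in which case the mass genuinely jumps under $\PrIv$. Second, your appeal to ``density of algebraic singularity types'' contradicts the paper's observation that $\overline{\mathcal Z}\subsetneq\mathcal S$ is nowhere $d_{\mathcal S}$-dense. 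Third, the claim that $d_1^c$-convergence of rays yields a.e.-in-$\tau$ $d_{\mathcal S}$-convergence of the Legendre slices with uniform integrable control is not established anywhere in the paper and would itself require a substantial argument. In effect you are attempting to prove a general $d_{\mathcal S}$-continuity of $[u]\mapsto[\PrIv[u]]$, which the paper never claims and which your three ingredients do not deliver. The paper's route avoids this entirely: assuming $d_1^c(\Pi\{u^j_t\},\Pi\{u_t\})\geq\delta>0$ for contradiction, it sandwiches (after passing to a subsequence) both $u^j_t$ and $u_t$ between a decreasing sequence of rays $\{v^j_t\}_t$ and an increasing sequence $\{w^j_t\}_t$, each $d_1^c$-converging to $\{u_t\}_t$ (the radial version of \cite[Proposition~2.6]{BDL17}, resting on the estimates \eqref{eq: d_p_decomp}); monotonicity of $\Pi$ then reduces the problem to the \emph{monotone} continuity $\PrIv[\hat w^j_\tau]\nearrow\PrIv[\hat u_\tau]$ a.e.\ and $\PrIv[\hat v^j_\tau]\searrow\PrIv[\hat u_\tau]$ for $\tau<\tau^+_{\hat u}$, which is exactly Lemma~\ref{lma:decvmodel} (together with the Hartogs-type convergence of $\tau^{\pm}$). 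You should replace your envelope-continuity step with this monotone reduction.
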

The integral in \eqref{eq: INA_expand} can be interpreted as $\Lk^{\NA}\{r_t\}$, the non-Archimedean analogue of Donaldson's $\mathcal{L}$-functional (see \eqref{eq:LNADef} and Proposition~\ref{prop:Lkarctononarc}). 
Theorem~\ref{main_thm: Non-Archimedean} says that the leading order term in the expansion of $\Lk^{\NA}\{r_t\}$ is given by the non-Archimedean Monge--Ampère energy. This is the non-Archimedean analogue of \cite[Theorem~3.5]{BFiM14}, where based on \cite{Don05} and  \cite{BB10}, it is proved that Donaldson's $\mathcal{L}$-functional from \cite{Don05} admits an expansion whose leading order term is given by the usual Monge--Ampère energy of $\mathcal E^1$. 


Similar flavour results in the non-Archimedean setting were obtained in \cite[Theorem~A]{BE21} and \cite[Theorem~A]{BGGJKM20} under different assumptions on the ground field and for continuous metrics. It would be interesting to see if   one could extend their results to finite energy metrics in case of trivially valued base fields of characteristic $0$, using our Theorem~\ref{main_thm: Non-Archimedean}.

\paragraph{The closure of the space of algebraic singularity types.} Finally we discuss approximation in the space of singularity types $\mathcal S$. We start with precisely defining algebraic/analytic singularity types.

\begin{definition}\label{def: anal_alg_sing}
We say that $[\psi]$ is an \emph{algebraic singularity type} (notation: $[\psi] \in \mathcal{Z} \subseteq \mathcal{S}$), if there exists $c \in \mathbb Q_+$ and around every point of $X$ there exists a Zariski open set $U$ and $f_j \in \mathcal{O}(U)$ algebraic, such that
$\psi|_U - \frac{c}{2} \log \left( \sum_{j=1}^k |f_j|^2\right)$ is smooth. 

We say that $[\psi]$ is an \emph{analytic singularity type} (notation: $[\psi] \in \mathcal{A} \subseteq \mathcal{S}$), if there exists $c \in \mathbb R_+$ and around every point of $X$ there exists an open set $U$ (with respect to the analytic topology) and $f_j \in \mathcal{O}(U)$, such that
$\psi|_U - \frac{c}{2} \log \left( \sum_{j=1}^k |f_j|^2\right)$ is locally bounded. 
\end{definition}

Many different conventions are in place regarding the definition of analytic/algebraic singularity types in the literature (see \cite[Definition 1.10]{De12}, \cite[Definition 2.3.9]{MM07} or  \cite[(4)]{RWN17}). Out of all possible definitions, our choice of $\mathcal Z$ is the smallest family one can consider, and $\mathcal A$ is perhaps the biggest. As we will show below, for purposes of approximation, using $\mathcal A$ or $\mathcal Z$ does not make a difference.

The study of partial Bergman kernels in connection with approximation in K\"ahler geometry has a long history, having both potential theoretic and spectral theoretic applications (see \cite{DPS01}, \cite{Bo02}, \cite{DP04}, \cite{Ra13}, \cite{Cao14}, \cite{Dem15}, \cite{RWN17}, \cite{RoSi17}, \cite{ZeZh19} in a very fast expanding literature). Even in case of smooth potentials $u \in \mathcal H_\omega$ one can use approximation by Bergman kernels for various geometric purposes,  going back to questions of Yau \cite{Ya86}, early work of Tian \cite{Ti88, Ti90}, and many others \cite{Ca97}, \cite{Ze98}, \cite{Lu00}.

Since many of the important invariants involved only depend on the singularity type of the potentials, in \cite{DDNL5} the authors  introduced a pseudometric $d_\mathcal S$ on $\mathcal S$, to study  the effectiveness of the approximation methods in the literature. $d_\mathcal S$-convergence implies convergence of all mixed complex Monge--Ampere masses \cite[Lemma~3.7]{DDNL5}, together with semicontinuity of multiplier ideal sheaves \cite[Theorem~1.3]{DDNL5}. Being also complete in the presence of positive mass \cite[Theorem~1.1]{DDNL5}, $d_\mathcal S$ seems well suited for this purpose. 

We refer to Section~\ref{subsec:mecL1geod} for a detailed discussion on the $d_\mathcal S$ metric, as well as the paper \cite{DDNL5}. We only mention the following double inequality of  \cite[Lemma~3.4 and Proposition~3.5]{DDNL5}, giving intuition about what $d_\mathcal{S}$-convergence means:
\[
d_\mathcal{S}([u],[v]) \leq  \sum_{j=0}^n \Big(2 \int_X \omega^j \wedge \omega^{n-j}_{\max(u,v)} - \int_X \omega^j \wedge \omega^{n-j}_{u} - \int_X \omega^j \wedge \omega^{n-j}_{v} \Big)\leq C d_\mathcal{S}([u],[v])\,,
\]
where $C>1$ only depends on $\dim X$. 

The pseudometric $d_\mathcal{S}$ is slightly degenerate, however $d_{\mathcal{S}} ([\phi],[\psi])=0$ implies that the singularities of $\phi,\psi$ are essentially indistinguishable (for example all Lelong numbers, multiplier ideal sheaves, and mixed complex Monge--Ampère masses need to agree), so in many ways this is a blessing in disguise. 

In our last main result we prove the inequality between arithmetic and non-pluripolar volumes for general $\omega$-psh functions, complementing \eqref{eq: BonaId} (c.f. \cite[Theorem~1.2]{Bo02}, \cite[Proposition~1.1]{Cao14}). Moreover, in the presence of positive mass, we show that Bonavero's formula holds exactly for the $d_\mathcal S$-closure $\overline{\mathcal Z}$. 

We also give a potential theoretic characterization for elements of $\overline{\mathcal Z}$ in terms of the coincidence locus of $\PrIv[\cdot]$ (defined in \eqref{eq: PrI_def}) and its analytic counterpart $P[\cdot]$:
\[
P[u] := \usc\left(\sup \{\,v \in \PSH(X,\omega) : [v]=[u] \textup{ and } v \leq 0 \,\}\right)\,.
\]

\begin{theorem}[Theorem~\ref{thm: arith_plurip_vol_deficit}]\label{main_thm: arith_plurip_vol_deficit} For $u \in \PSH(X,\omega)$  we have
\begin{equation}\label{eq: Bon_ineq}
\lim_{k\to\infty} \frac{1}{k^n}h^0(X,L^k \otimes \mathcal{I}(ku))=\frac{1}{n!}\int_X \omega_{\PrIv[u]}^n \geq \frac{1}{n!}\int_X \omega_u^n\,.
\end{equation}
Assume that $\int_X \omega_u^n>0$. Then equaility holds in \eqref{eq: Bon_ineq} if and only if one the following equivalent conditions hold:\\
\Rom{1} $\displaystyle\lim_{k\to\infty} \frac{1}{k^n}h^0(X,L^k \otimes \mathcal{I}(ku)) = \frac{1}{n!}\int_X \omega_u^n$. \vspace{0.15cm}\\
\Rom{2} $P[u]=\PrIv[u]$.\vspace{0.15cm}\\
\Rom{3} $[u] \in \overline{\mathcal Z}$.\vspace{0.15cm}\\
\Rom{4} $[u] \in \overline{\mathcal A}$.
\end{theorem}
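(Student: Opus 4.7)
The plan centers on the tautological identity $\mathcal{I}(ku)=\mathcal{I}(k\PrIv[u])$ for every $k\in\mathbb N$, baked into the definition \eqref{eq: PrI_def}, which immediately tells us that the arithmetic volume only sees the envelope $\PrIv[u]$. This reduces the theorem to three pieces: (a) the main equality $\lim_k k^{-n}h^0(X,L^k\otimes\mathcal{I}(ku))=(n!)^{-1}\int_X\omega_{\PrIv[u]}^n$; (b) the inequality, obtained from the sandwich $P[u]\leq\PrIv[u]$; (c) the four-fold equivalence under positive mass, driven by mass rigidity.

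For (a), I would fix for each $k\geq 1$ a basis $\{s_j^{(k)}\}$ of $H^0(X,L^k\otimes\mathcal{I}(ku))$ and form the normalized Bergman potential
\[
\psi_k := \frac{1}{2k}\log\sum_j |s_j^{(k)}|^2_{h^k}\,.
\]
By construction $[\psi_k]\in\mathcal Z$, and the algebraic Bonavero identity gives $\int_X\omega_{\psi_k}^n=(n!/k^n)\,h^0(X,L^k\otimes\mathcal{I}(ku))+o(1)$. The heart of the argument is to show $\psi_k\to\PrIv[u]$ in $d_\mathcal S$. The upper direction uses Demailly--Ein--Lazarsfeld subadditivity of multiplier ideals: any $d_\mathcal S$-cluster point $\chi$ satisfies $\chi\leq 0$ and $\mathcal{I}(c\chi)=\mathcal{I}(cu)$ for every $c>0$, hence $\chi\leq\PrIv[u]$. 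For the matching lower bound $\psi_k\geq\PrIv[u]-O(k^{-1}\log k)$, one extends local sections of $\mathcal{I}(k\PrIv[u])=\mathcal{I}(ku)$ via Ohsawa--Takegoshi. Combining with the $d_\mathcal S$-continuity of non-pluripolar mass \cite[Lemma~3.7]{DDNL5} then yields the desired identity.

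For (b), since $P[u]\leq 0$ and $[P[u]]=[u]$ one has $\mathcal{I}(cP[u])=\mathcal{I}(cu)$ for all $c>0$, so $P[u]$ is a competitor in the supremum \eqref{eq: PrI_def}, giving $P[u]\leq\PrIv[u]$ and hence $\int_X\omega_u^n=\int_X\omega_{P[u]}^n\leq\int_X\omega_{\PrIv[u]}^n$. For (c), assuming $\int_X\omega_u^n>0$: the equivalence of \Rom{1} with equality in \eqref{eq: Bon_ineq} is immediate from (a). For \Rom{1}$\Rightarrow$\Rom{2}, the mass equality $\int_X\omega_{P[u]}^n=\int_X\omega_{\PrIv[u]}^n$ paired with $P[u]\leq\PrIv[u]$ and positivity forces $[P[u]]=[\PrIv[u]]$ by Witt Nystr\"om-type mass rigidity; since both potentials are $\leq 0$ with a common singularity type, $P[u]=\PrIv[u]$ follows from the extremality of each envelope. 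The converse \Rom{2}$\Rightarrow$\Rom{1} is immediate from $[P[u]]=[u]$. For \Rom{2}$\Rightarrow$\Rom{3}, the algebraic $\psi_k$ from (a) $d_\mathcal S$-approximate $\PrIv[u]=P[u]$, a representative of $[u]$. The inclusion $\mathcal Z\subseteq\mathcal A$ gives \Rom{3}$\Rightarrow$\Rom{4}, and the density of $\mathcal Z$ in $\mathcal A$ via power-series truncation of local generators yields \Rom{4}$\Rightarrow$\Rom{3}. Finally \Rom{3}$\Rightarrow$\Rom{1} follows by approximating $[u]$ in $d_\mathcal S$ by algebraic $[u_j]$ (for which $P[u_j]=\PrIv[u_j]$, so Bonavero's identity holds) and passing to the limit through the $d_\mathcal S$-continuity of mass together with the arithmetic-volume asymptotic from (a).

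The principal obstacle is the $d_\mathcal S$-convergence $\psi_k\to\PrIv[u]$ in step (a): pointwise or $L^1$ convergence up to constants is accessible by classical methods, but $d_\mathcal S$ demands joint control of every mixed mass $\int_X\omega_{\psi_k}^j\wedge\omega_{\max(\psi_k,\PrIv[u])}^{n-j}$. Upgrading the Ohsawa--Takegoshi lower bound into uniform mixed-mass estimates, so that the metric structure of \cite[Lemma~3.7]{DDNL5} can close the loop, will be the central technical hurdle.
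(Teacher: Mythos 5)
Your reduction via $\mathcal{I}(ku)=\mathcal{I}(k\PrIv[u])$, your part (b), and the broad architecture of the equivalences in (c) do line up with the paper (Propositions~\ref{prop: analvol_arithvol}, \ref{prop: eq_Bonavero}, Theorem~\ref{thm:anaappvmodel}). But step (a) has a genuine gap, and it is the crux of the theorem. The problem is the sentence ``the algebraic Bonavero identity gives $\int_X\omega_{\psi_k}^n=(n!/k^n)\,h^0(X,L^k\otimes\mathcal{I}(ku))+o(1)$''. Bonavero's theorem, applied to the fixed algebraic potential $\psi_k$, computes $\lim_{m\to\infty}(n!/m^n)\,h^0(X,L^m\otimes\mathcal{I}(m\psi_k))$; it says nothing about the dimension $h^0(X,L^k\otimes\mathcal{I}(ku))$ at the same level $k$. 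Relating the number of sections used to build $\psi_k$ to the mass $\int_X\omega_{\psi_k}^n$ at that same level is essentially the statement being proved, and the quasi-equisingular comparisons $\mathcal{I}(m\delta_{k,m}\psi_k)\subseteq\mathcal{I}(mu)$ only give one-sided inclusions with correction factors depending on both indices, so the double limit does not commute for free. In the paper this Bergman/$d_\mathcal S$ route only ever produces the upper bound $\varlimsup_k k^{-n}h^0(X,L^k\otimes\mathcal{I}(ku))\leq\frac{1}{n!}\int_X\omega_{\PrIv[u]}^n$ (Theorem~\ref{thm:genBon}). The existence of the limit and the reverse inequality come from a different mechanism entirely: $\PrIv[u]$ is realized as a slice $\hat r_\tau$ of a bounded $\veq$-maximal test curve, the radial energy identity $\Irad[r_t]=\frac{1}{V}\int\big(\int_X\omega^n_{\hat r_\tau}-V\big)\,\mathrm{d}\tau$ is matched against its arithmetic counterpart from Theorem~\ref{thm: closure_of_test_conf}\Rom{4}, and the resulting a.e.\ equality of integrands is upgraded to equality at every $\tau$ by Lemma~\ref{lma:convintimppwconv}, using monotonicity in $\tau$ and Lemma~\ref{lma:contimass}. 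Without something playing this role, your (a) does not close, and since your (c) leans on (a) at several points (e.g.\ \Rom{3}$\Rightarrow$\Rom{1}), the gap propagates.

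A secondary soft spot: in \Rom{4}$\Rightarrow$\Rom{3} and \Rom{3}$\Rightarrow$\Rom{1} you pass to the limit along $d_\mathcal S$-convergent algebraic $[u_j]$, but neither the arithmetic volume nor $u\mapsto\PrIv[u]$ is known a priori to be $d_\mathcal S$-continuous, so ``passing to the limit'' is not automatic. The paper closes this with the geometric-series/maximum construction $v_j^l=\max(u_j,\dots,u_{j+l})$, the decreasing envelopes $w_j=\lim_l\PrIv[v_j^l]$, and Lemma~\ref{lma:decvmodel} combined with the mass-rigidity result \cite[Theorem~3.12]{DDNL2}; some argument of this kind is needed in your write-up as well.
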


It is part of showing \eqref{eq: Bon_ineq} that the limit on the left hand side exists. The equality part of \eqref{eq: Bon_ineq} can be interpreted as singular version of the Riemann--Roch theorem.
There are many known examples of potentials $u \in \PSH(X,\omega)$ for which the inequality \eqref{eq: Bon_ineq} is strict. One can even construct potentials $u$ that have zero Lelong numbers but don't have full mass, i.e., $u \not\in \mathcal E$ \cite{GZ07}. In particular, $\overline{\mathcal Z} \subsetneq \mathcal S$. What is more, by taking convex combinations of this $u$ with a  potential of $\overline{\mathcal Z}$ (and checking failure of condition (i) above), one can see that the containment $\overline{\mathcal Z} \subsetneq \mathcal S$ is nowhere $d_\mathcal S$-dense.

That the equivalences of Theorem~\ref{main_thm: arith_plurip_vol_deficit} are only proved in the presence of positive mass is perhaps not surprising, in light of \cite[Theorem~1.1, Section~4.3]{DDNL5}, where it was shown that $d_\mathcal S$ is complete \emph{only} in the presence of such condition. Still, it remains to be seen if this condition is essential in Theorem~\ref{main_thm: arith_plurip_vol_deficit}.


With different motivation, Rashkovskii studied the approximability of local isolated psh singularties using isolated analytic singularities in \cite{Ra13}. It is an interesting prospect to find the local analog of the $d_\mathcal S$ metric, and to relate our findings to the ones in \cite{Ra13}. 

As we will see, in all of our main theorems one can allow an additional twisting Hermitian line bundle $(T,h_T)$ as well (see Theorem~\ref{thm: closure_of_test_conf}, Theorem~\ref{thm: on_rays}, Theorem~\ref{thm: arith_plurip_vol_deficit} and Corollary~\ref{thm: Non-Archimedean}).

\paragraph{Organization.} In Section~\ref{sec:prelim} we recall  previous results and adapt them to our context. In Section~\ref{sec:R1app} we extend the Ross--Witt Nystr\"om correspondence to finite energy $L^1$ geodesic rays. In Section~\ref{sec:closure} we prove Theorem \ref{mainthm: on_rays} and Theorem \ref{main_thm: Non-Archimedean}. In Section~\ref{sec:cloalgsingtype} we prove Theorem \ref{main_thm: arith_plurip_vol_deficit}.

\paragraph{Acknowledgments.} The first named author has been partially supported by NSF grant DMS-1846942(CAREER) and an Alfred P. Sloan Fellowship. We profited from discussions with B. Berndtsson, M. Jonsson and L. Lempert. 
We would like to thank F. Zheng for providing the proof of Lemma~\ref{lma:convintimppwconv}. Finally, we thank the anonymous referees for a careful reading and numerous suggestions that improved the presentation.

\section{Preliminaries}\label{sec:prelim}



\subsection{The metric space of \texorpdfstring{$L^1$}{L1} geodesic rays and singularity types}\label{subsec:mecL1geod}

\paragraph{The \texorpdfstring{$L^1$}{L1} metric on \texorpdfstring{$\mathcal{H}_\omega$}{Hw} and its completion.}
We recall the basics of the $L^1$ metric structure of $\mathcal{H}_\omega$, introduced in \cite{Da15}. For a  survey we refer to \cite[Chapter~3]{Dar19}, and perhaps \cite[Section~4]{DR17} is a convenient quick summary. For historical context, we refer to \cite{Ru20}.

The $d_1$ metric on $\mathcal{H}_\omega$ is simply the path length pseudometric associated with the following $L^1$ Finsler metric:
\[
\|\psi\|_1 := \frac{1}{V}\int_X |\psi|\,\omega_u^n\,, \quad u \in \mathcal{H}_\omega \textnormal{ and } \psi \in T_u \mathcal{H}_\omega\,,
\]
where $V = \int_X \omega^n$ is the total volume. One then shows that $d_1$ is non-degenerate, making $(\mathcal{H}_\omega, d_1)$ a \emph{bona fide} metric space \cite[Theorem~1]{Da15}. 

When trying to find the $d_1$-completion of $\mathcal{H}_\omega$, one encounters the space $\mathcal{E}^1 \subseteq \PSH(X,\omega)$ that is defined in the following manner. One first defines the space of full mass potentials $\mathcal{E} \subseteq \PSH(X,\omega)$. Potentials in this space are characterized by the property $\int_X \omega_u^n = \int_X \omega^n$. Here $\omega^n_u$ is the following limit of measures
\begin{equation}\label{eq: non_pluripolar_def}
\omega_u^n := \lim_{k \to \infty} \mathds{1}_{\{u > -k\}}\, \omega_{\max(u,-k)}^n\,,
\end{equation}
where $\omega_{\max(u,-k)}^n$ can be made sense of using Bedford--Taylor theory, since $\max(u,-k)$ is bounded \cite{BT76}. For a general $\omega$-psh potential $u$ we have $\int_X \omega_u^n \in [0,\int_X \omega^n]$, with all values taken up. For more on this we refer to the original papers \cite{GZ07} and \cite{BEGZ10} (for a minimalist survey see \cite[Chapter~2]{Dar19}).  

Then, $\mathcal{E}^1 \subseteq \mathcal{E}$ is the class of full mass potentials satisfying $\int_X |u|\, \omega_u^n <\infty$. By \cite[Theorem~2]{Da15}, one can extend the metric $d_1$ to $\mathcal{E}^1$. In addition,  $(\mathcal{E}^1,d_1)$ is a complete geodesic metric space whose geodesics are decreasing limits of $C^{1,1}$-solutions to a degenerate complex Monge--Ampère equation (\cite{Ch00}, \cite{CTW18}, \cite{Da15}). Unfortunately, such limits are not the only $d_1$-geodesics connecting points of $\mathcal{E}^1$ (see the comments after \cite[Theorem~4.17]{Da15}). However, when talking about $d_1$-geodesics, we will only consider this distinguished class of length minimizing segments.

We recall that the definition of the Monge--Ampère energy $\mathrm{I}: \mathcal{E}^1 \to \mathbb{R}$ (sometimes denoted Aubin--Yau, or Aubin--Mabuchi energy):
\begin{equation}\label{eq: MA_en_def}
\mathrm{I}(u) = \frac{1}{V(n+1)} \sum_{j=0}^n\int_X u \,\omega^j \wedge \omega_u^{n-j}\,.
\end{equation}
Using the Monge--Ampère energy one can give the following potential theoretic description of $d_1$ \cite[Corollary~4.14]{Da15}:
\[
d_1(u,v) = \mathrm{I}(u) + \mathrm{I}(v) - 2 \mathrm{I}(P(u,v))\,, \quad u,v \in \mathcal{E}^1\,,
\]
where $P(u,v) \in \mathcal{E}^1$ is the following \emph{rooftop} envelope:
\[
P(u,v) = \sup \left\{\,h \in \PSH(X,\omega) : h \leq u \textup{ and } h \leq v\,\right\}\,.
\]
To understand $d_1$-convergence from a purely analytical point of view, the following double estimate is often very useful \cite[Theorem~3]{Da15}: 
\[
\frac{1}{C} d_1(u,v) \leq \int_X |u-v|\, \omega_u^n + \int_X |u-v| \,\omega_v^n \leq C d_1(u,v)\,,
\]
where $C$ is a constant only dependent on $n = \dim X$.

\paragraph{The complete metric space of $L^1$ rays.} Building on the previous paragraph, we recall the basics of the $L^1$ metric structure of $\mathcal{R}^1$, the space of $d_1$-geodesic rays in $\mathcal{E}^1$ emanating from $0 \in \mathcal{H}_\omega$, explored in detail in \cite{DL20}.

To fix notation, a $d_1$-geodesic ray $[0,\infty) \ni t \mapsto u_t \in \mathcal{E}^1$ with $u_0 =0$ will simply be denoted $\{u_t\}_t \in \mathcal{R}^1$. The chordal metric $d_1^c$ on $\mathcal{R}^1$ is introduced in the following manner:
\begin{equation}\label{eq: d_1^c_def}
d^c_1(\{u_t\}_t, \{v_t\}_t) = \lim_{t \to \infty} \frac{d_1(u_t,v_t)}{t}\,.
\end{equation}
Since $t\mapsto d_1(u_t,v_t)$ is convex \cite[Proposition~5.1]{BDL17}, the limit on the right hand side exists, and one can show that $d_1^c$ is non-degenerate, satisfies the triangle inequality, moreover $(\mathcal{R}^1,d_1^c)$ is a complete geodesic metric space \cite[Theorem~1.3, Theorem~1.4]{DL20}.

The subspace $\mathcal{R}^\infty \subseteq \mathcal{R}^1$ is the space of \emph{bounded} geodesic rays $\{u_t\}_t$, satisfying the property $u_t \in L^\infty \cap \mathcal{E}^1, \ t \geq 0$. Such rays allow for an important approximation property \cite[Theorem~1.4]{DL20} that will be used in this work, as well as its proof:  
\begin{theorem}For any $\{u_t\}_t \in \mathcal{R}^1$ there exists $\{u^j_t\}_t \in \mathcal{R}^\infty$ such that $u^j_t \searrow u_t$, for $t \geq 0$ and $d_1^c(\{u_t\}_t, \{u^j_t\}_t) \to 0$.
\end{theorem}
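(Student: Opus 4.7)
I'd approach this through the Legendre-dual correspondence between $d_1$-geodesic rays and maximal test curves. Normalize so that $\sup_X u_t = 0$. The ray $\{u_t\}_t$ determines a test curve $\hat u_\tau := \inf_{t > 0}(u_t - t\tau)$, concave and decreasing in $\tau$, with values in $\PSH(X,\omega) \cup \{-\infty\}$; one recovers $u_t = \usc\bigl(\sup_{\tau \leq 0}(\hat u_\tau + t\tau)\bigr)$.

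For each $j \in \mathbb{N}$ I would truncate, setting $\hat u^j_\tau := \max(\hat u_\tau, -j)$ for $\tau \leq 0$, replace this by its maximal regularization within the class of test curves bounded between $-j$ and $0$, and define $u^j_t$ as the Legendre transform of the regularized curve. One checks that $-j \leq u^j_t \leq 0$ uniformly in $t$, that $u^j_0 = 0$ (because $\sup_\tau \hat u^j_\tau = 0$), and that the monotone convergence $\hat u^j_\tau \searrow \hat u_\tau$ transfers to $u^j_t \searrow u_t$ pointwise for every $t \geq 0$. Maximality of the regularized test curve ensures, via the bounded Ross--Witt Nystr\"om correspondence \cite{RWN14}, that $\{u^j_t\}_t$ is a genuine $d_1$-geodesic ray in $\mathcal{R}^\infty$. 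This verification --- specifically showing that the maximal regularization of the truncation can be chosen without altering the pointwise limit and while preserving the normalization $u^j_0 = 0$ --- is the main technical obstacle in the argument.

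The $d_1^c$-convergence then follows from a short energy identity. Since $u_t \leq u^j_t$, the rooftop envelope satisfies $P(u_t, u^j_t) = u_t$, so the chordal $d_1$-formula combined with the linearity of $\mathrm{I}$ along $d_1$-geodesics emanating from $0$ yields
\begin{equation*}
d_1^c\bigl(\{u^j_t\}_t, \{u_t\}_t\bigr) = \Irad[u^j] - \Irad[u] = \mathrm{I}(u^j_1) - \mathrm{I}(u_1).
\end{equation*}
Since $u^j_1 \searrow u_1$ in $\mathcal{E}^1$, continuity of $\mathrm{I}$ along decreasing sequences in $\mathcal{E}^1$ \cite{BEGZ10} gives $\mathrm{I}(u^j_1) \to \mathrm{I}(u_1)$, completing the proof.
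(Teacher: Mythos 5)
The weak point is the construction of the approximating rays: truncating the test curve at the constant level $-j$ cannot work. Note first that $\mathcal{R}^\infty$ consists of rays whose potentials $u^j_t$ are each bounded on $X$; such rays necessarily grow linearly in $t$ (one has $\tau^-_{\hat u^j}\, t \le u^j_t \le \tau^+_{\hat u^j}\, t$ by Theorem~\ref{thm: max_test_curve_ray_duality}(iii)) and are \emph{not} uniformly bounded in $t$. In fact the only geodesic ray satisfying $-j \le u^j_t \le 0$ for all $t\ge 0$ is the trivial one: its Legendre transform $\hat u^j_\tau$ is then a bounded potential for every $\tau<0$, and maximality forces $\hat u^j_\tau = P[\hat u^j_\tau]=P[0]=0$, hence $u^j_t\equiv 0$. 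Your construction runs into exactly this wall. The value-truncation $\tau\mapsto \max(\hat u_\tau,-j)$ is not $\tau$-concave (a decreasing concave function capped below by a constant acquires a convex kink), its inverse Legendre transform is just $\max(u_t,-j)$, and any ``maximal regularization'' yielding a genuine maximal test curve dominating this data collapses to $0$: indeed $P[\max(\hat u_\tau,-j)]=P[-j]=0$ for every $\tau\le 0$, since $\max(\hat u_\tau,-j)$ has bounded singularity type. So the rays you produce are identically zero and do not decrease to $u_t$ unless $u_t\equiv 0$.

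The correct truncation is linear in $t$ rather than constant: one takes $\{u^j_t\}_t$ to be the smallest geodesic ray dominating the subgeodesic $t\mapsto\max(u_t,-jt)$ (equivalently, in the dual picture one cuts the test curve in the $\tau$-variable, replacing $\hat u_\tau$ by $0$ for $\tau<-j$ and keeping it on $[-j,\tau^+_{\hat u}]$, then passes to the associated maximal test curve). This is the route of \cite[Theorem~4.5]{DL20}, which the present paper only cites; the genuine work --- which your sketch explicitly defers as ``the main technical obstacle'' --- is precisely to show that this decreasing sequence of bounded rays converges to $u_t$ and not to something strictly larger. Your second step is fine as far as it goes: once one has geodesic rays with $u^j_t\ge u_t$ and $u^j_1\searrow u_1$, the identity $d_1^c(\{u^j_t\},\{u_t\})=\mathrm{I}(u^j_1)-\mathrm{I}(u_1)$ via $P(u_t,u^j_t)=u_t$ and the monotone continuity of $\mathrm{I}$ on $\mathcal{E}^1$ do give the $d_1^c$-convergence.
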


\paragraph{The pseudo-metric space of singularity types.}
We recall the basics of the pseudo-metric structure on $\mathcal{S}$, the space of singularity types, first explored in \cite{DDNL5}. First one needs to construct a map from $\mathcal{S}$ to $\mathcal{R}^\infty \subseteq \mathcal{R}^1$, using ideas going back to \cite{Da17}. Starting with $[u] \in \mathcal{S}$, one constructs $d_1$-geodesic segments $[0,l] \ni t \mapsto s(u)^l_t \in \mathcal{E}^1 \cap L^\infty$ connecting $s(u)^l_0 = 0$ and $s(u)^l_l = \max(u,-l)$. Moreover, using the maximum principle one can show that $\{s(u)^l_t\}_{l \geq t}$ is an $l$-increasing sequence converging to $r[u]_t \in \mathcal{E}^1 \cap L^\infty$, yielding a geodesic ray $\{r[u]_t\}_t \in \mathcal{R}^\infty$ \cite[Lemma~4.2]{Da17}. 

Using the map $[u] \mapsto \{r[u]_t\}_t$ we define the following pseudometric \cite[Section~3]{DDNL5}:
\[
d_\mathcal{S}([u],[v]) = d_1^c(\{r[u]_t\},\{r[v]_t\})\,, \quad [u],[v] \in \mathcal{S}\,.
\]
Due to non-degeneracy of $d_1^c$, one immediately sees that $d_\mathcal{S}([u],[v]) =0$ if and only if $r[u]_t = r[v]_t$ for $t \geq 0$. As shown in \cite[Theorem~3.3]{DDNL5}, in the presence of non-vanishing mass ($\int_X \omega_u^n>0$ and $\int_X \omega_v^n>0$), this condition is equivalent to $P[u] = P[v]$, where $P[\chi]$ is the envelope of the singularity type $[\chi]$, first considered in \cite{RWN14} in the Kähler context:
\[
P[\chi] = \lim_{C \to \infty} P(0,\chi + C) = \usc\left(\sup\{\psi \in \PSH(X,\omega), \psi \leq 0 \textup{ and } [\psi]=[\chi]\} \right)\,.
\]
As pointed out in \cite{DDNL5}, if $P[u]=P[v]$ holds, then the singularity types $[u]$ and $[v]$ are indistinguishable using Lelong numbers, multiplier ideal sheaves and mixed masses. 

Unfortunately the pseudomertic space $(\mathcal{S},d_\mathcal{S})$ is incomplete \cite[Section~4.2]{DDNL5}. However when restricting to the subspaces  $\mathcal{S}_\delta := \left\{\,[u] \in \mathcal{S} : \int_X \omega_u \geq \delta>0\,\right\}$, one obtains complete metric spaces $(\mathcal{S}_\delta,d_\mathcal{S})$ \cite[Theorem~4.9]{DDNL5} that are able to govern the variation of singularity types in equations of complex Monge--Ampère type \cite[Theorem~1.4]{DDNL5}.

Lastly, we mention the following double inequality that often comes handy when discussing $d_\mathcal{S}$-convergence in practice \cite[Lemma~3.4 and Proposition~3.5]{DDNL5}:
\[
d_\mathcal{S}([u],[v]) \leq  \sum_{j=0}^n \Big(2 \int_X \omega^j \wedge \omega^{n-j}_{\max(u,v)} - \int_X \omega^j \wedge \omega^{n-j}_{u} - \int_X \omega^j \wedge \omega^{n-j}_{v} \Big)\leq C d_\mathcal{S}([u],[v])\,,
\]
where $C>1$ only depends on $n$. That the expression in the middle is non-negative and only dependent on $[u]$ and $[v]$ is a consequence of \cite[Theorem~1.1]{WN19}.

\subsection{Exponents and filtrations of a family of Hermitian metrics}\label{subsec:expfilt}

In this section we relate the log-slope of the volume of a one dimensional family of Hermitian metrics with the associated filtration. In many ways we simply tailor the arguments of \cite{Bern17} to our needs, and for more thorough treatment of related results we refer to \cite[Part~1]{BE21}.

Let $V$ be a finite dimensional complex vector space of dimension $N$.
By  $\Herm(V)$ we denote the set of positive Hermitian inner products on $V$. Throughout this section, $H_s\in \Herm(V)$ ($s\geq 0$) will denote a continuous family of Hermitian inner products, simply referred to as $s \mapsto H_s$.

We denote by $V^*$ the dual vector space of $V$. Recall that given any $H\in \Herm(V)$, it naturally induces a dual inner product $H^*\in \Herm(V^*)$. 

\begin{definition}\label{def:pmfamily}
Let $I\subseteq \mathbb{R}$ be an interval.
We say that a family $H_s\in \Herm(V)$ ($s\in I$) is \emph{negative} if its trivial complexification $z \mapsto H_{\RE z}$ is a Griffiths negative vector bundle on $V$ with base $\{\RE z\in I\}$. This is equivalent to $s \mapsto \log H_s(v,v)$ being convex on $I$ for any $v \in V \setminus \{0\}$ (\cite[Section~VII.6]{De12}). Analogously, $s \mapsto H_s$ is \emph{positive} if its  dual bundle $(H^*_s)_{s\in I}$ is negative. 
\end{definition}

Let $I = [0,\infty)$. We do not assume that  $s \mapsto H_s$ is positive or negative for the moment.
The \emph{exponent} $\lambda_H:V \to [-\infty,\infty]$ of $s \mapsto H_s$ is defined by
\begin{equation}\label{eq:Lyapdef}
    \lambda_H(v):=\varlimsup_{s\to\infty} \frac{1}{s}\log H_s(v,v)\,, \quad v \in V\,.
\end{equation}
Note that $\lambda_H(0)=-\infty$. Moreover, one sees that $\lambda_H(c v) =\lambda_H( v)$ for any $c \in \mathbb{C}^*$, and $\lambda_H(u+v) \leq \max(\lambda_H(u),\lambda_H(v))$. Thus for any $c\in [-\infty,\infty]$, the set $\{\lambda_H\leq c\}\subseteq V$ is a sub- vector space. Hence $\lambda_H$ takes up only a finite number of values. If $\infty$ is not one of them, then $\lambda_H$ is the exponent of the non-Archimedean pseudometric $e^{\lambda_H}$, motivating our terminology.

The above properties of the exponent $\lambda_H$ also allow to introduce the associated \emph{filtration} of $s \mapsto H_s$:
\begin{equation}\label{eq: Filt_def}
    \mathcal{F}_{\lambda}^H:=\{v\in V: \lambda_H(v)\leq \lambda\}\,.
\end{equation}
Notice that $\mathcal{F}_{\lambda}^H$ defines an increasing right-continuous filtration of $V$ by linear subspaces. This filtration is bounded from above (in the sense that $\mathcal{F}^H_{\lambda}=V$ for some $\lambda\in \mathbb{R}$) if and only if $\lambda_H<\infty$. We call a number $\lambda\in \mathbb{R}$ a \emph{jumping number} of the filtration $\mathcal{F}^{H}$ if $\mathcal{F}^{H}_{\lambda}\neq \mathcal{F}^{H}_{\lambda-\epsilon}$ for any $\epsilon>0$.

Given $U_0,U_1 \in \Herm(V)$, one can diagonalize $U_1$ with respect to $U_0$ to find eigenvalues $e^{\lambda_1}, \ldots, e^{\lambda_N}$ counting multiplicity. Then one can introduce the following metric:
\begin{equation}\label{eq: d_1^V_def}
d_1^V(U_0,U_1) := \frac{1}{\dim V} \sum_{j=1}^N |\lambda_j|\,.
\end{equation}
This metric, along with its $L^p$-counterparts, was studied extensively in \cite{DLR20}, where it was shown that $d_1^V$ quantizes $d_1$ in the appropriate context. 

In particular, (in the appropriate diagonalizing basis) the curve $[0,1] \ni t \mapsto U_t: = \diag(e^{t \lambda_1}, \ldots, e^{t \lambda_N}) \in \Herm(V)$ provides a $d_1^V$-geodesic joining $U_0$ and $U_1$ (\cite[Theorem~1.1]{DLR20}, \cite[Theorem~3.7]{BE21}). There are other $d_1^V$-geodesics joining $U_0,U_1$, but we will only consider the above type of length minimizing curves.

We emphasize the following formula, pointing out that the dualization map $U \mapsto U^*$ between $\Herm(V)$ and $\Herm(V^*)$ is an isometry:
\begin{equation}\label{eq: dual_isom}
d_1^V(U_0,U_1) = d_1^{V^*}(U_0^*,U_1^*)\,, \quad U_0,U_1 \in \Herm(V)\,.
\end{equation}
This can be verified by picking an appropriate diagonalizing basis of $V$. 

In studying the growth of the volume of the unit ball with respect to $H_s$ as $s \to \infty$, we start with the following lemma that one can justify simply by diagonalizing:
\begin{lemma}\label{lma:intfiltr}
Suppose $s \mapsto H_s$ is a $d_1^V$-geodesic ray and let $\lambda_1\leq \ldots\leq \lambda_m$ be the jumping numbers of $\mathcal{F}_{\lambda}^H$. Then
\begin{equation}\label{eq:limforgeod}
    \lim_{s\to\infty}\frac{1}{s}\log \left(\frac{\det H_s}{\det H_0}\right)=  \sum_{j=0}^{m-1}\lambda_{j+1}(\dim \mathcal{F}_{\lambda_{j+1}}^H-\dim \mathcal{F}_{\lambda_{j}}^H)=\int_{-\infty}^{\infty} \lambda \,\mathrm{d}(\dim\mathcal{F}_{\lambda}A)\,,
\end{equation}
where the integral on the right is interpreted in the Stieltjes sense. Moreover, $\dim\mathcal{F}_{\lambda_0}^H=\dim \bigcap_{\lambda\in \mathbb{R}}\mathcal{F}_{\lambda}^H=0$ in the middle sum, by convention.
\end{lemma}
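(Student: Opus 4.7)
My plan is to exploit the structural description of $d_1^V$-geodesic rays recalled right before the lemma: in an appropriate basis $e_1,\dots,e_N$ of $V$ the family takes the form $H_s=\mathrm{diag}(e^{s\mu_1},\dots,e^{s\mu_N})$ with $\mu_1,\dots,\mu_N\in\mathbb{R}$ (the exponents of the geodesic, counted with multiplicity). This is precisely the description of length-minimizing segments reviewed after \eqref{eq: d_1^V_def}, extended to rays by concatenation, and it is what makes the computation essentially algebraic.

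Once this normal form is in hand, the left side of \eqref{eq:limforgeod} is immediate: since $\det H_s=e^{s(\mu_1+\dots+\mu_N)}\det H_0$, the quantity $\tfrac{1}{s}\log(\det H_s/\det H_0)$ is in fact constant and equal to $\sum_i \mu_i$, so the $\varlimsup$ in the definition \eqref{eq:Lyapdef} is actually a genuine limit.

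Next I identify the exponent $\lambda_H$ and the associated filtration $\mathcal{F}^H$ in terms of the $\mu_i$. For $v=\sum_i c_i e_i$ one has
\[
H_s(v,v)=\sum_i |c_i|^2\, e^{s\mu_i},
\]
so $\lambda_H(v)=\max\{\mu_i:c_i\neq 0\}$ with the convention $\lambda_H(0)=-\infty$. Therefore $\mathcal{F}^H_\lambda=\mathrm{span}\{e_i:\mu_i\leq\lambda\}$, and $\dim\mathcal{F}^H_\lambda=\#\{i:\mu_i\leq\lambda\}$ is the right-continuous counting function of the multiset $\{\mu_1,\dots,\mu_N\}$. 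In particular, the jumping numbers $\lambda_1<\dots<\lambda_m$ of $\mathcal{F}^H$ are exactly the distinct values among the $\mu_i$, and the jump $\dim\mathcal{F}^H_{\lambda_{j+1}}-\dim\mathcal{F}^H_{\lambda_j}$ equals the multiplicity of $\lambda_{j+1}$ in the list $(\mu_i)$ (using the convention stated in the lemma that $\dim\mathcal{F}^H_{\lambda_0}=0$).

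Regrouping equal terms then yields
\[
\sum_{i=1}^{N}\mu_i=\sum_{j=0}^{m-1}\lambda_{j+1}\bigl(\dim\mathcal{F}^H_{\lambda_{j+1}}-\dim\mathcal{F}^H_{\lambda_j}\bigr),
\]
and the Stieltjes-integral rewriting on the right of \eqref{eq:limforgeod} is just the definition of the Stieltjes integral against the step function $\lambda\mapsto\dim\mathcal{F}^H_\lambda$. The main point to be careful about is the correct multiplicity bookkeeping in passing from the $\mu_i$ to the jumping numbers, and verifying that the geodesic normal form $H_s=\mathrm{diag}(e^{s\mu_i})$ can be used throughout the ray and not only on a single segment; this follows from uniqueness of the simultaneous diagonalizing basis under the class of length-minimizing geodesics considered in the paper. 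Beyond that the argument is a short computation and no further obstacle arises.
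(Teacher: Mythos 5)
Your proof is correct and follows exactly the route the paper intends: the paper gives no detailed argument, stating only that the lemma "one can justify simply by diagonalizing," which is precisely the computation you carry out (including the appropriate care about the diagonalizing basis being consistent along the whole ray and the multiplicity bookkeeping between the exponents $\mu_i$ and the jumping numbers).
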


By $\det H$ we mean the determinant of a matrix representative of the sesquilinear form $H \in \Herm(V)$ with respect to a fixed basis, making ${\det H_s}/{\det H_0}$ in  \eqref{eq:limforgeod} well-defined. Note that our convention is different from that in \cite{BE21} by a square.

Using Hadamard's inequality, for $s \mapsto H_s$ only satisfying $\lambda_H < \infty$, one can show that in general the left hand side is dominated by the right hand side in \eqref{eq:limforgeod}. 

As we will see, equality holds in \eqref{eq:limforgeod} when $s \mapsto H_s$ is only positive, satisfying a mild decay condition. Before we prove this, we will construct a geodesic ray $s \mapsto \tilde H_s$ asymptotic to any $s \mapsto H_s$, closely following \cite[Proposition~2.2]{Bern17}.

\begin{lemma}\label{lma:geoderay}
Assume that $[0,\infty) \ni s \mapsto H_s$ is positive and $\lambda_{H^*}<\infty$. Then there exists a $d_1^V$-geodesic ray $s \mapsto \tilde H_s$  such that\\
\Rom{1} $H_0 = \tilde H_0$.\\
\Rom{2} $H_s \geq \tilde H_s$.\\
\Rom{3} $\lambda_H=\lambda_{\tilde H}$. In particular, $\lambda_H < \infty$.\\
\Rom{4} $\displaystyle \lim_{s\to \infty}\frac{1}{s}\log \bigg( \frac{\det H_s}{\det H_0} \bigg)=\lim_{s\to \infty}\frac{1}{s}\log \bigg( \frac{\det \tilde H_s}{\det\tilde H_0} \bigg)$.
\end{lemma}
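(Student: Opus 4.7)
My plan is to realize $\tilde H_s$ as a $d_1^V$-geodesic ray whose dual $\tilde H_s^*$ is diagonal in an $H_0^*$-orthonormal basis of $V^*$ adapted to the filtration induced by $\lambda_{H^*}$, following Berndtsson \cite[Proposition~2.2]{Bern17}. Properties (i), (iii), (iv) then follow cleanly from this construction, while (ii) is the technical heart.

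\textit{Construction.} Positivity of $s\mapsto H_s$ translates to the dual family $s\mapsto H_s^*\in\Herm(V^*)$ being negative, so $s\mapsto\log H_s^*(v^*,v^*)$ is convex on $[0,\infty)$ for every $v^*\in V^*\setminus\{0\}$. Since $\lambda_{H^*}<\infty$, the filtration $\{\mathcal{F}^{H^*}_\lambda\}$ of $V^*$ in \eqref{eq: Filt_def} has finitely many jumping numbers $\mu_1<\cdots<\mu_m$ with $\mathcal{F}^{H^*}_{\mu_m}=V^*$. Applying Gram--Schmidt to $H_0^*$ level by level produces an $H_0^*$-orthonormal basis $\{f_1^*,\ldots,f_N^*\}$ adapted to the filtration, so that $\lambda_{H^*}(f_j^*)=\mu_{i(j)}$ where $i(j)$ is the level index of $f_j^*$. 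I would then define $\tilde H_s^*$ as the diagonal family in this basis with eigenvalues $e^{s\mu_{i(j)}}$, which is a $d_1^{V^*}$-geodesic ray starting at $H_0^*$. Dualizing via the isometry \eqref{eq: dual_isom} gives the required $d_1^V$-geodesic ray $s\mapsto\tilde H_s$ with $\tilde H_0=H_0$, yielding (i).

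\textit{The main obstacle (ii).} Equivalently, one must show $\tilde H_s^*\ge H_s^*$ as Hermitian forms on $V^*$. The direct convexity bound $H_s^*(v^*,v^*)\le H_0^*(v^*,v^*)e^{s\lambda_{H^*}(v^*)}$ is too coarse: for $v^*=\sum c_j f_j^*$ it weights the entire norm $\sum|c_j|^2$ by the top exponent $\max\{\mu_{i(j)}:c_j\neq 0\}$, whereas $\tilde H_s^*(v^*,v^*)=\sum|c_j|^2 e^{s\mu_{i(j)}}$ assigns each level its own weight. I would prove $\tilde H_s^*\ge H_s^*$ by induction on the number $m$ of filtration levels. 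The base case $m=1$ reduces directly to the pointwise convexity bound, since $\tilde H_s^*=e^{s\mu_1}H_0^*$. For the inductive step, I would apply convexity of $s\mapsto\log H_s^*(w^*+\alpha u^*,w^*+\alpha u^*)$ for $w^*$ in the top filtration-level complement and $u^*\in\mathcal{F}^{H^*}_{\mu_{m-1}}$, then optimize over $\alpha\in\mathbb{C}$ to produce a Cauchy--Schwarz-type control on the cross term $H_s^*(w^*,u^*)$; this peels off the top level, and the inductive hypothesis handles the restriction $s\mapsto H_s^*|_{\mathcal{F}^{H^*}_{\mu_{m-1}}}$. The positivity hypothesis on $s\mapsto H_s$ enters decisively here, as it is precisely what makes the scalar log-convexity available.

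\textit{Properties (iii) and (iv).} For (iii), (ii) immediately gives $\lambda_{\tilde H}\le\lambda_H$ and in particular $\lambda_H<\infty$; conversely, the $H_0$-orthonormal dual basis $\{f_j\}\subset V$ of $\{f_j^*\}$ diagonalizes $\tilde H_s$ with $\lambda_{\tilde H}(f_j)=-\mu_{i(j)}$, and the adaptation of $\{f_j^*\}$ to the dual filtration forces the filtration $\mathcal{F}^H$ to match that of $\tilde H$ vector by vector, giving the reverse inequality. For (iv), Lemma~\ref{lma:intfiltr} applied to the geodesic ray $\tilde H$ computes $\lim_{s\to\infty}\frac{1}{s}\log(\det\tilde H_s/\det\tilde H_0)=-\sum_j\mu_{i(j)}$. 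The inequality $\det H_s\ge\det\tilde H_s$ from (ii) gives one direction for the corresponding limit of $\frac{1}{s}\log(\det H_s/\det H_0)$; the matching direction follows from Hadamard's inequality applied to $H_s^*$ in the adapted basis, combined with the scalar convexity bound $H_s^*(f_j^*,f_j^*)\le e^{s\mu_{i(j)}}$, after passing from $\det H_s^*$ back to $\det H_s$ via the dual pairing. Together these pin down the common limit.
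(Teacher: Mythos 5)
Your candidate ray is, in the end, the same object as the one the paper constructs, but the route you take to property \Rom{2} has a genuine gap, and \Rom{2} is where all the content lives. The paper builds $\tilde H_s$ as the decreasing-in-$t$ limit of the curves $H^t$ obtained by replacing $H$ on $[0,t]$ by the geodesic from $H_0$ to $H_t$; property \Rom{2} is then automatic, because each $H^t_s$ lies below $H_s$ by the comparison principle for positive families versus geodesics \cite[Lemma~8.11]{BK12}, and that principle uses the full positivity of $s\mapsto H_s$, not merely the convexity of $s\mapsto\log H^*_s(v^*,v^*)$ for each fixed $v^*$. You instead fix the candidate ray in advance by diagonalizing the filtration of $H^*$ against $H_0^*$, and must then prove the matrix inequality $H_s^*\le\tilde H_s^*$ directly; the only input you invoke is the scalar bound $H^*_s(v^*,v^*)\le e^{s\lambda_{H^*}(v^*)}H_0^*(v^*,v^*)$. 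That bound controls the diagonal entries, $H^*_s(f_j^*,f_j^*)\le e^{s\mu_{i(j)}}$, but gives nothing adequate on the off-diagonal ones: Cauchy--Schwarz only bounds $|H_s^*(w^*,u^*)|$ for $w^*$ in the top level and $u^*\in\mathcal{F}^{H^*}_{\mu_{m-1}}$ by a quantity of order $e^{s(\mu_m+\mu_{m-1})/2}$, which asymptotically dominates the lower diagonal blocks of $\tilde H_s^*$, and absorbing the cross term costs multiplicative factors $(1+\epsilon)$, $(1+\epsilon^{-1})$ that destroy the exact inequality. The ``optimize over $\alpha$'' step is not an argument as stated, and I do not see how the inequality can be extracted from scalar log-convexity alone; somewhere the comparison principle, or an equivalent use of genuine positivity beyond the constant-section criterion, must enter. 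This is the missing idea.

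Two secondary problems. For \Rom{4}, your ``matching direction'' does not close: Hadamard applied to $H_s^*$ in the adapted basis together with $H^*_s(f_j^*,f_j^*)\le e^{s\mu_{i(j)}}$ yields $\det H_s^*\le\det\tilde H_s^*$, hence $\det H_s\ge\det\tilde H_s$ after dualizing --- the same inequality you already have from \Rom{2}, not its reverse; the paper obtains the reverse from the concavity of $s\mapsto\log\det H_s$ (again a consequence of positivity) together with the fact that $\log\det\tilde H_s$ is by construction the limiting chord of that concave function. For \Rom{3}, the one-sided bound \Rom{2} only gives $\lambda_{\tilde H}\le\lambda_H$ and in particular does not yield $\lambda_H<\infty$; the paper needs the two-sided estimate $e^{-\epsilon s}\tilde H_s\le H_s\le e^{\epsilon s}\tilde H_s$ for $s$ large, deduced from the vanishing of $s^{-1}d_1^V(\tilde H_s,H_s)$ via Lemma~\ref{lma:comparenorms}, and your appeal to the adapted basis does not obviously substitute for it, since the relation between the primal filtration $\mathcal{F}^H$ and the dual filtration $\mathcal{F}^{H^*}$ is not automatic for a non-geodesic family.
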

Recall the following comparison principle that will be used multiple times in the argument below: if $[a,b] \ni s \mapsto U_s,W_s \in \Herm(V)$ are such that $s \mapsto U_s$ is positive and $s \mapsto W_s$ is a geodesic with $W_a \leq U_a$ and $W_b \leq U_b$ then  $W_s \leq U_s$ for $s \in [a,b]$ \cite[Lemma~8.11]{BK12}.
Note that $s \mapsto \log \det W_s$ is linear  and also $ \log \det W_s \leq \log \det U_s $. Varying the endpoints $a,b$ we obtain that $s \mapsto \log \det H_s$ is concave, whenever $s \mapsto H_s$ is positive. As a result, the limit on the left of  \Rom{4}  exists.

\begin{proof} First we interpret the condition $\lambda_{H^*} < \infty$. Since $s \mapsto H_s^*$ is negative,  $s \mapsto \log H^*_s(v,v)$ is convex for any $v \in V^*\setminus \{0\}$, hence   $H_s^*(v,v) \leq e^{s \lambda_{H^*}(v)} H_0^*(v,v)$, for $s \geq 0$. Dualizing, we arrive at
\begin{equation}\label{eq: H_s_lower bound}
H_s \geq e^{-s \lambda_{H^*}} H_0\,, \quad s \geq 0\,.
\end{equation} 
Now we construct $s \mapsto \tilde H_s$. For each $t\geq 0$, we define $[0,\infty) \ni s \mapsto H^t_s\in \Herm(A)$ as follows: for $[0,t] \ni s \mapsto H^t_s$ is the geodesic connecting $H_0$ and $H_t$ and $H^t_s = H_s$ for $s>t$.

By the comparison principle, we get that $H_s^t$ is $t$-decreasing for any $s\geq 0$ (in fact $s \mapsto H_s^t$ is positive for any $t$, but this will not be needed). Due to \eqref{eq: H_s_lower bound} we can take the decreasing $t$-limit to obtain 
\[
\tilde H_s(v,v) := \lim_{t\to\infty}H_s^t(v,v)\,, \quad v \in V\,.
\]
It is immediate that $s \mapsto H_s$ is a $d_1^V$-geodesic ray satisfying  \Rom{1}  and  \Rom{2} .

Recall that $s \mapsto \log \det H_s$ is concave (due to positivity) and of course $s \mapsto \log \det \tilde H_s$ is linear (since $s \mapsto \tilde H_s$ is a geodesic). Using this, due to the construction of $s \mapsto \tilde H_s$, one immediately sees that $\displaystyle\lim_{s \to \infty} s^{-1}(\log \det H_s - \log \det \tilde H_s) =0$, proving  \Rom{4}. 

Since $H_s \geq \tilde H_s$, comparing with \eqref{eq: d_1^V_def} we arrive at
\[
\lim_{s \to \infty} \frac{d_1^V(\tilde H_s, H_s)}{s} = \lim_{s \to \infty} \frac{\log \det H_s - \log \det \tilde H_s}{s}  =0\,.
\]
Because of this, by Lemma~\ref{lma:comparenorms} below, for any $\epsilon >0$ there exists $s_0$ such that 
$e^{-\epsilon s}\tilde H_s \leq H_s \leq e^{\epsilon s}\tilde H_s$, for $s \geq s_0$. This is immediately seen to imply  \Rom{3} .  
\end{proof}
\begin{lemma}\label{lma:comparenorms}
Let $U_1,U_2 \in \Herm(V)$. Assume that $d_1^V(U_1,U_2)\leq \epsilon$ for some $\epsilon>0$. Then
\[
e^{-\epsilon \dim V}U_2 \leq U_1 \leq e^{\epsilon \dim V}U_2\,.
\]
\end{lemma}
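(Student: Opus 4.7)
The plan is to prove this by simultaneous diagonalization and then compare the resulting diagonal Hermitian forms entry by entry.

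First, I would invoke the standard simultaneous diagonalization procedure that underlies the very definition of $d_1^V$ in \eqref{eq: d_1^V_def}: pick a basis of $V$ that is $U_1$-orthonormal and $U_2$-orthogonal, so that in this basis $U_1 = \mathrm{Id}$ and $U_2 = \diag(e^{\lambda_1},\dots,e^{\lambda_N})$ where $N = \dim V$ and $\lambda_1,\dots,\lambda_N$ are precisely the numbers appearing in \eqref{eq: d_1^V_def}. Then the hypothesis $d_1^V(U_1,U_2) \leq \epsilon$ reads $\sum_{j=1}^N |\lambda_j| \leq \epsilon N$, which in particular forces the crude bound $|\lambda_j| \leq \epsilon N$ for every single index $j$ (since all the summands are nonnegative).

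Next, I would translate these scalar inequalities into Hermitian-form inequalities. For an arbitrary $v = \sum_j v_j e_j \in V$,
\[
U_2(v,v) = \sum_{j=1}^N e^{\lambda_j} |v_j|^2 \leq e^{\epsilon N} \sum_{j=1}^N |v_j|^2 = e^{\epsilon N} U_1(v,v),
\]
and symmetrically $U_2(v,v) \geq e^{-\epsilon N} U_1(v,v)$. Thus $e^{-\epsilon N} U_1 \leq U_2 \leq e^{\epsilon N} U_1$, which after rearranging the scalar factors is exactly the desired double inequality $e^{-\epsilon \dim V} U_2 \leq U_1 \leq e^{\epsilon \dim V} U_2$.

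There is no real obstacle here; the only subtlety worth flagging is the slightly lossy step where we bound the individual $|\lambda_j|$ by the full sum $\sum_j |\lambda_j|$, which is what produces the factor $\dim V$ in the exponent. Since this crude estimate is already sufficient for the intended application in Lemma~\ref{lma:geoderay} (where $\epsilon > 0$ is arbitrary and $\dim V$ is fixed), no sharper bound is needed.
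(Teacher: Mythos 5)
Your proof is correct and is essentially identical to the paper's: both diagonalize $U_2$ against a $U_1$-orthonormal basis, extract $|\lambda_j|\leq \epsilon\dim V$ from the definition of $d_1^V$, and convert this to the stated operator inequality. No further comment is needed.
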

\begin{proof}
We fix a basis $(e_1,\ldots,e_{\dim V})$ that is orthonormal with respect to $U_1$ and orthogonal with respect to $U_2$, with eigenvalues $e^{\lambda_1},\ldots, e^{\lambda_{{\dim V}}}$. 
Then by definition, 
$d_1(U_1,U_2)= \frac{1}{\dim V}\sum_{j=1}^{{\dim V}} |\lambda_j|$.
Hence, $|\lambda_j|\leq \epsilon \dim V$, so $e^{-\epsilon \dim V}U_2\leq U_1 \leq e^{\epsilon \dim V}U_2$.
\end{proof}

\begin{theorem}\label{thm: detequality}
Assume that  $[0,\infty) \ni s \mapsto H_s$ is positive  with $\lambda_{H^*}<\infty$, and $\lambda_1\leq\ldots\leq\lambda_m$ are the jumping numbers of the filtration $\mathcal{F}_\lambda^H$. Then
\begin{equation}\label{eq:limsupleqintfil1}
    \lim_{s\to\infty}\frac{1}{s}\log \left(\frac{\det H_s}{\det H_0}\right)=\sum_{j=0}^{m-1}\lambda_{j+1}(\dim \mathcal{F}_{\lambda_{j+1}}^H-\dim \mathcal{F}_{\lambda_{j}}^H)= \int_{-\infty}^{\infty} \lambda \,\mathrm{d}\left(\dim\mathcal{F}_{\lambda}^H\right)\,,
\end{equation}
where $\dim \mathcal F^H_{\lambda_0} =0$ by convention.
\end{theorem}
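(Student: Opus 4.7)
The plan is to reduce the statement to the already-proved geodesic case (Lemma \ref{lma:intfiltr}) by means of the comparison geodesic constructed in Lemma \ref{lma:geoderay}. Concretely, since the hypotheses of Lemma \ref{lma:geoderay} are exactly those we assume, I would begin by invoking that lemma to produce an auxiliary $d_1^V$-geodesic ray $s\mapsto \tilde H_s$ attached to $s\mapsto H_s$, and then transfer the computation from $H$ to $\tilde H$.

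First, I would use parts \Rom{1} and \Rom{4} of Lemma \ref{lma:geoderay} to replace the log-det slope of $H_s$ by that of $\tilde H_s$:
\[
\lim_{s\to\infty}\frac{1}{s}\log\!\left(\frac{\det H_s}{\det H_0}\right)=\lim_{s\to\infty}\frac{1}{s}\log\!\left(\frac{\det \tilde H_s}{\det \tilde H_0}\right).
\]
Next, I would observe that by part \Rom{3} of the same lemma the exponent functions coincide, $\lambda_H=\lambda_{\tilde H}$ (both finite), so by the definition \eqref{eq: Filt_def} the associated filtrations are identical:
\[
\mathcal{F}_\lambda^H=\{v\in V:\lambda_H(v)\leq \lambda\}=\{v\in V:\lambda_{\tilde H}(v)\leq \lambda\}=\mathcal{F}_\lambda^{\tilde H}\quad\text{for all }\lambda\in\mathbb{R}.
\]
In particular, $\tilde H$ and $H$ have the same jumping numbers $\lambda_1\leq\cdots\leq\lambda_m$ and the same dimensions of the pieces.

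Finally, since $s\mapsto \tilde H_s$ is a $d_1^V$-geodesic ray, Lemma \ref{lma:intfiltr} applies directly and gives
\[
\lim_{s\to\infty}\frac{1}{s}\log\!\left(\frac{\det \tilde H_s}{\det \tilde H_0}\right)=\sum_{j=0}^{m-1}\lambda_{j+1}\bigl(\dim \mathcal{F}_{\lambda_{j+1}}^{\tilde H}-\dim \mathcal{F}_{\lambda_j}^{\tilde H}\bigr)=\int_{-\infty}^{\infty}\lambda\,\mathrm{d}\bigl(\dim \mathcal{F}_\lambda^{\tilde H}\bigr),
\]
with the Stieltjes integral interpreted as in the statement. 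Chaining these three equalities and replacing $\mathcal{F}^{\tilde H}$ by $\mathcal{F}^H$ yields \eqref{eq:limsupleqintfil1}.

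Since all the analytical work (existence of the limit, comparison principle for positive families, construction of $\tilde H$ and the match of exponents) is packaged into the preceding two lemmas, there is no real obstacle: the proof is essentially a bookkeeping step combining Lemma \ref{lma:intfiltr} with Lemma \ref{lma:geoderay}. The only subtlety worth flagging is the justification that the $\mathrm{lim\,sup}$ in the definition \eqref{eq:Lyapdef} actually agrees with a genuine limit on the relevant subspaces, but this is automatic once $\lambda_H=\lambda_{\tilde H}$ is available, since $\tilde H$ is a geodesic and hence $\log \tilde H_s(v,v)$ is linear in $s$ on each eigenspace.
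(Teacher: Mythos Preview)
Your proposal is correct and follows essentially the same approach as the paper: invoke Lemma~\ref{lma:geoderay} to produce the geodesic ray $\tilde H_s$, use part~\Rom{4} to match the log-det slopes, use part~\Rom{3} to identify the filtrations $\mathcal{F}^H_\lambda=\mathcal{F}^{\tilde H}_\lambda$, and then apply Lemma~\ref{lma:intfiltr}. The paper's proof is just a terser version of what you wrote.
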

\begin{proof}
As discussed below the statement of Lemma~\ref{lma:geoderay}, the limit on the left hand side of  \eqref{eq:limsupleqintfil1} exists and is finite. In fact, for the 
ray $s \mapsto \tilde H_s$ constructed in Lemma~\ref{lma:geoderay} we have that 
\[
\lim_{s\to\infty}\frac{1}{s}\log\bigg(\frac{\det H_s}{\det H_0}\bigg)=\lim_{s\to \infty} \frac{1}{s}\log\bigg(\frac{\det \tilde H_s}{\det \tilde H_0}\bigg)\,.
\]
Since $\lambda_H = \lambda_{\tilde H}$ implies $\mathcal{F}^H_\lambda = \mathcal{F}^{\tilde H}_\lambda$, the conclusion follows from Lemma~\ref{lma:intfiltr}.
\end{proof}

\subsection{Quantization of the Monge--Ampère energy}\label{subsec:quantization}

Recall that we have a positive Hermitian line bundle $(L,h)$ inducing a background Kähler metric $\omega = \frac{\mathrm{i}}{2\pi} \Theta(h)>0$ with class $[\omega]\in c_1(L)$. For any $k\geq 1$, the metric $h$ induces a Hermitian metric $h^k$ on $L^k$.

For the rest of the paper we also fix a holomorphic (twisting) line bundle $T$ on $X$ together with a  smooth Hermitian metric $h_T$. By slight abuse of notation, we also denote the induced metric on $T\otimes L^k$ by $h^k$.

Given $\varphi \in \PSH(X,\omega)$, by $\HTLm \subseteq \HTL$, we denote the space of holomorphic sections of $L^k$ over $X$ that are $L^2$-integrable with respect to the weight $e^{-k\varphi}$, i.e.,   $\int_X h^k(s,s)e^{-k\varphi}\,\omega^n  <\infty$. Also $h^0(X,T\otimes L^k \otimes \mathcal{I}(k\varphi))= \dim \HTLm$.

For each $k\geq 1$, define the Hilbert map $\Hilb_k:\mathcal{E}^1 \rightarrow \Herm(\HTL)$ as follows:
\begin{equation}\label{eq:Hilbkdef}
\Hilb_k(\varphi)(f,g):=\int_X h^k(f,g)e^{-k\varphi}\,\omega^n \,,\quad \varphi\in \mathcal{E}^1 \textnormal{ and } f,g\in \HTL\,.
\end{equation}

Define the quantum Monge--Ampère energy $\mathrm{I}_k:\Herm(\HTL)\rightarrow \mathbb{R}$ by the formula
\begin{equation}\label{eq:defIkqu}
\mathrm{I}_k(U):=-\frac{1}{kV}\log \left(\frac{\det U}{\det \Hilb_k(0)}\right)\,.
\end{equation}
The expression $\mathrm{I}_k(U)-\mathrm{I}_k(V)$ is nothing but Donaldson's original $\mathcal L$-functional from \cite{Don05}. As we will see, the $\mathrm{I}_k$ quantizes the usual Monge--Ampère energy $\mathrm{I}$, motivating our notation.

Now we define $\Lk:\mathcal{E}^1\rightarrow \mathbb{R}$ by
\begin{equation}\label{eq:defDonLk}
\Lk(\varphi):=\mathrm{I}_k \circ \Hilb_k(\varphi)\,.
\end{equation}
\begin{remark}
When $(T,h_T)$ is trivial and $\varphi$ is equal to $P(\phi)$ for some continuous function $\phi$ on $X$, the functional $\Lk(\varphi)$ is defined and studied in \cite{BB10}. Note that our $\Lk(\varphi)$ corresponds to $h^0(X,L^k)\Lk(X,\phi/2)$ in their paper, with the extra $1/2$  due to the difference in conventions.
\end{remark}

Let $V_k := \HTL$. As recalled in Section~\ref{subsec:expfilt}, there is a natural metric $d_1^{V_k}$ on $\Herm(V_k)$. With the focus of this section on quantization, we define a scaled version of this metric: 
\[
\dok(H_1,H_2):=\frac{1}{k}d_1^{V_k}(H_1,H_2)\,, \quad H_1,H_2 \in V_k
\,.
\]
This convention coincides with the one used in \cite{DLR20}.

Let $S = \{0< \textup{Re }z < 1\} \subset \mathbb C$ be the unit strip and $\pi:S \times X \rightarrow X$ be the natural projection. We say that $(0,1) \ni t \mapsto \varphi_t \in \mathcal{E}^1$ is a \emph{subgeodesic} if its complexification $\Phi$ satisfies $
\pi^*\omega+\ddc \Phi\geq 0$ on $S \times X$ in the sense of currents.
Let us recall the following version of Berndtsson's convexity theorem \cite[Theorem~1.2]{Bern09}.
\begin{theorem}\label{thm:Bernconvex}
Let $[0,1] \ni t \mapsto \varphi_t \in \mathcal{E}^1$ be a subgeodesic connecting $\varphi_0,\varphi_1 \in \mathcal{E}^1$. Let $\Phi$ be the complexification of $t \mapsto \varphi_t$ and assume that
\[
\pi^*\omega+\ddc \Phi\geq \epsilon\pi^*\omega
\]
for some $\epsilon>0$. Let $[0,1] \ni t \mapsto H_t\in \Herm(\HTL)$  be the geodesic connecting $\Hilb_k(\varphi_0)$ with $\Hilb_k(\varphi_1)$. Then there exists $k_0(\varepsilon)>0$, so that for any $k\geq k_0$ we have 
\[
H_t\leq \Hilb_k(\varphi_t)\,,\quad t\in [0,1]\,.
\]
Moreover, $t \mapsto \Hilb_k(\varphi_t)$ is positive (see Definition~\ref{def:pmfamily}).

If additionally $t \mapsto \varphi_t$ is $t$-increasing, then for any $s\in \HTL$,
\begin{equation}\label{eq:derHtlower}
-k\int_X \dot{\varphi}_1 h^k(s,s)e^{-k\varphi_1}\,\omega^n\leq \ddtz[1]H_t(s,s)\,.
\end{equation}
\end{theorem}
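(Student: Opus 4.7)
The plan is to apply Berndtsson's direct image theorem to the projection $p:S\times X\to S$, equipped with the pulled-back line bundle $\pi^*(T\otimes L^k)$ carrying the metric $\pi^*(h_T\otimes h^k)\cdot e^{-k\Phi}$. A direct computation shows its Chern curvature on $S\times X$ equals $\pi^*\Theta(h_T)+k(\pi^*\omega+\ddc\Phi)$, which by hypothesis is bounded below by $\pi^*(k\epsilon\omega+\Theta(h_T))$. Since $\omega$ is Kähler and $\Theta(h_T)$ is a fixed smooth form, for $k\geq k_0(\epsilon)$ large enough we have $k\epsilon\omega+\Theta(h_T)\geq 0$ on $X$, so the total curvature is semipositive on $S\times X$. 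Berndtsson's theorem then yields Griffiths semipositivity of the direct image bundle $p_*\pi^*(T\otimes L^k)$, whose fiber over $z\in S$ with $\RE z=\tau$ is $\HTL$ equipped with $\Hilb_k(\varphi_\tau)$. Unwinding Definition~\ref{def:pmfamily}, this is precisely the statement that $t\mapsto\Hilb_k(\varphi_t)$ is positive in the paper's sense, which settles the second claim.

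With positivity in hand, the comparison principle recalled after Lemma~\ref{lma:geoderay} applies to $U_t:=\Hilb_k(\varphi_t)$ and $W_t:=H_t$: since $W_t$ is a geodesic segment with $W_0=U_0$ and $W_1=U_1$, we conclude $H_t\leq \Hilb_k(\varphi_t)$ on $[0,1]$, giving the first displayed inequality.

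For the derivative bound under the additional monotonicity assumption, set $F(t):=\Hilb_k(\varphi_t)(s,s)-H_t(s,s)$. By the previous step, $F\geq 0$ on $[0,1]$ with $F(1)=0$, so its left derivative at $t=1$ is non-positive, yielding $\ddtz[1]H_t(s,s)\geq \ddtz[1]\Hilb_k(\varphi_t)(s,s)$. Since $t\mapsto\varphi_t$ is increasing, the integrand $h^k(s,s)e^{-k\varphi_t}$ is non-negative and $t$-decreasing, so monotone convergence justifies passing the left derivative inside the integral and gives $\ddtz[1]\Hilb_k(\varphi_t)(s,s)=-k\int_X \dot\varphi_1\, h^k(s,s)\,e^{-k\varphi_1}\,\omega^n$. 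Combining the two inequalities produces \eqref{eq:derHtlower}.

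The main technical obstacle is that Berndtsson's theorem is classically formulated for smooth weights, whereas $\Phi$ is only a subgeodesic with fiberwise values in $\mathcal{E}^1$ and is potentially singular or unbounded. I would bridge this gap by approximating $\Phi$ from above by smooth subgeodesics $\Phi^j$ preserving essentially the same lower curvature bound (shrinking $\epsilon$ slightly if necessary, e.g.\ via Demailly regularization in the $X$-variable combined with sup-convolution in $\RE z$), applying the theorem to each $\Phi^j$, and then passing to the monotone limit. Both the inequality $H_t\leq\Hilb_k(\varphi_t)$ and the derivative bound survive this passage to the limit via another application of monotone convergence, as $e^{-k\Phi^j}\nearrow e^{-k\Phi}$ when $\Phi^j\searrow\Phi$.
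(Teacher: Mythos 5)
Your route is the same one the paper takes: its proof is a single sentence deferring to \cite[Corollary~2.13, Lemma~2.14]{DLR20}, and that argument is precisely Berndtsson positivity of the direct image for $k\geq k_0(\epsilon)$, the finite-dimensional comparison principle, and a one-sided differentiation at the endpoint. Two places need repair. The first is bookkeeping: Berndtsson's theorem applies to adjoint bundles, so to get positivity of $t\mapsto \Hilb_k(\varphi_t)$ (whose $L^2$-norm uses the fixed volume form $\omega^n$) you should write $T\otimes L^k=K_X\otimes(K_X^{-1}\otimes T\otimes L^k)$ with $K_X^{-1}$ carrying the metric dual to $\omega^n$; the curvature to be made semipositive is then $\pi^*(\Theta(h_T)+\mathrm{Ric}\,\omega)+k(\pi^*\omega+\ddc\Phi)$, which is again absorbed by enlarging $k_0(\epsilon)$. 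Your regularization scheme for the singular weight is fine in outline: positivity, being convexity of $t\mapsto\log H_t^*(\xi,\xi)$ for each fixed dual vector $\xi$, survives the monotone limit $\Hilb_k(\varphi^j_t)\nearrow\Hilb_k(\varphi_t)$, and you then apply the comparison principle once, directly to the limit family, so you never need to track the approximating geodesics.

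The genuine gap is in the last step. The difference quotients $h^{-1}\bigl(e^{-k\varphi_{1-h}}-e^{-k\varphi_1}\bigr)$ are \emph{not} monotone in $h$: writing them as the product of the chord slope of $x\mapsto e^{-kx}$ with $h^{-1}(\varphi_1-\varphi_{1-h})$, both factors are monotone as $h\searrow 0$, but the product of a negative increasing function with a non-negative increasing function need not be monotone. So monotone convergence does not justify differentiating under the integral, and the equality you assert for $\ddtz[1]\Hilb_k(\varphi_t)(s,s)$ is exactly the reverse-Fatou direction that requires an argument. Fortunately only an inequality is needed. From $H_{1-h}(s,s)\leq \Hilb_k(\varphi_{1-h})(s,s)$, $H_1(s,s)=\Hilb_k(\varphi_1)(s,s)$, and convexity of $x\mapsto e^{-kx}$ (tangent line at $\varphi_{1-h}$), one gets for every $h>0$
\[
\frac{H_1(s,s)-H_{1-h}(s,s)}{h}\;\geq\;\int_X\frac{e^{-k\varphi_1}-e^{-k\varphi_{1-h}}}{h}\,h^k(s,s)\,\omega^n\;\geq\;-k\int_X\frac{\varphi_1-\varphi_{1-h}}{h}\,e^{-k\varphi_{1-h}}\,h^k(s,s)\,\omega^n\,.
\]
Since $0\leq h^{-1}(\varphi_1-\varphi_{1-h})\leq\dot\varphi_1$ by $t$-convexity and $e^{-k\varphi_{1-h}}\leq e^{-k\varphi_{1-h_0}}$ for $h\leq h_0$, one concludes \eqref{eq:derHtlower} by letting $h\to 0$ and then $h_0\to 0$ (there being nothing to prove when the left-hand side of \eqref{eq:derHtlower} is $-\infty$). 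With these two repairs your argument is complete and coincides with the paper's.
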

The proof follows line by line from that of \cite[Corollary~2.13, Lemma~2.14]{DLR20}.

\begin{lemma}\label{lma:Iklip}
For $\varphi_0,\varphi_1\in \mathcal{E}^1$ we have
\[
\left| \Lk(\varphi_0)-\Lk(\varphi_1) \right|\leq C k^{n} \dok({\Hilb_k(\varphi_0)},{\Hilb_k(\varphi_1)})\,,
\]
where $C$ depends only on $X$. 
\end{lemma}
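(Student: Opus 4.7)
The plan is to unwind the definitions of $\Lk$ and $\dok$ on both sides and reduce the inequality to the elementary estimate $\bigl|\sum_j \lambda_j\bigr|\leq \sum_j |\lambda_j|$ applied to the log-eigenvalues of $\Hilb_k(\varphi_0)$ relative to $\Hilb_k(\varphi_1)$, together with the standard asymptotic bound $\dim H^0(X,T\otimes L^k) = O(k^n)$.

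More concretely, starting from the definitions \eqref{eq:defIkqu} and \eqref{eq:defDonLk}, I would first compute
\[
\Lk(\varphi_0)-\Lk(\varphi_1) = -\frac{1}{kV}\log\!\left(\frac{\det \Hilb_k(\varphi_0)}{\det \Hilb_k(\varphi_1)}\right),
\]
since the $\det \Hilb_k(0)$ normalization cancels. Next I would pick a basis of $V_k := H^0(X,T\otimes L^k)$ diagonalizing $\Hilb_k(\varphi_0)$ with respect to $\Hilb_k(\varphi_1)$, so that the ratio of determinants becomes $\exp(\sum_j \lambda_j)$ where $e^{\lambda_1},\dots,e^{\lambda_{N_k}}$ are the generalized eigenvalues (here $N_k = \dim V_k$). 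Thus
\[
\bigl|\Lk(\varphi_0)-\Lk(\varphi_1)\bigr| = \frac{1}{kV}\Bigl|\sum_{j=1}^{N_k}\lambda_j\Bigr| \leq \frac{1}{kV}\sum_{j=1}^{N_k}|\lambda_j|.
\]

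Now, by the definition \eqref{eq: d_1^V_def} of $d_1^{V_k}$ we have $\sum_j|\lambda_j|=N_k\cdot d_1^{V_k}(\Hilb_k(\varphi_0),\Hilb_k(\varphi_1))$, and by our scaling convention $d_1^{V_k}=k\,\dok$. Substituting,
\[
\bigl|\Lk(\varphi_0)-\Lk(\varphi_1)\bigr| \leq \frac{N_k}{V}\,\dok\bigl(\Hilb_k(\varphi_0),\Hilb_k(\varphi_1)\bigr).
\]
Finally, since $L$ is ample, asymptotic Riemann--Roch (applied to $T\otimes L^k$) gives $N_k = h^0(X,T\otimes L^k) \leq C_0 k^n$ for some $C_0$ depending only on $X$, $L$ and $T$, which together with $V = \int_X \omega^n$ yields the desired constant $C$.

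There is no real obstacle here; the argument is a bookkeeping exercise combining the explicit determinantal form of $\mathrm{I}_k$, the spectral definition of $d_1^{V_k}$, the scaling factor relating $d_1^{V_k}$ to $\dok$, and the polynomial growth of $N_k$. The only conceptual point worth emphasizing is that \emph{no} convexity or positivity of the family $t\mapsto \Hilb_k(\varphi_t)$ is needed for this Lipschitz estimate, as it is a purely algebraic statement about the two endpoint Hermitian inner products on $V_k$; Theorem~\ref{thm:Bernconvex} will only enter later, when comparing $\dok(\Hilb_k(\varphi_0),\Hilb_k(\varphi_1))$ back to $d_1(\varphi_0,\varphi_1)$.
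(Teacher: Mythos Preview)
Your proof is correct and follows essentially the same route as the paper's own argument: express $\Lk(\varphi_0)-\Lk(\varphi_1)$ as $-\frac{1}{kV}\sum_j\lambda_j$ via simultaneous diagonalization, bound by $\frac{1}{kV}\sum_j|\lambda_j|=\frac{N_k}{V}\dok(\Hilb_k(\varphi_0),\Hilb_k(\varphi_1))$, and invoke Riemann--Roch for $N_k=O(k^n)$. The only cosmetic difference is that you write out the absolute-value step explicitly, whereas the paper bounds one sign and implicitly appeals to symmetry.
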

\begin{proof}
Notice that
\[
\Lk(\varphi_1)-\Lk(\varphi_0)=-\frac{1}{kV}\log \frac{\det {\Hilb_k(\varphi_1)}}{\det {\Hilb_k(\varphi_0)}}\,.
\]
Take a basis $(e_1,\ldots,e_{N_k})$ of $H^0(X,T\otimes L^k)$ which is orthonormal with respect to ${\Hilb_k(\varphi_0)}$ and is orthogonal with respect to ${\Hilb_k(\varphi_1)}$. Let $\lambda_j:=\log \Hilb_k(\varphi_1)(e_j,e_j)$ for $j=1,\ldots,N_k$. Then 
\[
\Lk(\varphi_1)-\Lk(\varphi_0)=-\frac{1}{kV}\sum_{j=1}^{N_k} \lambda_j\leq \frac{N_k}{V} \dok\left({\Hilb_k(\varphi_0)},{\Hilb_k(\varphi_1)}\right)\,.
\]
By the Riemann--Roch theorem, $N_k$ is dominated by $V k^n$, and the result follows.
\end{proof}
\begin{lemma}\label{lma:d1quant}
For $\varphi_0,\varphi_1\in \mathcal{E}^1$ we have
\[
\lim_{k\to\infty} \dok({\Hilb_k(\varphi_0)},{\Hilb_k(\varphi_1)})=d_1(\varphi_0,\varphi_1)\,.
\]
\end{lemma}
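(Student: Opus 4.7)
The plan is to reduce to the ordered case via the rooftop envelope, where the identity follows directly from a classical quantization of the Monge--Amp\`ere energy, and then to complete the matching lower bound via Berndtsson's positivity theorem.

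If $\varphi_0\leq \varphi_1$ in $\mathcal{E}^1$, then $\Hilb_k(\varphi_0)\geq \Hilb_k(\varphi_1)$, and simultaneous diagonalization gives the explicit identity
\[
\dok(\Hilb_k(\varphi_0),\Hilb_k(\varphi_1))=\frac{V}{N_k}\bigl(\Lk(\varphi_1)-\Lk(\varphi_0)\bigr),
\]
where $N_k := h^0(X,T\otimes L^k)\sim V k^n/n!$ by Riemann--Roch. The classical asymptotic
\[
\lim_{k\to\infty}\frac{V}{N_k}\Lk(\varphi)=\mathrm{I}(\varphi),\qquad \varphi\in\mathcal{E}^1,
\]
due to Bouche--Catlin--Zelditch--Tian for smooth $\varphi$ and extended to $\mathcal{E}^1$ by decreasing approximation (see e.g.\ \cite{BB10}), combined with $d_1(\varphi_0,\varphi_1)=\mathrm{I}(\varphi_1)-\mathrm{I}(\varphi_0)$ valid for $\varphi_0\leq\varphi_1$, settles the ordered case. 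For general $\varphi_0,\varphi_1\in\mathcal{E}^1$, let $P:=P(\varphi_0,\varphi_1)\leq \varphi_0,\varphi_1$; applying the triangle inequality through $\Hilb_k(P)$ together with the ordered-case formula gives the upper bound
\[
\dok(\Hilb_k(\varphi_0),\Hilb_k(\varphi_1))\leq \frac{V}{N_k}\bigl(\Lk(\varphi_0)+\Lk(\varphi_1)-2\Lk(P)\bigr),
\]
which converges to $\mathrm{I}(\varphi_0)+\mathrm{I}(\varphi_1)-2\mathrm{I}(P)=d_1(\varphi_0,\varphi_1)$ by the rooftop formula of \cite[Corollary~4.14]{Da15}, yielding $\limsup_k\dok\leq d_1$.

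For the matching lower bound, I would take $\{u_t\}_{t\in[0,1]}$ to be the $d_1$-geodesic joining $\varphi_0$ to $\varphi_1$, realized as a decreasing limit of smooth strictly-increasing subgeodesics with strictly positive complexified curvature. Theorem~\ref{thm:Bernconvex} then furnishes the pointwise comparison $H_t^k\leq \Hilb_k(u_t)$ between the $d_1^V$-geodesic $\{H_t^k\}$ from $\Hilb_k(\varphi_0)$ to $\Hilb_k(\varphi_1)$ and the quantized $\mathcal{E}^1$-geodesic. Since $t\mapsto\log\det H_t^k$ is affine while $t\mapsto\log\det\Hilb_k(u_t)$ is concave with the same boundary values, integrating the derivative inequality \eqref{eq:derHtlower} and applying the quantization of $\mathrm{I}$ in the limit $k\to\infty$ forces $\liminf_k\dok\geq d_1$. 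The main obstacle is the $\mathcal{E}^1$-regularity of $\{u_t\}$: since Theorem~\ref{thm:Bernconvex} requires smooth strictly positive subgeodesics, one must regularize, apply the inequality at each approximation level, and then take a limit; the Lipschitz estimate of Lemma~\ref{lma:Iklip} combined with the quantization of $\mathrm{I}$ provides the uniform control needed to pass to the limit in $k$.
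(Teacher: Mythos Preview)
Your reduction to the ordered case via the rooftop envelope is natural, and the identity $\dok(\Hilb_k(\varphi_0),\Hilb_k(\varphi_1))=\frac{V}{N_k}(\Lk(\varphi_1)-\Lk(\varphi_0))$ for $\varphi_0\leq\varphi_1$ is correct. The gap is that you invoke the quantization $\frac{V}{N_k}\Lk(\varphi)\to\mathrm{I}(\varphi)$ \emph{on all of $\mathcal{E}^1$} as an input. In this paper that statement is Theorem~\ref{thm:QuantI}, and it is proved \emph{using} the present lemma together with Lemma~\ref{lma:Iklip}; so your argument is circular in the paper's logical structure. The reference \cite{BB10} does not rescue this: it treats envelopes of continuous metrics, not general $\mathcal{E}^1$ potentials. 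The phrase ``extended to $\mathcal{E}^1$ by decreasing approximation'' hides the difficulty: if $\varphi^j\searrow\varphi$ then monotonicity only yields $\limsup_k \frac{n!}{k^n}\Lk(\varphi)\leq \mathrm{I}(\varphi^j)\to \mathrm{I}(\varphi)$, and the matching lower inequality on $\Lk$ is exactly the nontrivial content that requires a Berndtsson-type argument.

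Your separate lower-bound argument for the general case is also not complete as written. The $d_1$-geodesic between arbitrary $\varphi_0,\varphi_1\in\mathcal{E}^1$ is not $t$-increasing, so the derivative estimate \eqref{eq:derHtlower} does not apply to it; and even granting the comparison $H_t^k\leq \Hilb_k(u_t)$ with matching endpoints, this only controls $\log\det$, which is $\sum\lambda_j$ rather than $\sum|\lambda_j|$, so it does not directly bound $\dok$ from below. The regularization you propose changes the endpoints, and you would then need precisely an estimate of the form $\limsup_k\dok(\Hilb_k(\varphi_i^\epsilon),\Hilb_k(\varphi_i))\to 0$ to close the loop---which is again the heart of the matter.

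The paper's proof avoids splitting into upper and lower bounds. It takes the smooth case (Berndtsson) as known, approximates $\varphi_i$ by decreasing $\varphi_i^j\in\mathcal{H}_\omega$, and reduces everything to the single estimate $\varlimsup_k\dok(\Hilb_k(\varphi_0^j),\Hilb_k(\varphi_0))\to 0$. That estimate is obtained by a concrete construction: one forms the geodesic $\ell_t$ from $P(\delta\varphi_0)$ to $\varphi_0^j$ and the convex combination $\ell'_t=\tfrac{1}{\delta}\ell_t+(1-\tfrac{1}{\delta})\varphi_0^j$, which is a \emph{genuinely} strictly positive subgeodesic whose endpoints sandwich $\varphi_0$; then Theorem~\ref{thm:Bernconvex} and the endpoint-derivative inequality \eqref{eq:derHtlower} apply, and the Bergman kernel expansion is used at the \emph{smooth} endpoint $\varphi_0^j$ (not at $\varphi_0$). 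Letting $\delta\searrow 1$ gives the claim. This is the missing ingredient your proposal would need.
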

This result is the twisted version of \cite[Theorem~1.2\Rom{2}]{DLR20}. We reproduce the proof for convenience of the reader.
\begin{proof}
Without loss of generality, let us assume that $\varphi_0,\varphi_1\leq -1$. Due to \cite[Theorem~3.3]{Bern13a} the results is known for $\varphi_0,\varphi_1 \in \mathcal{H}_\omega$. 

By \cite{BK07} we can find $\varphi_0^j$, $\varphi_1^j\in \mathcal{H}_{\omega}$, sequences decreasing to $\varphi_0,\varphi_1$,  respectively. We may assume without loss of generality that $\varphi_0^1,\varphi_1^1\leq 0$.  By our assumption, for any $j\geq 1$ we have,
\[
\lim_{k\to\infty} \dok({\Hilb_k(\varphi_0^j)},{\Hilb_k(\varphi_1^j)})=d_1(\varphi_0^j,\varphi_1^j)\,.
\]
Hence it is enough to show that for any $\epsilon>0$, we can find $j_0>0$ such that
\[
\varlimsup_{k\to\infty}\dok({\Hilb_k(\varphi_0^j)},{\Hilb_k(\varphi_0)})<\epsilon\,,\quad \varlimsup_{k\to\infty} \dok({\Hilb_k(\varphi_1^j)},{\Hilb_k(\varphi_1)})<\epsilon\,,
\]
for any $j\geq j_0$. By symmetry, we only prove the former. We fix some real number $\delta>1$ for now.
Let $[0,1] \ni t \mapsto \ell_t \in \mathcal{E}^1$ be the geodesic from $P(\delta \varphi_0)$ to $\varphi_0^j$. For $t\in [0,1]$, let 
\[
\ell'_t:=\frac{1}{\delta}\ell_t+\left(1-\frac{1}{\delta}\right)\varphi_0^{j}\,.
\]
Notice that $\ell'_0\leq \varphi_0\leq \varphi_0^j=\ell'_1$. As a result,  ${\Hilb_k(\ell'_1)}\leq {\Hilb_k(\varphi_0)}\leq {\Hilb_k(\ell'_0)}$.
Hence, by comparison of the tangent vectors at $\ell'_1$, we conclude
\begin{equation}\label{eq: d_1^k_est}
\dok({\Hilb_k(\varphi_0)},{\Hilb_k(\varphi_0^j)})\leq \dok({\Hilb_k(\ell'_0)},{\Hilb_k(\ell'_1)})\,.
\end{equation}
Let $t \to G^k_t\in \Herm(\HTL)$ ($t\in [0,1]$) be the geodesic from ${\Hilb_k(\ell'_0)}$ to ${\Hilb_k(\ell'_1)}$.
Observe that the conditions of Theorem~\ref{thm:Bernconvex} are satisfied by $t \to \ell'_t$. Hence for $k \geq k_0(\delta)$,
\[
G^k_t\leq {\Hilb_k(\ell'_t)}
\]
for all $t\in [0,1]$. By \eqref{eq:derHtlower}, for any $f\in \HTL$, we have
\begin{equation}\label{eq: interm_est}
-\frac{1}{\delta}\int_X\dot{\ell}_1 h^k(f,f)e^{-k\varphi_0^j}\,\omega^n\leq \frac{1}{k}\ddtz[1] G^k_t(f,f)\leq 0\,.
\end{equation}
By \cite[Lemma~4.5]{DLR20}, the left hand side is finite. Now we find a basis $e_1,\ldots,e_N$ of $\HTL$ that is orthonormal with respect to $\Hilb_k(\ell'_1)$ and such that the quadratic form
\[
f\mapsto -\frac{1}{\delta}\int_X\dot{\ell}_1 h^k(f,f)e^{-k\varphi_0^j}\,\omega^n
\]
is orthogonal with eigenvalues $\lambda_1,\ldots,\lambda_N$. Then, using \eqref{eq: d_1^k_est} and \eqref{eq: interm_est}, we get
\[
\dok({\Hilb_k(\varphi_0)},{\Hilb_k(\varphi_0^j)}) \leq \dok({\Hilb_k(\ell'_0)},{\Hilb_k(\ell'_1)})\leq \frac{1}{N}\sum_{a=1}^N |\lambda_a|\leq \frac{1}{N}\sum_{a=1}^N\int_X |\dot{\ell}_1|h^k(e_a,e_a)e^{-k\varphi_0^j}\,\omega^n\,.
\]
Letting $k\to\infty$, it follows from the classical Bergman kernel expansion  that (see the elementary calculations following \cite[(35)]{DLR20})
\[
\varlimsup_{k\to\infty}\dok({\Hilb_k(\varphi_0)},{\Hilb_k(\varphi_0^j)})\leq \frac{1}{V\delta}\int_X |\dot{\ell}_1|\,\omega_{\varphi_0^j}^n=\frac{1}{\delta}d_1(P(\delta \varphi_0),\varphi_0^j)\,,
\]
where the last equality follows from \cite[Lemma~4.5]{DLR20}. Letting $\delta \searrow 1$, we find
\[
\varlimsup_{k\to\infty}\dok({\Hilb_k(\varphi_0)},{\Hilb_k(\varphi_0^j)})\leq d_1(P(\varphi_0,\varphi_0^j)\,,
\]
finishing the proof of the claim, and the argument.
\end{proof}

Next we quantize the Monge--Amp\`ere energy (see \eqref{eq: MA_en_def})  on the space $\mathcal E^1$, extending the corresponding result for smooth metrics \cite{Don05}, continuous metrics  \cite[Theorem~A]{BB10}, and the case of $K_X$-twisting \cite[Theorem~3.5]{BFiM14}:
\begin{theorem}\label{thm:QuantI}
For any $\varphi\in \mathcal{E}^1$, we have
\begin{equation}\label{eq:limLkIE1}
\lim_{k\to\infty}\frac{n!}{k^{n}}\Lk(\varphi)=\mathrm{I}(\varphi)\,.
\end{equation}
\end{theorem}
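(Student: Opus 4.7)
\medskip

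\noindent\textbf{Plan of proof.} The plan is to first establish \eqref{eq:limLkIE1} for smooth potentials $\varphi\in\mathcal{H}_\omega$ and then extend to all of $\mathcal{E}^1$ by a density argument, using the quantization estimates of Lemma~\ref{lma:Iklip} and Lemma~\ref{lma:d1quant} as the bridge.

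First, I would treat the case $\varphi\in\mathcal{H}_\omega$. In this setting the standard Tian--Yau--Zelditch asymptotic expansion of the Bergman kernel associated to the Hermitian metric $h_T\otimes h^k e^{-k\varphi}$ on $T\otimes L^k$ gives
\[
\frac{1}{k^n}\log\frac{\det\Hilb_k(\varphi)}{\det\Hilb_k(0)}
= \frac{1}{k^n}\int_X\log\beta_k(\varphi)\,\omega_\varphi^n+O(k^{-1}),
\]
where $\beta_k(\varphi)$ is the (twisted) Bergman kernel on the diagonal, and standard manipulations with the Bouche/Catlin/Tian expansion recover $-\tfrac{n!}{V}\mathrm{I}(\varphi)$ in the limit. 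The twisting bundle $(T,h_T)$ only contributes lower-order terms, so the argument is formally identical to the untwisted case treated in \cite{Don05}, \cite{BB10}, \cite{BFiM14}, and I would invoke these references rather than rederive the expansion.

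Next, for general $\varphi\in\mathcal{E}^1$, pick smooth approximants $\varphi^j\in\mathcal{H}_\omega$ with $\varphi^j\searrow\varphi$ (available by \cite{BK07}). The triangle inequality gives
\[
\Bigl|\tfrac{n!}{k^n}\Lk(\varphi)-\mathrm{I}(\varphi)\Bigr|
\le \tfrac{n!}{k^n}\bigl|\Lk(\varphi)-\Lk(\varphi^j)\bigr|
+\Bigl|\tfrac{n!}{k^n}\Lk(\varphi^j)-\mathrm{I}(\varphi^j)\Bigr|
+|\mathrm{I}(\varphi^j)-\mathrm{I}(\varphi)|.
\]
By Lemma~\ref{lma:Iklip}, the first term is bounded by $C\,n!\,\dok(\Hilb_k(\varphi),\Hilb_k(\varphi^j))$, and by Lemma~\ref{lma:d1quant} this quantity converges as $k\to\infty$ to $C\,n!\,d_1(\varphi,\varphi^j)$. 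The middle term tends to $0$ by the smooth case, and the last term is handled by $d_1$-continuity of the Monge--Amp\`ere energy on $\mathcal{E}^1$. Taking $\limsup_k$ first and then letting $j\to\infty$, both remaining terms vanish since $\varphi^j\to\varphi$ in $d_1$ for monotone decreasing sequences in $\mathcal{E}^1$.

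The main technical point is ensuring that Lemma~\ref{lma:d1quant} is applicable at both endpoints uniformly well for a fixed sequence $\varphi^j$, but this is precisely how that lemma is formulated, so no further effort is required. The smooth case with the twist $T$ is the only place where one relies on external Bergman-kernel technology; everything else follows from the quantization machinery already developed in Section~\ref{subsec:quantization}.
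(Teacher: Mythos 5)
Your proof follows essentially the same route as the paper: the smooth case is delegated to the Bergman kernel expansion and the references \cite{Don05}, \cite{BB10}, \cite{BFiM14}, and the extension to $\mathcal{E}^1$ uses decreasing smooth approximants from \cite{BK07} together with Lemma~\ref{lma:Iklip} and Lemma~\ref{lma:d1quant}, exactly as in the paper. One caveat on the smooth case: your displayed identity expressing $k^{-n}\log\left(\det\Hilb_k(\varphi)/\det\Hilb_k(0)\right)$ as $k^{-n}\int_X\log\beta_k(\varphi)\,\omega_\varphi^n+O(k^{-1})$ is not the correct mechanism, since the determinant of the Gram matrix is not recovered by integrating the logarithm of the Bergman function; the paper instead differentiates along $t\mapsto t\varphi$, obtaining $\frac{n!}{k^{n}}\frac{\mathrm{d}}{\mathrm{d}t}\Lk(t\varphi)=\int_X \varphi\, B_k(t\varphi)\,\omega^n$, applies the uniform expansion $B_k(t\varphi)\,\omega^n\to V^{-1}\omega_{t\varphi}^n$ to this derivative, and then integrates in $t$. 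Since you explicitly defer to the cited references for the smooth case, this misstatement does not create a gap in the overall argument.
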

\begin{proof}
Assume that this result is true for $\varphi\in \mathcal{H}_{\omega}$. For a general $\varphi\in \mathcal{E}^1$, take a decreasing sequence $\varphi_j\in \mathcal{H}_{\omega}$ that converges to $\varphi$. Then by Lemma~\ref{lma:Iklip} and Lemma~\ref{lma:d1quant}, for any $j\geq 1$,
\[
\varlimsup_{k\to\infty} \left|\frac{n!}{k^{n}}\Lk(\varphi_j)-\frac{n!}{k^{n}}\Lk(\varphi)\right|\leq C \varlimsup_{k\to\infty} d_1^k(\varphi_j,\varphi)=C d_1(\varphi_j,\varphi)\,.
\]
By our assumption, $\lim_{k\to\infty}\frac{n!}{k^{n}}\Lk(\varphi_j)=\mathrm{I}(\varphi_j)$. This implies that
\[
\varlimsup_{k\to\infty} \left|\mathrm{I}(\varphi_j)-\frac{n!}{k^{n}}\Lk(\varphi)\right|\leq C d_1(\varphi_j,\varphi)\,.
\]
Letting $j\to\infty$, we conclude. 

It remains to prove \eqref{eq:limLkIE1} when $\varphi \in \mathcal{H}_\omega$. When $(T,h_T)$ is trivial, this was carried out in \cite{Don05}. Indeed, it suffices to observe that
\[
\frac{n!}{k^{n}}\cdot \ddt \Lk(t\varphi)=\int_X \varphi B_k(t\varphi)\,\omega^n\,,\quad t\in [0,1]\,,
\]
where $B_k(t\varphi)$ denotes the $k$-th $T$-twisted Bergman kernel at $t\varphi\in \PSH(X,\omega)$. The well-known Bergman kernel expansion \cite[Theorem~4.1.1]{MM07} implies that  $B_k(t\varphi)\,\omega^n \to V^{-1}\omega_{t\varphi}^n$ uniformly. Consequently, 
\[
\frac{n!}{k^{n}}\cdot\ddt \Lk(t\varphi) \to \ddt \mathrm{I}(t\varphi)
\]
uniformly. Taking integral with respect to $t\in [0,1]$, we conclude \eqref{eq:limLkIE1} in this case.
\end{proof}

\subsection{An algebraic notion of singularity type}\label{subsec:algsingtype}

\paragraph{Detecting singularities using algebraic tools.} 

In this section, let $(X,\omega)$ be a compact K\"ahler manifold of dimension $n$. Let $\theta$ be a smooth real $(1,1)$-form on $X$ representing a pseudo-effective cohomology class.

Given $u \in \PSH(X,\theta)$, as pointed out in the literature (see for example \cite{BFJ08}, \cite{Kim15}), one can not characterize the singularity type $[u]$ using ``mainstream'' algebraic data, like multiplier ideal sheaves $\mathcal{I}(c u), c >0$ or Lelong numbers. Instead, one can introduce an algebraic notion that is coarser than equivalence up to singularity types, considered in \cite[Section~2.1]{KS20}:
\begin{definition}
Let $\varphi,\psi\in \PSH(X,\theta)$. We put $\varphi \preceq_{\veq} \psi$ in case $\mathcal{I}(a\varphi) \subseteq \mathcal{I}(a\psi)$ for all $a > 0$. Then $\preceq_{\veq}$ is a preorder, with equivalence relation $\varphi\simeq_{\veq} \psi$ characterized by $\mathcal{I}(a\varphi)=\mathcal{I}(a\psi)$ for all $a > 0$. The corresponding classes are called $\veq$-\emph{singularity types}, and are denoted by $[\chi]_{\veq}$, where $\chi \in \PSH(X,\theta)$ is a representative of the class.
\end{definition}

In the language of \cite[Section~2.1]{KS20} the relation $\simeq_{\veq}$ is called \emph{v-equivalence}. Obviously $[\varphi]=[\psi]$ implies $[\varphi]_{\veq}  =[\psi]_{\veq}$. However the reverse direction does not hold in general, and this phenomenon is at the center of the discussion in this subsection. Before we dive deeper into this, let us recall the following characterization of $\veq$-equivalence via Lelong numbers, a direct consequence of \cite[Theorem~A]{BFJ08}:

\begin{theorem}\label{thm: Lelong_characterization}
Let $\varphi,\psi\in \PSH(X,\theta)$. Then the following are equivalent: \vspace{0.15cm}\\
\Rom{1} $\varphi\simeq_{\veq} \psi$.\vspace{0.15cm}\\
\Rom{2} $\nu(\varphi ,y)=\nu(\psi ,y)$ for any projective modification $\pi:Y\rightarrow X$, with $Y$ smooth, and $y\in Y$.
\end{theorem}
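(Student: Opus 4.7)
The plan is to deduce the result from \cite[Theorem~A]{BFJ08}, which I read as the equivalence of (I) with the \emph{a priori} weaker condition (II$'$) asserting that $\nu(\varphi,E) = \nu(\psi,E)$ for every prime divisor $E$ on every smooth projective modification $\pi\colon Y \to X$, where $\nu(\cdot,E)$ denotes the generic Lelong number along $E$. Thus it only remains to check by hand the equivalence (II) $\Leftrightarrow$ (II$'$); here and below, $\nu(\varphi,y)$ in (II) is, as usual, shorthand for the pointwise Lelong number $\nu(\pi^*\varphi,y)$, and similarly for $\psi$.

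For the direction (II) $\Rightarrow$ (II$'$), fix a prime divisor $E \subset Y$ on a smooth projective modification $\pi\colon Y\to X$. Since the functions $y\mapsto \nu(\pi^*\varphi,y)$ and $y\mapsto \nu(\pi^*\psi,y)$ are upper semi-continuous on $Y$ and equal their generic values $\nu(\varphi,E)$ and $\nu(\psi,E)$ respectively on Zariski-open dense subsets of $E$, one can choose a common smooth point $y \in E$ at which both generic Lelong numbers are simultaneously attained. Condition (II) applied at such $y$ then gives $\nu(\varphi,E) = \nu(\psi,E)$.

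For the converse (II$'$) $\Rightarrow$ (II), given a pair $(Y,y)$ as in (II), the plan is to further blow up $y$, producing $\sigma\colon \tilde Y \to Y$ with exceptional divisor $\tilde E \subset \tilde Y$, and to set $\tilde\pi := \pi\circ\sigma$. By the classical identity between pointwise Lelong numbers and generic Lelong numbers along exceptional divisors of point blow-ups, one has
\[
\nu(\varphi,y) = \nu(\pi^*\varphi,y) = \nu(\tilde\pi^*\varphi,\tilde E),
\]
with the analogous identity for $\psi$. Applying (II$'$) to the divisor $\tilde E\subset \tilde Y$ then yields $\nu(\varphi,y) = \nu(\psi,y)$.

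The only genuine obstacle is to invoke correctly the two classical facts used above -- upper semi-continuity of Lelong numbers and their compatibility with point blow-ups -- with the right normalizations. All of the substantive pluripotential-theoretic content is then black-boxed in \cite[Theorem~A]{BFJ08}.
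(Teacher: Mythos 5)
Your proposal is correct and follows essentially the same route as the paper: the paper likewise defers all the substantive pluripotential theory to \cite[Theorem~A]{BFJ08} and handles the translation between pointwise Lelong numbers on modifications and generic Lelong numbers along prime divisors via exactly the point–blow-up identity $\nu(u,y)=\nu(p_y^*u,E_y)$ together with Siu semicontinuity (see Proposition~\ref{rmk:mistoLelong} and the remark following it). The one caveat is that the implication (I)$\Rightarrow$(II$'$) is not a formal consequence of the valuative criterion for integrability alone, but also requires recovering the Lelong numbers from the multiplier ideals via $\nu(\varphi,y)=\lim_k k^{-1}\nu(\mathcal{I}(k\varphi),y)$ (Ohsawa--Takegoshi plus a Fubini--Tonelli argument), which the paper isolates as Proposition~\ref{rmk:mistoLelong}; your black-boxing of \cite[Theorem~A]{BFJ08} is legitimate only if it is understood to include this ingredient.
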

In the above statement $\nu(\varphi , y)$ is the Lelong number of $\varphi \circ \pi$ at $y$, in local coordinates defined by 
\[
\nu(\varphi , y)=\nu^{\pi}(\varphi , y):= \sup \left\{\,c \geq 0 : \varphi \circ \pi(z) \leq c \log \|z - y\| + O(1) \textup{ near } y\,\right\}\,.
\]

Given a prime divisor $Z$ of $Y$, the \emph{generic Lelong number} of $\varphi$ along $Z$ is defined as:
\[
\nu(\varphi , Z) = \inf_{z \in Z}\nu(\varphi , z)\,.
\]

Due to Siu's semicontinuity theorem, for a set $S \subseteq Z$ of measure zero,  we have that $\nu(\varphi , z) = \nu(\varphi , Z)$ for $z \in Z \setminus S$, motivating the terminology.

Since we work with smooth models $Y$, for a coherent ideal $\mathcal{J} \subseteq \mathcal{O}_X$ one can talk about $\nu(\mathcal{J}, y)$ ($\nu(\mathcal{J}, Z)$) as the minimum vanishing order of $f_j \circ \pi$ at $y$ (along $Z$) for a finite set of generators $\{f_j\}_j$ of $\mathcal{J}_y$ ($\mathcal{J}_z$ for some  $z \in Z$). Moreover, one can see that $\nu(\mathcal{J}, y) := \nu(\mathcal{J}, E_y)$, where  $E_y$ is the exceptional divisor of $p_y: \Bl_{\{y\}}Y \to Y$, the blowing up of $Y$ at $y$.

The following result is implicit in \cite{BFJ08}, and clarifies the relationship between multiplier ideal sheaves and Lelong numbers in Theorem~\ref{thm: Lelong_characterization}. We give a detailed sketch of the argument for the convenience of the reader:

\begin{prop}\label{rmk:mistoLelong}
Let $\pi:Y\rightarrow X$ be a projective modification with $Y$ smooth, and $y\in Y$. For $\varphi\in \PSH(X,\theta)$ we have
\begin{equation}\label{eq: Lelong_L^2_formula}
\nu(\varphi ,y)=\lim_{k\to\infty}\frac{1}{k}\nu(\mathcal{I}(k\varphi),y)\,.
\end{equation} 
\end{prop}
\begin{proof}
That $\nu(\varphi ,y)\geq \frac{1}{k}\nu(\mathcal{I}(k\varphi),y)$ follows from the fact that the local potential with singularity governed by $\frac{1}{k} \mathcal{I}(k \varphi)$ is always less singular than $\varphi$ (by the Ohsawa--Takegoshi theorem). Taking $\varlimsup$, we get that the left hand side is greater than $\nu(\varphi ,y) \geq \varlimsup_{k\to\infty}\frac{1}{k}\nu(\mathcal{I}(k\varphi))$.

For the reverse inequality, we start with noticing that $\nu(\varphi  \circ p_y, z) \geq \nu (\varphi ,y)$ for any $z \in E_y$. Indeed, Lelong numbers can only increase under pullbacks. In particular, $\nu(\varphi,{E_y}) \geq \nu(\varphi , y)$. That $\varliminf_{k\to\infty} \frac{1}{k} \nu(\mathcal{I}(k \varphi),{E_y}) \geq \nu(\varphi,{E_y})$, follows from an application of Fubini--Tonelli's theorem, as elaborated in the proof of \cite[(5.3)]{BFJ08}.
\end{proof}

\begin{remark} That \Rom{1} implies \Rom{2} in Theorem~\ref{thm: Lelong_characterization} is seen to follow from \eqref{eq: Lelong_L^2_formula}. The reverse direction now follows from the local result \cite[Theorem~A]{BFJ08}. More broadly, the reverse direction is the consequence of the valuative criteria for integrability (see \cite[Theorem~10.12]{Bo17} and its proof).
\end{remark}

\begin{coro}\label{cor:charrelmislelong}
Let $\varphi,\psi\in \PSH(X,\theta)$. Then the following are equivalent: \vspace{0.15cm}\\
\Rom{1} $\varphi\preceq_{\veq} \psi$. \vspace{0.15cm}\\
\Rom{2} $\nu(\varphi,y)\geq\nu(\psi,y)$ for any projective modification $\pi:Y\rightarrow X$, with $Y$ smooth, and $y\in Y$.
\end{coro}
\begin{proof}
Assume that \Rom{1} holds. Then \Rom{2} holds by Proposition~\ref{rmk:mistoLelong}. Conversely, assume that \Rom{2} holds. Then $\max\{\varphi,\psi\}\simeq_{\veq} \psi$ by Theorem~\ref{thm: Lelong_characterization}, and the  fact  that $\nu(\max(\varphi,\psi),y) = \min(\nu(\psi,y),\nu(\varphi,y))$ \cite[Corollary~2.10]{Bo17}. Hence
$\varphi\preceq_{\veq}\max(\varphi,\psi)\simeq_{\veq} \psi$ as desired.
\end{proof}

As we saw in the above argument, the class of potentials $\chi$ satisfying $\chi \preceq_{\veq} \varphi$ are stable under taking $\max$, hence we can introduce the notion of an envelope with respect to $\veq$-singularity:
\begin{equation}\label{eq: PI_def}
\begin{split}
\PrIv[\varphi]:=&\usc \left(\sup \left\{ \,\psi\in \PSH(X,\theta):  \psi\leq 0\,, \psi\preceq_{\veq} \varphi
\,\right\} \right)\\
=&\usc \left(\sup \left\{\,\max\{\psi, \varphi - \sup_X \varphi\}: \psi\in \PSH(X,\theta)\,, \psi\leq 0\,, \psi\preceq_{\veq} \varphi \,\right\} \right)\\
=&\usc \left(\sup \left\{\,\psi\in \PSH(X,\theta): \psi\leq 0\,, \psi\simeq_{\veq} \varphi
\,\right\} \right)\,.
\end{split}
\end{equation}
The above envelope should be compared with the well-known envelope with respect to singularity type (going back to \cite{RWN14} and \cite{RS05} in the local case):
\[
P[\varphi] := \usc\left(\sup \left\{\,v \in \PSH(X,\theta) : [v]=[\varphi] \textup{ and } v \leq 0 \,\right\}\right)\,.
\]
We refer the reader to \cite{RWN14}, \cite{DDNL2}, \cite{DDNL5} for basic properties of $P[u]$. 


\begin{definition}Recall that $u \in \PSH(X,\theta)$ is a \emph{model potential} if $u = P[u]$. Also, the singularity $[u]$ of a model potential $u$ is called a \emph{model singularity type}. Analogously, a potential $\varphi\in \PSH(X,\theta)$ is called \emph{$\veq$-model} if $\varphi=\PrIv[\varphi]$. The singularity type $[\varphi]$ of an $\veq$-model potential $\varphi$ is called an \emph{$\veq$-model singularity type}. 
\end{definition}

We begin to discuss the parallel between the above notions:

\begin{prop}\label{prop:Qprojection}
 Let $\varphi\in \PSH(X,\theta)$. Then \vspace{0.15cm}\\
\Rom{1} $\PrIv[\varphi]\in \PSH(X,\theta)$ is a model potential ($P[\PrIv[\varphi]]=\PrIv[\varphi]$), moreover $\PrIv[\varphi]\geq P[\varphi]$.
In particular, all $\mathcal{I}$-model potentials are also model potentials.
\vspace{0.15cm}\\
\Rom{2} $\varphi\simeq_{\veq} \PrIv[\varphi]$. In particular, $P[\PrIv[\varphi]]_\mathcal I=\PrIv[\varphi]$, and the $\usc$ is unnecessary in \eqref{eq: PI_def}.
\end{prop}

According to the above result $\PrIv[u] = u$ implies $P[u] = u$. As a result, if $u$ is $\veq$-model then it is automatically model, but not vice versa.

\begin{proof}
\Rom{1} $\PrIv[\varphi]=P[\PrIv[\varphi]]$ because $a\PrIv[\varphi]$ and $aP[\PrIv[u]]= a\lim_{C \to \infty}P(\PrIv[\varphi]+C,0)$ have that same multiplier ideal sheaves for any $a\geq 0$. Indeed, multiplier ideal sheaves are stable under taking increasing limits \cite{GZ15}.
Since $\varphi\simeq_{\veq} P[\varphi]$, we get $\PrIv[\varphi]\geq P[\varphi]$.

\Rom{2} By Choquet's lemma we can take $\psi_j\in \PSH(X,\theta)$ $(j\geq 0)$, such that $\psi_j\leq 0$, $\psi_j\sim_{\veq} \varphi$ and that $\psi_j$ increases to $\PrIv[\varphi]$ a.e.. It follows from Guan--Zhou's strong openness theorem \cite{GZ15} that $\varphi\sim_{\veq} \PrIv[\varphi]$.
\end{proof}

\begin{example}
Following \cite[Example~6.10]{BBJ21}, we give an example showing that not all model potentials are $\mathcal{I}$-model. Consider $X=\mathbb{P}^1$ and let $\omega$ be the Fubini--Study form on $X$. Let $K\subseteq \mathbb{P}^1$ be a Cantor set. Then $K$ carries an atom-free probability measure, whose potential $v$ has zero Lelong numbers. Then the pull-back of $v$ to any proper modification of $X$ has zero Lelong numbers as well \cite[Corollary~2.4]{Kim15}. Hence $\PrIv[v]=0$.
But $v$ does not have full mass. Hence $P[v]\neq 0$, i.e., $P[v]$ is model but not $\mathcal I$-model.
\end{example}

\begin{prop}\label{prop: anal_sing_type_envelope} Assume that $\psi \in \PSH(X,\theta) \in \mathcal A$ (See Definition~\ref{def: anal_alg_sing}). Then
 \[
 \PrIv[\psi]=P[\psi]\,.
 \]
 In particular, $P[\psi]$ is $\veq$-model, and has the same singularity type as $\psi$.
\end{prop}

\begin{proof}
First one notices that $[\PrIv[\psi]]=[\psi]$ for $[\psi]$ analytic. This is a consequence of \cite[Theorem~4.3]{Kim15}. It can also be seen after an analysis on the pullback $\pi:Y \to X$, where $\pi$ is the  normalized blowup of the ideal of $\psi$, precomposed with a log resolution. 

Since $[\PrIv[\psi]]=[\psi]$, we get $\PrIv[\psi] \leq P[\psi]$, with the reverse being true by Proposition~\ref{prop:Qprojection}\Rom{1}. 
\end{proof}

\begin{lemma}\label{lma:decvmodel}
Suppose that $\{\varphi_j\}_j\in \PSH(X,\theta)$ and $\varphi \in \PSH(X,\theta)$ are model potentials. \vspace{0.15cm}\\
\Rom{1} If $\varphi_j \searrow \varphi$ and $\varphi_j$ are $\veq$-model, then $\varphi$ is $\veq$-model as well.\vspace{0.15cm}\\
\Rom{2} If $\varphi_j \searrow \varphi$ and $\int_X \theta_{\varphi}^n > 0$, then $\PrIv[\varphi_j] \searrow \PrIv[\varphi]$.\vspace{0.15cm}\\
\Rom{3} If $\varphi_j \nearrow \varphi$ a.e. and $\int_X \theta_{\varphi}^n > 0$, then $\PrIv[\varphi_j] \nearrow \PrIv[\varphi]$ a.e. as well. In particular, if  the $\varphi_j$ are additionally $\veq$-model, then $\varphi$ is  $\veq$-model as well.
\end{lemma}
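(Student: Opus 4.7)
All three parts rest on the monotonicity principle $\varphi \preceq_\veq \psi \Rightarrow \PrIv[\varphi] \leq \PrIv[\psi]$, which is immediate from the first characterization of $\PrIv[\cdot]$ in \eqref{eq: PI_def}, combined with continuity of multiplier ideal sheaves (equivalently, of generic Lelong numbers via Proposition~\ref{rmk:mistoLelong}) along monotone sequences of model potentials. For (i), since each $\varphi_j = \PrIv[\varphi_j] \leq 0$, the decreasing limit $\varphi$ lies in $\PSH(X,\theta)$, is $\leq 0$, and is a candidate in its own $\veq$-envelope, giving $\varphi \leq \PrIv[\varphi]$. Conversely, any $\chi \leq 0$ with $\chi \preceq_\veq \varphi$ satisfies $\chi \preceq_\veq \varphi_j$ (from $\varphi \leq \varphi_j$), hence $\chi \leq \PrIv[\varphi_j] = \varphi_j$; taking the supremum over $\chi$ and letting $j \to \infty$ yields $\PrIv[\varphi] \leq \varphi$.

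For (ii), the same monotonicity forces $\PrIv[\varphi_j]$ to be $j$-decreasing, with limit $\tilde\varphi \leq 0$, and gives $\PrIv[\varphi] \leq \tilde\varphi$. The reverse inequality, by maximality of the $\veq$-envelope among $\leq 0$ potentials $\simeq_\veq \varphi$, reduces to proving $\tilde\varphi \simeq_\veq \varphi$. Invoking the Lelong-number criterion of Corollary~\ref{cor:charrelmislelong} together with $\nu(\PrIv[\varphi_j],y) = \nu(\varphi_j, y)$ (Proposition~\ref{prop:Qprojection}(ii)), the inequalities $\tilde\varphi \leq \PrIv[\varphi_j]$ and $\tilde\varphi \geq \varphi$ reduce the matter to the identity $\sup_j \nu(\varphi_j, y) = \nu(\varphi, y)$ on every projective modification $Y \to X$ and $y \in Y$, a continuity statement for generic Lelong numbers along the decreasing sequence of model potentials.

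For (iii), analogously $\PrIv[\varphi_j]$ is $j$-increasing with usc-regularized limit $\hat\varphi \leq \PrIv[\varphi]$, so the task is to prove the reverse inequality. I would proceed at the level of multiplier ideal sheaves: the sandwich
\[
\mathcal{I}(a\varphi_j) = \mathcal{I}(a\PrIv[\varphi_j]) \subseteq \mathcal{I}(a\hat\varphi) \subseteq \mathcal{I}(a\PrIv[\varphi]) = \mathcal{I}(a\varphi),
\]
combined with the strong openness theorem of Guan--Zhou \cite{GZ15} (which, for the increasing sequence $\varphi_j \nearrow \varphi$ with positive-mass limit, yields $\bigcup_j \mathcal{I}(a\varphi_j) = \mathcal{I}(a\varphi)$), forces $\mathcal{I}(a\hat\varphi) = \mathcal{I}(a\varphi)$ for every $a > 0$, i.e.\ $\hat\varphi \simeq_\veq \varphi$. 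Proposition~\ref{prop:Qprojection}(ii) then gives $\PrIv[\hat\varphi] = \PrIv[\varphi]$, so once $\hat\varphi$ is shown to be $\veq$-model (an increasing-limit analogue of (i), which also uses positive mass) we conclude $\hat\varphi = \PrIv[\varphi]$. The ``In particular'' clause is automatic: if each $\varphi_j = \PrIv[\varphi_j]$, then $\hat\varphi = \varphi$, so $\varphi = \PrIv[\varphi]$.

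\textbf{Main obstacle.} The genuine technical work is the continuity of $\PrIv[\cdot]$ along monotone sequences of model potentials---in (ii) packaged as a statement about generic Lelong numbers, in (iii) as one about multiplier ideal sheaves. The positive-mass hypothesis $\int_X \theta_\varphi^n > 0$ is indispensable: without it, bounded truncations such as $\max(\varphi, -j) \searrow \varphi$ exhibit catastrophic failure of the required continuity, and there is no hope of controlling the envelope in the limit.
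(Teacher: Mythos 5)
Your part (i) is correct and coincides with the paper's argument. The problems are in (ii) and (iii), where in both cases you have correctly isolated the genuine difficulty and then left it unproved. In (ii) you reduce the reverse inequality to the identity $\sup_j\nu(\varphi_j,y)=\nu(\varphi,y)$ on every modification, i.e.\ to continuity of generic Lelong numbers along the decreasing sequence. That is precisely the hard content: Lelong numbers can jump up under decreasing limits (think of $\max(\log|z|,-j)\searrow\log|z|$), it does not follow from anything you cite, and it is not even implied by the conclusion of the lemma --- $\PrIv[\varphi_j]\searrow\PrIv[\varphi]$ only yields $\lim_j\nu(\varphi_j,y)\le\nu(\varphi,y)$ --- so you have reduced the statement to something at least as strong. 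In (iii) the multiplier-ideal sandwich together with strong openness does give $\hat\varphi\simeq_{\veq}\varphi$, but you then need $\hat\varphi=\usc\big(\sup_j\PrIv[\varphi_j]\big)$ to be $\veq$-model, which you defer to ``an increasing-limit analogue of (i)''. No such analogue is available by the method of (i): there, $\varphi\le\varphi_j$ gives $\varphi\preceq_{\veq}\varphi_j$ and hence $\PrIv[\varphi]\le\PrIv[\varphi_j]=\varphi_j$; for an increasing sequence the comparison $\psi_j\preceq_{\veq}\psi$ points the wrong way and yields nothing. What you need is exactly the ``in particular'' clause of (iii), which the paper obtains as a \emph{consequence} of the convergence $\PrIv[\varphi_j]\nearrow\PrIv[\varphi]$, not as an input --- so as written your (iii) is circular.

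The missing device, used for both (ii) and (iii), is the convex-combination domination of \cite[Lemma~4.3]{DDNL5}: since $\int_X\theta_{\varphi_j}^n\to\int_X\theta_{\varphi}^n>0$, there exist $\alpha_j\searrow 0$ and $v_j\in\PSH(X,\theta)$ with $v_j\le 0$ such that $(1-\alpha_j)\varphi_j+\alpha_j v_j\le\varphi$ in case (ii) (resp.\ $(1-\alpha_j)\varphi+\alpha_j v_j\le\varphi_j$ in case (iii)). Applying $\PrIv[\cdot]$, which is superadditive under convex combinations by Theorem~\ref{thm: Lelong_characterization} and the additivity of Lelong numbers, gives
\[
(1-\alpha_j)\PrIv[\varphi_j]+\alpha_j\PrIv[v_j]\;\le\;\PrIv[\varphi]
\]
(resp.\ the analogous inequality with $\varphi$ and $\varphi_j$ interchanged), and letting $j\to\infty$ using $\alpha_j\to 0$ and $\sup_X\PrIv[v_j]=0$ produces the reverse inequality in both parts without ever establishing continuity of Lelong numbers. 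This quantitative one-sided comparison, made possible by the positive-mass hypothesis, is the step your proposal lacks.
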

\begin{proof} 
First we prove \Rom{1}. Note that $\PrIv[\varphi]\simeq_{\veq} \varphi \preceq_{\veq} \varphi_j$ for any $j\geq 1$. Hence by Proposition~\ref{prop:Qprojection}, $\PrIv[\varphi]\leq \PrIv[\varphi_j]=\varphi_j$.
Letting $j\to\infty$, we obtain $\PrIv[\varphi]\leq \varphi$. 
Since $\varphi\leq \varphi_j\leq 0$, we know that $\varphi\leq \PrIv[\varphi]$, hence $\varphi$ is $\veq$-model.

We deal with \Rom{2}. Since $\int_X \theta_{\varphi_j}^n \searrow \int_X \theta^n_\varphi>0$ \cite[Proposition 4.8]{DDNL5}, by \cite[Lemma 4.3]{DDNL5}  there exists $\alpha_j \searrow 0$ and $v_j := P(\frac{1}{\alpha_j}\varphi + (1 - \frac{1}{\alpha_j}) \varphi_j) \in \PSH(X,\theta)$ satisfying $(1-\alpha_j)\varphi_j + \alpha_j v_j \leq \varphi$. Taking $\PrIv[\cdot]$ we arrive at 
\[
(1-\alpha_j)\PrIv[\varphi_j] + \alpha_j \PrIv[v_j] \leq \PrIv[(1-\alpha_j)\varphi_j + \alpha_j v_j] \leq \PrIv[\varphi]\,, 
\]
where in the first inequality we have used $\PrIv[\psi] \sim_\veq \psi$, Theorem~\ref{thm: Lelong_characterization}, and additivity of Lelong numbers. 
Since $\{\varphi_j\}_j$ is decreasing, so is $\{\PrIv[\varphi_j]\}_j$, hence $w:= \lim_j \PrIv[\varphi_j] \geq \PrIv[\varphi]$ exists. Since $\alpha_j \to 0$ and $\sup_X \PrIv[v_j]=0$, comparison with the above  gives $w = \PrIv[\varphi]$.

Dealing with \Rom{3} is similar. Since $\int_X \theta_{\varphi_j}^n \nearrow \int_X \theta^n_\varphi>0$ \cite[Theorem 2.3]{DDNL2}, by \cite[Lemma 4.3]{DDNL5} there exists $\alpha_j \searrow 0$ and $v_j := P(\frac{1}{\alpha_j}\varphi_j + (1 - \frac{1}{\alpha_j}) \varphi) \in \PSH(X,\theta)$ satisfying $(1-\alpha_j)\varphi + \alpha_j v_j \leq \varphi_j$. Taking $\PrIv[\cdot]$ we arrive at 
$$(1-\alpha_j)\PrIv[\varphi] + \alpha_j \PrIv[v_j]  \leq  \PrIv[(1-\alpha_j) \varphi + \alpha_j v_j] \leq \PrIv[\varphi_j]\, ,$$
where in the first inequality we have used that $\PrIv[\psi] \sim_\veq \psi$, Theorem~\ref{thm: Lelong_characterization}, and additivity of Lelong numbers. 
Since $\{\varphi_j\}_j$ is increasing, so is $\{\PrIv[\varphi_j]\}_j$, hence $w:= \lim_j \PrIv[\varphi_j] \leq \PrIv[\varphi]$ exists. Since $\alpha_j \to 0$ and $\sup_X \PrIv[v_j]=0$, comparison with the above  yields $w = \PrIv[\varphi]$.
\end{proof}

\begin{remark}\label{rmk:naprecsim}
The condition $\varphi\simeq_{\veq} \psi$ is strictly stronger than requiring $\varphi$ and $\psi$ have the same Lelong number everywhere on $X$ (See \cite[Example~2.5]{Kim15}).
As we will see in the next section, in terms of valuations, $\varphi \preceq_{\veq} \psi$ means exactly that the induced non-Archimedean functions on the space of divisorial valuations $X^{\mathrm{div}}_{\mathbb{Q}}$ satisfy $\varphi^{\NA} \leq \psi^{\NA}$.
In particular, $\varphi^{\NA} = \psi^{\NA}$ is equivalent to $\varphi\simeq_{\veq} \psi$.
See \cite{BFJ08} for further details.
\end{remark}

\paragraph{Algebraic approximation of $\veq$-model potentials.} 
For the remained of this subsection, we return to the context of an ample line bundle $L \to X$, with hermitian metric $h$, whose first Chern form is equal to the K\"ahler form $\omega$. Let us recall the following well known result, originated from \cite[Theorem~2.2.1]{DPS01}:

\begin{theorem}\label{thm:qeqsingapp}
Let $u\in \PSH(X,\omega)$. Let $u_k\in \PSH(X,\omega)$ be the partial Bergman kernel of $V^u_k:=H^0(X,L^{2^k + k_0} \otimes \mathcal{I}(2^k u))$: 
\begin{equation}\label{eq: quasi_eq_sing_def}u_k = \frac{1}{2^{k} + k_0} \sup_{\substack{s \in V^u_k\,,\\ \Hilb_{2^k}(u)(s,s) \leq 1}} \log h^{2^{k} + k_0}(s,s)\,,
\end{equation}
where $\Hilb_{2^k}(u)$ is the Hilbert map of $L^{2^k}$ with twisting $T = L^{k_0}$ (see Section~\ref{subsec:quantization}). Then $u_k$ has algebraic singularity type (See Definition~\ref{def: anal_alg_sing}), and for some $k_0= k_0(X,L,\omega)$ the following hold:\\
\Rom{1} $u_k$ converges to $u$ in $L^1$ as $k\to\infty$.\\
\Rom{2} $[u_{k+1}] \preceq [u_k]$.\\
\Rom{3} for all $m > 0$ and $k > m$ one can find $\delta_{k,m}>1$ such that  $ \mathcal{I}(m \delta_{k,m} u_k)\subseteq \mathcal{I}(mu)$ and $\delta_{k,m} \searrow 1$ as $k \to \infty$.
\end{theorem}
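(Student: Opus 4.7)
The plan is to execute the dyadic Demailly approximation scheme \cite{DPS01}, fixing a single universal $k_0=k_0(X,L,\omega)$ large enough that $L^{k_0}$ both activates the Ohsawa--Takegoshi extension theorem at every scale and is globally generated on $X$. Writing $u_k$ in terms of a $\Hilb_{2^k}(u)$-orthonormal basis $\{s_{k,j}\}_{j=1}^{N_k}$ of $V_k^u$,
\[
u_k = \frac{1}{2^k+k_0}\log \sum_{j=1}^{N_k} h^{2^k+k_0}(s_{k,j}, s_{k,j}),
\]
the algebraic singularity type assertion is immediate: since $L$ is ample, $X$ is projective by Kodaira embedding, each $s_{k,j}$ is an algebraic section by Serre's GAGA, and the rational constant $c=2/(2^k+k_0)\in\mathbb{Q}_+$ fits $u_k$ into Definition~\ref{def: anal_alg_sing}.

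For (i), I would establish the two-sided almost-everywhere bounds
\[
u-\frac{C_1}{2^k}\leq u_k \leq u+\varepsilon_k,\qquad \varepsilon_k\to 0.
\]
The lower bound is a direct application of Ohsawa--Takegoshi at a generic point $x\in X$: one produces a section $s\in V_k^u$ with bounded $\Hilb_{2^k}(u)$-norm and $h^{2^k+k_0}(s,s)(x)\geq c\, e^{2^k u(x)}$, which after normalization gives $u_k(x)\geq u(x)-C_1/2^k$. The upper bound follows from the submean-value inequality applied to each section of $V_k^u$ on a ball $B(x,r_k)$ with $r_k\to 0$ chosen slowly enough, combined with the Riemann--Roch dimension estimate $N_k=O(k^n)$ and the upper semicontinuity of $u$. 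Together with a uniform envelope bound $u_k\leq \sup_X u+C/k$, these yield $u_k\to u$ in $L^1$ by dominated convergence.

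For (ii), the dyadic exponent $2^k$ enables the standard squaring/subadditivity argument: the Demailly--Ein--Lazarsfeld subadditivity gives $\mathcal{I}(2^{k+1}u)_x\subseteq \mathcal{I}(2^k u)_x\cdot \mathcal{I}(2^k u)_x$ germ-wise, so local germs of sections of $V_{k+1}^u$ factor through products of pairs of sections from $V_k^u$. A global lifting step using the base-point-free system of $L^{k_0}$ then produces a uniform domination $u_{k+1}\leq u_k + C$ with $C$ independent of $k$, yielding $[u_{k+1}]\preceq [u_k]$. For (iii), the same subadditivity plus Ohsawa--Takegoshi delivers the Demailly-type quantitative comparison $\mathcal{I}(\lambda u_k)\subseteq \mathcal{I}(\lambda(1-\varepsilon_k)u)$ valid for all $\lambda>0$ and some sequence $\varepsilon_k\to 0$ (essentially \cite[Theorem~2.2.1]{DPS01}). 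Setting $\delta_{k,m}:=1/(1-\varepsilon_k)\searrow 1$, one then computes
\[
\mathcal{I}(m\delta_{k,m} u_k)\subseteq \mathcal{I}\bigl(m\delta_{k,m}(1-\varepsilon_k)u\bigr)=\mathcal{I}(mu),
\]
as required.

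The principal obstacle I expect is pinning down a single universal $k_0$ that simultaneously secures the positivity hypotheses of Ohsawa--Takegoshi (used in (i) and (iii)) and the global generation needed for the local-to-global step in (ii). Once this uniform threshold is fixed, the three conclusions follow from the classical DPS machinery, with Guan--Zhou's strong openness \cite{GZ15} available to sharpen the asymptotic ideal comparison in (iii) if a direct Demailly-type estimate proves insufficient.
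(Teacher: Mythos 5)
Your proposal is correct and follows essentially the same route as the paper, which itself only sketches this result by citing Steps 1--2 of the proof of \cite[Theorem~7.1]{GZ05} for \Rom{1}--\Rom{2} and the comparison-of-integrals method of \cite[Theorem~2.2.1]{DPS01} (written out in \cite[Lemma~3.2]{Cao14}) for \Rom{3}. The only cosmetic imprecision is in \Rom{3}: the loss $\varepsilon$ in the Demailly--Peternell--Schneider estimate depends on $m$ as well as on $k$, so your $\varepsilon_k$ should really be $\varepsilon_{k,m}$ --- harmless here, since the theorem's $\delta_{k,m}$ is allowed to depend on $m$.
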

\begin{proof}[Sketch of proof] \Rom{1} follows from Step 1 in the proof of \cite[Theorem~7.1]{GZ05}. By Step 2 in the proof of \cite[Theorem~7.1]{GZ05} we get that for sufficiently high $k_0$ the sequence $[u_k]$ is decreasing, satisfying \Rom{2}. Condition~\Rom{3} is a consequence of the \emph{comparison of integrals method} of \cite[Theorem~2.2.1]{DPS01} (see \cite[Lemma~3.2]{Cao14} where this is written out explicitly).
\end{proof}
Following terminology of \cite{Cao14}, approximations $\{u_j\}_j$ of $u\in \PSH(X,\omega)$ of the type \eqref{eq: quasi_eq_sing_def}, satisfying all three conditions in Theorem~\ref{thm:qeqsingapp} will be referred to as \emph{quasi-equisingular approximations} of $u$. 

We arrive at the following result, characterizing the difference between model and $\veq$-model potentials in terms of $d_\mathcal S$-approximability via quasi-equisingular sequences:
\begin{theorem}\label{thm:anaappvmodel}
Let $\varphi \in \PSH(X,\omega)$ be a model potential $(P[\varphi]=\varphi)$ with $\int_X \omega_{\varphi}^n > 0$. Then $\varphi$ is $\veq$-model ($\PrIv[\varphi]=\varphi$) if and only if $[\varphi]$ is the $d_\mathcal{S}$-limit of a quasi-equisingular approximation $[\varphi_j] \in \mathcal Z$.
\end{theorem}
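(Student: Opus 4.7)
The plan is to extract, from any quasi-equisingular approximation $\{u_k\}$ of $\varphi$ produced by Theorem~\ref{thm:qeqsingapp}, the projection $v_k:=P[u_k]$. Since $[u_k]\in\mathcal{Z}\subseteq\mathcal{A}$, Proposition~\ref{prop: anal_sing_type_envelope} gives $v_k=\PrIv[u_k]$, so each $v_k$ is $\veq$-model with $[v_k]=[u_k]\in\mathcal{Z}$. Moreover, since $u_k\geq \varphi - C/k$ (Ohsawa--Takegoshi) and $[u_{k+1}]\preceq [u_k]$, the comparison principle for model potentials (model potentials with comparable singularity types are pointwise comparable) yields $\varphi\leq v_{k+1}\leq v_k$. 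Hence $v_k\searrow v_\infty\geq \varphi$, and $d_\mathcal{S}([u_k],[\varphi])=d_\mathcal{S}([v_k],[\varphi])$ throughout.

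For the forward direction, assume $\PrIv[\varphi]=\varphi$. The key step is to prove $v_\infty\preceq_{\veq}\varphi$. For any $m>0$ and $k>m$, property \Rom{3} of Theorem~\ref{thm:qeqsingapp} combined with $v_k\simeq_{\veq} u_k$ (Proposition~\ref{prop:Qprojection}\Rom{2}) and $v_\infty\leq v_k$ gives
\[
\mathcal{I}(m\delta_{k,m}v_\infty)\subseteq \mathcal{I}(m\delta_{k,m}v_k)=\mathcal{I}(m\delta_{k,m}u_k)\subseteq \mathcal{I}(m\varphi).
\]
Since $v_\infty\leq 0$ and $\delta_{k,m}\searrow 1$, Guan--Zhou's strong openness theorem yields $\mathcal{I}(mv_\infty)=\bigcup_k \mathcal{I}(m\delta_{k,m}v_\infty)\subseteq \mathcal{I}(m\varphi)$. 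As $v_\infty\leq 0$ and $\varphi$ is $\veq$-model, this forces $v_\infty\leq \PrIv[\varphi]=\varphi$, hence $v_\infty=\varphi$. The decreasing convergence $v_k\searrow\varphi$ with $\int_X\omega_\varphi^n>0$ then yields $\int_X\omega^j\wedge\omega_{v_k}^{n-j}\searrow \int_X\omega^j\wedge\omega_\varphi^{n-j}$ for all $j$ by \cite[Theorem~2.3]{DDNL2}. Since $\max(v_k,\varphi)=v_k$, the double inequality from the introduction gives $d_\mathcal{S}([u_k],[\varphi])\to 0$.

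For the reverse direction, assume a quasi-equisingular approximation with $d_\mathcal{S}([u_k],[\varphi])\to 0$ exists and set up the same $v_k\searrow v_\infty\geq \varphi$. The double inequality combined with $\max(v_k,\varphi)=v_k$ forces $\int_X\omega_{v_k}^n\to \int_X\omega_\varphi^n>0$, and monotone convergence \cite[Theorem~2.3]{DDNL2} then gives $\int_X\omega_{v_\infty}^n=\int_X\omega_\varphi^n>0$. Since decreasing limits of model potentials are again model (from the sup-characterization of $P[\cdot]$ and the model-comparison principle), $v_\infty$ is model; Lemma~\ref{lma:decvmodel}\Rom{1} then makes $v_\infty$ $\veq$-model. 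Applying the double inequality once more yields $d_\mathcal{S}([v_k],[v_\infty])\to 0$, so the triangle inequality gives $d_\mathcal{S}([v_\infty],[\varphi])=0$. In positive mass this forces $\varphi=P[\varphi]=P[v_\infty]=v_\infty$, and therefore $\varphi=\PrIv[v_\infty]=\PrIv[\varphi]$ is $\veq$-model.

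The hardest step is the forward direction's passage from the quantitative ideal inclusions $\mathcal{I}(m\delta_{k,m}v_\infty)\subseteq \mathcal{I}(m\varphi)$ to the sharp inclusion $\mathcal{I}(mv_\infty)\subseteq \mathcal{I}(m\varphi)$; this relies on strong openness and on the uniformity $\delta_{k,m}\searrow 1$ in Theorem~\ref{thm:qeqsingapp}\Rom{3}. The remaining ingredients---model-potential comparison, monotone convergence of non-pluripolar products, and positive-mass rigidity of $d_\mathcal{S}$---are standard results from \cite{DDNL2,DDNL5} invoked as black boxes.
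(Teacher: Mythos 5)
Your overall strategy coincides with the paper's: both directions hinge on passing to the envelopes $v_k=P[u_k]=\PrIv[u_k]$, which decrease to some $v_\infty\geq\varphi$; the forward direction is settled by the chain of multiplier-ideal inclusions $\mathcal{I}(m\delta_{k,m}v_\infty)\subseteq\mathcal{I}(m\delta_{k,m}u_k)\subseteq\mathcal{I}(m\varphi)$ together with strong openness, and the reverse direction by mass rigidity for model potentials plus Lemma~\ref{lma:decvmodel}\Rom{1}. The reverse direction as you wrote it is sound (modulo citation bookkeeping).

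The one step whose justification does not hold up is the end of the forward direction: you deduce $\int_X\omega^j\wedge\omega_{v_k}^{n-j}\searrow\int_X\omega^j\wedge\omega_\varphi^{n-j}$ from the \emph{decreasing} convergence $v_k\searrow\varphi$ by citing \cite[Theorem~2.3]{DDNL2}. That result is a lower semicontinuity statement under convergence in capacity and is what makes masses converge along \emph{increasing} sequences; for decreasing sequences it only gives $\lim_k\int_X\omega^j\wedge\omega_{v_k}^{n-j}\geq\int_X\omega^j\wedge\omega_\varphi^{n-j}$, and the possible strict drop of mass along decreasing limits is precisely the phenomenon the whole theorem is about (compare the truncations $\max(u,-j)\searrow u$ of a potential without full mass). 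The paper closes the argument differently: since $[u_{k+1}]\preceq[u_k]$, each mixed mass is $k$-decreasing by \cite[Theorem~1.1]{WN19} and hence convergent, so \cite[Lemma~3.4]{DDNL5} shows $[u_k]$ is $d_\mathcal{S}$-Cauchy; completeness of $(\mathcal S_\delta,d_\mathcal S)$ then produces a limit $[\varphi']$ with $\varphi'=\lim_kP[u_k]=v_\infty$ by \cite[Corollary~4.7]{DDNL5}, and only afterwards is $\varphi'=\varphi$ identified via the multiplier-ideal argument. Your proof can be repaired either by substituting this Cauchy-plus-completeness step, or by invoking the convergence of masses along decreasing sequences of \emph{model} potentials with positive-mass model limit (\cite[Proposition~4.8]{DDNL5}, as used in the proof of Lemma~\ref{lma:decvmodel}), together with a word on why the mixed masses follow suit --- but not by \cite[Theorem~2.3]{DDNL2} alone.
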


\begin{remark}\label{rem: anal_sing_d_S_approx} Due to this theorem and Proposition~\ref{prop: anal_sing_type_envelope}, the class of analytic singularity types $\mathcal A$ are $d_\mathcal S$-approximable by algebraic singularity types of $\mathcal Z$ (in the presence of positive mass), already proving the (easy) equivalences between  \Rom{4}  and  \Rom{5}  in Theorem~\ref{main_thm: arith_plurip_vol_deficit}.
\end{remark}

\begin{proof} 
Let $\varphi \in \PSH(X,\omega)$ be an $\veq$-model potential with $\int_X \omega_\varphi^n >0$. Let $\varphi_k$ be the corresponding quasi-equisingular approximation of $\varphi$. By \cite[Theorem~1.1]{DDNL2} the sum $\sum_{j=0}^n \int_X \omega^{n-j}\wedge \omega_{\varphi_k}^j$ is decreasing in $k$, hence converges. By \cite[Lemma~3.4]{DDNL5}, $\{[\varphi_k]\}_k \subseteq \mathcal{S}$ forms a $d_\mathcal{S}$-Cauchy sequence with $\int_X \omega_{\varphi_j}^n \geq \int_X \omega_{\varphi}^n>0$. Hence by \cite[Theorem~1]{DDNL5}, the sequence  $d_\mathcal S$-converges to $[\varphi'] \in \mathcal{S}$, where $\varphi' = P[\varphi]'=\lim_j P[\varphi_j] \geq P[\varphi]=\varphi$ \cite[Corollary~4.7]{DDNL5}. 
We claim that 
\begin{equation}\label{eq: varphi_eq_varphi'}
\varphi=\varphi'\,.
\end{equation}
By Lemma~\ref{prop: anal_sing_type_envelope} and Lemma~\ref{lma:decvmodel}\Rom{1}, both $\varphi$ and $\varphi'$ are $\veq$-model, so it suffices to show that  $\mathcal{I}(m\varphi)=\mathcal{I}(m\varphi')$
for any $m > 0$. Since $\varphi \leq \varphi'$, the non-trivial inclusion is $\mathcal{I}(m\varphi)\supseteq \mathcal{I}(m\varphi')$. 
To prove this, by the last statement of the above theorem, we notice that
\[
\mathcal{I}(m\varphi)\supseteq \mathcal{I}(m \delta_{k,m} \varphi_k)=\mathcal{I}(m \delta_{k,m} P[\varphi_k]) \supseteq \mathcal{I}(m \delta_{k,m} \varphi')\,.
\]
By the strong openness theorem \cite{GZ15}, we can let $k \to \infty$ to arrive at $\mathcal{I}(m\varphi)\supseteq \mathcal{I}(m\varphi')$, as desired. 

Conversely, let $\varphi_j$ be the quasi-equisingular approximation of $\varphi$ such that $d_\mathcal{S}([\varphi_j],[\varphi]) \to 0$. Taking envelopes, by Lemma~\ref{prop: anal_sing_type_envelope} we conclude that $\varphi_j' := P[\varphi_j] = \PrIv[\varphi_j]$ is pointwise decreasing, and $d_\mathcal{S}([\varphi_j],[\varphi]) = d_\mathcal{S}([\varphi'_j],[\varphi]) \to 0$. \cite[Lemma~3.6]{DDNL5} now gives that $\int_X \omega_{\varphi'}^n \searrow \int_X \omega_{\varphi}^n$. Since $\varphi = P[\varphi]$, $\lim_j \varphi'_j \geq \varphi$, and $\int_X \omega_\varphi^n >0$, by \cite[Theorem~3.12]{DDNL2} we obtain that $\lim_j \varphi'_j = \varphi$. 
Finally, Lemma~\ref{lma:decvmodel}\Rom{1} implies that $\varphi$ is $\veq$-model. 
\end{proof}

Before we proceed further, we recall that the conventions set at the beginning of Section~\ref{sec:prelim} guarantee that the leading order Riemann--Roch expansion takes the form $\hTL= \frac{V}{n!}k^n+\mathcal{O}(k^{n-1})$, where $T$ is an arbitrary line bundle. We recall the following result of Bonavero:
\begin{theorem}\label{thm: analytic_sing_type_formula}
Assume that $\varphi\in \PSH(X,\omega)$ has algebraic singularity type ($[\varphi] \in \mathcal Z$). Then
\[
\lim_{k\to\infty} \frac{\hTLm}{k^n}=\frac{1}{n!}\int_X \omega_{\varphi}^n\,.
\]
\end{theorem}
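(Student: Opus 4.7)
The plan is to pull $\varphi$ back to a log resolution of its singularity ideal, where it acquires purely divisorial singularities, and then reduce the desired identity to asymptotic Riemann--Roch for a $\mathbb Q$-line bundle on the resolution. Concretely, since $[\varphi]\in\mathcal Z$, there is a coherent ideal $\mathcal J\subseteq\mathcal O_X$ and $c\in\mathbb Q_+$ such that locally $\varphi=\frac{c}{2}\log|\mathcal J|^2+O(1)$; I would fix a log resolution $\pi\colon Y\to X$ of $\mathcal J$, so that $\pi^{-1}\mathcal J\cdot\mathcal O_Y=\mathcal O_Y(-D)$ for an SNC effective divisor $D$ on $Y$. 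Upstairs one then has $\pi^*\varphi=c\log|s_D|+O(1)$, with $s_D$ a defining section of $D$.

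First I would transport the cohomology on $X$ to $Y$ via the log resolution formula for multiplier ideals,
\[
\mathcal I(k\varphi)=\pi_*\mathcal O_Y\bigl(K_{Y/X}-\lfloor kcD\rfloor\bigr),
\]
together with relative local vanishing $R^q\pi_*\mathcal O_Y(K_{Y/X}-\lfloor kcD\rfloor)=0$ for $q>0$ (essentially relative Nadel vanishing, see e.g.\ Lazarsfeld, \emph{Positivity in Algebraic Geometry II}, \S9.4). The Leray spectral sequence and the projection formula then identify
\[
H^0\!\bigl(X,T\otimes L^k\otimes\mathcal I(k\varphi)\bigr)\;\cong\;H^0\!\Bigl(Y,\pi^*T\otimes K_{Y/X}\otimes\pi^*L^k\otimes\mathcal O_Y(-\lfloor kcD\rfloor)\Bigr).
\]

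Next I would analyze the asymptotics on $Y$. Setting $L':=\pi^*L-cD$ as a $\mathbb Q$-line bundle and using $\lfloor kcD\rfloor=kcD+O(1)$ together with the fact that adding any fixed line bundle to $kL'$ does not perturb the $k^n$-coefficient of $h^0$, asymptotic Riemann--Roch (equivalently, the definition of the $\mathbb Q$-volume) yields
\[
\lim_{k\to\infty}\frac{1}{k^n}\,h^0\!\Bigl(Y,\pi^*T\otimes K_{Y/X}\otimes\pi^*L^k\otimes\mathcal O_Y(-\lfloor kcD\rfloor)\Bigr)\;=\;\frac{\mathrm{vol}(L')}{n!}.
\]
It then remains to identify $\mathrm{vol}(L')$ with $\int_X\omega_\varphi^n$. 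The positive current $\pi^*\omega+\ddc\pi^*\varphi$ admits a Siu-type decomposition $\omega'+c[D]$, with $\omega'$ a smooth form on $Y$ representing $c_1(L')$ that is bounded and semipositive on $Y\setminus D$. Hence $\omega'$ gives $L'$ a bounded representative with potentials of minimal singularities, so by \cite{BEGZ10} one has $\int_Y(\omega')^n=\mathrm{vol}(L')$. Birational invariance of the non-pluripolar product then gives
\[
\int_X\omega_\varphi^n=\int_Y(\pi^*\omega)_{\pi^*\varphi}^n=\int_Y(\omega')^n=\mathrm{vol}(L'),
\]
and combining with the previous step closes the argument.

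The main technical hinge is the first step: one has to make sure the log resolution is compatible with the singularity ideal only defined locally by the $f_j$'s, and that relative local vanishing really applies to $K_{Y/X}-\lfloor kcD\rfloor$ on $Y$. Once this is settled, the remaining ingredients (asymptotic $\mathbb Q$-volume, Siu decomposition on a log resolution, and the BEGZ identity $\int(\omega')^n=\mathrm{vol}$) are standard. A minor subtlety worth checking is that the error $\lfloor kcD\rfloor-kcD=O(1)$ only contributes $O(k^{n-1})$ to $h^0$, which is precisely what permits replacing the $\mathbb Q$-divisor $kcD$ by its integer floor without disturbing the $k^n$ leading term.
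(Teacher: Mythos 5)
Your argument is correct, but it takes a genuinely different route from the paper. The paper's proof is a two-line application of Bonavero's singular holomorphic Morse inequalities: the case $q=0$ of \cite[Théorème~1.1]{Bon98} gives the upper bound $\varlimsup_k k^{-n}h^0(X,T\otimes L^k\otimes\mathcal I(k\varphi))\leq\frac{1}{n!}\int_X\omega_\varphi^n$, the case $q=1$ combined with Bonavero's estimate $h^1(X,T\otimes L^k\otimes\mathcal I(k\varphi))=o(k^n)$ gives the matching lower bound, and that is all. You instead unwind what is essentially the projective-case proof of Bonavero's theorem: principalize the singularity ideal on a log resolution, use the birational transformation rule and local vanishing for multiplier ideals to move the section count to $Y$, apply asymptotic Riemann--Roch for the $\mathbb Q$-class $\pi^*L-cD$, and close the loop with the BEGZ identity $\int\langle(\omega')^n\rangle=\mathrm{vol}(L')$ for minimal-singularity potentials together with birational invariance of the non-pluripolar product. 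What your route buys is transparency (the statement really is a singular Riemann--Roch, and your proof says so) and independence from Morse inequalities; what the paper's route buys is brevity and the ability to quote a single reference. Two points to tighten. First, the local data $(c,(f_j))$ on Zariski opens need not glue to a coherent ideal; you must pass to the integral closures, which do glue because they are determined by the singularity type, and this is exactly why the theorem is stated for $\mathcal Z$ (algebraic data on Zariski opens) rather than $\mathcal A$. After this replacement $\pi^*\varphi-c\log|s_D|$ is only bounded, not smooth, so $\omega'$ is a positive current with bounded potentials rather than a smooth semipositive form — which is still enough for minimal singularities and hence for the BEGZ volume identity, but your phrasing should reflect it. Second, when $\pi^*L-cD$ fails to be big both sides of your Riemann--Roch step vanish, so the asymptotic identity $k^{-n}h^0=\mathrm{vol}(L')/n!+o(1)$ should be justified via continuity of the volume function rather than bigness; this is standard but worth a sentence.
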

This is indeed a special case of the singular holomorphic Morse inequalities proved by Bonavero \cite{Bon98}, surveyed in \cite[Theorem~2.3.18]{MM07}.
\begin{proof}
According to \cite[Théorème~1.1]{Bon98} for $q=0$, we have $k^{-n}\hTLm\leq \frac{1}{n!}\int_X \omega_{\varphi}^n+o(1)$, hence
\[
\varlimsup_{k\to\infty}\frac{\hTLm}{k^n} \leq \frac{1}{n!}\int_X \omega_{\varphi}^n\,.
\]

Applying \cite[Théorème~1.1]{Bon98}  with $q=1$, we get
\[
-\frac{\hTLm}{k^n}+\frac{h^1(X,T\otimes L^k\otimes \mathcal{I}(k\varphi))}{k^n}\leq -\frac{1}{n!}\int_X \omega_{\varphi}^n+o(1)\,.
\]
But according to \cite[Corollaire~2.2]{Bon98}, $h^1(X,T\otimes L^k\otimes \mathcal{I}(k\varphi))=o(k^n)$,  hence
\[
\varliminf_{k\to\infty} \frac{\hTLm}{k^n} \geq \frac{1}{n!}\int_X \omega_{\varphi}^n\,.
\]
Hence equality indeed holds.
\end{proof}
\begin{remark}
Note that our convention for the multiplier ideal sheaves is different from that of Bonavero's. In fact, Bonavero's definition of $\mathcal{I}(\varphi/2)$ corresponds to our $\mathcal{I}(\varphi)$. But the volume of $\varphi/2$ in the sense of Bonavero is exactly the same as $\int_X \omega_{\varphi}^n$ in our sense, hence the holomorphic Morse inequalities take exactly the same form, despite the difference in conventions. 

We additionally note that Bonavero proved the above result for potentials with \emph{analytic} singularity type, however his definition of this notion is less general than ours in Definition~\ref{def: anal_alg_sing}, this being the reason for our more conservative statement above. 
\end{remark}

\begin{theorem}\label{thm:genBon}
Let $\varphi\in \PSH(X,\omega)$ be an $\veq$-model potential.
Then
\[
\varlimsup_{k\to\infty}\frac{1}{k^n}\hTLm\leq \frac{1}{n!}\int_X \omega_{\varphi}^n\,.
\]
\end{theorem}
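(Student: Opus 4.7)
The plan is to approximate $\varphi$ by potentials of algebraic singularity type and invoke Bonavero's identity (Theorem~\ref{thm: analytic_sing_type_formula}) on each approximant. Let $\{\varphi_j\}_j$ be a quasi-equisingular approximation of $\varphi$ as produced by Theorem~\ref{thm:qeqsingapp}. Each $\varphi_j$ has algebraic singularity type and is less singular than $\varphi$, so $\mathcal{I}(k\varphi)\subseteq \mathcal{I}(k\varphi_j)$ and
\[
\hTLm \leq h^0(X,T\otimes L^k\otimes \mathcal{I}(k\varphi_j))\,.
\]
Applying Bonavero's identity on the right-hand side gives
\[
\varlimsup_{k\to\infty}\frac{1}{k^n}\hTLm \leq \frac{1}{n!}\int_X \omega_{\varphi_j}^n\,,\quad j\geq 1\,.
\]

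It then suffices to show $\lim_{j\to\infty}\int_X \omega_{\varphi_j}^n = \int_X \omega_{\varphi}^n$, which is where the $\veq$-model hypothesis is crucial. When $\int_X \omega_\varphi^n > 0$, this is immediate from Theorem~\ref{thm:anaappvmodel}: the $\veq$-model hypothesis on $\varphi$ forces $d_{\mathcal{S}}$-convergence $[\varphi_j]\to[\varphi]$, and $d_\mathcal{S}$-convergence implies convergence of non-pluripolar masses via the double inequality recalled after the statement of Theorem~\ref{main_thm: arith_plurip_vol_deficit}.

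The main obstacle is the zero mass case $\int_X \omega_\varphi^n = 0$, where $d_\mathcal{S}$ degenerates and Theorem~\ref{thm:anaappvmodel} is unavailable. I would bypass it by passing to the sup-normalized $\veq$-model envelopes $\chi_j:=P[\varphi_j]=\PrIv[\varphi_j]$ (equal by Proposition~\ref{prop: anal_sing_type_envelope}); these form a pointwise decreasing sequence of potentials dominating $\varphi$, and the pointwise limit $\chi:=\lim_j \chi_j$ is $\veq$-model by Lemma~\ref{lma:decvmodel}(i). Property~(iii) of Theorem~\ref{thm:qeqsingapp}, together with $\chi_j\simeq_\veq \varphi_j$ and the strong openness theorem of Guan--Zhou, yields $\mathcal{I}(m\chi)\subseteq \mathcal{I}(m\varphi)$ for every $m>0$; combined with the reverse inclusion coming from $\chi\geq \varphi$, this forces $\chi\simeq_\veq \varphi$ and hence $\chi=\varphi$, both being $\veq$-model. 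A monotone mass argument along $\chi_j\searrow \varphi$ (invoking Witt-Nystr\"om's singularity-type invariance $\int_X \omega_{\varphi_j}^n = \int_X \omega_{\chi_j}^n$) then yields $\int_X \omega_{\varphi_j}^n \to 0 = \int_X \omega_\varphi^n$, completing the proof.
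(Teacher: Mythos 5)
Your positive-mass case is exactly the paper's argument (quasi-equisingular approximation, Bonavero's identity on each $[\varphi_j]\in\mathcal Z$, then Theorem~\ref{thm:anaappvmodel} plus the $d_\mathcal S$--mass comparison), so that part is fine. The problem is the zero-mass case, and specifically the last step. You reduce everything to the claim that $\int_X\omega_{\chi_j}^n\to 0$ along the decreasing sequence of model potentials $\chi_j=P[\varphi_j]\searrow\varphi$, and you dispose of it by invoking ``a monotone mass argument.'' No such general principle exists: for a decreasing sequence $u_j\searrow u$ in $\PSH(X,\omega)$ the non-pluripolar masses satisfy only $\lim_j\int_X\omega_{u_j}^n\geq\int_X\omega_u^n$ (by the monotonicity theorem of \cite{WN19}), and the inequality can be strict -- mass routinely ``escapes'' in decreasing limits, which is exactly why $(\mathcal S,d_\mathcal S)$ fails to be complete at mass zero and why the equivalences in Theorem~\ref{main_thm: arith_plurip_vol_deficit} are only asserted for $\int_X\omega_u^n>0$. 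The convergence you need is therefore not a formality; it is the entire analytic content of the zero-mass case.

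The paper's proof makes this visible: it does \emph{not} work in the fixed class $\omega$, but perturbs to $\omega+\epsilon\omega$ so that the envelopes $P_\epsilon[\varphi_j]$ have masses bounded away from zero, applies the positive-mass case there, and then proves the key claim \eqref{eq:claimmass0} that $\int_X(\omega+\epsilon\omega+\ddc\psi^\epsilon)^n\to 0$ as $\epsilon\to 0$. That claim is established by a genuinely nontrivial argument: the non-pluripolar measure of a model potential is concentrated on its contact set $\{\psi^\epsilon=0\}$ (\cite[Theorem~3.8]{DDNL2}), which allows one to insert the weight $e^{b\psi^\epsilon}$ for free, pass to bounded truncations, use Bedford--Taylor convergence in capacity to reach $\int_Xe^{b\varphi}\omega_{\varphi_c}^n$, and finally let $b\to\infty$ to land on $\int_X\omega_\varphi^n=0$. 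Your route of staying in the class $\omega$ and identifying $\lim_j\chi_j=\varphi$ via multiplier ideals and strong openness is sound up to that point (and is parallel to the paper's identification $\psi=\varphi$), but to close the argument you would have to reproduce an analogue of this support-plus-capacity argument for the sequence $\chi_j\searrow\varphi$; as written, the step is asserted rather than proved, and the assertion as stated (a ``monotone mass argument'') is false in general.
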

\begin{proof}
If $\int_X \omega_{\varphi}^n>0$,
the estimate follows directly from Theorem~\ref{thm: analytic_sing_type_formula}, Theorem~\ref{thm:anaappvmodel} and \cite[Lemma~3.6]{DDNL5}.

Now let
$\int_X \omega_{\varphi}^n=0$ and $\varphi_j$ be a quasi-equisingular approximation of $\varphi$. Let $\epsilon\in (0,1)\cap \mathbb{Q}$. Let $P_{\epsilon}[\cdot]$ denote the envelope with respect to singularity type with respect to  $\omega+\epsilon\omega$. Note that the potentials $P_{\epsilon}[\varphi_j]\in \PSH(X,\omega+\epsilon\omega)$ have positive masses bounded away from zero, for each $\epsilon >0$ fixed. Moreover, $P_{\epsilon}[\varphi_j]$ has the same singularity type as $\varphi_j$. Let $\psi^{\epsilon}$ be the decreasing limit of $P_{\epsilon}[\varphi_j]$. By Lemma~\ref{lma:decvmodel}\Rom{1}, $\psi^{\epsilon}$ is an $\veq$-model potential and 
\[
\int_X (\omega+\epsilon\omega+\ddc\psi^{\epsilon})^n=\lim_{j\to\infty}\int_X (\omega+\epsilon\omega+\ddc \varphi_j)^n\,.
\]
We have  $\psi_\epsilon \searrow \psi\in \PSH(X,\omega)$ as $\epsilon \searrow 0$. From the condition $\mathcal{I}(m \delta_{k,m}\varphi_k) \subseteq \mathcal{I}(m \varphi)$, we see that $\mathcal{I}(m\psi_{\epsilon}) = \lim_k \mathcal{I}(m \delta_{k,m} \psi_\epsilon) \subseteq \lim_k \mathcal{I}(m \delta_{k,m} \varphi_k) \subseteq \mathcal{I}(m\varphi)$, for any $m\geq 0$. Hence it follows that $\mathcal{I}(m\psi)\subseteq \mathcal{I}(m\varphi)$. Hence $\PrIv[\psi]\leq \varphi$. But as $\psi\leq 0$, we know that $\PrIv[\psi]\geq \psi\geq \varphi$, hence we conclude that $\varphi=\psi$.

Now we claim that 
\begin{equation}\label{eq:claimmass0}
\lim_{\epsilon\to 0+} \int_X (\omega+\epsilon\omega+\ddc\psi^{\epsilon})^n=0\,.
\end{equation}
Indeed, for $c>0$ let $\psi^\epsilon_c : = \max (\psi^\epsilon,-c)$ and $\varphi_c : = \max(\varphi,-c)$. For any $b >0$ We have that
\[\varlimsup_{\epsilon\to 0+}\int_X (\omega+\epsilon\omega+\ddc\psi^{\epsilon})^n \leq \varlimsup_{\epsilon\to 0+}\int_X e^{b \psi^\epsilon}(\omega+\epsilon\omega+\ddc\psi^{\epsilon}_c)^n = \int_X e^{b \varphi}(\omega+\ddc\varphi_c)^n,\]
where in the first estimate we used that $(\omega+\epsilon\omega+\ddc\psi^{\epsilon})^n$ is supported on $\{\psi^{\epsilon} =0\}$ \cite[Theorem~3.8]{DDNL2}, and in the last equality we used that $e^{b\psi^\epsilon}$ is bounded and quasi-continuous, converging to $e^{b\varphi}$ in capacity. Similarly, $(\omega+\ddc\varphi)^n$ is supported on $\{\varphi =0\}$, hence letting $b \to \infty$ we arrive at our claim 
\[
\varlimsup_{\epsilon\to 0+}\int_X (\omega+\epsilon\omega+\ddc\psi^{\epsilon})^n \leq \int_X \omega_\varphi^n=0\,.
\]

Let $\delta >0$ be arbitrary, and take $\epsilon=p/q \in \mathbb Q_+$ such that $\frac{1}{n!}\int_X (\omega+\epsilon\omega+\ddc\psi^{\epsilon})^n<\delta$. By the positive mass case of this theorem,
\begin{flalign}\label{eq: h^0_est}
\nonumber \varlimsup_{k\to\infty, q|k}\frac{1}{k^n}\hTLm&\leq \varlimsup_{k\to\infty, q|k}\frac{1}{k^n}h^0(X,T\otimes L^k\otimes L^{k\epsilon}\otimes \mathcal{I}(k\psi^{\epsilon}))\\
&\leq \frac{1}{n!}\int_X (\omega+\epsilon\omega+\ddc\psi^{\epsilon})^n<\delta\,.
\end{flalign}
For a general $k$ (possibly not divisible by $q$) write $k= d q+r$ with $d\in \mathbb{Z}_{\geq 0}$, $r=0,\ldots,q-1$. Then
\[
\frac{1}{k^n}\hTLm\leq \frac{1}{(dq)^n} h^0(T\otimes L^r\otimes L^{dq}\otimes \mathcal{I}(dq\varphi))
\]
for $q$ large enough.
Thus, replacing $T$ with $T\otimes L^r$ as the twisting line bundle, we are reduced to the case $r=0$, dealt with in \eqref{eq: h^0_est}. Letting $\delta \to 0$, the proof is finished.
\end{proof}

\subsection{Filtrations, flag ideals and the non-Archimedean formalism}\label{subsec:nonAchi}

\paragraph{Filtrations of the ring of sections.} Let us recall the basics of filtrations in the context of canonical Kähler metrics, going back to work of Székelyhidi \cite{Sze15}. We refer to \cite[Section~1, Section~5]{BHJ17} and \cite[Section~3]{BJ18} for a much more detailed description. In the sequel, we will focus on the point of view advocated by Ross--Witt Nyström \cite{RWN14}. For $r > 0$ we will consider  
\[R(X,L^r):=\bigoplus_{k=0}^\infty H^0(X,L^{kr})\] 
the graded ring associated to $(X,L^r)$. When dealing with filtrations, we always assume that $r$ is big enough so that  $R(X,L^r)$ is generated in degree 1.

A \emph{filtration} $\big(\{\mathcal{F}^{\lambda}_k\}_{\lambda\in \mathbb{R}, k\in \mathbb{N}},r\big)$ of $R(X,L^r)$ is a collection of decreasing left-continuous filtrations $\{\mathcal{F}_k^{\lambda}\}_{\lambda}$ on each vector space $H^0(X,L^{kr})$ , that is \emph{multiplicative} ($\mathcal{F}^{\lambda}_k \cdot \mathcal{F}^{\lambda'}_{k'} \subseteq \mathcal{F}^{\lambda + \lambda'}_{k + k'}$ for any $k,k'\in \mathbb{N}$, $\lambda,\lambda'\in \mathbb{R}$) and \emph{linearly bounded} (there exists $\tilde \lambda >0$ big enough such that $\mathcal{F}^{-\tilde \lambda k}_k = H^0(X,L^{kr})$ and $\mathcal{F}^{\tilde \lambda k}_k = \{0\}$, for $k\geq 0$.)

Let $(\mathbb{C},\|\cdot\|)$ be the trivially normed complex line. A \emph{non-Archimedean graded norm} on $R(X,L^r)$ is a norm on $R(X,L^r)$ considered as a $(\mathbb{C},\|\cdot\|)$-algebra satisfying exponential boundedness (there exists $\tilde \lambda>0$, such that for any $k\in \mathbb{N}$ and any non-zero $s\in H^0(X,L^{kr})$, $e^{-\tilde \lambda k} \leq \| s \|_k \leq e^{\tilde \lambda k})$ and sub-multiplicativity ($\| s \cdot s'\|_{k + k'} \leq \| s\|_k \|s'\|_{k'}$ for any $s \in H^0(X,L^{kr})$ and $s' \in H^0(X,L^{k'r})$).

It is elementary to verify that there is a bijection between filtrations $\big(\{\mathcal{F}^{\lambda}_k\},r\big)$ and non-Archimedean graded norms $\{\|\cdot\|_k\}_{k\in \mathbb{N}}$ on $R(X,L^r)$ given by
 \[
 \| s\|_k \leq e^{-\lambda} \Leftrightarrow s \in \mathcal{F}^\lambda_k\,,\quad k\in \mathbb{N}\,, \lambda \in \mathbb{R}\,, s\in H^0(X,L^{kr})\,.
 \]
Due to this, we will use the terms filtrations and non-Archimedean norms interchangeably. 

\paragraph{Filtrations induced by test configurations and flag ideals.} A filtration $\big(\{\mathcal{F}^\lambda_k\}, r \big)$ is a $\mathbb Z$-\emph{filtration} if the jumping numbers/points of discontinuity  of $ \lambda \mapsto \mathcal{F}^\lambda_k$ are integers for all $k \geq 0$. 

Due to the fact that $R(X,L^r)$ is generated in degree $1$, we have a surjective map 
\begin{equation}\label{eq: fin_gen_filt}
\left(H^0(X,L^r )\right)^{\otimes k} \to H^0(X,L^{kr})\,.
\end{equation} 

Naturally, $\| \cdot\|_1$ induces a non-Archimedean norm on $(H^0(X,L^r))^{\otimes k}$, as well as on any quotient $(H^0(X,L^r))^{\otimes k}/W$, where $W \subseteq (H^0(X,L^r))^{\otimes k}$ is  a subspace. 

As a result, given a filtration $\big(\{\mathcal{F}^\lambda_k\}, r \big)$, it is possible
 to define a non-Archimedean graded norm ${\| \cdot \|^T_k}$ on each $H^0(X,L^{kr})$ only using $\| \cdot \|_1$ and the maps \eqref{eq: fin_gen_filt}. 

We say that $\big(\{\mathcal{F}^\lambda_k\}, r \big)$ is \emph{induced by an (ample) test configuration} if it is a $\mathbb Z$-filtration, and the map \eqref{eq: fin_gen_filt} induces  an isometry between the graded non-Archimedean norms $\| \cdot\|^T_k$ and $\| \cdot\|_k $ for any $k \geq 0$.

This of course is not the usual definition of (ample) test configurations. However, as pointed out in \cite[Proposition~2.15]{BHJ17}, this construction is in a one-to-one correspondence with the usual one going back to Tian \cite{Ti97} and Donaldson \cite{Don01}. 

Flag ideals yield an important (and in many ways exhaustive) class of filtrations induced by test configurations, going back to Odaka \cite{Od13}. A \emph{flag ideal}  $\mathfrak a$  is a $\mathbb{C}^*$-invariant coherent ideal of $\mathcal{O}_{X \times \mathbb{C}}$, cosupported in $X \times \{0\}$. Such an ideal is always of the form
\[
\mathfrak a = \sum_{j=0}^{d-1} \tau ^j \mathfrak a_j + \tau^d \mathcal{O}_X\,,
\]
where $\mathfrak{a}_j$ is an increasing sequence of coherent ideals of $\mathcal{O}_X$ and $\tau$ is the variable in $\mathbb{C}$. As a convention, we write $\mathfrak a_j = \mathcal{O}_X$, when $j \geq d$ and $\mathfrak a_j = 0$, when $j  <  0$.

If for some $r>0$, the sheaves $L^r \otimes \mathfrak a_i$ are globally generated for every $i \geq 0$, then we associate a filtration to $\mathfrak a$ in the following way. First we define $\mathcal{F}^\lambda_0 := H^0(X,L^r \otimes \mathfrak a_{\lfloor -\lambda \rfloor})$. As $\{\mathcal{F}^\lambda_0\}_\lambda$ is decreasing and left-continuous, it induces a non-Archimedean norm $\| \cdot \|_1$ on $H^0(X,L^r)$, which further introduces a non-Archimedean graded norm $\| \cdot \|$ on $R(X,L^r)$ via the surjections \eqref{eq: fin_gen_filt}.

By construction, the underlying $\mathbb Z$-filtration is clearly induced by a test configuration, with the jumping numbers of $\{\mathcal{F}^\lambda_0\}_\lambda$ being exactly the integers $j$ such that $\mathfrak a_{j} \subsetneq \mathfrak a_{j+1}$. As pointed out by Odaka \cite{Od13}, essentially all test configurations arise via this construction.

\paragraph{The non-Archimedean formalism.} Here we recall some of the formalism developed in  \cite{BHJ17, BHJ19, BBJ21}, and later tailor some of their results to our context.

By $X^{\Div}_\mathbb{Q}$ we denote the set of rational divisorial valuations on $X$, i.e., valuations $v: \mathbb{C}(X) \to \mathbb{Q}$ of the form $v = c \ord_D$, with $D$ being a prime divisor on some smooth variety $Y$, mapping to $X$ via a projective modification, and $c \in \mathbb{Q}_+$. By convention, we also take the trivial valuation $v_\triv$ to be part of $X^{\Div}_\mathbb{Q}$.

To any $v \in X^{\Div}_\mathbb{Q}$ one associates $\sigma(v) \in (X \times \mathbb{C})^{\Div}_\mathbb{Q}$, the \emph{Gauss extension} of $v$. The construction is described in detail in \cite[Section~4.1]{BHJ17}. 

The Gauss extension is defined  as $\sigma(v)(\sum_j f_j \tau^j) := \min_j(v(f_j)+j)$, where $f_j \in \mathbb{C}(X)$ and $\tau$ is the coordinate of $\mathbb{C}$. 
It can be immediately verified that $\sigma(v)$ thus defined is a valuation, moreover as shown in \cite[Lemma~4.5, Theorem~4.6]{BHJ17},
$\sigma(v)$ is also divisorial on $X \times \mathbb C$.

\begin{remark} In \cite[Section~3]{BBJ21} the authors define $X^{\Div}_\mathbb{Q}$ as divisorial valuations on normal models of $X$. However, due to Hironaka's theorem, one can always take log-resolutions of normal models, hence no information is lost if one considers only prime divisors of smooth models, as we do in this work.
\end{remark}

\paragraph{The non-Archimedean data of potentials, rays and flag ideals.} In the non-Archime\-dean approach to canonical Kähler metrics one converts both analytic and algebraic data into \emph{non-Archimedean data}, i.e., various functions on $X^{\Div}_\mathbb{Q}$. We describe how this is carried out with $\omega$-psh functions, geodesic rays and flag ideals. 

Given $u \in \PSH(X,\omega)$, one defines $u^{\NA}: X^{\Div}_\mathbb{Q} \to \mathbb{R}$ by 
\begin{equation}\label{eq:uNAdef}
u^{\NA}(V):= -c\nu(u, D)\,, \quad V= c \ord_D \in X^{\Div}_\mathbb{Q}\,.
\end{equation}
Recall that  $\nu(u, D)$ is the generic Lelong number of $u$ along $D$ (see Section~\ref{subsec:algsingtype}). In accordance with the literature, sometimes we will also write $V(u): = c\nu(u, D)$.

Before proceeding, we note the following result, which corresponds to Theorem~\ref{thm: Lelong_characterization} in the non-Archimedean dictionary. Indeed, for any projective modification $\pi': Y \to X$ with $Y$ smooth, the Lelong number for any $u \in \PSH(X,\omega)$ at $y \in Y$ is the same as the generic Lelong number along the exceptional divisor of the blowup $\Bl_{\{y\}}Y$.
\begin{prop} \label{prop: I-equiv_NA}Suppose that $u,w \in \PSH(X,\omega)$. Then the following are equivalent:\vspace{0.15cm}\\
\Rom{1} $u \simeq_{\veq} w$.\vspace{0.15cm}\\
\Rom{2} $u^{\NA} = w^{\NA}$.
\end{prop}

Given a ray $\{r_t\}_t \in \mathcal{R}^1$, it is known that $t \mapsto \sup_X r_t$ is linear, in particular, there exists $l \in \mathbb{R}$ such that $\Phi(s,x) = r_{-\log|s|} +  l \log|s| \in \PSH(X \times \mathbb D, \pi^*\omega)$, where $\mathbb D$ is the unit disk and $\pi: X \times \mathbb D \to X$ is the usual projection.

We define $r^{\NA}: X^{\Div}_\mathbb{Q} \to \mathbb{R}$ using the Gauss extension in the following manner:
\begin{equation}\label{eq: r^NA_def}
r^{\NA}(v) := - \sigma(v)(\Phi) + l\,,
\end{equation}
where $\sigma(v) \in (X \times \mathbb{C})^{\Div}_\mathbb{Q}$ is the Gauss extension of $v \in X^{\Div}_\mathbb{Q}$ and $\sigma(v)(\Phi)$ is to be interpreted as a suitable multiple of the generic Lelong number along the center divisor $E'$ of $\sigma(v)$. It can be seen that this definition does not depend on $l \in \mathbb{R}$, nor on the choice of smooth model hosting $E'$. 

Lastly, by the same construction $u^\NA$ can be defined for sublinear subgeodesic rays $\{u_t\}_t$ (as defined in Section~\ref{subsec:rwncorre} below).\medskip

Given a \emph{flag ideal} $\mathfrak a = \sum_{j=0}^{d-1} \tau ^j \mathfrak a_j + \tau^d \mathcal{O}_X$, such that $L^m \otimes \mathfrak a_j$ are globally generated for all $j$, we define the corresponding function non-Archimedean function $\varphi^{\NA}_\mathfrak a : X^{\Div}_\mathbb{Q} \to \mathbb{R}$ as follows:
\[
\varphi^{\NA}_\mathfrak a(v) := -\min_j(v(\mathfrak a_j) + j)\,,
\]
where $v \in X^{\Div}_\mathbb{Q}$, and $v(\mathfrak a_j)$ is the valuation of  $\mathfrak a_j$ given by $v(\mathfrak a_j) := \inf \{\,v(a): a \in \mathfrak a_j\,\}$.

\paragraph{Approximation by flag ideals.} Given  a ray $\{r_t\}_t \in \mathcal{R}^1$, in \cite{BBJ21} the authors define an approximation scheme by flag ideals $\mathfrak a^m$ such that  $\varphi^{\NA}_{\mathfrak a^m} \searrow r^{\NA}$. We describe the main point of this procedure, as it will be important in the sequel.

For simplicity, let us assume that $\{r_t\}_t \in \mathcal{R}^1$ satisfies $\sup_X r_t \leq 0$ for $t \geq 0$. The general case, can easily be reduced to this case, but one needs to slightly extend the definition of flag ideals (to allow for fractional ideals). We have $\Phi(s,x) = r_{-\log|s|} \in \PSH(X \times \mathbb D, \pi^*\omega)$, and we simply define \cite[Section~5.3]{BBJ21}: 
\[
\mathfrak{a}^m := \mathcal{I}(2^m \Phi)\,.
\]
As pointed out in \cite[Lemma~5.6]{BBJ21} $L^{2^m + m_0} \otimes \mathfrak{a}^m_j$ is globally generated for some $m_0>0$ and all $m,j$. In addition, by the proof of \cite[Theorem~6.2]{BBJ21}, the subbaditivity of multiplier ideals implies that $\varphi_{\mathfrak{a}^m}^{\NA}$ is $m$-decreasing, moreover $\varphi_{\mathfrak{a}^m}^{\NA}(v) \searrow r^{\NA}(v)$ for $v \in X^{\Div}_\mathbb{Q}$.  

\section{The structure of \texorpdfstring{$\mathcal{R}^1$}{R1} and approximability}\label{sec:R1app}

\subsection{The extended Ross--Witt Nyström correspondence}\label{subsec:rwncorre}

The results of this subsection hold for an arbitrary Kähler manifold $(X,\omega)$. The goal is to give a duality between the finite energy geodesic rays of $\mathcal{R}^1_\omega$ and certain maximal test curves, reminiscent of \cite{RWN14} and \cite{DDNL3}, but to also give a formula the Monge--Ampère slope of $L^1$ rays in terms of their Legendre transforms. To do this we consider a wider context and generalize the discussion  going back to \cite{RWN14},  revisited in \cite{DDNL3}.

A \emph{sublinear subgeodesic ray} is a subgeodesic ray $(0,+\infty) \ni t \mapsto u_t \in \PSH(X,\omega)$ (notation $\{u_t\}_{t >0}$) such that $u_t \to_{L^1} u_0: = 0$ as $t \to 0$, and there exists $C\in \mathbb{R}$ such that $u_t(x) \leq C t$ for all $t\geq 0$, $x \in X$.

Due to $t$-convexity, we obtain some immediate properties of sublinear subgeodesic rays:
\begin{lemma}\label{lem: -inf_est_subgeod} Suppose that $\{u_t\}_t$ is a sublinear subgeodesic ray. Then the set $\{u_t >-\infty \}$ is the same for any $t >0$. In particular, for any $x \in X$ the curve $t \mapsto u_t(x)$ is either finite and convex on $(0,\infty)$, or equal to $-\infty$ on this interval.
\end{lemma}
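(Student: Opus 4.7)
The two assertions of the lemma are equivalent modulo the convexity of $t\mapsto u_t(x)$, so I would reduce everything to the ``In particular'' clause. My approach splits cleanly into two steps: deriving convexity of $t\mapsto u_t(x)$ from the subgeodesic hypothesis, and then applying an elementary convex-analysis dichotomy.

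For convexity, the key point is that the complexification $\Phi(z,x):=u_{\RE z}(x)$ is $\pi^*\omega$-psh on $\{\RE z>0\}\times X$, by definition of a subgeodesic. Fixing $x_0\in X$ and restricting $\Phi$ to the complex line $\{(z,x_0):\RE z>0\}$, the standard slicing property of psh functions along complex lines shows that $z\mapsto \Phi(z,x_0)$ is either subharmonic on $\{\RE z>0\}$ or identically $-\infty$ there. Since this slice depends only on $\RE z$, the distributional inequality $\Delta_z\Phi(\,\cdot\,,x_0)\geq 0$ reduces to convexity of $t\mapsto u_t(x_0)$ on $(0,\infty)$ in the nontrivial case; the $\equiv-\infty$ case is already one of the two alternatives we want.

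The convex-analysis dichotomy I need is: every convex $\phi:(0,\infty)\to[-\infty,\infty)$ is either identically $-\infty$ or finite at every point. This I would prove by a two-point convexity argument. Suppose $\phi(t_0)\in\mathbb{R}$ and, for contradiction, $\phi(t_1)=-\infty$ for some $t_1\neq t_0$. Pick any $t_2>0$ lying on the opposite side of $t_0$ from $t_1$, so that $t_0$ is a strict convex combination $t_0=\lambda t_1+(1-\lambda)t_2$ with $\lambda\in(0,1)$. Convexity then gives
\[
\phi(t_0)\leq \lambda\phi(t_1)+(1-\lambda)\phi(t_2)=-\infty,
\]
using $\phi(t_1)=-\infty$, $\lambda>0$, and the fact that $\phi(t_2)<+\infty$ (the codomain excludes $+\infty$). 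This contradicts $\phi(t_0)\in\mathbb{R}$.

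Combining the two steps, for each $x\in X$ the map $t\mapsto u_t(x)$ on $(0,\infty)$ is either identically $-\infty$ or finite, and in the latter case it is convex by the first step. This is exactly the ``In particular'' statement, and the invariance of $\{u_t>-\infty\}$ in $t$ follows immediately. Neither the sublinearity $u_t\leq Ct$ nor the $L^1$-convergence $u_t\to 0$ at $t=0$ plays a role in this lemma; both will be used later in the paper. The only genuine subtlety---and hence the main ``obstacle''---is justifying the slicing of $\Phi$ at a \emph{fixed} $x_0$ rather than only for a.e.\ $x_0$, which relies on $u_t$ being upper-semicontinuous and on the slicing property of psh functions along complex lines holding pointwise at every base point.
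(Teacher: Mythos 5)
The paper does not actually write out a proof of this lemma; it simply records it as an immediate consequence of the $t$-convexity of $t\mapsto u_t(x)$, which is exactly the content you establish by slicing the $\pi^*\omega$-psh complexification along the complex line over a fixed $x_0$ (a restriction that is subharmonic or $\equiv-\infty$ at \emph{every} base point, not just a.e.) and then invoking the elementary dichotomy for convex functions with values in $[-\infty,\infty)$ on an open interval. Your write-up is correct and is a faithful fleshing-out of the same approach.
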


A \emph{psh geodesic ray} is a sublinear subgeodesic ray that additionally satisfies the following maximality property: for any $0 < a < b$, the subgeodesic $(0,1) \ni t \mapsto v^{a,b}_t:=u_{a(1-t) + bt} \in \PSH(X,\omega)$ can be recovered in the following manner:
\begin{equation}\label{eq: vabt_eq}
v^{a,b}_t:=\sup_{h \in \mathcal{S}}{h_t}\,,\quad t \in [0,1]\,,
\end{equation}
where $\mathcal{S}$ is the set of subgeodesics $(0, 1) \ni  t \to h_t \in \PSH(X,\omega)$ such that
\[
\lim_{t \searrow 0}h_t\leq  u_a\,,\quad \lim_{t \nearrow 1}h_t\leq u_b\,.
\]

We note the following properties of the map $v \mapsto \sup_X v$ along rays:
\begin{lemma}\label{lem: suplinear} For any psh geodesic ray $\{u_t\}_t$, the map $t \mapsto \sup_X u_t$ is linear. For sublinear subgeodesics, the map $t \mapsto \sup_X u_t$ is only convex.
\end{lemma}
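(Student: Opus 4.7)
The plan is to base both claims on the slicewise convexity of $t\mapsto u_t(x)$ for each fixed $x\in X$, already recorded in Lemma~\ref{lem: -inf_est_subgeod}: for every $x$, this map is either identically $-\infty$ or finite and convex on $(0,\infty)$. Indeed, the rotation-invariant complexification $\Phi(s,x):=u_{-\log|s|}(x)$ is $\pi^*\omega$-psh on an annulus times $X$; restricting to a fiber $\{x\}\times A$ kills the pulled-back contribution, leaving a rotation-invariant subharmonic function, hence convex in $t=-\log|s|$. The second assertion of the lemma follows at once since the pointwise supremum of a family of convex functions is convex.

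For the first assertion (linearity along a psh geodesic ray) convexity is already in hand, so the task is the reverse inequality. Fix $0<a<b$, $t\in[a,b]$, and set $M(\tau):=\sup_X u_\tau$ and $\ell(\tau):=\tfrac{b-\tau}{b-a}M(a)+\tfrac{\tau-a}{b-a}M(b)$. The constant-in-$x$ affine-in-$\tau$ function $\tilde v_\tau:=\ell(\tau)$ is itself a psh geodesic on $[a,b]$ (its complexification is pluriharmonic, so the degenerate Monge--Amp\`ere equation holds trivially) and satisfies $\tilde v_a\geq u_a$, $\tilde v_b\geq u_b$ pointwise. Testing $u$ against the maximality of $\tilde v$ via \eqref{eq: vabt_eq}, one obtains $u_\tau\leq\tilde v_\tau$, hence $M(\tau)\leq\ell(\tau)$; this recovers convexity.

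For the reverse $M(\tau)\geq\ell(\tau)$, the strategy is to reduce to a rigidity statement. Subtracting the affine function $c(\tau-a)$ with slope $c=(M(b)-M(a))/(b-a)$ leaves $\bar u_\tau:=u_\tau-c(\tau-a)$ again a psh geodesic on $[a,b]$ (shifting by an affine function of $\tau$ preserves both the subgeodesic property and maximality), now satisfying $\sup_X\bar u_a=\sup_X\bar u_b=M(a)$; it then suffices to prove $\sup_X\bar u_\tau\equiv M(a)$ on $[a,b]$. I would argue this by contradiction: if $\sup_X\bar u_{\tau_0}\leq M(a)-\delta$ for some $\tau_0\in(a,b)$ and $\delta>0$, one constructs a subgeodesic competitor in $\mathcal{S}$ whose value at $\tau_0$ strictly exceeds $\bar u_{\tau_0}$, contradicting the maximality \eqref{eq: vabt_eq}.

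The main obstacle is producing the competitor when $u_a,u_b$ are unbounded below, since naive truncations $\max(\bar u_\tau,M(a)-\eta)$ do not lie in $\mathcal{S}$ (they exceed the endpoint data on sets of positive measure). The resolution I would attempt is two-fold: first verify the rigidity in the bounded endpoint case, where a truncation with constant chosen in terms of the oscillations of $u_a,u_b$ does lie in $\mathcal{S}$ and directly yields the equality, and then transfer to the general case either by approximating $u_a,u_b$ from above by bounded potentials via $\max(\cdot,-k)$, or by appealing to the Ross--Witt Nystr\"om duality developed in the next subsection, which expresses $u_\tau=\sup_\lambda(\hat u_\lambda+\tau\lambda)$ with $\sup_X\hat u_\lambda\leq 0$ and reduces linearity of $M$ to commuting the two suprema.
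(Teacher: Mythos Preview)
Your argument for convexity (both the pointwise-sup-of-convex-functions argument and the comparison with the affine competitor $\tilde v$) is fine and matches the paper's one-line justification via $t$-convexity.

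The genuine gap is in the concavity step. Your proposed truncation in the bounded-endpoint case does not produce a competitor in $\mathcal S$ that forces $\sup_X\bar u_{\tau_0}\geq M(a)$. Concretely, $h_\tau:=\max(\bar u_\tau,c)$ lies in $\mathcal S$ only when $c\leq\min(\inf_X\bar u_a,\inf_X\bar u_b)$, and then the maximality condition yields merely $\bar u_\tau\geq c$, which says nothing about $\sup_X\bar u_\tau$. No choice of constant ``in terms of the oscillations'' repairs this: any admissible constant must sit below the infima of the endpoint data, not near their suprema. The same obstruction defeats obvious variants such as $\max(\bar u_a-C(\tau-a),\bar u_b-C(b-\tau))$, whose supremum over $X$ is the max of two affine functions and dips well below the interpolant for large $C$. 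So the bounded case is not settled by your sketch, and without it the approximation step has nothing to feed on.

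Your fallback via the Ross--Witt Nystr\"om duality is also not quite enough as written: from $\sup_X\hat u_\lambda\leq 0$ alone you only recover the upper bound $\sup_X u_t\leq t\tau^+_{\hat u}$. To get equality you need $\sup_X\hat u_\lambda=0$ for $\lambda<\tau^+_{\hat u}$, which is the statement that $\hat u_\lambda$ is a \emph{model} potential; this is exactly the maximality half of the correspondence, itself drawn from \cite[Proposition~5.1]{Da17}. The paper's own route is of the same flavor but more direct: it invokes \cite[Theorem~1]{Da17} for linearity along bounded geodesics (where it is proved by independent means, not by a truncation competitor), and then passes to general psh geodesic rays by approximation with bounded geodesics together with the $L^1$-continuity of $u\mapsto\sup_X u$. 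If you want a self-contained argument, you should either reproduce that bounded-case proof or make the model-potential step explicit rather than relying on the inequality $\sup_X\hat u_\lambda\leq 0$.
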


The statement for subgeodesics is a consequence of $t$-convexity.  To argue the statement for psh geodesic rays, one can simply use \cite[Theorem~1]{Da17} together with approximation by bounded geodesics, and the continuity of $u \mapsto \sup_X u$ in the weak $L^1$-topology of $\PSH(X,\omega)$.

Making small tweaks to \cite[Definition 5.1]{RWN14}, we are ready to give the definition of test curves:

\begin{definition}
A map $\mathbb{R}\ni \tau \mapsto \psi_\tau\in \PSH(X, \omega)$ is a \emph{psh test curve}, denoted $\{\psi_\tau\}_{\tau \in \mathbb R}$, if \vspace{0.15cm}\\
\Rom{1} $\tau\mapsto \psi_\tau(x)$ is concave, decreasing and usc for any $x\in X$. \vspace{0.15cm}\\
\Rom{2} $\psi_\tau\equiv -\infty$ for all $\tau$ big enough, and $\psi_\tau$ increases a.e. to $0$ as $\tau \to -\infty$.
\end{definition}
Note that this definition is more general than the one in \cite{RWN14} (where the authors only considered potentials with small unbounded locus), even more general than the one in \cite{DDNL3} (where the authors considered only bounded test  curves). Moreover, condition~\Rom{2} allows for the introduction of the following constant:
\begin{equation}\label{eq: psi+_tau_def}
\tau_\psi^+ := \inf\{\tau \in \mathbb{R} : \psi_\tau \equiv -\infty\}\,.
\end{equation}

\begin{remark} We adopt the following notational convention: psh test curves will always be parametrized by $\tau$, whereas rays will be parametrized by $t$. Hence $\{\psi_t\}_t$ will always refer to some kind of ray, whereas  $\{\phi_\tau\}_\tau$ will refer to some type of test curve. As we prove below, rays and test curves are dual to each other, so one should think of the  parameters $t$ and $\tau$ to be dual to each other as well.
\end{remark}

\begin{definition}\label{def: test_curves}
\leavevmode A  psh test curve $\{\psi_\tau \}_\tau$  can have the following properties: \vspace{0.1cm}\\
\Rom{1} $\{\psi_{\tau}\}_\tau$ is \emph{maximal} if $P[\psi_\tau] =\psi_\tau$ for any $\tau \in \mathbb{R}$.\vspace{0.1cm}\\
\Rom{2} $\{\psi_{\tau}\}_\tau$ is \emph{$\veq$-maximal} if $\PrIv[\psi_\tau] =\psi_\tau$ for any $\tau \in \mathbb{R}$.\vspace{0.1cm}\\
\Rom{3} $\{\psi_{\tau}\}_\tau$ is a \emph{finite energy test curve} if
    \begin{equation}\label{eq: fetestcurve_def}
        \int_{-\infty}^{\tau^+_\psi} \left( \int_X \omega_{\psi_\tau}^n-\int_X \omega^n \right) \,\mathrm{d}\tau >-\infty\,.
    \end{equation}
\Rom{4} We say $(\psi_{\tau})$ is \emph{bounded} if $\psi_\tau = 0$ for all $\tau$ small enough. In this case, one can introduce the following constant, complementing \eqref{eq: psi+_tau_def}:
 \begin{equation}\label{eq: psi-_tau_def}
\tau_\psi^- := \sup\left\{\,\tau \in \mathbb{R} : \psi_\tau \equiv 0\,\right\}\,.
\end{equation}
\end{definition}
In the above definition, we followed the convention $P[-\infty]=\PrIv[-\infty]=-\infty$.
Note that bounded test curves are clearly of finite energy.

 We recall the \emph{Legendre transform}, that will help establish the duality between various types of maximal test curves and geodesic rays. Given a convex function $f:[0, +\infty)\rightarrow \mathbb{R}$, its Legendre transform is defined as 
 \begin{equation}
 \hat f(\tau):= \inf_{t \geq 0} (f(t)-t\tau)=\inf_{t > 0} (f(t)-t\tau)\,,\quad \tau\in \mathbb{R}\,.
 \end{equation}
The \emph{(inverse) Legendre transform} of a decreasing concave function $g:\mathbb{R}\rightarrow \mathbb{R}\cup \{-\infty\}$ is
\begin{equation}\label{eq: inverse_Lag_tran_def}
\check{g}(t):=\sup_{\tau \in \mathbb{R}} (g(\tau)+t\tau)\,, \quad t \geq 0\,.
\end{equation}
We point out that there is a sign difference in our choice of Legendre transform compared to the convex analysis literature, however this choice will be more suitable for us.

As it is well known, for every $\tau \in \mathbb{R}$ we have that $\hat{\check{g}}(\tau) \geq g(\tau)$ with equality if and only if $g$ is additionally $\tau$-usc. Similarly, $\check{\hat{f}}(t) \leq f(t)$ for all $t\geq 0$ with equality if and only if $f$ is $t$-lsc.   In general, $\hat{\check{g}}$ is the $\tau$-usc envelope of $g$, and $\check{\hat{f}}$ is the $t$-lsc envelope of $f$. We will refer to these facts commonly as the involution property of the Legendre transform. 

Starting with a psh test curve $\{\psi_\tau\}_\tau$, our goal will be to construct a geodesic/subgeodesic ray by taking the $\tau$-inverse Legendre transform. The first step is the next proposition which was essentially proved in \cite{Da17}:

\begin{prop} \label{lem: Legendre_usc} Suppose $\{\psi_\tau\}_\tau$ is a psh test curve. Then $\sup_{\tau}(\psi_\tau(x) + t\tau)$ is usc with respect to $(t,x)\in(0,\infty)\times X$.

\end{prop}

Since $\tau^+_\psi <\infty$ and $\psi_\tau \leq 0, \tau \in \mathbb R$, we note that $ \sup_{\tau} (\psi_\tau + t\tau) \leq t \tau^+_\psi$ for $t \geq 0$. Also, for $t=0$ the above proposition may fail.

\begin{proof} 

Let $S = \{ \RE s > 0\}$. 
In the proof, $\usc(\cdot)$ will denote the usc regularization on $S \times X$. We consider the usual complexification of the inverse Legendre transform: 
\[
u(s,z) := \sup_{\tau} (\psi_\tau(z) +  \tau\RE s)\,,\quad (s,z) \in S \times X\,.
\]
Also, $u_t(x): = u(t,x) \leq  t\tau^+_\psi, t >0$. Clearly, $\usc u \in \PSH(S\times X,\pi^*\omega)$, where $\usc u$ is the usc regularization of $u$ on $S \times X$. Let $\pi:S\times X\rightarrow X$ be the natural projection.
It will be enough to show that  $\usc u=u$. 

We introduce $E = \{ u < \usc u\} \subseteq S \times X$. As both $u$ and $\usc u$ are $\mathbb{R}$-invariant in the imaginary direction of $S$, it follows that $E$ is also $\mathbb{R}$-invariant, i.e., there exists $B \subseteq (0,\infty) \times X$ such that $E = B \times \mathbb R$.

As $E$ has Monge--Ampère capacity zero, it follows that $E$ has Lebesgue measure zero. By Fubini's theorem $B \subseteq (0,\infty) \times X$ has Lebesgue measure zero as well. For $z \in X$, we introduce the slices:
\[
B_z = B \cap \left((0,\infty) \times \{z\}\right)\,.
\]
By Fubini's theorem again, we have that $B_z$ has Lebesgue measure $0$ for all $z \in X \setminus F$, where $F \subseteq X$ is some set of Lebesgue measure $0$. 

By slightly increasing $F$, but not its zero Lebesgue measure(!), we can additionally assume that $u_t(z)> -\infty$ for all $t >0$ and $z \in X \setminus F$ (indeed, at least one potential $\psi_\tau$ is not identically equal to $-\infty$). 

Let $z \in X \setminus F$. We argue that $B_z$ is in fact empty. By our assumptions on $F$, both maps $t \mapsto u_t(z)$ and $t \mapsto (\usc u)(t,z)$ are locally bounded and convex (hence continuous) on $(0,\infty)$. As they agree on the dense set $(0,\infty) \setminus B_z$, it follows that they have to be the same, hence $B_z=\emptyset$. This allows to conclude that 
\begin{equation}\label{eq: a.e._id1}
\inf_{ t> 0} [u_t(x) - \tau t] = \chi_\tau :=\inf_{ t> 0} \left[{(\usc u)}(t,x) - \tau t\right]\,, \quad \tau \in \mathbb{R}  \textup{ and } z \in X \setminus F\,.
\end{equation}
By duality of the Legendre transform $\psi_\tau(x) = \inf_{t > 0} [u_t(x) - t\tau]$ for all  $x \in X$ and $\tau \in \mathbb R$ (here is where the $\tau$-usc property of $\tau \mapsto \psi_\tau$ is used). From this and \eqref{eq: a.e._id1} it follows that $\psi_\tau=\chi_\tau$ a.e. on $X$, for all $\tau \in \mathbb{R}$. Since both $\psi_\tau$ and $\chi_\tau$ are $\omega$-psh (the former by definition, the latter by Kiselman's minimum principle \cite[Theorem~I.7.5]{De12}), it follows that in fact $\psi_\tau\equiv\chi_\tau$ for all $\tau \in \mathbb{R}$. 

Consequently, applying the $\tau$-Legendre transform to the $\tau$-usc and $\tau$-concave curves $\tau \mapsto \psi_\tau$ and $\tau \mapsto \chi_\tau$, we obtain that $u_t(x) = \usc u(t,x)$ for all $(t,x)\in (0, \infty) \times X$.
\end{proof}

Given a sublinear subgeodesic ray $\{\phi_t\}_t$ (psh test curve $\{\psi_\tau\}_\tau$), we can associate its (inverse) Legendre transform at $x \in X$ as 
\begin{equation}\label{eq: Leg_transf_def_ray_test_curve}
    \begin{aligned}
\hat \phi_\tau(x) := \inf_{t>0}(\phi_t(x) - t\tau)\,,& \quad \tau \in \mathbb R\,,\\
\check \psi_t(x) := \sup_{\tau \in \mathbb R}(\psi_\tau(x) + t\tau)\,,& \quad t> 0\,.
\end{aligned}
\end{equation}

Our main theorem describes a duality between various types of rays and maximal test curves, extending various particular cases from \cite{RWN14}, \cite{DDNL3}:
\begin{theorem}\label{thm: max_test_curve_ray_duality}
The Legendre transform $\{\psi_\tau\}_\tau \mapsto \{\check \psi_t\}_t$ gives a bijective map with inverse $\{\phi_t\}_t \mapsto \{\hat \phi_\tau\}_\tau$ between:\vspace{0.1cm}\\
\Rom{1} psh test curves and sublinear subgeodesic rays,\vspace{0.1cm}\\
\Rom{2} maximal psh test curves and psh geodesic rays,\vspace{0.1cm}\\
\Rom{3}\cite{RWN14}, \cite{DDNL3} maximal bounded test curves and bounded geodesic rays. In this case, we additionally have that
        \[
        \tau_\psi^- t \leq {\check \psi_t}\leq \tau_\psi^+t\,, \quad t \geq 0\,.
        \]
\Rom{4} maximal finite energy test curves and finite energy geodesic rays. In this case, we additionally have that
        \begin{equation}\label{eq: I_RWN_form}
            \Irad[\check \psi]=\frac{1}{V}\int_{-\infty}^{\tau^+_\psi} \left(\int_X \omega_{\psi_\tau}^n-\int_X \omega^n \right) \,\mathrm{d}\tau+\tau_{\psi}^+\,.
        \end{equation}
        Recall that the functional $I$ is defined in \eqref{eq:defIrad}.
\end{theorem}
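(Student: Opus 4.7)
I would prove the four parts in order, since each refines the previous; only part (iv) contains substantively new content beyond the Legendre-transform bookkeeping.

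For part (i), I would argue both directions by direct construction. In the forward direction, Proposition~\ref{lem: Legendre_usc} already gives that the complexification $u(s,z):=\sup_\tau(\psi_\tau(z)+\tau\RE s)$ is usc on $\{\RE s>0\}\times X$; since $u$ is a supremum of a family of plurisubharmonic functions, its usc-regularization agrees with $u$, so $\pi^*\omega+\ddc u\geq 0$ and $\{\check\psi_t\}_{t>0}$ is a subgeodesic. Sublinearity is immediate from $\psi_\tau\equiv-\infty$ for $\tau>\tau_\psi^+$, and $\check\psi_t\to 0$ in $L^1$ as $t\searrow 0$ follows from $\psi_\tau\nearrow 0$ a.e.\ as $\tau\to-\infty$ together with monotone convergence. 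In the reverse direction, for a sublinear subgeodesic $\{\phi_t\}_t$ Kiselman's minimum principle \cite[Theorem I.7.5]{De12} applied to the complexification (on a right half-plane) shows $\hat\phi_\tau\in\PSH(X,\omega)$; concavity and monotonicity in $\tau$ are tautological, the sublinear bound $\phi_t\le Ct$ forces $\hat\phi_\tau\equiv-\infty$ for $\tau>C$, and the hypothesis $\phi_t\to_{L^1}0$ combined with $t$-convexity yields $\hat\phi_\tau\nearrow 0$ a.e.. Finally, involutivity $\check{\hat\phi}=\phi$ follows from the $t$-lower semicontinuity of subgeodesics (which are in fact continuous in $t$ by convexity on each fiber, see Lemma~\ref{lem: -inf_est_subgeod}), while $\hat{\check\psi}=\psi$ uses the $\tau$-upper semicontinuity built into the definition of psh test curves.

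For part (ii), I would show that maximality of $\{\psi_\tau\}_\tau$ as a test curve is equivalent to maximality of $\{\check\psi_t\}_t$ as a geodesic via a direct correspondence of competitors. Given any candidate $\{h_t\}_{t\in(0,1)}$ in the envelope defining $v^{a,b}_t$, its pointwise Legendre transform produces an $\omega$-psh family dominated by the respective $\psi_\tau$ up to singularity type; the envelope property $P[\psi_\tau]=\psi_\tau$ then forces $h_t\le\check\psi_t$, showing that $\check\psi$ itself realizes the supremum in \eqref{eq: vabt_eq}. Conversely, if $\psi_\tau\ne P[\psi_\tau]$ for some $\tau$, then replacing $\psi_\tau$ by $P[\psi_\tau]$ produces a strictly larger test curve with the same inverse Legendre transform (by density of model envelopes in the defining supremum), contradicting the geodesic property after passing back through the Legendre correspondence.

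Part (iii) is essentially a restatement of \cite[Section~6]{RWN14} and \cite[Theorem~3.7]{DDNL3}: bounded maximal test curves correspond bijectively to bounded geodesic rays, and the sandwich $\tau_\psi^-t\le\check\psi_t\le\tau_\psi^+t$ is immediate from $\psi_\tau=0$ on $(-\infty,\tau_\psi^-]$ and $\psi_\tau\equiv-\infty$ on $(\tau_\psi^+,\infty)$.

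For part (iv), which is the main new content, I would run a monotone truncation. Given a maximal finite energy test curve $\{\psi_\tau\}_\tau$, set $\psi_\tau^c:=P[\max(\psi_\tau,-c)]$; I would verify (using that $P[\cdot]$ preserves concavity in $\tau$ and the finite energy hypothesis \eqref{eq: fetestcurve_def}) that $\{\psi_\tau^c\}_\tau$ is a maximal bounded test curve which decreases pointwise to $\psi_\tau$ as $c\nearrow\infty$. By part (iii) the corresponding Legendre transforms $\check\psi^c_t$ are bounded geodesic rays, and the classical endpoint formula of Ross--Witt Nystr{\"o}m (see \cite[Theorem~6.2]{RWN14}) yields
\[
\mathrm{I}(\check\psi^c_1)=\frac{1}{V}\int_{-\infty}^{\tau_\psi^+}\bigg(\int_X\omega_{\psi_\tau^c}^n-\int_X\omega^n\bigg)\,\mathrm{d}\tau+\tau_\psi^+.
\]
Since $\check\psi^c_t\searrow\check\psi_t$ (e.g.\ by stability of Legendre transforms under decreasing limits) and $\check\psi_t\in\mathcal{E}^1$ by the finite energy assumption, monotone convergence of Monge--Amp{\`e}re energies on $\mathcal{E}^1$ passes the left-hand side to $\mathrm{I}(\check\psi_1)=\Irad[\check\psi]$, while the right-hand side converges by monotone convergence for the integrand, giving \eqref{eq: I_RWN_form}. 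The reverse direction---that any finite energy geodesic ray arises this way---follows by running the same formula with $\hat\phi_\tau$ in place of $\psi_\tau$ and observing that $\Irad[\phi]>-\infty$ forces the integral \eqref{eq: fetestcurve_def} to be finite.

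The main obstacle is the truncation step for part (iv): I must ensure that $P[\max(\psi_\tau,-c)]$ remains $\tau$-concave and $\tau$-usc (so it is again a test curve), and that the decreasing limit of maximal test curves remains maximal---this uses Lemma~\ref{lma:decvmodel}(i) applied in the Kähler setting together with $\tau$-monotonicity of $P[\cdot]$ under pointwise increasing families. The finite energy assumption is what guarantees the truncated masses $\int_X\omega_{\psi_\tau^c}^n$ are dominated by an integrable function of $\tau$, enabling the passage to the limit in the integral.
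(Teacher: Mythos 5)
Parts (i)--(iii) of your proposal follow the paper's route closely (Proposition~\ref{lem: Legendre_usc} plus Kiselman's minimum principle for (i), a competitor-by-competitor Legendre argument for (ii), and the references to \cite{RWN14}, \cite{DDNL3} for (iii)). One caveat in (ii): for the direction ``psh geodesic ray $\Rightarrow$ maximal test curve'' the paper simply invokes \cite[Proposition~5.1]{Da17}; your replacement argument is shakier, since substituting $P[\psi_\tau]$ for $\psi_\tau$ need not preserve the inverse Legendre transform, and the resulting larger subgeodesic need not be an admissible competitor in \eqref{eq: vabt_eq} (its values at $t=a,b$ may exceed $\check\psi_a,\check\psi_b$).

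The genuine gap is in part (iv): the truncation $\psi_\tau^c:=P[\max(\psi_\tau,-c)]$ does not do what you want. For any $\tau$ with $\psi_\tau\not\equiv-\infty$ the potential $\max(\psi_\tau,-c)$ is bounded, hence has the trivial singularity type, so $P[\max(\psi_\tau,-c)]\equiv 0$. Your approximating curves are therefore identically zero and do not decrease to $\psi_\tau$; the limiting scheme collapses at the first step. Even with a corrected truncation, two further points you gloss over are exactly where the difficulty lies: first, a decreasing limit of test curves need not commute with the inverse Legendre transform (one only has $\lim_c\sup_\tau\geq\sup_\tau\lim_c$); second, and more seriously, the convergence of the non-pluripolar masses $\int_X\omega^n_{\psi^c_\tau}\to\int_X\omega^n_{\psi_\tau}$ along a decreasing family is \emph{not} automatic and cannot be dismissed as ``monotone convergence for the integrand''. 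The paper circumvents both issues by approximating on the ray side: by \cite[Theorem~4.5]{DL20} and its proof there exist bounded geodesic rays $\check\psi^k_t\searrow\check\psi_t$ for which the mass convergence $\int_X\omega^n_{\psi^k_\tau}\searrow\int_X\omega^n_{\psi_\tau}$ is part of the cited result; it then applies the bounded-case formula --- which is reproved from scratch as Proposition~\ref{prop: RWN_I_formula} via a Riemann-sum argument, because \cite{RWN14} only treats potentials with small unbounded locus, so your direct citation of that reference is also insufficient --- and passes to the limit using that the integrand is nonpositive and bounded below in $L^1(\mathrm{d}\tau)$ by the finite energy hypothesis. To repair your argument you should replace the truncation by this ray-side approximation, or by any approximation for which the mass convergence can actually be established.
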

\begin{proof} 
We prove \Rom{1}. This is essentially \cite[Proposition~4.4]{DDNL3}, where an important particular case was addressed. Let $\{\psi_\tau\}_\tau$ be a psh test curve. Then $\check \psi_t \in \PSH(X,\omega)$ for all $t>0$ due to  Proposition~\ref{lem: Legendre_usc}. We also see that $\sup_X \check \psi_t \leq t \tau^+_\psi$, and $\check \psi_t \to_{L^1} 0$ as $t \to 0$, proving that $\{\check{\psi}_t\}_t$ is a subgeodesic.

For the reverse direction, let $\{\phi_t\}_t$ be a sublinear subgeodesic ray. Then $\hat \phi_\tau \in \PSH(X,\omega)$ or $\hat \phi_\tau \equiv \infty$ for any $\tau \in \mathbb R$ due to Kiselman's minimum principle. By properties of Legendre transforms and Lemma~\ref{lem: -inf_est_subgeod}, we get that $\tau \mapsto \hat \phi_\tau(x)$ is $\tau$-usc, $\tau$-concave and decreasing. Due to sublinearity of $\{\phi_t\}_t$ we get that $\hat \phi_\tau \equiv -\infty$ for $\tau$ big enough. Lastly $\psi_\tau \nearrow 0$ a.e. as $\tau \to -\infty$, since $\phi_t \to_{L^1} 0$ as $t \to 0$.

We prove \Rom{2}. From \cite[Propisition 5.1]{Da17} (that only uses the maximum principle \eqref{eq: vabt_eq}) we obtain that for any psh geodesic ray $\{u_t\}_t$, the curve $\{ \hat u_\tau\}_\tau$ is a maximal psh test curve.

Let $\{\psi_\tau\}_\tau$ be a maximal psh test curve. We will show that the sublinear subgeodesic $\{\check \psi_t\}_t$ is a psh geodesic ray. By elementary properties of the Legendre transform we can assume that $\tau^+_\psi =0$, in particular $\{\check \psi_t\}_t$ is $t$-decreasing. 

Now assume by contradiction that $\{\check \psi_t\}_t$ is not a psh geodesic ray. Comparing with \eqref{eq: vabt_eq}, there exists $0 < a < b$ such that 
\[
\check \psi_{(1-t)a + tb} \lneq \chi_t:=\sup_{h \in \mathcal{S}} h_t\,,\quad t \in [0,1]\,,
\]
where $\mathcal{S}$ is the set of subgeodesics $(a, b) \ni  t \mapsto h_t \in \PSH(X,\omega)$  satisfying $\displaystyle\lim_{t \to a+} h_t \leq  \check \psi_a$ and $\displaystyle\lim_{t \to b-} h_t \leq  \check \psi_b$. Now let $\{\phi_t\}_t$ be the sublinear subgeodesic such that $\phi_t := \check \psi_t$ for $t \not\in (a,b)$ and $\phi_{a(1-t) + bt} := \chi_t$ otherwise.

Trivially, $\check \psi_t \leq \phi_t \leq 0$, hence by duality, $\psi_\tau \leq \hat \phi_\tau$ and $\tau^+_\psi = \tau^+_{\hat \phi}=0$.  However, comparing with \eqref{eq: Leg_transf_def_ray_test_curve}, we claim that $ \hat \phi_\tau \leq  \psi_\tau + \tau(a-b)$ for any $\tau \in \mathbb R$. Since $\tau^+_\psi = \tau^+_{\hat \phi}=0$, we only need to show this for $\tau \leq 0$. For such $\tau$ we indeed have 
\[
\inf_{t \in [a,b]}(\phi_t - t \tau) \leq \phi_b - b \tau = \check \psi_b - b \tau  \leq \inf_{t \in [a,b]}(\check \psi_t - t \tau) +  (a-b) \tau\,,
\]
where in the last inequality we used that $t \mapsto \check \psi_t$ is decreasing.

By the maximality of $\{\psi_\tau\}_\tau$, we obtain that $\psi_\tau = \hat \phi_\tau$. An application of the Legendre transform now gives that $\check \psi_t=\phi_t$, a contradiction. Hence $\{\psi_t\}_t$ is a psh geodesic ray.

The duality of  \Rom{3}  is simply \cite[Theorem~1.3]{DDNL3}, closely following \cite{RWN14}.

We deal with  \Rom{4}. As before, we may assume that $\tau_{\psi}^+=0$. As a preliminary result, in Proposition~\ref{prop: RWN_I_formula} below we prove \eqref{eq: I_RWN_form} for bounded maximal test curves.

Given a finite energy maximal test curve $\{\psi_\tau\}_\tau$, we know that $\{\check \psi\}_t$ is a psh geodesic ray. 
By \cite[Theorem~4.5]{DL20} and its proof there exists bounded geodesic rays $\{\check \psi^k_t\}_t$ such that $\check \psi^k_t \searrow \check \psi_t$ for any $t 
\geq 0$, and $\int_X \omega^n_{\psi^k_\tau } \searrow \int_X \omega^n_{\psi_\tau }$ for any $\tau < \tau^+_\psi = \tau^+_{\psi^k}=0$ (see especially the last displayed equation of \cite[pp. 17]{DL20}). Indeed, the arguments of \cite[Theorem~4.5, Lemma~4.6]{DL20} work for general psh rays, without change.

By Proposition~\ref{prop: RWN_I_formula} below
\[
\Irad[\check \psi^k_t]=\frac{1}{V}\int_{-\infty}^{0} \left( \int_X \omega_{\psi^k_\tau}^n-\int_X \omega^n\right)\,\mathrm{d}\tau\,.
\]

The right hand side is bounded from below, since $\{\psi_\tau\}_\tau$ is a finite energy test curve.
Since $\int_X \omega^n_{\psi^k_\tau } \searrow \int_X \omega^n_{\psi_\tau }$, we can take the limit on both the left and right hand side, to arrive at \eqref{eq: I_RWN_form}, also implying that $\{\check \psi_t\}_t$ is a finite energy geodesic ray. 

Conversely, assume that $\{\phi_t\}_t$ is a finite energy geodesic ray, with decreasing approximating sequence of bounded rays $\{\phi_t^k\}_t$, as detailed above. For similar reasons we have
$
\Irad[\phi^k_t]=\frac{1}{V}\int_{-\infty}^{0} \left( \int_X \omega_{\hat \phi^k_\tau}^n-\int_X \omega^n\right)\,\mathrm{d}\tau\,.
$
Since $\textbf{I}\{\phi_t^k\} \searrow \Irad[\phi_t]$,  the monotone convergence theorem gives that \eqref{eq: I_RWN_form} holds for $\{\hat \phi_\tau\}_{\tau}$, finishing the proof.
\end{proof}

As promised, to complete the argument of Theorem \ref{thm: max_test_curve_ray_duality}, we prove the next proposition, whose argument can be extracted from \cite[Section~6]{RWN14} with additional references to \cite{DDNL2}. We recall the precise details here as the results of \cite{RWN14} were proved in the context of potentials with small unbounded locus.
\begin{prop}\label{prop: RWN_I_formula} Suppose that $\{\psi_\tau\}_\tau$ is a bounded maximal test curve with $\tau^+_\psi = 0$. Then
\begin{equation}\label{eq: I_bounded_formula_RWN}
\frac{\mathrm{I}(\check \psi_t)}{t}=\Irad[\check \psi_t]= \frac{1}{V}\int_{-\infty}^{0} \left( \int_X \omega_{\psi_\tau}^n-\int_X \omega^n\right)\,\mathrm{d}\tau\,, \quad t >0\,. 
\end{equation} 
\end{prop}
\begin{proof} Without loss of generality we assume that $V=1$. For $N\in \mathbb{Z}_+,M \in \mathbb{Z}$ and $t >0$, we introduce the following:
\[
\check{\psi}^{N,M}_t := \max_{\substack{k \in \mathbb{Z} \\ k \leq M}} \left(\psi_{k/2^N} +  tk/2^N\right)\,.
\]
It is clear that $\check \psi_t^{N,M} \in \PSH(X,\omega) \cap L^\infty(X)$, since it is a maximum of a finite number of $\omega$-psh potentials (here we also used that $\{\psi_\tau\}_\tau$ is a bounded test curve). Moreover, we now argue that
\begin{equation}\label{eq: diff_eq_I}
 \frac{t}{2^N} \int_X \omega^n_{\psi_{(M+1)/2^N}}\leq \mathrm{I}(\check \psi_t^{N,M+1})-\mathrm{I}(\check \psi_t^{N,M})\leq \frac{t}{2^N} \int_X \omega^n_{\psi_{M/2^N}}\,.
\end{equation}
Indeed, for elementary reasons:
\begin{equation}\label{eq: first_I_ineq}
\int_X \left(\check{\psi}_t^{N,M+1}-\check{\psi}_t^{N,M}\right)\,\omega^n_{\check \psi_t^{N,M+1}}\leq \mathrm{I}(\check{\psi}_t^{N,M+1})-\mathrm{I}(\check{\psi}_t^{N,M})\leq \int_X \left(\check{\psi}_t^{N,M+1}-\check{\psi}_t^{N,M}\right)\,\omega^n_{\check \psi_t^{N,M}}\,.
\end{equation}
Clearly $\check{\psi}_t^{N,M+1} \geq \check{\psi}_t^{N,M}$, and using $\tau$-concavity we notice that 
\[
 U_t := \left\{\,\check \psi^{N,M+1}_t-\check \psi^{N,M}_t>0\,\right\} = \left\{\,\psi_{(M+1)/2^N} +  2^{-N}t -\psi_{M/2^N}>0\,\right\}\,.
\]
Moreover, on $U_t$ we have 
\[
\check \psi_t^{N,M+1} = \psi_{(M+1)/2^N} + t (M+1)/2^{N}\,,\quad \check \psi_t^{N,M} = \psi_{M/2^N} + t M /2^{N}\,. 
\]
 We also note that $U_t$ is an open set in the plurifine topology, implying that $\omega^n_{\psi_{(M+1)/2^N}}\big|_{U_t}=\omega^n_{\check \psi^{N,M+1}_t}\big|_{U_t}$ and $\omega^n_{\psi_{M/2^N}}\big|_{U_t}=\omega^n_{\check \psi^{N,M}_t}\big|_{U_t}$. Recall that $\omega^n_{\psi_{M/2^N}}$ and $\omega^n_{\psi_{(M+1)/2^N}}$ are supported on the sets $\{\psi_{M/2^N} =0\}$ and $\{\psi_{(M+1)/2^N} =0\}$ respectively \cite[Theorem~3.8]{DDNL2}.  Since  $\{\psi_{(M+1)/2^N} =0\} \subseteq U_t$ and $\{\psi_{(M+1)/2^N} =0\} \subset \{\psi_{M/2^N} =0\}$, applying the above to \eqref{eq: first_I_ineq}, we arrive at \eqref{eq: diff_eq_I}.

Fixing $N$, let $M$ be the biggest integer to the left of $2^{N}\tau^{-}_\psi$. Then repeated application of \eqref{eq: diff_eq_I} yields
\[
\sum_{M+1 \leq j \leq 0} \frac{t}{2^N} \int_X \omega^n_{\psi_{j/2^N}}\leq \mathrm{I}(\check \psi_t^{N,0})-\mathrm{I}(\check \psi_t^{N,M})\leq \sum_{M \leq j \leq -1} \frac{t}{2^N} \int_X \omega^n_{\psi_{j/2^N}}\,.
\]
Since $M \leq 2^{N}\tau^-_\psi$ we have that $\check \psi^{N,M}_t = \psi_{M/2^N} + t M/2^N=t M/2^N$, we can continue to write 
\[
\sum_{j=M+1}^0 \frac{t}{2^N} \left(\int_X \omega^n_{\psi_{j/2^N}} - \int_X \omega^n\right)\leq \mathrm{I}(\check \psi_t^{N,0})\leq \sum_{j=M}^{-1} \frac{t}{2^N} \left(\int_X \omega^n_{\psi_{j/2^N}} - \int_X \omega^n\right)\,.
\]
We now notice that we have Riemann sums on both the left and right of the above inequality. Using Lemma~\ref{lma:contimass} below, it is possible to let $N \to \infty$ and obtain \eqref{eq: I_bounded_formula_RWN}, as desired.
\end{proof}
\begin{lemma}\label{lma:contimass}
Suppose that $\{\psi_\tau \}_{\tau}$ is a psh test curve. Then $\tau \mapsto \int_X \omega_{\psi_\tau}^n>0$ is a continuous function for $\tau \in (-\infty, \tau^+_\psi)$.
\end{lemma}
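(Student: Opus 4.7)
The mass $F(\tau):=\int_X\omega_{\psi_\tau}^n$ is monotonically decreasing in $\tau$: for $\tau_1<\tau_2<\tau_\psi^+$ one has $\psi_{\tau_1}\geq\psi_{\tau_2}$, and singularity-type monotonicity of non-pluripolar mass (\cite[Theorem~1.1]{WN19}) gives $F(\tau_1)\geq F(\tau_2)$. The plan is therefore to show that both one-sided limits of $F$ at each $\tau_0\in(-\infty,\tau_\psi^+)$ coincide with $F(\tau_0)$. As a preliminary pointwise step, whenever $\tau_n\to\tau_0$, one has $\psi_{\tau_n}(x)\to\psi_{\tau_0}(x)$ for every $x$ outside the pluripolar set $\bigcap_{\tau'\in\mathbb{Q}\cap(\tau_0,\tau_\psi^+)}\{\psi_{\tau'}=-\infty\}$, since $\tau\mapsto\psi_\tau(x)$ is concave and finite (hence continuous) on the interior of its effective domain.

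For right continuity, let $\tau_n\searrow\tau_0$. Then $\{\psi_{\tau_n}\}_n$ is an increasing sequence of $\omega$-psh functions, bounded above by $\psi_{\tau_0}$ and converging to $\psi_{\tau_0}$ off a pluripolar set; its usc regularization therefore coincides with $\psi_{\tau_0}$, and the continuity of non-pluripolar mass under increasing a.e.\ limits (\cite[Theorem~2.3]{DDNL2}) yields $F(\tau_n)\nearrow F(\tau_0)$.

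Left continuity is more delicate, since non-pluripolar masses are not generally continuous under decreasing limits. My plan is to pass to the model envelopes $\phi_\tau:=P[\psi_\tau]$, which satisfy $\int_X\omega^n_{\phi_\tau}=F(\tau)$ by \cite[Theorem~1.1]{WN19} and are model potentials for each $\tau$. For $\tau_n\nearrow\tau_0$ the sequence $\{\phi_{\tau_n}\}_n$ is decreasing with limit $\phi^*:=\lim_n\phi_{\tau_n}$, itself model (decreasing limits of model potentials are model, as in the argument of Lemma~\ref{lma:decvmodel}\Rom{1} adapted to $P[\cdot]$). Once $\phi^*=\phi_{\tau_0}$ is established, Proposition~4.8 of \cite{DDNL5} (continuity of mass under decreasing limits of model potentials) gives $F(\tau_n)\searrow F(\tau_0)$, completing the proof.

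The hard part is showing $\phi^*=\phi_{\tau_0}$. The inequality $\phi^*\geq\phi_{\tau_0}$ is immediate from $\phi_{\tau_n}\geq\phi_{\tau_0}$ for each $n$. The reverse requires $[\phi^*]=[\psi_{\tau_0}]$: indeed, $\phi^*\leq 0$ with this singularity type would qualify as a competitor in the envelope defining $P[\psi_{\tau_0}]=\phi_{\tau_0}$, forcing $\phi^*\leq\phi_{\tau_0}$. From $\phi^*\geq\psi_{\tau_0}$ we already have $[\phi^*]\succeq[\psi_{\tau_0}]$; for the reverse direction I plan to exploit the concavity secant bound
\[
\psi_{\tau_n}\leq\psi_{\tau_0}+\delta_n(\psi_{\tau_0}-\psi_{\tau_+}),\qquad \delta_n=\frac{\tau_0-\tau_n}{\tau_+-\tau_0}\to 0,
\]
valid for any fixed $\tau_+\in(\tau_0,\tau_\psi^+)$, together with the fact that $\phi_{\tau_n}$ shares singularity type with $\psi_{\tau_n}$, in order to bound $\phi_{\tau_n}$ from above by $\psi_{\tau_0}$ modulo a uniform constant. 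The delicate point is controlling the normalization constants involved uniformly in $n$ so that the error term $\delta_n(\psi_{\tau_0}-\psi_{\tau_+})$ can be safely absorbed in the limit.
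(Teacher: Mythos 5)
Your monotonicity observation and your proof of right continuity (increasing limits plus \cite[Theorem~2.3]{DDNL2}) are correct and match the paper. The left-continuity argument, however, has a genuine gap, and you have in effect flagged it yourself. Reducing the problem to showing $\phi^*:=\lim_n P[\psi_{\tau_n}]=P[\psi_{\tau_0}]$ is a strictly stronger statement than the mass identity you need, and the secant bound does not close it: the estimate $\psi_{\tau_n}\leq(1+\delta_n)\psi_{\tau_0}-\delta_n\psi_{\tau_+}$ has a right-hand side that is a \emph{difference} of quasi-psh functions, hence not quasi-psh, and the ``error'' $\delta_n(\psi_{\tau_0}-\psi_{\tau_+})$ is a nonnegative function that is unbounded above (it is $+\infty$ wherever $\psi_{\tau_+}=-\infty$ and $\psi_{\tau_0}>-\infty$), so it cannot be absorbed into an additive constant. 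No uniform control of ``normalization constants'' will fix this; without it you cannot conclude $[\phi^*]\preceq[\psi_{\tau_0}]$, and indeed $P[\cdot]$ is not continuous along general decreasing sequences, so concavity in $\tau$ must enter in an essential quantitative way that your sketch does not supply. Separately, the statement of the lemma also asserts $\int_X\omega_{\psi_\tau}^n>0$ for $\tau<\tau_\psi^+$, which your proposal never proves, yet your own argument invokes results (Lemma~\ref{lma:decvmodel}, \cite[Proposition~4.8]{DDNL5}) that require positive mass.

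The paper exploits concavity differently, and much more directly. For left continuity it uses $\tfrac12\psi_{\tau_0-\epsilon}+\tfrac12\psi_{\tau_0+\epsilon}\leq\psi_{\tau_0}$, expands the non-pluripolar product multilinearly, and bounds the mixed terms from below by $\int_X\omega^n_{\psi_{\tau_0+\epsilon}}$ (monotonicity of mixed masses), obtaining
\[
\frac{1}{2^n}\int_X\omega^n_{\psi_{\tau_0-\epsilon}}+\frac{2^n-1}{2^n}\int_X\omega^n_{\psi_{\tau_0+\epsilon}}\leq\int_X\omega^n_{\psi_{\tau_0}}\,;
\]
letting $\epsilon\searrow0$ and using the already-established right continuity squeezes $\int_X\omega^n_{\psi_{\tau_0-\epsilon}}\searrow\int_X\omega^n_{\psi_{\tau_0}}$, with no envelopes needed. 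Positivity is obtained by the same device: writing $\tau=\alpha\tau_0+(1-\alpha)\tau_1$ with $\int_X\omega^n_{\psi_{\tau_0}}>0$ (such $\tau_0$ exists since the mass tends to $\int_X\omega^n$ as $\tau\to-\infty$), concavity and \cite[Theorem~1.1]{WN19} give $\int_X\omega^n_{\psi_\tau}\geq\alpha^n\int_X\omega^n_{\psi_{\tau_0}}>0$. I would encourage you to redo the left-continuity and positivity steps along these lines.
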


By working harder, using \cite[Theorem~B]{DDNL5}, one can show that $\tau \mapsto \big(\int_X \omega_{\psi_\tau}^n\big)^{1/n}$ is concave, however we will not need this in the sequel.

\begin{proof} First we argue positivity. Since $\psi_\tau \nearrow 0$ a.e, as $\tau \to -\infty$, \cite[Theorem~2.3]{DDNL2} gives $\int_X \omega_{\psi_\tau}^n \nearrow \int_X \omega^n > 0$ as $\tau \to -\infty$. Let $\tau \in (-\infty,\tau^+_\psi)$ be arbitary. Pick $\tau_1 \in (\tau,\tau^+_\psi)$ and $\tau_0 < \tau$ such that $\int_X \omega_{\psi_{\tau_0}^n} > 0$. Let $\alpha \in (0,1)$ such that $\tau = \alpha \tau_0 + (1-\alpha) \tau_{1}$. By $\tau$-concavity and \cite[Theorem~1.1]{WN19} we obtain that $\int_X \omega^n_{\psi_\tau} \geq \int_X \omega^n_{\psi_{\alpha \tau_0 + (1-\alpha) \tau_{1}}} \geq \alpha^n \int_X \omega^n_{\psi_{\tau_0}}>0$, as desired.

Next, we argue continuity. We know that $\tau \mapsto \psi_\tau$ is $\tau$-decreasing. Fix $\tau_0 \in (-\infty, \tau^+_\psi)$ then $\int_X \omega_{\psi_\tau}^n \nearrow \int_X \omega_{\psi_{\tau_0}}^n$ as $\tau \searrow \tau_0$ by \cite[Theorem~2.3]{DDNL2}. Now we argue that $\int_X \omega_{\psi_\tau}^n \searrow \int_X \omega_{\psi_{\tau_0}}^n$ as $\tau \nearrow \tau_0$. For $\epsilon >0$ small, using the $\tau$-convexity of $\tau \mapsto \psi_\tau$ together with monotonicity and multi-linearity of the non-pluripolar measure, we have
\[
\frac{1}{2^n} \int_X \omega^n_{\psi_{\tau_0 - \epsilon}}  + \frac{2^n - 1}{2^n} \int_X \omega^n_{\psi_{\tau_0 + \epsilon}}\leq \int_X \omega^n_{\frac{1}{2}\psi_{\tau_0 - \epsilon} + \frac{1}{2}\psi_{\tau_0 + \epsilon}} \leq \int_X \omega^n_{\psi_{\tau_0}}\,.
\]
Letting $\epsilon \searrow 0$, we know that $\int_X \omega^n_{\psi_{\tau_0 + \epsilon}} \to \int_X \omega^n_{\psi_{\tau_0}}$, hence  $\int_X \omega_{\psi_{\tau_0 - \epsilon}}^n \searrow \int_X \omega_{\psi_{\tau_0}}^n$.
\end{proof}

The technique in the proof of Proposition~\ref{prop: RWN_I_formula} can also be applied to other energy functionals. We refer to \cite{Xia20} for details.

\subsection{Rays induced by filtrations and approximability}\label{subsec:rayfiltandapp}

We fix a filtration $(\{\mathcal{F}^{\lambda}_k\},r )$ on $R(X,L^r)$. Following \cite[Section~7]{RWN14}, one can associate a maximal test curve to this filtration in the following manner. The corresponding construction for test configurations is due to Phong--Sturm \cite{PS07}, \cite{PS10}. For $\tau \in \mathbb{R}$, let 
\begin{equation}\label{eq: filtration_to_test_curve_def1}
\hat u_{\tau}^k := \sup_{ \substack{s \in \mathcal{F}^{\tau k}_k\,,  h^k(s,s) \leq 1}} \frac{1}{kr} \log h^{kr}(s,s) \leq 0\,.
\end{equation}
Since each $\mathcal{F}^{\tau  k}_k$ is finite dimensional, one notices that $\hat u^k_\tau \in \PSH(X,\omega)$ has analytic singularity type. Moreover, by the multiplicativity of the filtration we have that 
\begin{equation}\label{eq: mult_filt}
k \hat u^k_\tau+ k' \hat u^{k'}_\tau\leq (k + k') \hat u^{k + k'}_\tau \leq 0\,.
\end{equation}
As a result, Fekete's lemma implies that $\hat u_\tau := \lim_{k} \hat u^k_\tau = \sup_k \hat u^k_\tau \in \PSH(X,\omega)$ exists, and the curve $\tau \mapsto \hat u_\tau$ has a number of special properties.

\begin{theorem}\cite[Proposition~7.7, Proposition~7.11]{RWN14} For  any filtration $(\{\mathcal{F}^\lambda_k\},r)$ the potentials $\{\hat u_\tau\}_\tau$ form a maximal bounded test curve. In particular, by Theorem~\ref{thm: max_test_curve_ray_duality} they induce a ray of bounded potentials $\{ u_t\}_t \in \mathcal{R}^\infty$.
\end{theorem}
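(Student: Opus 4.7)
The plan is to verify the five axioms of a bounded maximal test curve (Definition~\ref{def: test_curves}) one by one, and then invoke Theorem~\ref{thm: max_test_curve_ray_duality}\Rom{3} to produce the bounded geodesic ray. The single structural ingredient driving the proof is the super-multiplicativity \eqref{eq: mult_filt}, which comes from multiplicativity of the filtration. Each finite-$k$ envelope $\hat u^k_\tau$ is $\omega$-psh with analytic singularity type, being a finite-dimensional partial Bergman envelope; by Fekete's lemma applied to \eqref{eq: mult_filt}, the increasing limit $\hat u_\tau = \sup_k \hat u^k_\tau \in \PSH(X,\omega)$ exists.

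Next I would handle the ordering axioms in $\tau$. Monotonicity is immediate from the decreasing nature of the filtration. For concavity, fix $\tau_1,\tau_2 \in \mathbb{R}$ and a rational $\alpha = p/(p+q)$. Multiplicativity sends $s^{\otimes p} \otimes (s')^{\otimes q}$ with $s \in \mathcal{F}^{\tau_1 k}_k$ and $s' \in \mathcal{F}^{\tau_2 k'}_{k'}$ into $\mathcal{F}^{p\tau_1 k + q\tau_2 k'}_{pk+qk'}$, and unpacking this at the level of pointwise norms gives
\[
pk\, \hat u^k_{\tau_1} + qk'\, \hat u^{k'}_{\tau_2} \leq (pk+qk')\, \hat u^{pk+qk'}_{(p\tau_1 k + q\tau_2 k')/(pk+qk')}.
\]
Passing to the Fekete limit, and then using rational density of $\alpha$, yields concavity. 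Upper semicontinuity in $\tau$ is then a consequence of concavity together with boundedness of the curve on compact subsets, with left-continuity at the right endpoint inherited from the left-continuity of the filtration. Linear boundedness finally pins down the endpoint behavior: $\mathcal{F}^{\tilde \lambda k}_k = \{0\}$ forces $\hat u_\tau \equiv -\infty$ for $\tau \geq \tilde \lambda$, while $\mathcal{F}^{-\tilde \lambda k}_k = H^0(X,L^{kr})$ combined with the standard Bergman kernel expansion invoked in Theorem~\ref{thm:QuantI} forces $\hat u^k_\tau \to 0$ uniformly for $\tau \leq -\tilde \lambda$, so $\hat u_\tau \equiv 0$ there. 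This simultaneously gives boundedness of the test curve ($\tau^-_\psi \geq -\tilde\lambda$) and the asymptotic $\hat u_\tau \nearrow 0$ as $\tau \to -\infty$.

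The last axiom is maximality, and this is where I expect the main technical difficulty. Since each $\hat u^k_\tau$ has analytic singularity type, Proposition~\ref{prop: anal_sing_type_envelope} gives $\hat u^k_\tau = P[\hat u^k_\tau] = \PrIv[\hat u^k_\tau]$, so each level is both model and $\veq$-model. For those $\tau$ with $\int_X \omega_{\hat u_\tau}^n > 0$, Lemma~\ref{lma:decvmodel}\Rom{3} promotes the increasing limit $\hat u_\tau$ to be $\veq$-model, hence model via Proposition~\ref{prop:Qprojection}\Rom{1}. The delicate issue is that Lemma~\ref{lma:decvmodel}\Rom{3} requires positive mass, and the masses $\tau \mapsto \int_X \omega^n_{\hat u_\tau}$ (continuous by Lemma~\ref{lma:contimass}) might vanish at a threshold below $\tau^+_\psi$. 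At such threshold values of $\tau$ I would argue via a decreasing-in-$\tau$ limit using Lemma~\ref{lma:decvmodel}\Rom{1} applied to $\tau' \searrow \tau$ where positive mass still holds, exploiting the already-established $\tau$-concavity and monotonicity of $\hat u_\tau$. The trivial range $\tau \geq \tau^+_\psi$ is covered by the convention $P[-\infty] = -\infty$.

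Once all axioms are verified, Theorem~\ref{thm: max_test_curve_ray_duality}\Rom{3} directly produces the bounded geodesic ray $\{u_t\}_t \in \mathcal{R}^\infty$ as the Legendre transform of the maximal bounded test curve $\{\hat u_\tau\}_\tau$, completing the argument.
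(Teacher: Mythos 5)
The axiomatic part of your plan (existence of the Fekete limit, $\tau$-monotonicity and $\tau$-concavity from multiplicativity, the endpoint behavior from linear boundedness) tracks the paper's sketch and is fine, modulo the minor point that $\tau$-usc is not purely a consequence of concavity: at $\tau = \tau_u^+$ one must \emph{define} $\hat u_{\tau_u^+}$ as the decreasing limit of $\hat u_\tau$ as $\tau$ increases to $\tau_u^+$, which is what the paper does.

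The maximality step, however, has a genuine gap, and it is the heart of the theorem. First, Proposition~\ref{prop: anal_sing_type_envelope} does \emph{not} give $\hat u^k_\tau = P[\hat u^k_\tau]$; it gives $\PrIv[\hat u^k_\tau] = P[\hat u^k_\tau]$ and $[P[\hat u^k_\tau]] = [\hat u^k_\tau]$. A finite-level partial Bergman envelope is typically not a model potential (already for a one-dimensional subspace, $\frac{1}{r}\log h^r(s,s)$ differs from its envelope $P[\cdot]$ by an unbounded amount away from the zero set of $s$). Second, and more seriously, even after replacing $\hat u^k_\tau$ by $P[\hat u^k_\tau]$, the application of Lemma~\ref{lma:decvmodel}\Rom{3} is circular: that lemma carries the standing hypothesis that the \emph{limit} $\varphi$ is itself a model potential, which for $\varphi = \hat u_\tau$ is precisely the maximality you are trying to establish. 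Relatedly, you have no control $P[\hat u^k_\tau] \leq \hat u_\tau$, only $\hat u^k_\tau \leq P[\hat u^k_\tau]$, so you cannot even identify the increasing limit of the model potentials $P[\hat u^k_\tau]$ with $\hat u_\tau$ without already knowing $P[\hat u_\tau] \le \hat u_\tau$. (Indeed, the paper's proof of the \emph{next} result, Theorem~\ref{thm: filtration_I_max_testcurve}, runs exactly the argument you propose, but it can do so only because it invokes the maximality of $\hat u_\tau$ as an input.) The missing ingredient is the Skoda division theorem: given $v \in \PSH(X,\omega)$, $v \le 0$, with $[v] = [\hat u_\tau]$, Skoda division lets one divide high-power sections through the finitely many sections generating $\hat u^k_\tau$, producing elements of $\mathcal F^{\tau k'}_{k'}$ that dominate $v$ up to $o(1)$; this yields $v \le \hat u_\tau$ and hence $P[\hat u_\tau] = \hat u_\tau$ for $\tau < \tau_u^+$, with the endpoint $\tau = \tau_u^+$ then handled by the decreasing limit and \cite[Corollary~4.7]{DDNL5}.
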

We give a very brief sketch of the argument. As elaborated in \cite[Section~7]{RWN14}, that $\{\hat u_\tau\}_\tau$ is $\tau$-concave and $\tau$-decreasing is a consequence of the multiplicativity of the filtration. Boundedness follows from linear boundedness of the filtration. To make sure that $\{\hat u_\tau\}_\tau$ is $\tau$-usc we take $\hat u_{\tau_u^+} := \lim_{\tau \searrow {\tau_u^+}}\hat u_\tau$ \cite[Lemma~4.3]{DDNL3}. Regarding maximality, $P[\hat u_\tau] = \hat u_\tau$, for $\tau < \tau_u^+$ is a consequence of the Skoda division theorem. That $P[\hat u_{\tau_u^+}] = \hat u_{\tau_u^+}$ follows since $\hat u_\tau \searrow \hat u_{\tau_u^+}$ as ${\tau \searrow {\tau_u^+}}$ \cite[Corollary~4.7]{DDNL5}.

Recall the notion of $\mathcal I$-maximal test curves from Definition~\ref{def: test_curves}. As $\veq$-maximal test curves are maximal (Lemma~\ref{prop:Qprojection}), we give the following improvement to the above result:
\begin{theorem} \label{thm: filtration_I_max_testcurve}For  any filtration $(\{\mathcal{F}^\lambda_k\},r)$ the curve $\{\hat u_\tau\}_\tau$ is a bounded $\veq$-maximal test curve.
\end{theorem}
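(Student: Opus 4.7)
The plan is to present each $\hat u_\tau$ as an a.e.\ increasing limit of $\veq$-model potentials and then invoke Lemma~\ref{lma:decvmodel}(III). The key observation is that the building blocks $\hat u^k_\tau$ from \eqref{eq: filtration_to_test_curve_def1} have algebraic singularity type essentially by construction, so Proposition~\ref{prop: anal_sing_type_envelope} delivers $\veq$-model envelopes for free, and all that remains is to pass to the limit.

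Fix $\tau < \tau^+_u$. For $k$ large enough that $\mathcal{F}^{\tau k}_k \neq \{0\}$, the potential $\hat u^k_\tau$ is (up to a bounded smooth term) the logarithm of the Bergman kernel of the finite-dimensional space $\mathcal{F}^{\tau k}_k \subseteq H^0(X,L^{kr})$, and hence $[\hat u^k_\tau] \in \mathcal{Z}$. Proposition~\ref{prop: anal_sing_type_envelope} then gives $\PrIv[\hat u^k_\tau] = P[\hat u^k_\tau]$, so the envelopes $P[\hat u^k_\tau]$ are $\veq$-model for every such $k$.

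Next I would extract a pointwise monotone subsequence. Setting $k = k' = 2^j$ in \eqref{eq: mult_filt} yields $\hat u^{2^j}_\tau \leq \hat u^{2^{j+1}}_\tau$, and Fekete's lemma combined with the identity $\hat u_\tau = \sup_k \hat u^k_\tau$ gives $\hat u^{2^j}_\tau \nearrow \hat u_\tau$ a.e. Applying the monotone operator $\varphi \mapsto P[\varphi]$ and using the maximality $P[\hat u_\tau] = \hat u_\tau$ from the previous theorem, we obtain the sandwich
\[
\hat u^{2^j}_\tau \leq P[\hat u^{2^j}_\tau] \leq P[\hat u^{2^{j+1}}_\tau] \leq P[\hat u_\tau] = \hat u_\tau\,,
\]
whence $P[\hat u^{2^j}_\tau] \nearrow \hat u_\tau$ a.e.

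Finally, Lemma~\ref{lma:contimass} ensures $\int_X \omega_{\hat u_\tau}^n > 0$ for all $\tau < \tau^+_u$, so the hypotheses of Lemma~\ref{lma:decvmodel}(III) are satisfied by the $\veq$-model sequence $\{P[\hat u^{2^j}_\tau]\}_j$ with a.e.\ limit $\hat u_\tau$; this yields $\PrIv[\hat u_\tau] = \hat u_\tau$, as desired. The boundary case $\tau \geq \tau^+_u$ is trivial: the convention introduced after the previous theorem forces $\hat u_\tau \equiv -\infty$, which is $\veq$-model under the convention $\PrIv[-\infty] = -\infty$. I do not anticipate a genuine obstacle, as the proof is a direct synthesis of the algebraic nature of the finite-level Bergman kernels $\hat u^k_\tau$, the maximality already in hand, and the stability of the $\veq$-model property under a.e.\ increasing limits with positive mass; the only small verification worth flagging is that $P[\cdot]$ is order-preserving on $\PSH(X,\omega)$, which is immediate from $P[\varphi] = \lim_{C\to\infty} P(\varphi + C, 0)$.
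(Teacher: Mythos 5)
Your argument for $\tau<\tau_u^+$ is exactly the paper's: the potentials $\hat u^{2^j}_\tau$ have analytic singularity type, so $P[\hat u^{2^j}_\tau]=\PrIv[\hat u^{2^j}_\tau]$ is $\veq$-model by Proposition~\ref{prop: anal_sing_type_envelope}; the squeeze $\hat u^{2^j}_\tau \leq P[\hat u^{2^j}_\tau]\leq P[\hat u_\tau]=\hat u_\tau$ gives $P[\hat u^{2^j}_\tau]\nearrow \hat u_\tau$ a.e.; and Lemma~\ref{lma:contimass} plus Lemma~\ref{lma:decvmodel}\Rom{3} close the case. That part is correct.

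The gap is at the endpoint $\tau=\tau_u^+$. You dismiss it by asserting that $\hat u_{\tau_u^+}\equiv-\infty$, but that is false in general: $\tau_u^+$ is defined as an \emph{infimum}, $\tau_u^+=\inf\{\tau:\hat u_\tau\equiv-\infty\}$, and to make the test curve $\tau$-usc one sets $\hat u_{\tau_u^+}$ equal to the decreasing limit $\lim_{\epsilon\searrow 0}\hat u_{\tau_u^+-\epsilon}$, which is typically a genuine potential. (For the trivial filtration $\mathcal{F}^\lambda_k=H^0(X,L^{kr})$ for $\lambda\leq 0$ and $\{0\}$ for $\lambda>0$, one gets $\tau_u^+=0$ and $\hat u_{0}=0$, not $-\infty$.) Moreover your interior argument genuinely does not cover this value: Lemma~\ref{lma:contimass} only guarantees $\int_X\omega^n_{\hat u_\tau}>0$ for $\tau<\tau_u^+$, so Lemma~\ref{lma:decvmodel}\Rom{3} may not apply at $\tau=\tau_u^+$. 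The repair is short and is what the paper does: having established $\PrIv[\hat u_{\tau}]=\hat u_{\tau}$ for all $\tau<\tau_u^+$, observe that $\hat u_{\tau_u^+-\epsilon}\searrow\hat u_{\tau_u^+}$ as $\epsilon\searrow 0$, and invoke Lemma~\ref{lma:decvmodel}\Rom{1} (a decreasing limit of $\veq$-model potentials which is itself model is $\veq$-model; the model property of $\hat u_{\tau_u^+}$ comes from the maximality already in hand). For $\tau>\tau_u^+$ the statement is indeed trivial under the convention $\PrIv[-\infty]=-\infty$.
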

\begin{proof} Using the previous result, we only have to show that $\PrIv[\hat u_\tau]= \hat u_\tau$ for $\tau \leq \tau_u^+$. Due to \eqref{eq: mult_filt} we have that $\hat u^{2^j}_\tau \leq \hat u^{2^{j+1}}_\tau$, moreover $\hat u_\tau = \lim_j \hat u^{2^j}_\tau$. By maximality of $\hat u_\tau$, we have that  $\hat u^{2^j}_\tau \leq P[\hat u^{2^j}_\tau]\leq P[\hat u_\tau] =\hat u_\tau$, in particular, $ P[\hat u^{2^j}_\tau] \nearrow \hat u_\tau$.

Let us assume momentarily that $\tau < \tau^+_u$. Then $\int_X \omega_{\hat u_\tau} > 0$ by Lemma~\ref{lma:contimass}. By Lemma~\ref{lma:decvmodel}\Rom{3}, to conclude that $\PrIv[\hat u_\tau] = \hat u_\tau$, we only need to argue that $P[\hat u^{2^j}_\tau]$ is $\veq$-model, which follows from Proposition~\ref{prop: anal_sing_type_envelope}.

In case when $\tau = \tau^+_u$, notice that $\PrIv[\hat u_{\tau-\epsilon}]= \hat u_{\tau-\epsilon} \searrow \hat u_{\tau}$ as $\epsilon \searrow 0$. Hence by Lemma~\ref{lma:decvmodel}\Rom{1}, and what we just proved, we get that $\PrIv[\hat u_{\tau}]= \hat u_{\tau}$, as desired.
\end{proof}

\paragraph{Test configurations and approximable rays.}
We introduce some preliminary terminology, aiding our discussion in this paragraph.
\begin{definition}
We say that a ray $\{r_t\}_t \in \mathcal{R}^1$ is \emph{approximable}  if 
there exists rays $\{r^j_t\}_t \in \mathcal{R}^1$ induced by test configurations such that $r^j_t \searrow r_t$ for $t \geq 0$. 
\end{definition}
In the terminology of \cite{BBJ21}, approximable rays  $\{r_t\}_t$ are called \emph{maximal}. By \cite[Lemma~4.3]{DL20} we obtain that $d_1^c (\{r^j_t\}_t,\{r_t\}_t) \to 0$, in particular $\{r_t\}_t \in \overline{\mathcal{T}}$, where $\mathcal{T}$ is the set of rays induced by ample test configurations. 

Due to the completeness of $\mathcal{E}^{1,\NA}$ proved by Boucksom--Jonsson (see \cite[Example~3.3]{Xi19}, or Theorem \ref{thm: Pi_cont} below), $\{v_t\}_t \in \overline{\mathcal{T}}$ if and only if it is approximable, but we will not rely on this property in the present section.

We recall the following potential theoretic interpretation of $r^{\NA}$ from \cite[Section~4.3]{BBJ21} in terms of the Legendre transform:
\begin{prop}\label{prop: NA_Legendre_formula} Let $\{r_t\}_t \in \mathcal{R}^1$ such that $ \tau^+_{\hat r}=\sup_X r_t \leq 0$, for $t \geq 0$. Let $v \in X^{\Div}_\mathbb{Q}$. Then
\begin{flalign}\label{eq: NA_Gauss_no_Gauss}
r^{\NA}(v)&=- \inf_{\tau < \tau^+_{\hat r}}(v(\hat{r}_{\tau})-\tau)= -\inf_{\tau \in \mathbb R}(v(\hat{r}_{\tau})-\tau)\\
&=\sup_{\tau < \tau^+_{\hat r}}(\hat{r}^\NA_{\tau}(v)+\tau)=\sup_{\tau \in \mathbb R}(\hat{r}^\NA_{\tau}(v)+\tau)\,.  \nonumber
\end{flalign}
where $\{\hat r_\tau\}_\tau$ is the maximal finite energy test curve of $\{r_t\}_t$. By convention, $v(-\infty)=\infty$.
\end{prop}

\begin{proof} We only need to prove the very first equality. Recall from Theorem~\ref{thm: max_test_curve_ray_duality} the following duality between $\{r_t\}_t$ and its maximal test curve:
\begin{equation}\label{eq: dual_Legendre}
r_{t}=\sup_{\tau < \tau^+_{\hat r}} (\hat r_{\tau}+t\tau)\,,\quad t\geq 0\,.
\end{equation}

Let $v\in X^{\Div}_\mathbb{Q}$, and $\sigma(v) \in (X \times \mathbb{C})^{\Div}_\mathbb{Q}$ be the corresponding Gauss extension (see Section~\ref{subsec:nonAchi}). Since $ \tau^+_{\hat r} \leq 0$, by  Lemma~\ref{lma:Lelonginc} below we conclude that $-r^{\NA}(v)  =\sigma(v)(\Phi)=\inf_{\tau < \tau^+_{\hat r}} (v(\hat{r}_{\tau})-\tau)$.
\end{proof}
\begin{lemma}\label{lma:Lelonginc}
Let $\Omega$ be a complex manifold. Let $\mathcal{F}$ be a non-empty family of non-positive psh functions on $\Omega$ and $\displaystyle
\psi:=\usc\left(\sup_{\varphi\in \mathcal{F}}\varphi\right)$.
Then for any $x\in \Omega$,
\begin{equation}\label{eq:Lelonginc}
\nu(\psi,x)=\inf_{\varphi\in \mathcal{F}} \nu(\varphi,x)\,.
\end{equation}
\end{lemma}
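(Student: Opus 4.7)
The plan is to establish the equality as two inequalities. For the easy direction $\nu(\psi,x)\le \inf_{\varphi\in\mathcal{F}}\nu(\varphi,x)$, observe that each $\varphi\in\mathcal{F}$ satisfies $\varphi\le\psi$ pointwise, since $\psi$ is the usc regularization of the pointwise supremum. Lelong numbers are monotone in the opposite direction -- a less singular function has a smaller Lelong number -- so $\nu(\psi,x)\le\nu(\varphi,x)$ for each $\varphi\in\mathcal{F}$, and taking the infimum gives the inequality.

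For the reverse inequality, I would reduce to a countable maximum. By Choquet's lemma there exists a countable subfamily $\{\varphi_j\}_{j\ge 1}\subseteq\mathcal{F}$ with $\psi=\usc\bigl(\sup_j\varphi_j\bigr)$. Setting $\psi_k:=\max(\varphi_1,\ldots,\varphi_k)\in\PSH(\Omega)$, the sequence $\psi_k$ is increasing, and $\sup_k\psi_k$ agrees with $\psi$ off a pluripolar (hence Lebesgue-null) set, so $\psi_k\to\psi$ in $L^1_{\mathrm{loc}}$ by monotone convergence. The standard identity $\nu(\max(u,v),x)=\min(\nu(u,x),\nu(v,x))$ (cf.\ \cite[Corollary~2.10]{Bo17}), applied iteratively, yields
\[
\nu(\psi_k,x)=\min_{1\le j\le k}\nu(\varphi_j,x),
\]
a decreasing sequence with limit $\inf_j\nu(\varphi_j,x)\ge \inf_{\varphi\in\mathcal{F}}\nu(\varphi,x)$. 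Applying upper semicontinuity of the Lelong number under $L^1_{\mathrm{loc}}$ convergence (the Demailly--Siu semicontinuity theorem) then gives $\nu(\psi,x)\ge\limsup_k\nu(\psi_k,x)=\inf_j\nu(\varphi_j,x)$, which, combined with the first paragraph, closes the argument.

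The key input beyond Choquet's lemma and the max formula is the $L^1_{\mathrm{loc}}$ upper semicontinuity of Lelong numbers, which is the main (but well-known) obstacle; if one wishes to avoid invoking it, an alternative is to use the characterization $\nu(u,x)=\lim_{r\to 0^+}M(u,x,r)/\log r$ with $M(u,x,r):=\sup_{B(x,r)}u$, verify that $M(\psi_k,x,r)\nearrow M(\psi,x,r)$ for every small $r$ (since the pointwise supremum and its usc regularization have the same supremum over any open set), and then commute the $k\to\infty$ and $r\to 0^+$ limits using the monotonicity of both.
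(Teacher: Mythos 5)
Your argument is correct and follows essentially the same route as the paper's proof: reduce to a countable subfamily via Choquet's lemma, compute $\nu(\max(\varphi_1,\dots,\varphi_k),x)$ with the max formula from \cite[Corollary~2.10]{Bo17}, and pass to the limit using upper semicontinuity of Lelong numbers under weak convergence. Your write-up is merely more explicit about the easy inequality and adds an optional alternative via $M(u,x,r)$, but the substance coincides with the paper's.
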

\begin{proof}
By Choquet's lemma, we may assume that $\mathcal{F}$ consists of only countably many functions $\varphi_j$ $(j\in \mathbb{N})$ and $\psi=\usc \left(\sup_{j\in \mathbb{N}}\varphi_j\right)$.

By upper semicontinuity of Lelong numbers,
$\nu(\psi,x)\geq \varlimsup_{j\in \mathbb{N}} \nu( \max\{\varphi_0,\ldots,\varphi_j\},x)$.
In addition, by monotonicity of Lelong numbers, $
\nu(\psi,x)= \lim_{j\in \mathbb{N}} \nu (\max\{\varphi_0,\ldots,\varphi_j\},x)= \inf_j \nu (\varphi_j,x)$, where in the last step we used \cite[Corollary~2.10]{Bo17}.
\end{proof}

Given a ray $\{r_t\}_t \in \mathcal{R}^1$, we define two associated envelopes, based on the ideas of \cite{BBJ21}. For $t \geq 0$ let
\[
\Pi(r_t):=\inf \left\{\, r'_t:  \{r'_t\}_t \in \mathcal{R}^1 \textup{ is induced by a test configuration and } r'_t \geq r_t\,\right\}\,,
\]
A priori, it is not even clear that $\{\Pi(r_t)\}_t$ is a geodesic ray. On the other hand, following the argument of \cite[Theorem~6.6]{BBJ21}, for $t \geq 0$ we can also consider 
\[
\pi(r_t):=\sup\left\{\,  \{r''_t\}_t \in \mathcal{R}^1 : {r''}^{\NA}=r^{\NA}\,\right\}\,.
\]
As we prove now, these two projections coincide to give a ray, whose maximal test curve can be described concretely:
\begin{theorem} \label{thm:BBJ21projnaequal}
Let $\{r_t\}_t\in \mathcal{R}^1$. Then $\{\Pi(r)_t\}_t$ is an approximable geodesic ray. Moreover the following hold: \vspace{0.1cm}\\
\Rom{1} $\widehat{\Pi(r)}_\tau = \widehat{\pi(r)}_\tau = \PrIv[\hat r_\tau]$, $\tau \neq  \tau^+_{\hat r}$.\vspace{0.1cm}\\
\Rom{2} $\Pi(r)_t = \pi(r)_t$ for $t\geq 0$. Moreover $\widehat{\Pi(r)}_\tau  \simeq_{\veq} \hat r_\tau$, $\tau \neq  \tau^+_{\hat r}$.\vspace{0.15cm}\\
\Rom{3} $\Pi(r)^{\NA}=\pi(r)^{\NA} = r^{\NA}$.
\end{theorem}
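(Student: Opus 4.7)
The plan is to produce the candidate ray as the inverse Legendre transform of the test curve $\tilde\psi_\tau := \PrIv[\hat r_\tau]$ for $\tau < \tau^+_{\hat r}$, and show that this candidate coincides with both $\Pi(r)$ and $\pi(r)$. First I would verify that $\{\tilde\psi_\tau\}_\tau$ is a $\veq$-maximal finite energy test curve: $\veq$-maximality is built into $\PrIv$; the concavity and monotonicity in $\tau$ follow from the corresponding properties of $\{\hat r_\tau\}_\tau$ combined with additivity of Lelong numbers (Corollary~\ref{cor:charrelmislelong}) and Lemma~\ref{lma:decvmodel}; upper semicontinuity at the right endpoint uses Lemma~\ref{lma:decvmodel}\Rom{2}; and the finite energy property follows from $\tilde\psi_\tau \geq \hat r_\tau$, which forces $\int_X\omega_{\tilde\psi_\tau}^n \geq \int_X\omega_{\hat r_\tau}^n$. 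By Theorem~\ref{thm: max_test_curve_ray_duality}\Rom{4} the inverse Legendre transform yields a finite energy ray $\{\tilde r_t\}_t \in \mathcal R^1$ with $\tilde r_t \geq r_t$.

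For the inequality $\Pi(r)_t \geq \tilde r_t$, given any ray $\{s_t\}_t \in \mathcal T$ with $s_t \geq r_t$, its test curve $\{\hat s_\tau\}_\tau$ is $\veq$-maximal by Theorem~\ref{thm: filtration_I_max_testcurve} and satisfies $\hat s_\tau \geq \hat r_\tau$; hence $\hat s_\tau = \PrIv[\hat s_\tau] \geq \PrIv[\hat r_\tau] = \tilde\psi_\tau$ by Proposition~\ref{prop:Qprojection}, so $s_t \geq \tilde r_t$ by Legendre monotonicity, and taking the infimum gives $\Pi(r)_t \geq \tilde r_t$. For the reverse inequality I would deploy the flag ideal scheme of \cite[Section~5.3]{BBJ21}: assuming without loss of generality $\sup_X r_t \leq 0$ with complexification $\Phi$, set $\mathfrak a^m := \mathcal I(2^m\Phi)$; the induced rays $\{r^m_t\}_t \in \mathcal T$ satisfy $r^m_t \geq r_t$ by Ohsawa--Takegoshi, and each test curve $\{\hat r^m_\tau\}_\tau$ is $\veq$-maximal by Theorem~\ref{thm: filtration_I_max_testcurve}. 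Combining $\veq$-maximality of the decreasing limit (Lemma~\ref{lma:decvmodel}\Rom{1}) with the non-Archimedean convergence $\varphi^\NA_{\mathfrak a^m}\searrow r^\NA$ from \cite[Theorem~6.2]{BBJ21}---which, via Proposition~\ref{prop: NA_Legendre_formula} and Corollary~\ref{cor:charrelmislelong}, forces $\lim_m \hat r^m_\tau \sim_\veq \hat r_\tau$---one concludes $\lim_m \hat r^m_\tau = \PrIv[\hat r_\tau] = \tilde\psi_\tau$. Legendre duality then yields $r^m_t \searrow \tilde r_t$, giving $\Pi(r)_t \leq \tilde r_t$ and also establishing approximability of $\{\tilde r_t\}_t$.

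For \Rom{2}, the equivalence $\widehat{\Pi(r)}_\tau = \tilde\psi_\tau \sim_\veq \hat r_\tau$ is immediate from Proposition~\ref{prop:Qprojection}\Rom{2}. To identify $\pi(r)_t = \tilde r_t$, observe that any ray $\{r''_t\}_t$ with $(r'')^\NA = r^\NA$ has, via Proposition~\ref{prop: NA_Legendre_formula} and Lemma~\ref{lma:Lelonginc}, $\widehat{r''}_\tau^{\,\NA} \leq \hat r_\tau^\NA$; hence $\widehat{r''}_\tau \preceq_\veq \hat r_\tau$ by Corollary~\ref{cor:charrelmislelong}, so $\widehat{r''}_\tau \leq \PrIv[\hat r_\tau] = \widehat{\Pi(r)}_\tau$ and therefore $r''_t \leq \Pi(r)_t$, giving $\pi(r)_t \leq \Pi(r)_t$. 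Conversely $\tilde r^\NA = r^\NA$ because $\tilde\psi_\tau^\NA = \hat r_\tau^\NA$ (Proposition~\ref{prop: I-equiv_NA}) combined with Proposition~\ref{prop: NA_Legendre_formula}, so $\tilde r_t \leq \pi(r)_t$ and hence equality holds throughout; this in turn yields \Rom{3}.

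The main obstacle will be the identification $\lim_m \hat r^m_\tau \sim_\veq \hat r_\tau$ referenced above: carrying this out rigorously requires translating the subadditivity of multiplier ideals, the generation properties of $L^{2^m+m_0}\otimes \mathfrak a^m_j$, and the non-Archimedean decreasing limit $\varphi^\NA_{\mathfrak a^m}\searrow r^\NA$ into equality of generic Lelong numbers on every smooth projective modification of $X$, uniformly in the Legendre parameter $\tau$.
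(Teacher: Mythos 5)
Your argument is correct in outline and relies on the same external inputs as the paper's proof --- the flag--ideal approximation $\mathfrak a^m=\mathcal I(2^m\Phi)$ from \cite{BBJ21}, the non-Archimedean Legendre formula of Proposition~\ref{prop: NA_Legendre_formula}, the $\veq$-maximality of filtration test curves (Theorem~\ref{thm: filtration_I_max_testcurve}), and the stability of $\veq$-model potentials under monotone limits (Lemma~\ref{lma:decvmodel}) --- but the bookkeeping is genuinely different. You build the candidate ray $\{\tilde r_t\}_t$ explicitly as the inverse Legendre transform of $\tau\mapsto\PrIv[\hat r_\tau]$ and sandwich $\Pi(r)$ between $\tilde r$ from both sides, obtaining the lower bound $\Pi(r)_t\geq\tilde r_t$ purely from monotonicity of $\PrIv[\cdot]$ applied to the ($\veq$-maximal) test curves of test-configuration rays; this nicely sidesteps one of the two uses of \cite[Lemma~4.6]{BBJ21}, which the paper applies directly at the level of rays to get $\Pi(r)=\pi(r)$ before identifying the common test curve via Lemma~\ref{lem: BBJ21_approx}. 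What each approach buys: the paper's order of quantifiers lets it quote Lemma~\ref{lem: BBJ21_approx} as a black box, while yours makes the test curve of $\Pi(r)$ visible from the start, at the price of re-deriving the domination $r^m_t\geq r_t$ (for which the clean reference is again \cite[Lemma~4.6]{BBJ21} combined with $\varphi^{\NA}_{\mathfrak a^m}\geq r^{\NA}$, rather than a bare Ohsawa--Takegoshi appeal). The one step you flag as the ``main obstacle'' --- passing from $(r')^{\NA}=r^{\NA}$ to $\hat r'_\tau\simeq_{\veq}\hat r_\tau$ for every $\tau<\tau^+_{\hat r}$ --- is not a uniformity issue: the paper's device is to fix $v=c\ord_E$, note that both $\tau\mapsto\hat r^{\NA}_\tau(v)$ and $\tau\mapsto\hat r'^{\NA}_\tau(v)$ are concave (additivity of Lelong numbers plus $\tau$-concavity of test curves) and usc away from $\tau^+_{\hat r}$, recover their Legendre transforms at every rational $t>0$ by replacing $v$ with $t^{-1}v$ in \eqref{eq: NA_Gauss_no_Gauss}, and then invoke the involution property to get pointwise equality in $\tau$; Proposition~\ref{prop: I-equiv_NA} then converts this into $\veq$-equivalence. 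You should also set $\tilde\psi_{\tau^+_{\hat r}}:=\lim_{\tau\nearrow\tau^+_{\hat r}}\PrIv[\hat r_\tau]$ rather than $\PrIv[\hat r_{\tau^+_{\hat r}}]$ to guarantee $\tau$-usc of your test curve, which is consistent with the theorem excluding $\tau=\tau^+_{\hat r}$ from \Rom{1} and \Rom{2}.
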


Since $\{\Pi(r)_t\}_t$ is always approximable, we get that $\Pi \circ \Pi = \Pi$, i.e., $\Pi$ is a projection. It is not clear if \Rom{1} and \Rom{2} hold for $\tau = \tau^+_{\hat r}$, though this is not essential to our discussion.

\begin{proof} First we observe that no generality is lost if we assume the condition $\sup_X r_t \leq 0$ for $t \geq 0$, after possibly replacing $r_t$ with $r_t - m t$ for some $m \in \mathbb N$ big enough.

We note that  \Rom{1}  immediately implies  \Rom{2}, as $\PrIv[\hat r_\tau] \simeq_{\veq} \hat r_\tau$ (Proposition~\ref{prop:Qprojection}\Rom{2}). On the other hand, due to Theorem~\ref{thm: Lelong_characterization} and \eqref{eq: NA_Gauss_no_Gauss}, we have that  \Rom{2}  implies  \Rom{3}  as well. Lastly,  \Rom{1}  together with Lemma~\ref{lem: BBJ21_approx} below imply that $\{\Pi(r)_t\}_t$ is approximable.

Hence we only need to argue  \Rom{1}. In fact, from Lemma~\ref{lem: BBJ21_approx} below we know that $\{\pi(r_t)\}_t$ is an approximable ray, so $\Pi(r)_t\leq \pi(r)_t$, since $r_t \leq \pi(r)_t$. To show that $\Pi(r)_t\geq \pi(r)_t$, it suffices to show that for any Phong--Sturm ray $\{w_t\}_t$  satisfying $w_t \geq r_t$, we have $w_t \geq \pi(r)_t$.
We have that ${w}^{\NA} \geq r^{\NA}={r''}^{\NA}$ for any candidate $\{r''_t\}_t$ of $\{\pi(r_t)\}_t$. Hence,  \cite[Lemma~4.6]{BBJ21} implies that $w_t\geq r''_t$. Taking supremum over $\{r''_t\}_t$ we obtain  $w_t \geq \pi(r)_t$, finishing the proof of $\Pi(r)_t = \pi(r)_t$. 

Due to how $\Pi(r_t)$ is defined, we immediately obtain $\tau^+_{\widehat{\Pi(r)}} = \tau^+_{\widehat{\pi(r)}} = \tau^+_{\hat{r}}$, giving 
\[
\widehat{\Pi(r)}_\tau = \widehat{\pi(r)}_\tau = \PrIv[\hat r_\tau] = -\infty\,, \quad \tau > \tau^+_{\hat{r}}\,.
\]

To finish the proof of  \Rom{1}, we need to show that $\widehat{\pi(r)}_\tau = \PrIv[\hat r_\tau]$, for $\tau < \tau^+_{\hat r}$. Due to the lemma below, $\{\pi[r]_t\}_t$ is approximable.  Hence, by  Theorem~\ref{thm: filtration_I_max_testcurve} and Lemma~\ref{lma:decvmodel}\Rom{1}, $\widehat{\pi(r)}_\tau$ is $\veq$-maximal for any $\tau \in \mathbb R$. In particular, to show that $\widehat{\pi(r)}_\tau = \PrIv[\hat r_\tau]$, it is enough to argue that $\widehat{\pi(r)}_\tau \simeq_{\veq} \hat r_\tau$ for $\tau < \tau^+_{\hat r}$. To show this, due to Proposition~\ref{prop: I-equiv_NA} it is enough to argue that 
\begin{equation}\label{eq: I_equiv_to_show}
\widehat{\pi(r)}_\tau^\NA(v)= \hat r_\tau^\NA(v)\,,
\end{equation}
for all $v = c\ord_E\in X^{\Div}_{\mathbb{Q}}$ and $\tau < \tau^+_{\hat r}$. 

Since test curves are $\tau$-concave, both sides in \eqref{eq: I_equiv_to_show} are $\tau$-concave on $\mathbb R$, with the $\tau$-usc property failing at most at $\tau = \tau^+_{\hat r}$, the point of discontinuity. 

Due to the comments following \eqref{eq: inverse_Lag_tran_def}, to argue \eqref{eq: I_equiv_to_show} it is enough to show that both sides have the same  Legendre transform on positive rational values, namely
\[
\sup_{\tau < \tau^+_{\hat r}}(\widehat{\pi(r)}_\tau^\NA(v)+ t\tau)=\sup_{\tau < \tau^+_{\hat r}}(\hat r_\tau^\NA(v)+ t\tau)\,, 
\]
for any $t\in \mathbb{Q}_{>0}$. We may assume $t=1$ by considering the valuation $t^{-1}v$ instead. From \eqref{eq: NA_Gauss_no_Gauss}, this is equivalent to $r^{\NA}(v)=\pi(r)^{\NA}(v)$, which is known to hold by the lemma below.
\end{proof}

\begin{lemma}[\cite{BBJ21}] \label{lem: BBJ21_approx} For any $\{r_t\}_t \in \mathcal{R}^1$ the ray $\{\pi(r)_t\}_t \in \mathcal{R}^1$ is approximable and $\pi(r)^\NA = r^\NA$.
\end{lemma}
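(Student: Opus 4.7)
\medskip

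The plan is to build the approximating rays via the flag ideal construction recalled at the end of Section~\ref{subsec:nonAchi}. After reducing to $\sup_X r_t \le 0$ by replacing $r_t$ with $r_t - mt$ for some $m \in \mathbb{N}$ (and noting that both the statement and the definition of $\pi(r)$ are equivariant under this shift), we set $\Phi(s,x) := r_{-\log|s|}$ on $X \times \mathbb{D}$ and consider the flag ideals $\mathfrak{a}^m := \mathcal{I}(2^m \Phi)$. For some fixed $m_0$, the sheaves $L^{2^m+m_0} \otimes \mathfrak{a}^m_j$ are globally generated for all $m,j$, so each $\mathfrak{a}^m$ produces an ample test configuration, and hence a ray $\{r^m_t\}_t \in \mathcal{T}$ via the Phong--Sturm correspondence of Section~\ref{subsec:rayfiltandapp}. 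By subadditivity of multiplier ideals, the functions $\varphi^{\NA}_{\mathfrak{a}^m}$ are $m$-decreasing with $\varphi^{\NA}_{\mathfrak{a}^m}(v) \searrow r^{\NA}(v)$ for every $v \in X^\Div_\mathbb{Q}$, and we have $(r^m)^{\NA} = \varphi^{\NA}_{\mathfrak{a}^m}$.

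Next I would show the pointwise domination $r^m_t \ge r_t$. Since $L^{2^m+m_0} \otimes \mathfrak{a}^m$ is globally generated and $\mathfrak{a}^m = \mathcal{I}(2^m \Phi)$, the Ohsawa--Takegoshi extension theorem produces sections whose normalized logs recover $\Phi$ up to a bounded error independent of $m$ at leading order; equivalently, the local potential of $\mathfrak{a}^m$ is $\ge 2^m \Phi - O(1)$. Translating back along the ray via \eqref{eq: filtration_to_test_curve_def1} and the Legendre duality of Theorem~\ref{thm: max_test_curve_ray_duality} yields $r^m_t \ge r_t$ for $t \ge 0$. Moreover, since $(r^{m+1})^{\NA} \le (r^m)^{\NA}$ and both rays are in $\mathcal{T} \subseteq \overline{\mathcal{T}}$, the BBJ21 comparison principle \cite[Lemma~4.6]{BBJ21} forces $r^{m+1}_t \le r^m_t$. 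Hence $r^\infty_t := \lim_m r^m_t$ exists as a decreasing limit of test configuration rays, and by construction $\{r^\infty_t\}_t \in \mathcal{R}^1$ is approximable with $r^\infty_t \ge r_t$.

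The remaining step is to identify $r^\infty_t$ with $\pi(r)_t$ and confirm $(r^\infty)^{\NA} = r^{\NA}$. For the first point, $\{r^\infty_t\}_t$ is itself a candidate in the supremum defining $\pi(r)$ once we verify $(r^\infty)^{\NA} = r^{\NA}$, giving $r^\infty_t \le \pi(r)_t$. Conversely, any candidate $\{r''_t\}_t$ satisfies $(r'')^{\NA} = r^{\NA} \le (r^m)^{\NA}$, so another application of \cite[Lemma~4.6]{BBJ21} gives $r''_t \le r^m_t$; passing to the limit in $m$ and then to the supremum over $r''$ yields $\pi(r)_t \le r^\infty_t$. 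For the NA identity, the inequality $(r^\infty)^{\NA} \ge r^{\NA}$ follows from $r^\infty_t \ge r_t$ via the Legendre-transform description of $r^{\NA}$ in Proposition~\ref{prop: NA_Legendre_formula} combined with the monotonicity properties of generic Lelong numbers (Lemma~\ref{lma:Lelonginc}). For the reverse, $r^\infty_t \le r^m_t$ yields $(r^\infty)^{\NA} \le (r^m)^{\NA} = \varphi^{\NA}_{\mathfrak{a}^m}$, and taking $m \to \infty$ gives $(r^\infty)^{\NA} \le r^{\NA}$.

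The main obstacle I expect is the careful verification that the pointwise decreasing limit $r^\infty$ has NA data given exactly by the pointwise limit of $(r^m)^\NA$ — this requires interchanging the Legendre transform with the monotone limit and using upper semicontinuity of Lelong numbers on the Gauss extensions, which is precisely the content of Lemma~\ref{lma:Lelonginc} applied on $X \times \mathbb{C}$. The global-generation estimate comparing $\Phi$ with the local potential of $\mathfrak{a}^m$ up to $O(1)$ is standard but must be made uniform in $m$, which is where the twist by $L^{m_0}$ in \cite[Lemma~5.6]{BBJ21} plays its essential role.
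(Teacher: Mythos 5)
Your argument is correct and follows essentially the same route as the paper: the flag ideals $\mathfrak a^m=\mathcal I(2^m\Phi)$ from the proof of \cite[Theorem~6.2]{BBJ21}, the induced Phong--Sturm rays, the comparison principle \cite[Lemma~4.6]{BBJ21} to dominate all candidates for $\pi(r)$, and the squeeze $(r^m)^{\NA}\geq (r^\infty)^{\NA}\geq r^{\NA}$ to identify the non-Archimedean data. The only divergence is your Ohsawa--Takegoshi detour to get $r^m_t\geq r_t$; this is unnecessary, since $\{r_t\}_t$ is itself a candidate for $\pi(r)$ and so the same application of \cite[Lemma~4.6]{BBJ21} already yields $r^m_t\geq r_t$ directly (and, for the same reason, the ``main obstacle'' you flag at the end is already resolved by your squeeze argument and needs no interchange of limits).
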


\begin{proof} As before, we can assume that $\sup_X r_t \leq 0$, for $t \geq 0$, after possibly replacing $r_t$ with $r_t - m t$ for some $m \in \mathbb N$ big.
Recall that in the proof of \cite[Theorem~6.2]{BBJ21}, one constructs a sequence of globally generated flag ideals $\mathfrak a^k$, such that $\varphi_{\mathfrak a^k}\in \mathcal{H}^{\NA}$ and $\varphi_{\mathfrak a^k} \searrow r^{\NA}$. Let $\{r^k_t\}_t$ be the Phong--Sturm geodesic ray induced by the fractional ideals $\mathfrak a^k$. 

Due to  \cite[Lemma~4.6]{BBJ21}, $r^k_t\geq \rho_t$ for any ray $\{\rho_t\}_t \in \mathcal{R}^1$ that is a candidate for $\{\pi(r)_t\}_t \in \mathcal{R}^1$. Let $\{r'_t\}_t$ be the decreasing limit of $\{r^k_t\}_t$. Since $r^k_t\geq r'_t\geq r_t$, and ${r^k}^{\NA}= \varphi_{\mathfrak a^k} \to r^{\NA}$, we get that  $r'^{\NA}=r^{\NA}$. This implies that $\{r'_t\}_t$ is a candidate for $\{\pi(r)_t\}_t$ implying that $\{r'_t\}_t = \{\pi(r)_t\}_t$.
\end{proof}

Finally, we arrive at one of the main results of this section: 

\begin{theorem}\label{thm: bijection_I_max_test_curves}     There is a bijective correspondence between $\veq$-maximal finite energy test curves and the approximable geodesic rays of $\mathcal R^1$.
\end{theorem}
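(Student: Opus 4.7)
The plan is to combine the Legendre transform duality of Theorem~\ref{thm: max_test_curve_ray_duality}\Rom{4} with the explicit identification of the projection $\Pi$ given by Theorem~\ref{thm:BBJ21projnaequal}\Rom{1}. By the former, Legendre transform $\{\psi_\tau\}_\tau \mapsto \{\check{\psi}_t\}_t$ is already a bijection between maximal finite energy test curves and finite energy geodesic rays in $\mathcal R^1$. Since every $\veq$-maximal test curve is a fortiori maximal (Proposition~\ref{prop:Qprojection}\Rom{1}), the task reduces to identifying which maximal test curves correspond to approximable rays under this bijection.

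First I would show the implication ``$\veq$-maximal test curve $\Rightarrow$ approximable ray''. Given a $\veq$-maximal finite energy test curve $\{\psi_\tau\}_\tau$, set $r_t:=\check{\psi}_t$, so that $\hat r_\tau = \psi_\tau$ for $\tau < \tau^+_{\hat r}$. By Theorem~\ref{thm:BBJ21projnaequal}\Rom{1},
\[
\widehat{\Pi(r)}_\tau \;=\; \PrIv[\hat r_\tau] \;=\; \PrIv[\psi_\tau] \;=\; \psi_\tau \;=\; \hat r_\tau\,, \quad \tau\neq \tau^+_{\hat r}\,.
\]
Both $\{\widehat{\Pi(r)}_\tau\}_\tau$ and $\{\hat r_\tau\}_\tau$ are $\tau$-usc psh test curves, so they must also agree at the endpoint $\tau^+_{\hat r}$. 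Applying the inverse Legendre transform bijection of Theorem~\ref{thm: max_test_curve_ray_duality}\Rom{4} then yields $\Pi(r)_t = r_t$; since $\{\Pi(r)_t\}_t$ is approximable by construction (Theorem~\ref{thm:BBJ21projnaequal}), so is $\{r_t\}_t$.

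Next I would prove the converse. Let $\{r_t\}_t \in \mathcal R^1$ be approximable, and fix approximating Phong--Sturm rays $\{r^j_t\}_t$ induced by ample test configurations with $r^j_t \searrow r_t$. By Theorem~\ref{thm: filtration_I_max_testcurve} each $\{\hat r^j_\tau\}_\tau$ is a bounded $\veq$-maximal test curve. An elementary Legendre transform manipulation based on $r^j_t \searrow r_t$ gives $\hat r^j_\tau \searrow \hat r_\tau$ pointwise for each $\tau$. Since $\veq$-maximality is preserved under decreasing limits of model potentials without any positive-mass hypothesis (Lemma~\ref{lma:decvmodel}\Rom{1}), we deduce $\PrIv[\hat r_\tau] = \hat r_\tau$ for every $\tau < \tau^+_{\hat r}$. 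At the endpoint $\tau = \tau^+_{\hat r}$, the $\tau$-usc and $\tau$-decreasing properties of the test curve identify $\hat r_{\tau^+_{\hat r}}$ as a further decreasing limit of the already $\veq$-maximal potentials $\hat r_{\tau^+_{\hat r}-\varepsilon}$, so a second application of Lemma~\ref{lma:decvmodel}\Rom{1} finishes the argument.

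The main obstacle is the bookkeeping at the jumping point $\tau^+_{\hat r}$, where the test curve is potentially discontinuous and the non-pluripolar mass may vanish; the key is that Lemma~\ref{lma:decvmodel}\Rom{1} dispenses with any positive-mass hypothesis, so it applies uniformly in $\tau$ and handles the endpoint without additional input.
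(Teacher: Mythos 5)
Your proposal is correct and follows essentially the same route as the paper's proof: the forward direction via Theorem~\ref{thm:BBJ21projnaequal}\Rom{1} and the Legendre duality, and the converse via decreasing Phong--Sturm approximants, Theorem~\ref{thm: filtration_I_max_testcurve}, and Lemma~\ref{lma:decvmodel}\Rom{1}. The only difference is that you spell out the bookkeeping at the jump point $\tau^+_{\hat r}$, which the paper leaves implicit.
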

\begin{proof}Let $\{r_t\}_t \in \mathcal{R}^1$ be an approximable geodesic ray. 
Let $\{r^k_t\}_t$ be a sequence of Phong--Sturm geodesic rays decreasing to $\{r_t\}_t$. Since test configurations induce filtrations, that in turn induce geodesic rays (see Section~\ref{subsec:nonAchi}), we can use Theorem~\ref{thm: filtration_I_max_testcurve} to conclude that  $\{\widehat{r}^k_\tau\}_\tau$ is $\veq$-maximal. So $\{\hat{r}_\tau\}_\tau$ is $\veq$-maximal by Lemma~\ref{lma:decvmodel}\Rom{1}.

Assume now that $\{\psi_{\tau}\}_\tau$ is an $\veq$-maximal finite energy test curve. Due to $\veq$-maximality, by Theorem~\ref{thm:BBJ21projnaequal}\Rom{1} we have that 
\[
\widehat{\Pi(\check{\psi})}_\tau = \PrIv[ \psi_\tau] =  \psi_\tau\,, \quad \tau \neq \tau^+_\psi\,. 
\]
By duality, $\{\Pi(\check{\psi})_t\}_t = \{ \psi_t\}_t \in \mathcal R^1$. Finally, by Theorem~\ref{thm:BBJ21projnaequal} the ray $\{\Pi(\check{\psi})_t\}_t$ is approximable, finishing the proof.
\end{proof}

In addition to the above characterization, we show below that the projection $\Pi$ is continuous. First, we recall radial analogs of some known properties of $(\mathcal E^1, d_1)$. 

Given $\{u_t\}_t,\{v_t\}_t \in \mathcal R^1$, it is possible to construct $\{\max_\mathcal R(u,v)_t\}_t, \{P_\mathcal R(u,v)_t\}_t \in \mathcal R^1$ the smallest/biggest ray that is above/below $\{u_t\}$ and $\{v_t\}$ respectively. The ray $\{P_\mathcal R(u,v)_t\}_t$ was constructed in \cite[Example 3.2]{Xi19}, and $\{\max_R(u,v)_t\}_t$ was constructed above \cite[Proposition 2.15]{DDNL5}. These two rays satisfy the following metric estimates/identities for some $C(n) >1$, as argued in  \cite[Proposition 2.15]{DDNL5} and \cite[Example 3.2]{Xi19}:
\begin{gather}
d_1^c(\{u_t\}_t,\{v_t\}_t) \leq  d_1^c(\{u_t\}_t,\{{\max}_\mathcal R(u,v)_t\}_t)) + d_1^c(\{{\max}_\mathcal R(u,v)_t\}_t,\{v_t\}_t) \leq C d_1^c(\{u_t\}_t,\{v_t\}_t) , \nonumber \\
d_1^c(\{u_t\}_t,\{v_t\}_t) =  d_1^c(\{u_t\}_t,\{P_\mathcal R(u,v)_t\}_t)) + d_1^c(\{P_\mathcal R(u,v)_t\}_t,\{v_t\}_t)\, .\label{eq: d_p_decomp}
\end{gather}

\begin{theorem}\label{thm: Pi_cont} The projection map $\Pi: \mathcal R^1 \to  \overline{\mathcal{T}}$ is $d_1^c$-continuous. In particular, the set of approximable rays is $d_1^c$-closed.
\end{theorem}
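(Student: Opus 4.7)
The plan is to prove $d_1^c$-continuity of $\Pi$ in three stages: first establish that $\Pi$ is order-preserving, then handle monotone decreasing and increasing sequences via Lemma~\ref{lma:decvmodel}, and finally reduce the general case to the monotone cases by sandwiching with $\max_{\mathcal{R}}$ and $P_{\mathcal{R}}$. For the monotonicity of $\Pi$ on sup-normalized rays, if $r\leq s$ then $\hat r_\tau\leq\hat s_\tau$, which implies $\hat r_\tau\preceq_{\veq}\hat s_\tau$. Inspecting the defining supremum in~\eqref{eq: PI_def} one sees that $\PrIv[\hat r_\tau]\leq\PrIv[\hat s_\tau]$. By Theorem~\ref{thm:BBJ21projnaequal}(i) and Legendre duality (Theorem~\ref{thm: max_test_curve_ray_duality}(ii)) this yields $\Pi(r)_t\leq\Pi(s)_t$. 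In particular $\mathrm{I}^{\NA}\{r\}=\mathrm{I}\{\Pi(r)\}$ is also order-preserving.

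For the decreasing case, suppose $r^j_t\searrow r_t$ with $r^j,r\in\mathcal{R}^1$. Then $\hat r^j_\tau\searrow\hat r_\tau$ pointwise. Lemma~\ref{lma:contimass} ensures $\int_X\omega_{\hat r_\tau}^n>0$ for $\tau<\tau^+_{\hat r}$, so Lemma~\ref{lma:decvmodel}(ii) gives $\PrIv[\hat r^j_\tau]\searrow\PrIv[\hat r_\tau]$. Dualizing yields $\Pi(r^j)_t\searrow\Pi(r)_t$, and monotone convergence of non-pluripolar masses (\cite[Theorem~2.3]{DDNL2}) applied to~\eqref{eq: I_RWN_form} gives $\mathrm{I}\{\Pi(r^j)\}\searrow\mathrm{I}\{\Pi(r)\}$. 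Combined with $d_1^c(\Pi(r^j),\Pi(r))=\mathrm{I}\{\Pi(r^j)\}-\mathrm{I}\{\Pi(r)\}$ (valid since $\Pi(r)\leq\Pi(r^j)$), this gives $d_1^c(\Pi(r^j),\Pi(r))\to 0$. The increasing case $r^j_t\nearrow r_t$ is symmetric: a short argument using sublinearity and convexity shows $\hat r^j_\tau\nearrow\hat r_\tau$ pointwise on $\{\hat r_\tau>-\infty\}$ for $\tau<\tau^+_{\hat r}$, and Lemma~\ref{lma:decvmodel}(iii) applies because each $\hat r^j_\tau$ is model, being the Legendre transform of a finite energy ray (Theorem~\ref{thm: max_test_curve_ray_duality}(iv)).

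For the general case, given $r^j\to r$ in $d_1^c$ I set $w^j:=\max_{\mathcal{R}}(r^j,r)$ and $v^j:=P_{\mathcal{R}}(r^j,r)$. The chordal metric estimates recalled around~\eqref{eq: d_p_decomp} give $w^j,v^j\to r$ in $d_1^c$. Monotonicity of $\Pi$ yields $\Pi(v^j)\leq\Pi(r^j),\Pi(r)\leq\Pi(w^j)$, and the radial analogue of $d_1(u,v)=\mathrm{I}(u)+\mathrm{I}(v)-2\mathrm{I}(P(u,v))$ (obtained by adding the two summands in~\eqref{eq: d_p_decomp}) combined with the order sandwich yields
\[
d_1^c(\Pi(r^j),\Pi(r))\leq 2\bigl(\mathrm{I}^{\NA}\{w^j\}-\mathrm{I}^{\NA}\{v^j\}\bigr).
\]
It therefore suffices to prove $\mathrm{I}^{\NA}\{w^j\},\mathrm{I}^{\NA}\{v^j\}\to\mathrm{I}^{\NA}\{r\}$. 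Passing to subsequences with $d_1^c(w^j,r),d_1^c(v^j,r)\leq 2^{-j}$ and iterating $\max_{\mathcal{R}}$ (resp.\ $P_{\mathcal{R}}$) over tails, I construct monotone comparison sequences $\tilde w^N\searrow r$ dominating $w^N$ and $\tilde v^N\nearrow r$ dominated by $v^N$. Monotone continuity from the previous paragraph then gives $\mathrm{I}^{\NA}\{\tilde w^N\}\to\mathrm{I}^{\NA}\{r\}$ and $\mathrm{I}^{\NA}\{\tilde v^N\}\to\mathrm{I}^{\NA}\{r\}$, and the squeeze $\mathrm{I}^{\NA}\{\tilde v^N\}\leq\mathrm{I}^{\NA}\{v^j\}\leq\mathrm{I}^{\NA}\{w^j\}\leq\mathrm{I}^{\NA}\{\tilde w^N\}$ for $j\geq N$ finishes the proof. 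The main obstacle is this iterative construction of $\tilde w^N,\tilde v^N$: the estimate $d_1^c(\max_{\mathcal{R}}(u,v),v)\leq C\,d_1^c(u,v)$ carries a dimensional constant $C>1$, so one must use the summability $\sum 2^{-j}<\infty$ carefully to ensure the iterated maxima remain in $\mathcal{R}^1$ and converge to $r$ in $d_1^c$. Finally, closedness of the approximable rays is immediate: if each $r^j$ is approximable ($\Pi(r^j)=r^j$) and $r^j\to r$ in $d_1^c$, then $r=\lim_j r^j=\lim_j\Pi(r^j)=\Pi(r)$ by continuity of $\Pi$, so $r$ is approximable.
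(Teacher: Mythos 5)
Your proposal is correct and follows essentially the same route as the paper: reduce to monotone sequences by sandwiching with $\max_{\mathcal R}$ and $P_{\mathcal R}$, then handle the monotone cases by applying Lemma~\ref{lma:decvmodel} to the Legendre transforms $\hat r^j_\tau$ and dualizing. The only difference is that you reconstruct by hand (via tails of a rapidly convergent subsequence) the monotone comparison sequences that the paper imports as the radial version of \cite[Proposition~2.6]{BDL17}, and you phrase the final convergence through energy slopes rather than through \cite[Lemma~4.3]{DL20}.
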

The last sentence also follows from the completeness of $\mathcal{E}^{1,\NA}$ proved by Boucksom--Jonsson (\cite[Theorem~9.8]{BJ18b}). 
This theorem proves the first part of Theorem~\ref{main_thm: Non-Archimedean}.

\begin{proof} Let $\{u^j_t\}_t,\{u_t\}_t \in \mathcal R^1$ with $d_1^c(\{u^j_t\},\{u_t\}) \to 0$. To derive a contradiction, we can suppose that  $d_1^c(\Pi\{u^j_t\},\Pi\{u_t\}) \geq \delta > 0$.

After possibly taking a subsequence of $\{u^j_t\}_t$, the radial version of \cite[Proposition~2.6]{BDL17} (whose proof is the same, and only depends on the estimates \eqref{eq: d_p_decomp}) gives existence of two sequences $\{v^j_t\}_t, \{w^j_t\}_t \in \mathcal R^1$ that are decreasing and increasing respectively, satisfying $w^j_t \leq u^j_t \leq v^j_t$, $w^j_t \leq u_t \leq v^j_t$ together with $d_1^c(\{v^j_t\},\{u_t\}) \to 0$, $d_1^c(\{w^j_t\},\{u_t\}) \to 0$. For closely related arguments, see \cite[Proposition~3.1]{Xi19} and \cite[Proposition~4.2]{DDNL5}.

Naturally we also get $\Pi\{w^j\}_t \leq \Pi\{u^j_t\} \leq \Pi\{v^j_t\}$, $\Pi\{w^j\}_t \leq \Pi\{u_t\} \leq \Pi\{v^j_t\}$, hence to conclude it is enough to show that  $\Pi\{w^j_t\} \nearrow \Pi\{u_t\}$ a.e., and $\Pi\{v^j_t\} \searrow \Pi\{u_t\}$ for all $t \geq 0$ \cite[Lemma 4.3]{DL20}. 

 Note that we have $\sup_X v^j_1 = \tau^+_{\hat v^j} \searrow \tau^+_{\hat u} =\sup_X u_1$ and $\sup_X w^j_1 = \tau^+_{\hat w^j} \nearrow \tau^+_{\hat u} =\sup_X u_1$ by the Hartogs lemma for $L^1$-convergence of quasi-psh functions. Because of this, by the duality of Theorem \ref{thm: bijection_I_max_test_curves}, we only need to show that $ \PrIv[\hat {w}^j_\tau]  \nearrow \PrIv[\hat {u}_\tau]$ a.e. and $\PrIv[\hat {v}^j_\tau]  \searrow \PrIv[\hat {u}_\tau]$ for $\tau < \tau^+_{\hat u}$. But this follows from Lemma \ref{lma:decvmodel}, since $\hat {w}^j_\tau \nearrow \hat {u}_\tau$ a.e. and $\hat {v}^j_\tau  \searrow \hat {u}_\tau$ for $\tau < \tau^+_{\hat u}$.
\end{proof}

\subsection{Approximation of rays from below via subgeodesics of Kähler currents.} We prove the following result, which is the radial version of \cite[Proposition~2.15]{DLR20}:

\begin{theorem}\label{thm: approx_ray_below} Let $\{u_t\}_t \in \mathcal{R}^1$. Then for any $\epsilon > 0$ there exists subgeodesics  $[0,\infty) \ni t \mapsto u^\epsilon_t \in \mathcal{E}^1$ such that, $u^\epsilon_0 = 0$, $\omega_{u_t} \geq \epsilon \omega$, $u^\epsilon_t \leq u_t$ and 
\[
\Irad[u_t] - \Irad[u^\epsilon_t] \leq \epsilon \left|\Irad[u]\right|\,.
\]
In addition, $u^\epsilon_t \nearrow u_t$ a.e, as $\epsilon \to 0$ for any $t \geq 0$.
\end{theorem}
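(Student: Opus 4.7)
The plan is to normalize so that $\sup_X u_t = 0$ for all $t \geq 0$ by replacing $u_t$ with $u_t - t\sup_X u_1$; this linear-in-$t$ shift preserves subgeodesic status, slice-wise $(1-\epsilon)\omega$-positivity, and leaves $\Irad$ unchanged. Under this normalization, by Theorem~\ref{thm: max_test_curve_ray_duality}(iv) the maximal finite energy test curve $\{\hat u_\tau\}_\tau$ satisfies $\tau^+_{\hat u} = 0$ and $\hat u_\tau \leq 0$ for all $\tau \leq 0$.

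I would then construct $u^\epsilon_t$ through its Legendre dual, defining a test curve $\hat u^\epsilon_\tau \leq \hat u_\tau$ of $(1-\epsilon)\omega$-psh potentials and setting $u^\epsilon_t := \usc \sup_\tau (\hat u^\epsilon_\tau + t\tau)$. Once such $\hat u^\epsilon$ is in hand, Proposition~\ref{lem: Legendre_usc} together with a direct block-matrix computation on the complexification (using that the inverse Legendre transform of a family of $(1-\epsilon)\omega$-psh functions inherits the $(1-\epsilon)\omega$-positivity as a supremum) would show that $u^\epsilon_t$ is a sublinear subgeodesic with $u^\epsilon_0 = 0$, $u^\epsilon_t \leq u_t$, and with $\pi^*\omega + \ddc U^\epsilon \geq \epsilon \pi^*\omega$ on $S \times X$; restricting to slices gives $\omega + \ddc u^\epsilon_t \geq \epsilon \omega$.

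For the energy estimate I would apply the Ross--Witt Nyström formula \eqref{eq: I_RWN_form}. With the natural candidate closely related to $(1-\epsilon)\hat u_\tau$, the key computation uses the identity
\[
\omega_{(1-\epsilon)\hat u_\tau} = \epsilon \omega + (1-\epsilon)\omega_{\hat u_\tau}
\]
and multilinearity of the non-pluripolar product (valid since $\omega$ is smooth) to expand
\[
\int_X \omega^n_{(1-\epsilon)\hat u_\tau} = \sum_{k=0}^n \binom{n}{k}\epsilon^{n-k}(1-\epsilon)^k \int_X \omega^{n-k}\wedge \omega^k_{\hat u_\tau}.
\]
Combining this with the sandwich $\int_X \omega^n_{\hat u_\tau} \leq \int_X \omega^{n-k}\wedge \omega^k_{\hat u_\tau} \leq V$ for $k = 1, \ldots, n$ (a consequence of the monotonicity of mixed non-pluripolar masses) yields
\[
\int_X \omega^n_{(1-\epsilon)\hat u_\tau} - V \geq (1-\epsilon^n)\Big(\int_X \omega^n_{\hat u_\tau} - V\Big).
\]
Integrating in $\tau$ gives $\Irad[u^\epsilon_t] \geq (1-\epsilon^n)\Irad[u_t]$, and since $\Irad[u_t] \leq 0$ this yields $\Irad[u_t] - \Irad[u^\epsilon_t] \leq \epsilon^n|\Irad[u_t]| \leq \epsilon |\Irad[u_t]|$.

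The a.e. convergence $u^\epsilon_t \nearrow u_t$ as $\epsilon \to 0$ would follow from the monotone convergence $\hat u^\epsilon_\tau \nearrow \hat u_\tau$ at the level of test curves, together with the monotonicity of the inverse Legendre transform. The main obstacle is producing $\hat u^\epsilon_\tau$ that realizes \emph{simultaneously} the $(1-\epsilon)\omega$-psh condition and the pointwise bound $\hat u^\epsilon_\tau \leq \hat u_\tau$: the naive value-scaled candidate $(1-\epsilon)\hat u_\tau$ has the right positivity but lies \emph{above} $\hat u_\tau$ (since $\hat u_\tau \leq 0$, multiplying by $(1-\epsilon) \in (0,1)$ makes it less negative). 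Overcoming this will likely require first approximating $\{u_t\}$ from above by the bounded geodesic rays $\{u^M_t\}_t \in \mathcal R^\infty$ arising from the truncated test curves $\hat u^M_\tau := \max(\hat u_\tau, -M)$ provided by \cite[Theorem~4.5]{DL20}, performing the $(1-\epsilon)$-perturbation with an absorbing linear shift $-\epsilon D_M t$ that is admissible thanks to the boundedness of $u^M$, and passing to a diagonal limit in $M$ and $\epsilon$ coupled so that the perturbation remains below $u$ in the limit while retaining the energy bound from the mass computation above.
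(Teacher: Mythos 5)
There is a genuine gap: the construction of $u^\epsilon_t$ is never actually carried out. You correctly identify the central obstruction — since the normalized potentials are $\leq 0$, the value-scaled candidate $(1-\epsilon)\hat u_\tau$ (or $(1-\epsilon)u_t$) has the right positivity but sits \emph{above} the original — and everything else in your argument (the multilinear mass expansion, the Legendre-transform bookkeeping) is conditional on a candidate you do not produce. The fix you gesture at does not work as described. For a bounded truncation $\{u^M_t\}_t$ with $u^M_t \geq -D_M t$, the shifted potential $(1-\epsilon)u^M_t - \epsilon D_M t$ does lie below $u^M_t$, but $u^M_t \geq u_t$, so it need not lie below $u_t$; and since $D_M \to \infty$ for an unbounded ray, letting $M \to \infty$ at fixed $\epsilon$ sends the candidate to $-\infty$ for $t>0$, while coupling $\epsilon$ to $M$ forfeits the fixed Kähler-current constant $\epsilon$ that the statement demands. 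A further issue: you invoke \eqref{eq: I_RWN_form} for $\{u^\epsilon_t\}_t$, but that formula is established for maximal finite energy test curves and their dual geodesic rays, not for general subgeodesics, so even granting the construction the energy identity would need a separate justification.

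The paper resolves the obstruction by scaling \emph{up} rather than down: it sets $P((1+\epsilon)u_t) \in \mathcal E^1$ (finite energy by \cite[Proposition~2.15]{DLR20}), connects $0$ to $P((1+\epsilon)u_t)$ by geodesic segments $l \mapsto v^{\epsilon,t}_l$, uses the maximum principle to show $v^{\epsilon,t}_l \leq P((1+\epsilon)u_l)$ and that these segments decrease in $t$, passes to the limit to obtain a ray $\{v^\epsilon_l\}_l$, and finally sets $u^\epsilon_l := \frac{1}{1+\epsilon}v^\epsilon_l$. Division by $1+\epsilon$ simultaneously yields $\omega_{u^\epsilon_l} \geq \frac{\epsilon}{1+\epsilon}\omega$ and, because $P((1+\epsilon)u_l) \leq (1+\epsilon)u_l$, the bound $u^\epsilon_l \leq u_l$ — exactly the two conditions your candidate could not satisfy at once. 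The energy estimate then comes from comparing $\mathrm{I}(u_t)$ with $\mathrm{I}(P((1+\epsilon)u_t))$ via \cite[Lemma~4.4]{DDNL5}, with no need for the mixed-mass expansion. If you want to salvage your route, the missing ingredient is precisely this envelope mechanism; as written, the proposal is a plan for a proof rather than a proof.
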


\begin{proof}
We can assume without loss of generality that $\sup_X u_t = 0$.
By \cite[Proposition~2.15]{DLR20} we have that $P((1+ \epsilon)u_t) \in \mathcal{E}^1$ for any $\epsilon > 0$.

We fix $\epsilon > 0$ and $t \geq 0$ momentarily. Let $[0,t] \ni l \mapsto v^{\epsilon,t}_l \in \mathcal{E}^1$ be the  geodesic connecting $0$ and $P((1+\epsilon)u_t)$. Then $l \mapsto  \frac{1}{1+\epsilon} v^{\epsilon,t}_l$ is a subgeodesic connecting $0$ and $\frac{1}{1+\epsilon}P((1+ \epsilon) u_t) \leq u_t$. Hence by the maximum principle, $\frac{1}{1+\epsilon} v^{\epsilon,t}_l \leq u_l$, i.e., $v^{\epsilon,t}_l \leq (1+\epsilon) u_l$, i.e., $v^{\epsilon,t}_l \leq P((1+\epsilon) u_l)$ for all $l \in [0,t]$. In particular, another application of the maximum principle gives that $\{v^{\epsilon,t}_l\}_{t \geq l}$ is a decreasing sequence for any $\epsilon > 0$ and $l \geq 0$ fixed. 

Next we notice the following: for any $t\geq 0$ and $l \in [0,t]$,
\begin{flalign*}
\frac{t}{l} (\mathrm{I}(u_l) - \mathrm{I}(v^{\epsilon,t}_l)) &= \mathrm{I}(u_t) - \mathrm{I}(v^{\epsilon,t}_t) = \mathrm{I}(u_t) - \mathrm{I}(P((1+\epsilon) u_t)) \\
& \leq \frac{1}{V} \int_X (u_t - P((1+\epsilon) u_t)) \,\omega_{P((1+\epsilon) u_t)}^n  \\
& \leq -\frac{\epsilon (1+\epsilon)^n}{V} \int_X u_t  \,\omega_{u_t}^n \leq \epsilon (1+\epsilon)^n (n+1)   |\mathrm{I}(u_t)|\,,
\end{flalign*}
where in the third inequality we have used \cite[Lemma~4.4]{DDNL5}, and in the very last inequality we have used that $\sup_X u_t = 0$.

Now linearity of $I$ along geodesic segments gives that  $v^\epsilon_l: = \lim_{t \to \infty} v^{\epsilon,t}_l \in \mathcal{E}^1$. Moreover, endpoint stability of geodesics gives that $\{v^\epsilon_l\}_l \in \mathcal{R}^1$ \cite[Proposition~4.3]{BDL17}. Lastly, the sequence of rays $\{v^\epsilon_t\}_t$ is increasing to $\{u_t\}_t$. In addition, by the maximum principle, $v^\epsilon_l \leq P((1+\epsilon)u_l)$. 

Finally, we introduce the subgeodesics $u^\epsilon_l := \frac{1}{1 + \epsilon} v^\epsilon_l \geq v^\epsilon_l$, for $l\geq 0$.
We immediately obtain that $\omega_{u^\epsilon_l} \geq \frac{\epsilon}{1 + \epsilon} \omega$. Since $v^\epsilon_l \leq P((1+\epsilon) u_l)$, we get that $v^\epsilon_l \leq u^\epsilon_l \leq u_l$. Lastly, 
\[
\Irad[u_t] - \Irad[u^{\epsilon}_t]\leq \Irad[u_t] - \Irad[v^{\epsilon}_t] \leq \epsilon (1+\epsilon)^n(n+1)  |\Irad[u_t]|\,.
\]
After re-scaling $\epsilon> 0 $, the result follows.
\end{proof}

\section{The closure of rays induced by test configurations}\label{sec:closure}
For this section, let $(T,h_T)$ be a fixed Hermitian line bundle on $X$ with smooth metric $h_T$. 

To start, we notice that a sublinear subgeodesic ray $\{r_t\}_t$ satisfies
\[
\lim_{t \to\infty}\frac{1}{t}\sup_X {r_t}=\tau^+_{\hat r} =\sup_{v \in X^{\textup{div}}_\mathbb Q} r^{\NA}(v)\,.
\]
The first equality already follows from Lemma~\ref{lem: suplinear} and the correspondence in Theorem~\ref{thm: max_test_curve_ray_duality}\Rom{1}. The last equality is pointed out in \cite[Lemma~4.3]{BBJ21}. In particular, the above constant(s) can be recovered using only the non-Archimedean data $r^\NA$.

Now we introduce the non-Archimedean analogue of Donaldson's $\mathcal{L}$-functionals. For each $k\geq 1$ and $\{r_t\}_t$ sublinear subgeodesic ray we define
\begin{flalign}\label{eq:LNADef}
\nonumber \Lk^{\NA}\{r_t\}:=& \frac{1}{V}\hTL\cdot \tau^+_{\hat r} +
\frac{1}{V}\int_{-\infty}^{\tau^+_{\hat r}} \left(h^0(X,T\otimes L^k\otimes \mathcal{I}(k\hat{r}_{\tau}))-h^0(X,T\otimes L^k)\right)\,\mathrm{d}\tau\\
=& -\frac{1}{V}\int_{-\infty}^{\infty}\tau\,\mathrm{d} h^0(X,T\otimes L^k\otimes \mathcal{I}(k\hat{r}_{\tau}))\,,
\end{flalign}
where we integrated by parts for Riemann--Stieltjes integrals on some interval $[\tau_0, \tau^+_{\hat r}+\epsilon]$, with $h^0(X,T\otimes L^k\otimes \mathcal{I}(k\hat{r}_{\tau_0}))=\hTL$ and $\epsilon \searrow 0$ \cite[Theorem~7.6]{Ap74}. Indeed, such $\tau_0 \in (-\infty, \tau^+_{\hat r})$ exists due to the openness theorem of Guan--Zhou \cite{GZ15}.

\begin{lemma}\Rom{1} \label{lem:LNAnormalization}
Let $\{r_t\}_t$ be a sublinear subgeodesic ray and $r'_t:=r_t + t c$ for some $c \in \mathbb R$. Then
\begin{equation}\label{eq:LNAnormalization}
\Lk^{\NA}\{r'_t\}=\Lk^{\NA}\{r_t\}+\frac{1}{V}\hTL\cdot c\,.
\end{equation}
\Rom{2} If $\{u_t\}_t$ and $\{v_t\}_t$ are sublinear subgeodesics such that $u_t \leq v_t$, then  $\Lk^{\NA}\{u_t\} \leq \Lk^{\NA}\{v_t\}$. 
\end{lemma}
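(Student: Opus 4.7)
The plan is to reduce both parts to concrete manipulations of the definition using the behaviour of the Legendre transform under the operations at hand.

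For \Rom{1}, the key observation will be that adding a linear term to the ray simply shifts the Legendre transform: if $r'_t = r_t + tc$, then
\[
\hat{r'}_\tau = \inf_{t>0}(r_t + tc - t\tau) = \inf_{t>0}(r_t - t(\tau-c)) = \hat r_{\tau-c}\,,
\]
and in particular $\tau^+_{\hat{r'}} = \tau^+_{\hat r} + c$. Substituting this into the first form of \eqref{eq:LNADef} and changing variables $\sigma = \tau - c$ in the integral produces exactly the desired identity \eqref{eq:LNAnormalization}; the computation is a one-line bookkeeping check.

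For \Rom{2}, the starting point is that $u_t \leq v_t$ implies $\hat u_\tau \leq \hat v_\tau$ for all $\tau \in \mathbb{R}$ (immediate from the definition), and therefore $\tau^+_{\hat u} \leq \tau^+_{\hat v}$ and $\mathcal{I}(k\hat u_\tau) \subseteq \mathcal{I}(k\hat v_\tau)$, so that
\[
h^0_u(\tau) := h^0(X, T \otimes L^k \otimes \mathcal{I}(k\hat u_\tau)) \leq h^0(X, T \otimes L^k \otimes \mathcal{I}(k\hat v_\tau)) =: h^0_v(\tau)\,.
\]
I will work from the first form of \eqref{eq:LNADef}, whose integrand vanishes for $\tau \ll 0$ by the strong openness of Guan--Zhou, hence avoiding convergence issues. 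Writing the difference $V(\Lk^{\NA}\{v_t\} - \Lk^{\NA}\{u_t\})$ and splitting the $v$-integral at $\tau^+_{\hat u}$ gives
\[
\hTL\bigl(\tau^+_{\hat v} - \tau^+_{\hat u}\bigr) + \int_{\tau^+_{\hat u}}^{\tau^+_{\hat v}} \bigl(h^0_v(\tau)-\hTL\bigr)\,d\tau + \int_{-\infty}^{\tau^+_{\hat u}} \bigl(h^0_v(\tau) - h^0_u(\tau)\bigr)\,d\tau\,.
\]
The first two terms combine to $\int_{\tau^+_{\hat u}}^{\tau^+_{\hat v}} h^0_v(\tau)\,d\tau \geq 0$, and the third term is non-negative by the pointwise inequality $h^0_v \geq h^0_u$, so the whole quantity is $\geq 0$.

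Neither part presents a genuine obstacle: the only subtleties are (a) keeping track of the boundary contributions at $\tau^+_{\hat r}$ when manipulating the integrals, which is why I prefer the first ``renormalized'' form of \eqref{eq:LNADef} over the Stieltjes form, and (b) invoking Guan--Zhou to ensure $h^0_u, h^0_v$ stabilize to $\hTL$ for $\tau$ sufficiently negative, which guarantees that all integrals written above are absolutely convergent.
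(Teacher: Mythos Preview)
Your proposal is correct and takes essentially the same approach as the paper: part \Rom{1} is the shift of the Legendre transform (the paper simply calls it ``obvious''), and for part \Rom{2} you write out the same splitting of $V(\Lk^{\NA}\{v_t\}-\Lk^{\NA}\{u_t\})$ at $\tau^+_{\hat u}$ into three terms and observe their non-negativity, exactly as the paper does. Your explicit recombination of the first two terms into $\int_{\tau^+_{\hat u}}^{\tau^+_{\hat v}} h^0_v(\tau)\,\mathrm{d}\tau$ is precisely what is needed to see that the paper's ``second line'' is non-negative.
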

\begin{proof}
\Rom{1} is obvious. Let us argue \Rom{2}. One can see that
\begin{flalign*}
\Lk^{\NA}\{v_t\}-\Lk^{\NA}\{u_t\}  =  \frac{1}{V}\int_{-\infty}^{\tau^+_{\hat u}} \left(h^0(X,T\otimes L^k\otimes \mathcal{I}(k\hat{v}_{\tau}))-h^0(X,T\otimes L^k\otimes \mathcal{I}(k\hat{u}_{\tau}))\right)\,\mathrm{d}\tau\\
+\frac{1}{V}\hTL\cdot (\tau^+_{\hat v}-\tau^+_{\hat u}) +  \frac{1}{V}\int_{\tau^+_{\hat u}}^{\tau^+_{\hat v}} \left(h^0(X,T\otimes L^k\otimes \mathcal{I}(k\hat{v}_{\tau}))-h^0(X,T\otimes L^k)\right)\,\mathrm{d}\tau\,.
\end{flalign*}
To conclude, one observes that both the first and second lines are positive quantities.
\end{proof}

Next we provide an important estimate for the radial Monge--Amp\`ere energy of approximable rays, in terms of $\Lk^{\NA}$.

\begin{prop}\label{prop:upperbound}
Let $\{r_t\}_t \in \mathcal R^1$ be an approximable ray, i.e., $\{\hat r_\tau\}_\tau$ is $\veq$-maximal. Then
\begin{equation}
\Irad[r_t]\geq \varlimsup_{k\to\infty} \frac{n!}{k^n} \Lk^{\NA}\{r_t\}\,.
\end{equation}
\end{prop}
\begin{proof}
By Lemma~\ref{lem:LNAnormalization}, we may assume that $\tau^+_{\hat r}=0$. By Lemma~\ref{lma:contimass}, we have $\int_X \omega_{\hat{r}_{\tau}}^n>0$ for $\tau<0$.
Moreover, $\hat{r}_{\tau}$ is $\veq$-model for all $\tau \in \mathbb R$ by Theorem~\ref{thm: bijection_I_max_test_curves}. We can calculate
\begin{equation}\label{eq:Icalculation}
\begin{split}
\Irad[r_t]=&\frac{1}{V}\int_{-\infty}^{0} \Big(\int_X \omega_{\hat{r}_{\tau}}^n-V \Big) \,\mathrm{d}\tau =\int_{-\infty}^{0} \Big(\frac{\int_X \omega_{\hat{r}_{\tau}}^n}{\int_X \omega^n}-1 \Big) \,\mathrm{d}\tau\\
\geq  & \int_{-\infty}^{0}  \varlimsup_{k\to\infty}\bigg(\frac{h^0(X,T\otimes L^k\otimes \mathcal{I}(k \hat{r}_{\tau}))}{\hTL}-1 \bigg) \,\mathrm{d}\tau\\
\geq &\varlimsup_{k\to\infty}\int_{-\infty}^{0} \bigg(\frac{h^0(X,T\otimes L^k\otimes \mathcal{I}(k \hat{r}_{\tau}))}{\hTL}-1\bigg) \,\mathrm{d}\tau= \varlimsup_{k\to\infty}\frac{n!}{k^n} \Lk^{\NA}\{r_t\}\,,\\
\end{split}
\end{equation}
where the first line we used \eqref{eq: I_RWN_form}, in the second line we used the Riemann--Roch theorem together with Theorem~\ref{thm:genBon}, and in the third line we used Fatou's lemma.
\end{proof}

Using the results of Section~\ref{subsec:expfilt}, in the next lemma we provide a formula that will be an important technical ingredient (closely related to \cite[Theorem~1.1]{Bern17}). Recall the definition of the Hilbert map from \eqref{eq:Hilbkdef}:

\begin{lemma}\label{lem: quantum_Hilbert_slope} Let $\{r_t\}_t$ be a sublinear subgeodesic ray with such that $r_t \leq 0$ for all $t \geq 0$. Let
\[
\lambda_H(s):=\varlimsup_{t\to\infty}t^{-1}\log\Hilb_k(r_t)(s,s)\,,\quad s\in \HTL\,.
\]
Then for any $s\in \HTL$,
\begin{equation}\label{eq:lambdaHcalc_lemma}
\lambda_H(s)=-k\sup\left\{\,\lambda<0:s\in H^0(X,T\otimes L^k\otimes \mathcal{I}(k\hat{r}_{\lambda}))\,\right\}  < \infty\,.
\end{equation}
\end{lemma}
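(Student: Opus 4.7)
Write $\Lambda(s):=\sup\{\lambda<0:s\in H^0(X, T\otimes L^k\otimes \mathcal{I}(k\hat r_\lambda))\}$; the goal is $\lambda_H(s)=-k\Lambda(s)$. Finiteness of $\Lambda(s)$ (hence of $\lambda_H(s)$) will follow from $\hat r_\lambda\to 0$ as $\lambda\to -\infty$ (since $r_t-t\lambda\to 0$ as $t\to 0^+$), which forces $\mathcal{I}(k\hat r_\lambda)=\mathcal{O}_X$ for $\lambda$ sufficiently negative. The easy direction $\lambda_H(s)\leq -k\Lambda(s)$ comes from the defining Legendre inequality $\hat r_\lambda(x)\leq r_t(x)-t\lambda$: for any $\lambda<0$ with $s\in H^0(X,T\otimes L^k\otimes \mathcal{I}(k\hat r_\lambda))$,
\[
\Hilb_k(r_t)(s,s)\;\leq\;e^{-kt\lambda}\int_X h^k(s,s)e^{-k\hat r_\lambda}\omega^n\;=:\;C_s\,e^{-kt\lambda},
\]
with $C_s<\infty$ by assumption, so $t^{-1}\log\Hilb_k(r_t)(s,s)\leq -k\lambda + o(1)$ as $t\to\infty$; supping over admissible $\lambda$ gives the inequality.

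For the reverse direction, the key input is convexity of $t\mapsto \log \Hilb_k(r_t)(s,s)$, a consequence of Berndtsson's positivity (Theorem~\ref{thm:Bernconvex}) applied to the complexified subgeodesic on the strip $S$: the trivial bundle $\HTL\to S$ with metric $s\mapsto \Hilb_k(r_{\RE z})(s,s)$ is Griffiths negative. Combined with $\Hilb_k(r_t)(s,s)\to \|s\|_{L^2}^2$ as $t\to 0^+$, convexity implies that the slope $t^{-1}(\log \Hilb_k(r_t)(s,s)-\log \|s\|_{L^2}^2)$ is monotone non-decreasing with limit $\lambda_H(s)$ (in particular $\lambda_H$ is a genuine limit), yielding
\[
\Hilb_k(r_t)(s,s)\;\leq\;\|s\|_{L^2}^2\,e^{\lambda_H(s)t},\qquad t\geq 0.
\]
Fixing $\lambda<0$ with $-k\lambda>\lambda_H(s)$ and choosing an intermediate $\lambda'\in(\lambda,-\lambda_H(s)/k)$, Fubini then gives
\[
\int_X h^k(s,s)\,\Psi_{\lambda'}(x)\,\omega^n\;=\;\int_0^\infty e^{k\lambda't}\Hilb_k(r_t)(s,s)\,dt\;<\;\infty,
\]
where $\Psi_{\lambda'}(x):=\int_0^\infty e^{k\lambda't-kr_t(x)}\,dt$.

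To close the argument it suffices to produce a pointwise bound $e^{-k\hat r_\lambda(x)}\leq C_{\lambda,\lambda',k}\,\Psi_{\lambda'}(x)$ a.e.\ $x$, which upon integration forces $s\in H^0(X,T\otimes L^k\otimes\mathcal{I}(k\hat r_\lambda))$, hence $\Lambda(s)\geq \lambda$; letting $\lambda\nearrow -\lambda_H(s)/k$ closes the proof. The comparison exploits concavity of $t\mapsto -k(r_t(x)-t\mu)$: denote by $t^*(x,\mu)$ its maximizer, so the derivative of $t\mapsto -k(r_t(x)-t\lambda')$ at $t^*(x,\lambda)$ equals $k(\lambda'-\lambda)>0$, and the concave function increases from its value at $t^*(x,\lambda)$ up to its own maximum at $t^*(x,\lambda')$. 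Integrating on $[t^*(x,\lambda),t^*(x,\lambda')]$ gives
\[
\Psi_{\lambda'}(x)\;\geq\;\bigl(t^*(x,\lambda')-t^*(x,\lambda)\bigr)\,e^{-k\hat r_\lambda(x)+kt^*(x,\lambda)(\lambda'-\lambda)}.
\]

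\textbf{Main obstacle.} The difficulty is that $t^*(x,\lambda')-t^*(x,\lambda)$ may degenerate at points $x$ where $t\mapsto r_t(x)$ is very steep, while $e^{-kt^*(x,\lambda)(\lambda'-\lambda)}$ may decay along the $x$-locus where $t^*(x,\lambda)\to\infty$, so the pointwise comparison does not have a uniform $x$-independent constant. I expect to resolve this in two steps: first prove the result for $\lambda'$ in place of $\lambda$ (which is structurally cleaner since both sides of the comparison then refer to $\lambda'$), noting that $\mathcal{I}(k\hat r_{\lambda'})\subseteq \mathcal{I}(k\hat r_\lambda)$ already gives the needed containment; then let $\lambda'\searrow\lambda$ and invoke the strong openness theorem of Guan--Zhou \cite{GZ15} to pass to the limit. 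Additional care is needed at the pluripolar set $\{\hat r_\lambda=-\infty\}$, where both sides are $+\infty$, and near $t=0^+$ where the Berndtsson bundle degenerates, handled by working on $(\epsilon,\infty)$ and letting $\epsilon\searrow 0$.
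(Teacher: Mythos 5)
Your overall strategy is the right one and matches the paper's: the easy inequality via $\hat r_\lambda \leq r_t - t\lambda$, and the hard inequality via a Laplace-transform/Tonelli argument reducing to a pointwise lower bound on $\Psi(x)=\int_0^\infty e^{-pt}e^{-kr_t(x)}\,\mathrm{d}t$. But there are two problems. First, the convexity of $t\mapsto \log\Hilb_k(r_t)(s,s)$ that you invoke is not what Berndtsson's theorem gives: Theorem~\ref{thm:Bernconvex} makes the family $t\mapsto\Hilb_k(r_t)$ \emph{positive} in the sense of Definition~\ref{def:pmfamily}, i.e.\ the \emph{dual} norms $\log\Hilb_k(r_t)^*(\xi,\xi)$ are convex — not the norms of sections themselves (and even that requires $\omega_{r_t}\geq\epsilon\omega$ and $k\geq k_0(\epsilon)$, which a general sublinear subgeodesic does not satisfy). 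Fortunately this step is dispensable: the finiteness of $\int_0^\infty e^{k\lambda' t}\Hilb_k(r_t)(s,s)\,\mathrm{d}t$ follows directly from the definition of $\lambda_H$ as a $\varlimsup$ (take $t_0$ with $\Hilb_k(r_t)(s,s)\leq e^{(\lambda_H(s)+\epsilon)t}$ for $t\geq t_0$, and note $\Hilb_k(r_t)(s,s)$ is increasing in $t$, hence bounded on $[0,t_0]$); there is no need for the $\varlimsup$ to be a limit.

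The second problem is the one you flag yourself, and it is a genuine gap: your pointwise comparison via the maximizer $t^*(x,\lambda)$ has no $x$-uniform constant, and the proposed repair (prove the statement at $\lambda'$ and then apply strong openness) is not carried out and is not needed. The paper's fix is elementary and avoids introducing the auxiliary $\lambda'$ altogether: work directly at the critical slope $\tau=-p/k$ and use that $t\mapsto r_t(x)$ is \emph{decreasing} (since $r_0=0$, $r_t\leq 0$ and $t\mapsto r_t(x)$ is convex). Choosing $t_0$ with $r_{t_0}(x)+pk^{-1}t_0\leq \hat r_{-p/k}(x)+1$, monotonicity gives $r_t(x)+pk^{-1}t\leq \hat r_{-p/k}(x)+1+pk^{-1}$ for \emph{all} $t\in[t_0,t_0+1]$, whence
\[
\int_0^{\infty}e^{-pt}e^{-kr_t(x)}\,\mathrm{d}t\;\geq\;\int_{t_0}^{t_0+1}e^{-pt-kr_t(x)}\,\mathrm{d}t\;\geq\;e^{-p-k}\,e^{-k\hat r_{-p/k}(x)}\,,
\]
with the constant $e^{-p-k}$ independent of $x$. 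Integrating against $h^k(s,s)\,\omega^n$ then gives $s\in H^0(X,T\otimes L^k\otimes\mathcal{I}(k\hat r_{-p/k}))$ directly, so $\sup\{\lambda<0: s\in H^0(X,T\otimes L^k\otimes\mathcal I(k\hat r_\lambda))\}\geq -p/k$, and letting $p\searrow\lambda_H(s)$ concludes — no openness theorem required. (One does need to check $\hat r_{-p/k}\not\equiv-\infty$, which the paper gets from $\lambda_H(s)\geq -k\lim_t t^{-1}\sup_X r_t$.) With these two repairs your argument becomes the paper's proof.
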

\begin{proof} 
Let $\lambda<\sup\{\tau<0:s\in H^0(X,T\otimes L^k\otimes \mathcal{I}(k\hat{r}_{\tau})\}$. Let $C:=\int_X h^k(s,s) e^{-k\hat{r}_{\lambda}}\,\omega^n<\infty$. By definition, for any $t\geq 0$ we have $\hat{r}_{\lambda}\leq r_t-t\lambda$, so $C\geq \int_X h^k(s,s) e^{-k(r_t-t\lambda)}\,\omega^n$. As a result, $\lambda_H(s)\leq -k\lambda$, hence
\[
\lambda_H(s)\leq-k\sup\left\{\,\lambda<0:s\in H^0(X,T\otimes L^k\otimes \mathcal{I}(k\hat{r}_{\lambda}))\,\right\}\,.
\]
Now we prove the reverse inequality. We fix $p>\lambda_H(s)$ and $\epsilon >0$ satisfying $p-\epsilon>\lambda_H(s)$. We can find $t_0>0$ such that
\[
 \int_X h^k(s,s) e^{-kr_t}\,\omega^n<e^{(p-\epsilon)t}\,,\quad t \geq t_0\,.
\]
Hence $\int_0^{\infty}e^{-pt}\int_X h^k(s,s) e^{-kr_t}\,\omega^n\,\mathrm{d}t<\infty$.
By Tonelli's theorem, this is equivalent to
\begin{equation}\label{eq:finiteness}
\int_X h^k(s,s)\left(\int_0^{\infty}e^{-pt} e^{-kr_t}\,\mathrm{d}t\right)\,\omega^n<\infty\,.
\end{equation}

Before proceeding further, we show that $\lambda_H(s) \geq - k\tau^+_r = - k\lim_t \frac{\sup_X r_t}{t}$. Indeed, we get this after letting $t\to \infty$ in the following inequality:
\[
\frac{1}{t}\log{\Hilb_k(r_t)}(s,s) = -k \frac{\sup_X r_t}{t} + \frac{1}{t}\log{\Hilb_k(r_t - \sup_X r_t)}(s,s) \geq -k \frac{\sup_X r_t}{t} + \frac{1}{t}\log{\Hilb_k(0)}(s,s)\,.
\]
As a result, $\displaystyle -p/k < -k^{-1} \lambda_H(s) \leq  \lim_{t \to \infty} t^{-1}\sup_X r_t$, giving that $\hat r_{-p/k}$ is not identically equal to $-\infty$.

Next, for any $x \in X$ such that $\hat r_{-p/k}(x)$ is finite, we claim that
\begin{equation}\label{eq:uniflower}
\int_0^{\infty}e^{-pt} e^{-kr_t(x)}\,\mathrm{d}t\geq e^{-p-k}e^{-k\hat{r}_{-p/k}(x)}\,.
\end{equation}

By definition of $\hat{r}_{\tau}$, we can find $t_0>0$ so that $\hat r_{-p/k}(x)+1 \geq  r_{t_0}(x)+pk^{-1} t_0$.
Since $t \mapsto r_t(x)$ is decreasing,  we have $\hat{r}_{-p/k}(x)+pk^{-1}+1\geq r_{t}(x)+pk^{-1} t$ for $t\in [t_0,t_0+1]$. 
Hence
\[
\int_0^{\infty}e^{-pt} e^{-kr_t(x)}\,\mathrm{d}t\geq \int_{t_0}^{t_0 + 1}e^{-pt} e^{-kr_t(x)}\,\mathrm{d}t\geq \int_{t_0}^{t_0+1}e^{-k\hat{r}_{-{p}/{k}}(x)}e^{-p-k}\,\mathrm{d}t\geq e^{-p-k}e^{-k\hat{r}_{-{p}/{k}}(x)}\,.
\]
This proves the claim \eqref{eq:uniflower}.
So by \eqref{eq:finiteness} and the claim, $\int_X h^k(s,s) e^{-k\hat{r}_{-p/k}}\,\omega^n<\infty$, 
hence $p\geq -k\sup\{\,\lambda<0:s\in H^0(X,T\otimes L^k\otimes \mathcal{I}(k\hat{r}_{\lambda}))\,\}$, concluding the proof.
\end{proof}

Next we link the non-Archimedean functional $\mathcal L_k^\NA$  to the classical functional $\mathcal L_k$ for sufficiently positive subgeodesic rays:
\begin{prop}\label{prop:Lkarctononarc}
Let $\{r_t\}_t$ be a sublinear subgeodesic ray and $\delta>0$ such that  $r_t \in \mathcal{E}^{1}, $ and $\omega_{r_t}\geq \delta\omega$ for all $t\geq 0$. Then there exists $k_0(\delta) > 0$ such that $t \to \Lk(r_t)$ is convex, moreover
\begin{equation}
    \Lk^{\NA}\{r_t\}=\lim_{t\to\infty} \frac{1}{t}\Lk(r_t)\,, \quad k \geq k_0\,.
\end{equation}
\end{prop}

As it will be clear from the proof below, in case $T = K_X$ and $h_T$ is dual to $\omega^n$, one can omit the condition $\omega_{r_t}\geq \delta\omega$ from the assumptions.

\begin{proof}
By Lemma~\ref{lem:LNAnormalization}, we may assume that $\sup_{X} r_t= 0$ for any $t\geq 0$. 

By Lemma~\ref{lem: quantum_Hilbert_slope} for $f\in H^0(X,T\otimes L^k)$ we have that
\begin{equation}\label{eq:lambdaHcalc}
\lambda_{\Hilb_k}(f)=\varlimsup_{t\to\infty}t^{-1}\log{\Hilb_k(r_t)}(f,f)=-k\sup\left\{\,\lambda<0 : f\in H^0(X,T\otimes L^k\otimes \mathcal{I}(k\hat{r}_{\lambda}))\,\right\}\,.
\end{equation}
In particular, for $\lambda\geq 0$,
\begin{equation}\label{eq: filtration_identity}
\mathcal F^{\Hilb_k}_\lambda := \left\{\,f\in H^0(X,T\otimes L^k):\lambda_{\Hilb_k}(f)\leq \lambda\,\right\}=H^0(X,T\otimes L^k\otimes \mathcal{I}_{-}(k\hat{r}_{-\lambda/k}))
\end{equation}
where $\mathcal{I}_{-}(k\hat{r}_{\tau}):=\bigcap_{\lambda<\tau}\mathcal{I}(k\hat{r}_{\lambda})$, and $\mathcal F^{\Hilb_k}_\lambda$ is the filtration associated to $\lambda_{\Hilb_k}$, defined in \eqref{eq: Filt_def}.

As ${\Hilb_k(r_s)}$ is increasing in $s$, ${\Hilb_k(r_s)}^*$ is decreasing in $s$, hence the exponent $\lambda_{\Hilb_k^*}$ of ${\Hilb_k(r_s)}^*$ on $H^0(X,T\otimes L^k)^*$ is bounded above. Moreover, the family $(\Hilb_k(r_t))_{t\geq 0}$ is positive when $k \geq k_0(\delta)$ by Theorem~\ref{thm:Bernconvex}. As a result, $t \to \Lk(r_t)$ is convex (see the comments after Lemma~\ref{lma:geoderay}) and the conditions of Theorem~\ref{thm: detequality} are satisfied to imply that
\[
\begin{split}
\lim_{t\to\infty} \frac{1}{t}\log\left(\frac{\det{\Hilb_k(r_t)}}{\det{\Hilb_k(r_0)}}\right)=& \int_{0}^{\infty}\lambda\,\mathrm{d}h^0(X,T\otimes L^k\otimes \mathcal{I}_{-}(k\hat{r}_{-\lambda/k}))\\
=& k\int_{0}^{\infty}\lambda\,\mathrm{d}h^0(X,T\otimes L^k\otimes \mathcal{I}_{-}(k\hat{r}_{-\lambda}))
\end{split}
\]
for $k \geq k_0(\delta)$, where in the first line we also used \eqref{eq: filtration_identity}. As 
$\mathcal{I}(k\hat{r}_{\tau})\subseteq \mathcal{I}_{-}(k\hat{r}_{\tau})\subseteq \mathcal{I}(k\hat{r}_{\tau-\epsilon})$
for any $\epsilon>0$, and $\mathcal F^{\Hilb_k}_\lambda$ can only have finitely many jumping numbers, we get
\[
    \lim_{t\to\infty} \frac{1}{t}\Lk(r_t) = - \frac{1}{kV} \lim_{t\to\infty} \frac{1}{t}\log\left(\frac{\det{\Hilb_k(r_t)}}{\det{\Hilb_k(r_0)}}\right)=-\frac{1}{V}\int_{-\infty}^{0}\lambda\,\mathrm{d}h^0(X,T\otimes L^k\otimes \mathcal{I}(k\hat{r}_{\lambda}))\,.
\]
Comparing with \eqref{eq:LNADef}, the proof is finished.
\end{proof}

Before proceeding, we recall the following basic lemma:

\begin{lemma}\label{lem: convex_convergence_slope} Let $I \subseteq \mathbb{R}$ be an open interval. Let $f_j,f: I \to \mathbb{R}$ ($j\geq 1$) be convex functions such that $f_j \to f$ pointwise. Then for all $x \in I$, we have that
\[
f'_-(x) \leq \varliminf_{j\to\infty} {f'_j}_-(x) \leq \varlimsup_{j\to\infty} {f'_j}_+(x)\leq  f'_+(x)\,.
\]
\end{lemma}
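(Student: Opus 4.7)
The plan is to use the standard difference-quotient characterization of one-sided derivatives of convex functions. Recall that for any convex $g : I \to \mathbb{R}$ and $x \in I$, the map $h \mapsto (g(x+h)-g(x))/h$ is non-decreasing for $h$ in a small one-sided neighborhood of $0$, so
\[
g'_+(x) = \inf_{h > 0} \frac{g(x+h)-g(x)}{h}, \qquad g'_-(x) = \sup_{h > 0} \frac{g(x)-g(x-h)}{h},
\]
with both limits existing (possibly $-\infty$ at endpoints, but we are on an open interval so they are real). The middle inequality ${f'_j}_-(x) \leq {f'_j}_+(x)$ is immediate from convexity, and passes to $\varliminf \leq \varlimsup$ after taking limits in $j$.

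For the right-most inequality, I would fix $h > 0$ small enough that $x+h \in I$. By the monotonicity above, for each $j$ we have
\[
{f'_j}_+(x) \leq \frac{f_j(x+h)-f_j(x)}{h}.
\]
Taking $\varlimsup_{j \to \infty}$ and using pointwise convergence $f_j \to f$ on the right-hand side yields
\[
\varlimsup_{j\to\infty} {f'_j}_+(x) \leq \frac{f(x+h)-f(x)}{h}.
\]
Letting $h \searrow 0$ gives $\varlimsup_{j\to\infty} {f'_j}_+(x) \leq f'_+(x)$. The left-most inequality is symmetric: for $h>0$ small, ${f'_j}_-(x) \geq (f_j(x)-f_j(x-h))/h$, so taking $\varliminf_{j\to\infty}$ and then $h \searrow 0$ gives $\varliminf_{j\to\infty} {f'_j}_-(x) \geq f'_-(x)$.

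There is no real obstacle here; the only mild point is to make sure to take $\varlimsup$ (resp.\ $\varliminf$) \emph{before} sending $h \searrow 0$, so that pointwise convergence of $f_j$ at the two points $x$ and $x \pm h$ is all that is needed. No uniformity in $h$ is required, and no appeal to the fact that pointwise convergence of convex functions upgrades to locally uniform convergence is necessary, although one could alternatively invoke that to give a one-line proof.
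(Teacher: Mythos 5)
Your proof is correct and follows essentially the same route as the paper: bound the one-sided derivatives of $f_j$ by difference quotients at a fixed $h>0$, pass to the limit in $j$ using pointwise convergence, and only then let $h \searrow 0$. The paper's argument is exactly this for the left derivative, with the other case "following similarly," so no further comment is needed.
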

\begin{proof}
Due to convexity, for $h>0$ small enough, we have that
$(f_j(x-h) - f_j(x))/(-h)\leq {f_j'}_-(x)$.
Letting $j \to \infty$, we arrive at $(f(x-h) - f(x))/(-h) \leq \varliminf_{j\to\infty} {f_j'}_-(x)$.
Now $h \to 0$ gives the first inequality. The other inequality follows similarly.
\end{proof}

\begin{theorem}\label{thm:lowerboundmisbyI}
Let $\{r_t\}_t \in \mathcal{R}^{1}$. Then
 \begin{equation}\label{eq:generalray}
 \varliminf_{k\to\infty} \frac{n!}{k^n} \Lk^{\NA}\{r_t\} \geq  \Irad[r_t]\,.
 \end{equation}
\end{theorem}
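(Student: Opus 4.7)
The plan is to deduce \eqref{eq:generalray} from the quantization result Theorem \ref{thm:QuantI} by approximating $\{r_t\}_t$ from below with sufficiently positive subgeodesics and then passing to the limit. Fix $\epsilon>0$ and invoke Theorem \ref{thm: approx_ray_below} to obtain a subgeodesic $\{r_t^\epsilon\}_t$ in $\mathcal{E}^1$ with $r_0^\epsilon=0$, $\omega_{r_t^\epsilon}\geq \tfrac{\epsilon}{1+\epsilon}\omega$, $r_t^\epsilon\leq r_t$, and
\[
\Irad[r_t^\epsilon]\;\geq\;\Irad[r_t]-\epsilon\bigl|\Irad[r_t]\bigr|.
\]
Note that $\{r_t^\epsilon\}_t$ is sublinear in the sense of Section~\ref{subsec:rwncorre}, since $r_t^\epsilon\leq r_t$ and $t\mapsto \sup_X r_t$ is linear by Lemma \ref{lem: suplinear}. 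The monotonicity of $\Lk^{\NA}$ from Lemma \ref{lem:LNAnormalization}\Rom{2} gives $\Lk^{\NA}\{r_t\}\geq \Lk^{\NA}\{r_t^\epsilon\}$, so it suffices to bound $\Lk^{\NA}\{r_t^\epsilon\}$ from below by $\Irad[r_t^\epsilon]$ asymptotically in $k$.

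The key point is that $\omega_{r_t^\epsilon}$ is uniformly Kähler along the subgeodesic, so Proposition \ref{prop:Lkarctononarc} applies: there exists $k_0=k_0(\epsilon)$ such that for $k\geq k_0$ the map $t\mapsto \Lk(r_t^\epsilon)$ is convex and
\[
\Lk^{\NA}\{r_t^\epsilon\}\;=\;\lim_{t\to\infty}\frac{\Lk(r_t^\epsilon)}{t}.
\]
Since $\Lk(r_0^\epsilon)=\Lk(0)=0$, convexity forces $t\mapsto \Lk(r_t^\epsilon)/t$ to be non-decreasing on $(0,\infty)$, so the limit coincides with $\sup_{t>0}\Lk(r_t^\epsilon)/t$. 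The analogous statement for $\mathrm{I}$ (convex along subgeodesics in $\mathcal{E}^1$ by Berndtsson's theorem, and vanishing at $t=0$) yields $\Irad[r_t^\epsilon]=\sup_{t>0}\mathrm{I}(r_t^\epsilon)/t$.

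The conclusion now reduces to a standard sup/liminf swap combined with the pointwise convergence from Theorem \ref{thm:QuantI}:
\[
\varliminf_{k\to\infty}\frac{n!}{k^n}\Lk^{\NA}\{r_t^\epsilon\}
=\varliminf_{k\to\infty}\sup_{t>0}\frac{n!}{k^n}\frac{\Lk(r_t^\epsilon)}{t}
\geq \sup_{t>0}\varliminf_{k\to\infty}\frac{n!}{k^n}\frac{\Lk(r_t^\epsilon)}{t}
=\sup_{t>0}\frac{\mathrm{I}(r_t^\epsilon)}{t}=\Irad[r_t^\epsilon].
\]
Chaining with the inequalities of the first paragraph yields $\varliminf_{k\to\infty}(n!/k^n)\Lk^{\NA}\{r_t\}\geq \Irad[r_t]-\epsilon|\Irad[r_t]|$, and letting $\epsilon\to 0$ delivers \eqref{eq:generalray}.

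The main technical point is that Proposition \ref{prop:Lkarctononarc} identifies $\Lk^{\NA}$ with an honest slope of $\Lk$ only when the underlying subgeodesic is uniformly Kähler; Theorem \ref{thm: approx_ray_below} is tailored precisely to this need, furnishing such an approximation at the cost of an arbitrarily small loss in the radial Monge--Ampère energy. Everything else is forced by the two convexity statements (on the quantum and classical sides, respectively) and the normalization $r_0^\epsilon=0$, which together convert the limits at $t=\infty$ into suprema over $t>0$ so that the sup/liminf exchange becomes available.
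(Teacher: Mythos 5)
Your proof is correct and follows essentially the same route as the paper: approximate from below by uniformly Kähler subgeodesics via Theorem~\ref{thm: approx_ray_below}, identify $\Lk^{\NA}$ with the slope of $\Lk$ via Proposition~\ref{prop:Lkarctononarc}, and pass to the limit using Theorem~\ref{thm:QuantI} together with the monotonicity of $\Lk^{\NA}$ from Lemma~\ref{lem:LNAnormalization}. The only cosmetic difference is in the limit interchange: you exploit monotonicity of chord slopes of convex functions vanishing at $t=0$ plus a $\sup$/$\varliminf$ swap, whereas the paper invokes Lemma~\ref{lem: convex_convergence_slope} on one-sided derivatives of convergent convex functions; the two devices are equivalent here.
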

\begin{proof}
First, we consider $\{v_t\}_t$ a sublinear subgeodesic such that $v_t \in \mathcal E^1$,  $v_t\leq 0$, and  $\omega_{v_t}\geq \delta\omega$ for all $t\geq 0$ and some $\delta>0$. 

By Theorem~\ref{thm:QuantI}, we have $\lim_{k\to\infty} \frac{n!}{k^n}\Lk(v_t)=\mathrm{I}(v_t)$ for $t\geq 0$.
So by Lemma~\ref{lem: convex_convergence_slope} above and Proposition~\ref{prop:Lkarctononarc},
\begin{equation}\label{eq:convexslopeell}
\Irad[v_t] = \lim_{t \to \infty} \frac{\mathrm{I}(v_t)}{t}\leq \varliminf_{k\to\infty} \frac{n!}{ k^{n}}\left(\lim_{t\to\infty} \frac{1}{t}\Lk(v_t)\right)=\varliminf_{k\to\infty} \frac{n!}{k^{n}}\Lk^{\NA}\{v_t\}\,.
\end{equation}
By Lemma~\ref{lem:LNAnormalization} it is enough to prove the theorem for $\{r_t\}_t\in \mathcal{R}^{1}$ with $\sup_X r_t =0$. By Theorem~\ref{thm: approx_ray_below}, we can find sublinear subgeodesics $\{v^k_t\}_t$ such that $v^k_t \in \mathcal E^1$, $v^k_t \nearrow r_t$ and $\omega_{v^k_t}\geq \delta_k\omega$ for all $t\geq 0$ and some $\delta_k \searrow 0$. Moreover,  $\Irad[v^k_t] \to \Irad[r_t]$. By monotonicity of $\Lk^{\NA}\{\cdot\}$ (Lemma~\ref{lem:LNAnormalization}) we have
\[
\varliminf_{k\to\infty} \frac{n!}{k^{n}}\Lk^{\NA}\{r_t\}\geq \varliminf_{k\to\infty} \frac{n!}{k^{n}}\Lk^{\NA}\{v^k_t\}\geq  \Irad[v^k_t]\,.
\]
Letting $k\to\infty$, we conclude \eqref{eq:generalray}.
\end{proof}

\begin{theorem}\label{thm: closure_of_test_conf}
Let $\{r_t\}_t \in \mathcal{R}^1$ with $\sup_X r_t = 0$. Then the following are equivalent:\vspace{0.1cm}\\
\vspace{0.1cm}\Rom{1} $\{r_t\}_t \in \overline{\mathcal{T}}$.\\
\vspace{0.1cm}\Rom{2} $\{r_t\}_t \in \overline{\mathcal{F}}$.\\
\vspace{0.1cm}\Rom{3} $\PrIv[\hat r_\tau]=\hat r_\tau$, for all $\tau \leq 0$. \\
\Rom{4} $\displaystyle \lim_{k\to\infty} \int_{-\infty}^0 \bigg( \frac{h^0(X,T \otimes L^k \otimes \mathcal{I}(k {\hat r}_\tau))}{h^0(X,T \otimes L^k)}  - 1\bigg)\,\mathrm{d}\tau =   \Irad[r_t] $.
\end{theorem}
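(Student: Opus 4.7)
The plan is to use \emph{approximability} (namely, being a decreasing limit of test-configuration rays) as the central notion connecting all four conditions. I would first show $\overline{\mathcal T}$ coincides with the set of approximable rays: every approximable ray lies in $\overline{\mathcal T}$ by \cite[Lemma~4.3]{DL20}, and by Theorem~\ref{thm: Pi_cont} the set of approximable rays is $d_1^c$-closed. Thus (I) is the statement that $\{r_t\}_t$ is approximable. Now (I) $\Leftrightarrow$ (III) is immediate from Theorem~\ref{thm: bijection_I_max_test_curves}: approximability is equivalent to $\veq$-maximality of $\{\hat r_\tau\}_\tau$; the normalization $\sup_X r_t = 0$ forces $\tau^+_{\hat r} = 0$, so $\hat r_\tau = -\infty = \PrIv[\hat r_\tau]$ is automatic for $\tau > 0$ and only $\tau \leq 0$ remains. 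For (I) $\Leftrightarrow$ (II), the inclusion $\overline{\mathcal T} \subseteq \overline{\mathcal F}$ is obvious from $\mathcal T \subseteq \mathcal F$; for the reverse I would use Theorem~\ref{thm: filtration_I_max_testcurve} to note that any filtration ray has $\veq$-maximal test curve, hence is approximable by Theorem~\ref{thm: bijection_I_max_test_curves}, so $\mathcal F \subseteq \overline{\mathcal T}$ and the closures coincide.

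The cohomological equivalence (I) $\Leftrightarrow$ (IV) requires more care. First, since $\tau^+_{\hat r}=0$, the definition of $\Lk^{\NA}$ yields
\[
\frac{n!}{k^n}\Lk^{\NA}\{r_t\} = \frac{n!\,h^0(X,T\otimes L^k)}{Vk^n} \int_{-\infty}^0 \left(\frac{h^0(X,T\otimes L^k\otimes \mathcal I(k\hat r_\tau))}{h^0(X,T\otimes L^k)}-1\right)\mathrm{d}\tau,
\]
and Riemann--Roch makes the prefactor tend to $1$; thus (IV) is equivalent to $\lim_k \frac{n!}{k^n}\Lk^{\NA}\{r_t\} = \Irad[r_t]$. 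If (I) holds, this follows from Proposition~\ref{prop:upperbound} combined with Theorem~\ref{thm:lowerboundmisbyI}. For the converse, I would compare $\{r_t\}_t$ to $\{\Pi(r)_t\}_t$: by Proposition~\ref{prop:Qprojection}(II) and Theorem~\ref{thm:BBJ21projnaequal}(I) we have $\hat r_\tau \simeq_{\veq} \PrIv[\hat r_\tau] = \widehat{\Pi(r)}_\tau$ for $\tau < 0$, so $\mathcal I(k\hat r_\tau) = \mathcal I(k\widehat{\Pi(r)}_\tau)$ almost everywhere in $\tau$ and $\Lk^{\NA}\{r_t\} = \Lk^{\NA}\{\Pi(r)_t\}$. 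Since $\{\Pi(r)_t\}_t$ is approximable, Proposition~\ref{prop:upperbound} gives $\Irad[\Pi(r)_t] \geq \lim_k \frac{n!}{k^n}\Lk^{\NA}\{\Pi(r)_t\} = \Irad[r_t]$; the inequality $r_t \leq \Pi(r)_t$ yields the reverse, hence $\mathrm{I}(r_t) = \mathrm{I}(\Pi(r)_t)$ for all $t$. Using $P(r_t,\Pi(r)_t) = r_t$ and the identity $d_1(u,v) = \mathrm{I}(u) + \mathrm{I}(v) - 2\mathrm{I}(P(u,v))$ then gives $d_1(r_t,\Pi(r)_t) = 0$, hence $r_t = \Pi(r)_t$ and $\{r_t\}_t$ is approximable.

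The main obstacle is the direction (IV) $\Rightarrow$ (I): the cohomological hypothesis only controls the asymptotics of $\Lk^{\NA}$, yet we must upgrade this to the pluripotential identity $r_t = \Pi(r)_t$. The key observation making this work is that $\Lk^{\NA}$ is invariant under replacing $\hat r_\tau$ by $\PrIv[\hat r_\tau]$, allowing us to bring in the approximable ray $\{\Pi(r)_t\}_t$ dominating $\{r_t\}_t$ and then squeeze via monotonicity of $\mathrm{I}$ combined with the metric formula for $d_1$.
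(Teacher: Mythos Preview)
Your proof is correct and follows essentially the same route as the paper: reduce (I) to approximability via Theorem~\ref{thm: Pi_cont}, identify approximability with $\veq$-maximality of the test curve via Theorem~\ref{thm: bijection_I_max_test_curves}, handle (I)$\Leftrightarrow$(II) through Theorem~\ref{thm: filtration_I_max_testcurve}, and for (IV)$\Rightarrow$(I) compare $\{r_t\}_t$ with $\{\Pi(r)_t\}_t$ using equality of multiplier ideals to force $\Irad[r_t]=\Irad[\Pi(r)_t]$ and hence $r_t=\Pi(r)_t$.

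One small expository wrinkle in your (IV)$\Rightarrow$(I) step: invoking only Proposition~\ref{prop:upperbound} for $\{\Pi(r)_t\}_t$ gives $\Irad[\Pi(r)_t]\geq \Irad[r_t]$, which is the \emph{same} direction as the one coming from $r_t\leq \Pi(r)_t$, so ``the reverse'' does not follow from monotonicity. What you actually need (and what the paper does) is to apply your already-established implication (I)$\Rightarrow$(IV) to the approximable ray $\{\Pi(r)_t\}_t$, yielding the \emph{equality} $\Irad[\Pi(r)_t]=\lim_k \frac{n!}{k^n}\Lk^{\NA}\{\Pi(r)_t\}$; together with $\Lk^{\NA}\{r_t\}=\Lk^{\NA}\{\Pi(r)_t\}$ and hypothesis (IV) this gives $\Irad[r_t]=\Irad[\Pi(r)_t]$ directly, and then $r_t\leq \Pi(r)_t$ with $\mathrm{I}(r_t)=\mathrm{I}(\Pi(r)_t)$ forces $r_t=\Pi(r)_t$ as you conclude.
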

This theorem proves most of Theorem~\ref{mainthm: on_rays}. The remaining point will be completed in Corollary~\ref{cor: Thm1.1cor}.
\begin{proof} First we show $\Rom{1} \implies \Rom{3} \implies \Rom{4} \implies \Rom{1}$. Then we show $\Rom{1} \implies \Rom{2} \implies \Rom{1}$.

Due to \cite[Example 3.3]{Xi19} or (Theorem \ref{thm: Pi_cont}), \Rom{1} implies that $\{r_t\}_t$ is approximable. This in turn is equivalent with \Rom{3} due to Theorem~\ref{thm: bijection_I_max_test_curves}. However \Rom{3} implies \Rom{4} due to Proposition~\ref{prop:upperbound} and Theorem~\ref{thm:lowerboundmisbyI}. 

Now we show that  \Rom{4}  implies  \Rom{1} . For this, let us consider the approximable ray $\{\Pi(r)_t\}_t \in \mathcal R^1$ (Theorem~\ref{thm:BBJ21projnaequal}). From the same result we know that $\widehat{\Pi(r)}_\tau = P[\hat r_\tau]$ for $\tau < 0$. In particular,
\[
h^0(X,T\otimes L^k \otimes \mathcal{I}(k {\widehat{\Pi(r)}}_\tau)) = h^0(X,T \otimes L^k \otimes \mathcal{I}(k {\hat r}_\tau))\,.
\]
From the direction  \Rom{3}  implies  \Rom{4}  already proved, we obtain that  \Rom{4}  holds for $\{{\Pi(r)}_t\}_t$ in the following manner:
\begin{equation} \label{eq: I_Pi_slope}
\Irad[\Pi(r)_t] = \lim_{k\to\infty} \int_{-\infty}^0 \bigg( \frac{h^0(X,T \otimes L^k \otimes \mathcal{I}(k {\hat r}_\tau))}{h^0(X,T \otimes L^k)}  - 1\bigg)\,\mathrm{d}\tau\,.
\end{equation}
Condition~\Rom{4} now gives $\Irad[{\Pi(r)}_t]=\Irad[r_t]$. 
Since $r_t \leq \Pi(r)_t$, this gives $r_t = \Pi(r)_t$ for $t \geq 0$. Since $\{\Pi(r)_t\}_t$ is approximable due to Theorem~\ref{thm:BBJ21projnaequal}, so is $\{r_t\}_t$, concluding  \Rom{1} .

Finally, since $\mathcal T \subseteq \mathcal F$, we obtain that  \Rom{1}  implies  \Rom{2} . For the other direction, it is enough to show that elements of $\mathcal F$ are approximable. However the rays of $\mathcal F$ are all $\veq$-maximal, due to Theorem~\ref{thm: filtration_I_max_testcurve}, so they are approximable due to Theorem~\ref{thm: bijection_I_max_test_curves}, proving  \Rom{1} .
\end{proof}

\begin{theorem}\label{thm: on_rays}
Let $\{r_t\}_t \in \mathcal{R}^1$ with $\sup_X r_t = 0$  for any $t\geq 0$. Then $\lim_{k\to\infty} \frac{n!}{k^n} \Lk^{\NA}\{r_t\}$ exists and can be estimated the following way
\begin{equation}\label{eq: I_NA_ineq}
\lim_{k\to\infty} \frac{n!}{k^n} \Lk^{\NA}\{r_t\}=\lim_{k\to\infty} \int_{-\infty}^0 \bigg( \frac{h^0(X,T\otimes L^k \otimes \mathcal{I}(k {\hat r}_\tau))}{h^0(X,T\otimes L^k)}  - 1\bigg)\,\mathrm{d}\tau \geq \Irad[r_t]\,.
\end{equation}
\end{theorem}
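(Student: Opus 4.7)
The plan is to reduce the statement about the general ray $\{r_t\}_t$ to the approximable ray $\{\Pi(r)_t\}_t$ constructed in Theorem~\ref{thm:BBJ21projnaequal}, which will let me apply the already-established bounds from Proposition~\ref{prop:upperbound} (upper) and Theorem~\ref{thm:lowerboundmisbyI} (lower).

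First I would observe that $\Lk^{\NA}\{r_t\}=\Lk^{\NA}\{\Pi(r)_t\}$ for every $k\geq 1$. Indeed, $\sup_X r_t=0$ forces $\tau^+_{\hat r}=0$, and Theorem~\ref{thm:BBJ21projnaequal} (whose proof records that $\tau^+_{\widehat{\Pi(r)}}=\tau^+_{\hat r}$) gives $\tau^+_{\widehat{\Pi(r)}}=0$ as well. For $\tau<0$ the same theorem yields $\widehat{\Pi(r)}_\tau=\PrIv[\hat r_\tau]\simeq_{\veq}\hat r_\tau$, so by Proposition~\ref{prop: I-equiv_NA} (equivalently by \eqref{eq: PI_def}) the multiplier ideal sheaves $\mathcal{I}(k\widehat{\Pi(r)}_\tau)$ and $\mathcal{I}(k\hat r_\tau)$ agree. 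Thus the defining integrands in \eqref{eq:LNADef} coincide almost everywhere on $(-\infty,0)$, proving the desired equality.

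Since $\{\Pi(r)_t\}_t$ is approximable, I can apply Proposition~\ref{prop:upperbound} to it, obtaining $\Irad[\Pi(r)_t]\geq \varlimsup_k \frac{n!}{k^n}\Lk^{\NA}\{\Pi(r)_t\}$. On the other hand, Theorem~\ref{thm:lowerboundmisbyI} applied to $\{\Pi(r)_t\}_t$ gives $\varliminf_k \frac{n!}{k^n}\Lk^{\NA}\{\Pi(r)_t\}\geq \Irad[\Pi(r)_t]$. The two inequalities together with the identification from the previous paragraph collapse into
\[
\lim_{k\to\infty}\frac{n!}{k^n}\Lk^{\NA}\{r_t\}=\lim_{k\to\infty}\frac{n!}{k^n}\Lk^{\NA}\{\Pi(r)_t\}=\Irad[\Pi(r)_t].
\]
Since $r_t\leq \Pi(r)_t$ for all $t\geq 0$ and the Monge--Ampère energy is monotone on $\mathcal{E}^1$, one has $\Irad[\Pi(r)_t]\geq \Irad[r_t]$, which yields the inequality in \eqref{eq: I_NA_ineq}.

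Finally, the first equality in \eqref{eq: I_NA_ineq} is a purely bookkeeping matter. From \eqref{eq:LNADef} with $\tau^+_{\hat r}=0$, one has
\[
\frac{n!}{k^n}\Lk^{\NA}\{r_t\}=\frac{n!\,h^0(X,T\otimes L^k)}{Vk^n}\cdot\int_{-\infty}^{0}\bigg(\frac{h^0(X,T\otimes L^k\otimes\mathcal{I}(k\hat r_\tau))}{h^0(X,T\otimes L^k)}-1\bigg)\,\mathrm{d}\tau,
\]
and the Riemann--Roch expansion $h^0(X,T\otimes L^k)=\frac{V}{n!}k^n+O(k^{n-1})$ makes the leading prefactor tend to $1$. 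Since the left-hand side converges to $\Irad[\Pi(r)_t]$ and the prefactor converges to $1$, the integral on the right converges to the same limit. The main obstacle in this argument is extracting convergence (not merely a one-sided inequality) of $\frac{n!}{k^n}\Lk^{\NA}\{r_t\}$ for general $\{r_t\}_t\in\mathcal{R}^1$; this is precisely what the projection device of Theorem~\ref{thm:BBJ21projnaequal} resolves, by trading an arbitrary finite energy ray for an approximable one on which both Proposition~\ref{prop:upperbound} and Theorem~\ref{thm:lowerboundmisbyI} apply to yield matching bounds.
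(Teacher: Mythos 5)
Your proposal is correct and follows essentially the same route as the paper: both reduce to the approximable ray $\{\Pi(r)_t\}_t$, use $\widehat{\Pi(r)}_\tau \simeq_{\veq} \hat r_\tau$ to identify the multiplier ideals (hence the functionals $\Lk^{\NA}$), and then squeeze the limit between Proposition~\ref{prop:upperbound} and Theorem~\ref{thm:lowerboundmisbyI} to get $\Irad[\Pi(r)_t]$, with the final inequality coming from $r_t \leq \Pi(r)_t$. The paper simply packages the matching upper and lower bounds inside the implication \Rom{3}$\implies$\Rom{4} of Theorem~\ref{thm: closure_of_test_conf}, which it then cites.
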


\begin{proof} Consider the approximable ray $\{\Pi(r)_t\}_t \in \mathcal R^1$. In the argument  \Rom{3}  implies  \Rom{4}   of the previous theorem, we actually showed that the limit on the left hand side of \eqref{eq: I_NA_ineq} exists and is equal to $
\Irad[\Pi(r)_t]$. The inequality now readily follows from the fact that $r_t \leq \Pi(r)_t$, implying $\Irad[r_t] \leq \Irad[{\Pi(r)}_t]$.
\end{proof}
For $\{r_t\}_t\in \mathcal{R}^1$, it is possible to introduce the non-Archimedean Monge--Amp\`ere energy in the following manner:
\begin{equation}
\mathrm{I}^{\NA}\{r_t\}:=\Irad[\Pi(r)_t]\,.
\end{equation}
In particular, when $\{r_t\}_t \in \overline{\mathcal T}$  we have  
$\mathrm{I}^{\NA}\{r_t\}=\Irad[r_t]$. Comparing with \eqref{eq: I_Pi_slope}, we obtain a new interpretation for the non-Archimedean Monge--Ampère energy:

\begin{coro}\label{thm: Non-Archimedean} For $\{r_t\}_t \in \mathcal{R}^1$ we have
\begin{equation}
\mathrm{I}^{\NA}\{r_t\} = \lim_{k\to\infty} \frac{n!}{k^n} \Lk^{\NA}\{r_t\}\,.
 \end{equation}
In particular, if $\sup_X r_t = 0$ for any $t\geq 0$, then
\[
\mathrm{I}^{\NA}\{r_t\} = \lim_{k\to\infty} \frac{n!}{Vk^n}\int_{-\infty}^0 \big( h^0(X,T \otimes L^k \otimes \mathcal{I}(k\hat{r}_\tau)-h^0(X,T\otimes L^k)\big)\,\mathrm{d}\tau\,.
\]
\end{coro}
This proves the second part of Theorem~\ref{main_thm: Non-Archimedean}.
\section{The closure of algebraic singularity types}\label{sec:cloalgsingtype}

We start with the following result about approximable bounded geodesic rays.
\begin{prop}\label{prop: volume_id_along_bounded_ray}
Let $\{r_t\}_t\in \overline{\mathcal T}\subseteq \mathcal{R}^1$ be a ray of bounded potentials. Then for all $\tau \in (\tau_{\hat{r}}^-,\tau_{\hat{r}}^+)$ we have
\begin{equation}\label{eq: volume_eq_ray}
\int_X \omega_{\hat{r}_{\tau}}^n=\lim_{k\to\infty}  \frac{n!}{k^n} h^0(X,T\otimes L^k \otimes \mathcal{I}(k \hat{r}_\tau)) \,.
\end{equation}
In particular the limit on the right hand side exists.
\end{prop}
\begin{proof}
Without loss of generality, we may assume that $\{r_t\}_t$ is sup-normalized, i.e., $\tau_{\hat{r}}^+=0$.

Using \eqref{eq: I_RWN_form}, Theorem~\ref{thm: closure_of_test_conf}(iv), and Fatou's lemma, we have the following estimate
\begin{flalign}\label{eq:weakerint}
\int_{\tau^-_{\hat r}}^0 \bigg(\frac{\int_X  \omega_{\hat{r}_{\tau}}^n}{V} -  1\bigg) \,\mathrm{d}\tau &= \Irad[r_t] = \lim_{k\to\infty} \int_{\tau^-_{\hat r}}^0  \bigg(\frac{h^0(X,T\otimes L^k \otimes \mathcal{I}(k \hat{r}_\tau))}{\hTL} - 1 \bigg) \,\mathrm{d}\tau\\
&\leq \int_{\tau^-_{\hat r}}^0 \varlimsup_{k\to\infty} \bigg(\frac{h^0(X,T\otimes L^k \otimes \mathcal{I}(k \hat{r}_\tau))}{\hTL} - 1 \bigg) \,\mathrm{d}\tau \,. \nonumber
\end{flalign}
Comparing with \eqref{eq:Icalculation} we arrive at
\begin{equation}\label{eq:weakerint2}
\int_{\tau^-_{\hat r}}^0 \bigg(\frac{\int_X  \omega_{\hat{r}_{\tau}}^n}{V} - 1 \bigg) \,\mathrm{d}\tau = \int_{\tau^-_{\hat r}}^0 \varlimsup_{k\to\infty} \bigg( \frac{h^0(X,T\otimes L^k \otimes \mathcal{I}(k \hat{r}_\tau))}{h^0(X,T\otimes L^k)} - 1 \bigg) \,\mathrm{d}\tau\,.
\end{equation}
Since each $\hat r_\tau$ is $\mathcal I$-model, by Theorem~\ref{thm:genBon} the integrand of the left hand side is greater or equal to the integrand on the right hand side, so
for almost every $\tau \in (\tau^-_{\hat r},0)$ we have 
\begin{equation}\label{eq: volume_ineq_ray}
\frac{1}{n!}\int_X \omega_{\hat{r}_{\tau}}^n=\varlimsup_{k\to\infty}  \frac{1}{k^n} h^0(X,T\otimes L^k \otimes \mathcal{I}(k \hat{r}_\tau)) \,.
\end{equation}
Due to \eqref{eq:weakerint} and \eqref{eq:weakerint2}, the conditions of Lemma~\ref{lma:convintimppwconv} below are satisfied for $I = (\tau^-_{\hat r},0)$, $I' = (-2,1)$ and $f_k$ being the integrand on the right hand side of \eqref{eq:weakerint}. 
Due to Lemma~\ref{lma:contimass}, the function $(\tau^-_{\hat r},0) \ni \tau \mapsto \int_X \omega_{\hat r_\tau}^n$ is continuous and decreasing. We conclude that the limsup in \eqref{eq:weakerint} is a limit for all $\tau\in(\tau_{\hat{r}}^-,0)$ and \eqref{eq: volume_eq_ray} holds as desired.
\end{proof}

\begin{lemma}\label{lma:convintimppwconv}
Let $I,I'\subseteq \mathbb{R}$ be two bounded open intervals, and  $f_k:I\rightarrow I'$ for $k\in \mathbb{N}$ be a sequence of decreasing functions. Suppose that
\begin{equation}\label{eq:intclimint}
\lim_{k\to\infty}\int_{I} f_k\,\mathrm{d}\lambda=\int_I \varlimsup_{k\to\infty} f_k\,\mathrm{d}\lambda\,,
\end{equation}
where $\mathrm{d}\lambda$ is the Lebesgue measure. Denote $f:=\varlimsup_{k\to\infty} f_k$. Then the limit $\lim_{k\to\infty}f_k$ exists at each point of right continuity of $f$. In particular, $f(x)=\lim_{k\to\infty}f_k(x)$ for a.e. $x \in I$.
\end{lemma}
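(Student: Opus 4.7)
The strategy is to first upgrade the integral convergence to pointwise a.e.\ convergence using Fatou-type arguments, and then promote a.e.\ convergence to pointwise convergence at every right-continuity point of $f$ using the monotonicity of the $f_k$.

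Since each $f_k$ takes values in the bounded interval $I'$, the family is uniformly bounded, so both Fatou's lemma and its reverse form (which on bounded intervals requires only a uniform bound) apply. Setting $g := \varliminf_k f_k$, the hypothesis together with these two inequalities gives
\[
\int_I g\,\mathrm{d}\lambda \leq \varliminf_k \int_I f_k\,\mathrm{d}\lambda \leq \varlimsup_k \int_I f_k\,\mathrm{d}\lambda \leq \int_I f\,\mathrm{d}\lambda = \lim_{k\to\infty}\int_I f_k\,\mathrm{d}\lambda,
\]
so $\int_I g\,\mathrm{d}\lambda = \int_I f\,\mathrm{d}\lambda$. Since $g \leq f$ pointwise, this forces $g = f$ a.e., i.e.\ $\lim_k f_k(x) = f(x)$ for a.e.\ $x \in I$.

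Note that $f$ is itself decreasing: for $x<y$ in $I$ and every $k$ we have $f_k(x)\geq f_k(y)$, and taking $\varlimsup_k$ preserves this. In particular the set $D$ of discontinuities of $f$ is countable, so a.e.\ point of $I$ is a point of right-continuity. Fix now $x_0 \in I$ at which $f$ is right-continuous. Since $f=\varlimsup_k f_k$ by definition, it suffices to prove the reverse inequality $\varliminf_k f_k(x_0)\geq f(x_0)$. Let $G \subseteq I$ be the full-measure set on which $\lim_k f_k$ exists and equals $f$; for any $y \in G$ with $y > x_0$, monotonicity of $f_k$ gives
\[
\varliminf_{k\to\infty} f_k(x_0) \;\geq\; \varliminf_{k\to\infty} f_k(y) \;=\; f(y).
\]
As $G$ is dense in a right neighbourhood of $x_0$, we may take $y \searrow x_0$ along $G$; right-continuity of $f$ at $x_0$ then yields $\varliminf_k f_k(x_0) \geq f(x_0)$, as desired.

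The only step requiring any genuine care is the simultaneous use of Fatou and reverse Fatou to convert integral equality into a.e.\ equality of $\varliminf_k f_k$ and $\varlimsup_k f_k$; once this is in hand, monotonicity of the $f_k$ combined with right-continuity of $f$ at $x_0$ handles the upgrade to everywhere-convergence at right-continuity points by a one-line squeeze. No further input beyond the uniform bound, monotonicity, and the hypothesis is needed.
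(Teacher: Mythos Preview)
Your Step~1 contains a genuine gap. The displayed chain
\[
\int_I g \,\mathrm{d}\lambda \leq \varliminf_k \int_I f_k\,\mathrm{d}\lambda \leq \varlimsup_k \int_I f_k\,\mathrm{d}\lambda \leq \int_I f\,\mathrm{d}\lambda = \lim_{k}\int_I f_k\,\mathrm{d}\lambda
\]
only yields $\int_I g \leq \int_I f$, which is already trivially true from $g\leq f$. The hypothesis forces the last four quantities to coincide, but says nothing about the leftmost one: Fatou's inequality $\int g \leq \varliminf_k \int f_k$ can still be strict. Your Step~1 never uses that the $f_k$ are decreasing, and without that the conclusion $g=f$ a.e.\ is simply false. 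For instance, on $I=(0,1)$ let $\{J_k\}$ enumerate all dyadic subintervals and set $f_k = \mathbb{1}_{I\setminus J_k}$; then $\int f_k = 1-|J_k|\to 1 = \int f$ with $f\equiv 1$, yet $g\equiv 0$ since every point lies in infinitely many $J_k$.

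The paper's proof builds monotonicity in from the start. Arguing by contradiction at a right-continuity point $x$ with $a:=f(x)-\varliminf_k f_k(x)>0$, it passes to a subsequence $g_k$ with $g_k(x)\to f(x)-a$ and uses that each $g_k$ is decreasing to get $g_k\leq g_k(x)$ on $[x,x+\delta]$; combined with $f>f(x)-a/2$ there (right-continuity), this gives a definite deficit $\varlimsup_k\int_x^{x+\delta} g_k \leq \int_x^{x+\delta} f - \delta a/2$, while reverse Fatou controls the complement, contradicting $\lim_k\int g_k=\int f$. Your Steps~2--3 are fine and would give a clean finish \emph{once} $g=f$ a.e.\ is known, but establishing that is precisely where the monotonicity has to be spent, and it requires an argument of the paper's type rather than a bare Fatou--reverse-Fatou sandwich.
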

The proof of this lemma is due to Fan Zheng.
\begin{proof}
Without loss of generality, we may assume that $I=(0,1)$, $I'=(0,1)$. 
Let $x\in (0,1)$, such that 
\[
a:=f(x)-\varliminf_{k\to\infty} f_k(x)>0\,.
\]
We assume that $f$ is right continuous at $x$, to obtain a contradiction. There exists $\delta>0$, so that on $[x,x+\delta]$, $f>f(x)-a/2$.

We may take a subsequence $g_k$ of $f_k$ so that $g_k(x)\to f(x)-a$.
We automatically have
\begin{equation}\label{eq: g_k_id}
\lim_{k\to\infty}\int_0^1 g_k\,\mathrm{d}\lambda=\int_0^1 f\,\mathrm{d}\lambda\,,\quad \varlimsup_{k\to\infty} g_k\leq f\,.
\end{equation}
We deduce the estimates
\[
\varlimsup_{k\to\infty}\int_x^{x+\delta}g_k\,\mathrm{d}\lambda\leq \varlimsup_{k\to\infty} \delta g_k(x)\leq \delta f(x)-\delta a\leq \int_x^{x+\delta}f\,\mathrm{d}\lambda-\frac{\delta a}{2}\,.
\]
By Fatou's lemma, on the complement $S:=(0,1) \setminus[x,x+\delta]$ we have
\[
\varlimsup_{k\to\infty}\int_S g_k\,\mathrm{d}\lambda\leq \int_S\varlimsup_{k\to\infty} g_k\,\mathrm{d}\lambda\leq \int_S f\,\mathrm{d}\lambda\,. 
\]
Adding these estimates, we get $\displaystyle\varlimsup_{k\to\infty}\int_0^1 g_k\,\mathrm{d}\lambda\leq \int_0^1 f\,\mathrm{d}\lambda-\frac{\delta a}{2}$,
 contradicting \eqref{eq: g_k_id}.
 \end{proof}

Next compare the arithmetic and non-pluripolar volumes of arbitrary $\omega$-psh functions:

\begin{prop}\label{prop: analvol_arithvol}
For $\varphi\in \PSH(X,\omega)$, the limit $\displaystyle\lim_{k\to\infty}  k^{-n} \hTLm$ always exists. Moreover,
\begin{equation}\label{eq: PSH_lim_eq}
\frac{1}{n!}\int_X \omega_{\varphi}^n\leq \frac{1}{n!}\int_X \omega_{\PrIv[\varphi]}^n  = \lim_{k\to\infty}  \frac{1}{k^n} \hTLm\,.
\end{equation}
\end{prop}

\begin{proof} We note that $\hTLm = h^0(X,T \otimes L^k \otimes \mathcal I(k \PrIv[\varphi]))$, hence we can assume that $\varphi$ is $\veq$-model by replacing $\varphi$ with $\PrIv[\varphi]$. Further, due to Theorem~\ref{thm:genBon}, we can also assume that $\int_X \omega_\varphi^n = \int_X \omega_{\PrIv[\varphi]}^n>0$.

By \cite[Lemma~4.3]{DDNL5}, $\tau\mapsto P(\tau \varphi)$ is well defined for $\tau\in [0,1+\delta)$, where $\delta>0$ is a small constant depending on $\int_X \omega^n_\varphi>0$. We consider the bounded $\veq$-maximal test curve $\{\psi_{\tau}\}_\tau$ with $\tau^-_\psi=-1-\delta$ and $\tau^+_\psi=0$ (for notations, see Theorem~\ref{thm: max_test_curve_ray_duality}\Rom{3} and Definition~\ref{def: test_curves}) such that 
\[
\psi_{\tau}:=\PrIv[P((1+\delta+\tau)\varphi)]\,, \quad \tau\in [-1-\delta,0)
\]
and $\psi_0 := \lim_{\tau \nearrow 0} \psi_\tau$. Since $\varphi$ is $\mathcal I$-model, we conclude that $\psi_{-\tau} = \varphi$.
By Proposition~\ref{prop: volume_id_along_bounded_ray}, for $\tau\in [-1-\delta,0)$, we have
\begin{equation}
\frac{1}{n!}\int_X \omega_{\psi_\tau}^n= \lim_{k\to\infty}  k^{-n} h^0(X,T\otimes L^k \otimes \mathcal{I}(k \psi_\tau))\,. 
\end{equation}
Since $\psi_{-\delta} = \varphi$, plugging $\tau = -\delta$ in the above formula, we conclude that the limit on the right hand side of \eqref{eq: PSH_lim_eq} exists. Moreover this limit is equal to $\int_X \omega_{\PrIv[\varphi]}^n$.
\end{proof}

Next we characterize equality in \eqref{eq: PSH_lim_eq}, in the presence of positive mass.

\begin{prop}\label{prop: eq_Bonavero}Let $\varphi \in \PSH(X,\omega)$ with $\int_X \omega_\varphi^n >0$. Then $P[\varphi]=\PrIv[\varphi]$ if and only if 
\begin{equation}\label{eq: gen_Bonavero}
\lim_{k\to\infty} \frac{1}{k^n} \hTLm = \frac{1}{n!} \int_X \omega_{\varphi}^n.
\end{equation}
\end{prop}

\begin{proof} Since $\int_X \omega_{P[\varphi]}^n=\int_X \omega_{\varphi}^n>0 $ and $\mathcal I(k\varphi)= \mathcal I(kP[\varphi])$, we can assume that $\varphi$ is model, i.e., $\varphi = P[\varphi]$. If $\varphi$ is $\veq$-model, i.e., $\PrIv[\varphi]=\varphi$, then \eqref{eq: gen_Bonavero} follows from \eqref{eq: PSH_lim_eq}.

If \eqref{eq: gen_Bonavero} holds, then \eqref{eq: PSH_lim_eq} implies that $\int_X \omega^n_\varphi = \int_X \omega^n_{\PrIv[\varphi]}>0$.
Since $\varphi \leq \PrIv[\varphi]$ and $\varphi$ is model, \cite[Theorem~3.12]{DDNL2} gives $\PrIv[\varphi]=\varphi$, as desired.
\end{proof}

Finally, we state our last main result, collecting many of our previous findings:

\begin{theorem}\label{thm: arith_plurip_vol_deficit} For $u \in \PSH(X,\omega)$ we have 
\begin{equation}\label{eq: difference_arith_anal_volume}
\lim_{k\to\infty} \frac{h^0(X,T \otimes L^k \otimes \mathcal{I}(ku))}{k^n}=\frac{1}{n!}\int_X \omega_{\PrIv[u]}^n \geq \frac{1}{n!}\int_X \omega_u^n\,.
\end{equation}
Moreover, when $\int_X \omega_u^n>0$, we have equality in the above estimate if and only if one of the the following equivalent conditions hold:\vspace{0.1cm}\\
\Rom{1} $\displaystyle\lim_{k\to\infty} \frac{h^0(X,T \otimes L^k \otimes \mathcal{I}(ku))}{k^n} = \frac{1}{n!}\int_X \omega_u^n$. \vspace{0.1cm}\\
\Rom{2} $P[u]=\PrIv[u]$.\vspace{0.1cm}\\
\Rom{3} $[u]$ is $d_\mathcal{S}$-approximable by the quasi-equisingular sequence $[u_j]$ (see \eqref{eq: quasi_eq_sing_def}).\vspace{0.1cm}\\
\Rom{4} $[u] \in \overline{\mathcal Z}$.\vspace{0.1cm}\\
\Rom{5} $[u] \in \overline{\mathcal A}$.
\end{theorem}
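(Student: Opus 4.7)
The inequality \eqref{eq: difference_arith_anal_volume} and the existence of the limit are exactly the content of Proposition~\ref{prop: analvol_arithvol}. For the positive-mass equivalences, the plan is to close the chain (i) $\iff$ (ii) $\iff$ (iii) $\Rightarrow$ (iv) $\iff$ (v) $\Rightarrow$ (ii).

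The equivalence (i) $\iff$ (ii) is Proposition~\ref{prop: eq_Bonavero}. For (ii) $\iff$ (iii), I would apply Theorem~\ref{thm:anaappvmodel} to $P[u]$, which is model by construction. Since $u \simeq_\veq P[u]$ (Proposition~\ref{prop:Qprojection}(i)) one has $\mathcal I(ku) = \mathcal I(kP[u])$; combined with the standard identity $[P[u]]=[u]$, the Hilbert maps $\Hilb_{2^k}(u)$ and $\Hilb_{2^k}(P[u])$ differ by a constant multiplicative factor, so the quasi-equisingular sequences of $u$ and $P[u]$ differ by additive constants and define the same singularity types. Condition (ii) is equivalent to $P[u]$ being $\veq$-model (via $\PrIv[u] = \PrIv[P[u]]$), so Theorem~\ref{thm:anaappvmodel} converts this to the $d_\mathcal{S}$-convergence $[u_k]=[P[u]_k] \to [P[u]] = [u]$, which is (iii); the reverse direction is analogous. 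The implication (iii) $\Rightarrow$ (iv) is immediate since the QES lies in $\mathcal Z$, and (iv) $\iff$ (v) follows from $\mathcal Z \subseteq \mathcal A$ together with Remark~\ref{rem: anal_sing_d_S_approx}.

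The core step is (v) $\Rightarrow$ (ii). The plan is to establish the mass identity $\int_X \omega_{\PrIv[u]}^n = \int_X \omega_u^n$; once available, since $\PrIv[u] \geq P[u]$ (Proposition~\ref{prop:Qprojection}(i)) are both model potentials of equal positive mass, the rigidity result \cite[Theorem~3.12]{DDNL2} forces $\PrIv[u] = P[u]$, which is (ii). To build toward the mass identity, take $[w_j] \in \mathcal A$ with $d_\mathcal{S}([w_j],[u]) \to 0$. Each $P[w_j] = \PrIv[w_j]$ is $\veq$-model by Proposition~\ref{prop: anal_sing_type_envelope} and has the same singularity type as $w_j$, so Bonavero's identity (Theorem~\ref{thm: analytic_sing_type_formula}) gives $\lim_k h^0(X, T \otimes L^k \otimes \mathcal I(kw_j))/k^n = \int_X \omega_{w_j}^n/n!$. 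The $d_\mathcal{S}$-continuity of the non-pluripolar volume (\cite[Lemma~3.6]{DDNL5}) then yields $\int_X \omega_{w_j}^n \to \int_X \omega_u^n$, while Proposition~\ref{prop: analvol_arithvol} gives $\lim_k h^0(X, T \otimes L^k \otimes \mathcal I(ku))/k^n = \int_X \omega_{\PrIv[u]}^n/n!$.

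The crucial remaining step, and main technical obstacle, is the comparison $\int_X \omega_{\PrIv[u]}^n \leq \int_X \omega_u^n$. The plan is to invoke Demailly--Kollár-type semicontinuity of multiplier ideals under $d_\mathcal{S}$-convergence (\cite[Theorem~1.3]{DDNL5}): for each fixed $k$, $\mathcal I(ku) \subseteq \mathcal I(kw_j)$ for $j$ sufficiently large, yielding the pointwise bound $h^0(X, T \otimes L^k \otimes \mathcal I(ku)) \leq h^0(X, T \otimes L^k \otimes \mathcal I(kw_j))$ for $j \geq j_0(k)$. The difficulty is that $j_0(k)$ a priori depends on $k$, so the $k$- and $j$-limits cannot be exchanged naively; a diagonal choice $j = j(k)$ with $j(k) \to \infty$ slowly enough to maintain the semicontinuity at level $k$, but fast enough that $\int_X \omega_{w_{j(k)}}^n \to \int_X \omega_u^n$, should deliver $\int_X \omega_{\PrIv[u]}^n \leq \lim_j \int_X \omega_{w_j}^n = \int_X \omega_u^n$. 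Balancing these rates against the error in Bonavero's asymptotics for the intermediate terms is the delicate point I expect to require the most care.
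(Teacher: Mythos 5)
The first half of your plan — the inequality via Proposition~\ref{prop: analvol_arithvol}, the equivalence \Rom{1}$\iff$\Rom{2} via Proposition~\ref{prop: eq_Bonavero}, the reduction of \Rom{2}$\iff$\Rom{3} to $P[u]$ and Theorem~\ref{thm:anaappvmodel}, and the trivial implications down to \Rom{5} — matches the paper. (One small imprecision: $\Hilb_{2^k}(u)$ and $\Hilb_{2^k}(P[u])$ do \emph{not} differ by a multiplicative constant in general; what matters is that the underlying vector spaces $V^u_k=V^{P[u]}_k$ coincide, and any two inner products on a finite-dimensional space are comparable, so $[u_k]=[P[u]_k]$.)

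The gap is in \Rom{5}$\implies$\Rom{2}, exactly at the point you flag. Your diagonal argument needs, for a choice $j=j(k)\to\infty$ with $j(k)\ge j_0(k)$, the bound $\varlimsup_k k^{-n}h^0(X,T\otimes L^k\otimes\mathcal I(kw_{j(k)}))\le \lim_j \frac{1}{n!}\int_X\omega_{w_j}^n$. The convergence $k^{-n}h^0(X,T\otimes L^k\otimes\mathcal I(kw_j))\to \frac{1}{n!}\int_X\omega_{w_j}^n$ holds for each \emph{fixed} $j$, with no rate uniform in $j$, while the semicontinuity index $j_0(k)$ is forced to grow with $k$; so neither fixing $j$ nor running the diagonal closes the estimate, and interchanging the two limits is not justified. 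The standard repair — and what the paper actually does — is to \emph{monotonize} the approximating sequence rather than compare volumes at all: after passing to a subsequence with $d_{\mathcal S}([u_j],[u_{j+1}])\le C^{-2j}$, set $v^l_j:=\max(u_j,\dots,u_{j+l})$ and $w_j:=\lim_l \PrIv[v^l_j]$. By Lemma~\ref{lem: max_of_equi_sing}, Proposition~\ref{prop: anal_sing_type_envelope} and Lemma~\ref{lma:decvmodel}\Rom{3} each $w_j$ is $\veq$-model, the quasi-triangle inequality for $\max$ (\cite[Proposition~3.5]{DDNL5}) keeps $d_{\mathcal S}([u],[w_j])\to 0$, the nesting of the maxima makes $\{w_j\}_j$ decreasing, so $w_j\searrow u$ by \cite[Theorem~3.12]{DDNL2}, and Lemma~\ref{lma:decvmodel}\Rom{1} gives $\PrIv[u]=u$ directly. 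If you prefer to keep your volume-comparison route, the same monotonization rescues your diagonal: once $w_j$ is decreasing one has $h^0(X,T\otimes L^k\otimes\mathcal I(kw_{j(k)}))\le h^0(X,T\otimes L^k\otimes\mathcal I(kw_J))$ for all $j(k)\ge J$, and you may let $k\to\infty$ at fixed $J$ first. A further minor point: Theorem~\ref{thm: analytic_sing_type_formula} is stated only for $[\varphi]\in\mathcal Z$, so for $w_j\in\mathcal A$ you should instead invoke Proposition~\ref{prop: analvol_arithvol} together with Proposition~\ref{prop: anal_sing_type_envelope} (or first pass from $\mathcal A$ to $\mathcal Z$ via Remark~\ref{rem: anal_sing_d_S_approx}).
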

In particular, when $\int_X\omega_{\PrIv[u]}^n>0$ (i.e. $\mathrm{nd}(L,u)=n$ in the terminology of \cite{Cao14}), we have $\displaystyle\lim_{k\to\infty} k^{-n} h^0(X,T \otimes L^k \otimes \mathcal{I}(ku))>0$. This reproves \cite[Proposition~3.6]{Cao14} in the case of ample line bundles.

This theorem corresponds to Theorem~\ref{main_thm: arith_plurip_vol_deficit} in the introduction. Moreover, we can now complete the proof of Theorem 1.1 as well.
\begin{coro}\label{cor: Thm1.1cor} Let $\{r_t\}_t \in \mathcal R^1$ with $\sup_X r_t =0$. Then $\hat r_\tau = P[\hat r_\tau]$ and $\int_X \omega_{\hat r_\tau}^n >0$ for any  $\tau \in (-\infty,0)$. 
Moreover, condition $\Rom{3}$ of Theorem~\ref{thm: closure_of_test_conf} is equivalent to  
\[
\Rom{5}\,\lim_{k\to\infty} \frac{h^0(X,T \otimes L^k \otimes \mathcal{I}(k\hat r_\tau))}{k^n} =  \frac{1}{n!}\int_X \omega_{\hat r_\tau}^n\,,\quad \tau \in (-\infty, 0)\,.
\]
\end{coro}

\begin{proof}[Proof of Theorem~\ref{thm: arith_plurip_vol_deficit}] The inequality \eqref{eq: difference_arith_anal_volume} was proved in Proposition~\ref{prop: analvol_arithvol}. The equivalence of  \Rom{1}  and  \Rom{2}  was proved in Proposition~\ref{prop: eq_Bonavero}.  

That $[u]$ is $d_\mathcal{S}$-approximable by its quasi-equisingular sequence is equivalent to $[P[u]]$ being $d_\mathcal{S}$-approximable by its quasi-equisingular sequence (Indeed, $d_\mathcal S(u,P[u])=0$. Also, we have $V^k_u = V^k_{P[u]}$ in the language of Theorem~\ref{thm:qeqsingapp}, hence $[u_k]=[P[u]_k]$ for the corresponding quasi-equisingular approximations). As a result, Theorem~\ref{thm:anaappvmodel} immediately gives the equivalence between  \Rom{2}  and  \Rom{3} .

Trivially, $\Rom{3} \implies \Rom{4} \implies \Rom{5}$. To finish, it is enough to argue that  \Rom{5}  implies  \Rom{2} . As before, we can assume that $u$ is model, i.e., $P[u]=u$.

Let $[u_j] \in \mathcal A$ be such that $d_\mathcal S([u],[u_j]) \to 0$. Since each $[u_j]$ is analytic,  \Rom{2} holds for each $u_j$ (Proposition~\ref{prop: anal_sing_type_envelope}). Since  \Rom{2}  is equivalent to  \Rom{3}, we can replace each $u_j$ with a potential of the type \eqref{eq: quasi_eq_sing_def}, that is algebraic.

Further, after passing to a subsequence, we can assume that $d_\mathcal S([u_j],[u_{j+1}]) \leq C^{-2j}$, where $C>1$ is the constant of \cite[Proposition~3.5]{DDNL5}. Let $v^l_j :=\max(u_j,u_{j_1}, \ldots u_{j+l})$.  
Using the triangle inequality and \cite[Proposition~3.5]{DDNL5} we have
\begin{flalign*}
d_\mathcal S([u_j],[v_j^l]) &= d_\mathcal S([u_j],[\max(u_j,v^{l-1}_{j+1})]) \leq C d_\mathcal S([u_j],[v^{l-1}_{j+1}])\\ 
& \leq C\left(d_\mathcal S([u_j],[u_{j+1}]) + d_\mathcal S([u_{j+1}],[v^{l-1}_{j+1}])\right)\,.
\end{flalign*}
After iterating the above inequality $l$ times and observing that $d_\mathcal S([u_{j+l}], [v_{j+l}^0])=0$, we conclude that  
\begin{equation}\label{eq: v_j_u_j_C}
d_\mathcal S([u_j],[v_j^l]) \leq \sum_{k=j}^{j+l-1} C^{k+1-j} d_\mathcal S([u_k],[u_{k+1}]) = \sum_{k=j}^{j+l-1}  \frac{C^{k+1-j}}{C^{2k}}  \leq \frac{1}{C^{j-2}(C-1)}\,.
\end{equation}

Now let $w^l_j := \PrIv[v^l_j]$ and $\displaystyle w_j:= \lim_{l\to\infty} w^l_j$.  By Lemma~\ref{lem: max_of_equi_sing} below and Proposition~\ref{prop: anal_sing_type_envelope}  we get that $w^l_j$ is $\veq$-model and has the same singularity type as $v^l_j$. Moreover, by Lemma~\ref{lma:decvmodel} \Rom{3},  we get that $w_j$ is $\veq$-model.

Comparing with \eqref{eq: v_j_u_j_C}, we obtain that $d_\mathcal S([u_j],[w^l_j])=d_\mathcal S([u_j],[v_j^l]) \leq C^{2-j}(C-1)^{-1}$. Letting $l \to \infty$, and using \cite[Lemma~4.1]{DDNL5}, we arrive at $d_\mathcal S([u],[w_j]) \leq C^{2-j}(C-1)^{-1}$, i.e., $d_\mathcal S([u],[w_j])\to 0$ as $j \to \infty$. Since $\int_X \omega_u^n>0$, each $w_j$ and $u$ is model, we obtain that $\displaystyle u = \lim_{j\to\infty} w_j$ (\cite[Theorem~3.12]{DDNL2}).

Since $\{w_j\}_j$ is decreasing, by Lemma~\ref{lma:decvmodel} \Rom{1}  we obtain that $u = \PrIv[u]$. Since $u = P[u]$ by assumption, \Rom{2} follows.
\end{proof}

\begin{lemma} \label{lem: max_of_equi_sing}Let $u\in \PSH(X,\omega)$.
Suppose that 
$u_1,\ldots, u_l \in \PSH(X,\omega)$ are the potentials arising from the construction in \eqref{eq: quasi_eq_sing_def}. Then $\max(u_1,\ldots, u_l)\in \PSH(X,\omega)$ has analytic singularity type.
\end{lemma}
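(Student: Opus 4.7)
The strategy is to use the log-sum-exp approximation of the maximum together with the algebraic structure of each $u_j$ furnished by Theorem~\ref{thm:qeqsingapp}. The starting observation is the elementary inequality
\[
\max_j a_j \le \tfrac{1}{2}\log\left(\sum_{j=1}^l e^{2a_j}\right) \le \max_j a_j + \tfrac{1}{2}\log l\,,
\]
valid for any real numbers $a_1,\ldots,a_l$. Applied with $a_j = N u_j$ (for a suitable common denominator $N$ to be chosen below), this yields
\[
\max_j u_j - \tfrac{1}{2N}\log\left(\sum_{j=1}^l e^{2Nu_j}\right) \in L^\infty(X)\,.
\]
Thus it suffices to show that $\tfrac{1}{2N}\log\bigl(\sum_j e^{2Nu_j}\bigr)$ has analytic singularity type.

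Next I would localize. By Theorem~\ref{thm:qeqsingapp}, each $u_j$ is of algebraic singularity type, so around any point $p\in X$ there exist Zariski opens $U_j \ni p$, rational constants $c_j \in \mathbb{Q}_+$, and algebraic functions $f_{j,1},\ldots,f_{j,n_j} \in \mathcal{O}(U_j)$ such that
\[
u_j\bigr|_{U_j} - \tfrac{c_j}{2}\log\left(\sum_{i} |f_{j,i}|^2\right) \quad\text{is smooth on } U_j\,.
\]
Intersecting the $U_j$ gives a Zariski open $U = \bigcap_j U_j \ni p$ on which all the local representations hold simultaneously. Choose $N \in \mathbb{Z}_+$ with $m_j := c_j N \in \mathbb{Z}_+$ for all $j$. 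Expanding the integer power via the multinomial formula,
\[
\left(\sum_{i} |f_{j,i}|^2\right)^{m_j} = \sum_{\alpha} \binom{m_j}{\alpha} \left|\prod_i f_{j,i}^{\alpha_i}\right|^2 = \sum_k |F_{j,k}|^2\,,
\]
where each $F_{j,k}$ is algebraic. Consequently, on $U$,
\[
e^{2Nu_j} = e^{2N\phi_j}\sum_k |F_{j,k}|^2\,,
\]
with $\phi_j$ smooth on $U$.

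Combining the two paragraphs, on $U$ we obtain
\[
\max_j u_j = \tfrac{1}{2N}\log\left(\sum_{j,k} e^{2N\phi_j}|F_{j,k}|^2\right) + O(1)\,.
\]
Since each $\phi_j$ is smooth on $U$, the factors $e^{2N\phi_j}$ are bounded above and below by positive constants on any compact subset of $U$; in particular, on a small analytic neighborhood $W\subseteq U$ of $p$ the expression $\sum_{j,k} e^{2N\phi_j}|F_{j,k}|^2$ is comparable to $\sum_{j,k}|F_{j,k}|^2$ up to a multiplicative constant. Therefore
\[
\max_j u_j - \tfrac{1}{2N}\log\left(\sum_{j,k}|F_{j,k}|^2\right) \in L^\infty_{\mathrm{loc}}(W)\,,
\]
which exhibits $\max(u_1,\ldots,u_l)$ as a potential of analytic singularity type around $p$ (with constant $1/N \in \mathbb{Q}_+$ and holomorphic defining functions $F_{j,k}$).

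\textbf{Main obstacle.} There is no serious obstruction: the entire argument is essentially bookkeeping around the log-sum-exp identity. The only mildly subtle point is handling the \emph{rational} exponents $c_j$ uniformly in $j$, which is resolved by passing to a common denominator $N$ and invoking the multinomial expansion to convert fractional powers of sums-of-squares into honest sums-of-squares of algebraic functions. Notice that this argument in fact produces algebraic (not merely analytic) singularity type, although only the weaker conclusion is needed.
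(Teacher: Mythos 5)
Your proof is correct and follows essentially the same route as the paper's: both localize via Theorem~\ref{thm:qeqsingapp}, pass to a common denominator for the rational exponents, and reduce to the observation that the maximum of potentials each locally comparable to $\frac{1}{2N}\log(\text{sum of squares of holomorphic functions})$ is again of this form (the paper works with $\max_j\log|f_j|^2$ in place of your $\log\sum_j|f_j|^2$, a purely cosmetic difference that lets it avoid the multinomial expansion). One small caveat: your closing remark that the argument in fact produces \emph{algebraic} singularity type overstates matters, since membership in $\mathcal Z$ per Definition~\ref{def: anal_alg_sing} requires the remainder to be smooth, whereas your log-sum-exp bound and the comparison of $e^{2N\phi_j}$ with constants only yield a locally bounded remainder --- which is exactly what the (analytic) conclusion of the lemma needs.
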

\begin{proof} Examining the expression \eqref{eq: quasi_eq_sing_def} one notices that for each $x \in X$ we can find (a common denominator) $m \in \mathbb N$ and an open neighborhood $x \in U_x \subseteq X$ such that  
$u_k - \frac{1}{m} \max_j \log |f^k_j|^2$ is locally bounded on $U_x$ for a finite number of holomorphic functions $f^k_j \in \mathcal O(U_x)$. Then $\max(u_1,\ldots, u_l) - \frac{1}{m} \max_{j,k} \log |f^k_j|^2$ is locally bounded on $U_x$ as well.
\end{proof}

\begingroup
\setstretch{1.1}
\setlength\bibitemsep{0pt}
\setlength\biblabelsep{0pt}
\printbibliography
\endgroup
\bigskip

\footnotesize
\noindent {\sc Department of Mathematics, University of Maryland}\\
{\tt tdarvas@umd.edu}\vspace{0.1in}\\
\noindent \textsc{Department of Mathematics, Chalmers Tekniska Högskola, G\"oteborg}\par\nopagebreak
\noindent   \texttt{xiam@chalmers.se}\par\nopagebreak
\end{document}